\title{Universality of Approximate Message Passing algorithms and
tensor networks}
\author{Tianhao Wang}\email{tianhao.wang@yale.edu}
\author{Xinyi Zhong}\email{xinyi.zhong@yale.edu}
\author{Zhou Fan}\email{zhou.fan@yale.edu}
\address{TW, XZ, ZF: Department of Statistics and Data Science, Yale University}
\date{\today}
\def\a{\alpha}
\def\b{\beta}
\def\s{\mathbf{s}}
\def\u{\mathbf{u}}
\def\v{\mathbf{v}}
\def\x{\mathbf{x}}
\def\y{\mathbf{y}}
\def\z{\mathbf{z}}
\def\f{\mathbf{f}}
\def\g{\mathbf{g}}
\def\W{\mathbf{W}}
\def\S{\mathbf{S}}
\def\G{\mathbf{G}}
\def\D{\mathbf{D}}
\def\A{\mathbf{A}}
\def\X{\mathbf{X}}
\def\Y{\mathbf{Y}}
\def\E{\mathbb{E}}
\def\R{\mathbb{R}}
\def\Var{\operatorname{Var}}
\def\cU{\mathcal{U}}
\def\cV{\mathcal{V}}
\def\cE{\mathcal{E}}
\def\diag{\operatorname{diag}}
\def\val{\operatorname{val}}
\def\limval{\operatorname{lim-val}}
\def\op{\mathrm{op}}
\def\Normal{\mathcal{N}}
\def\GOE{\operatorname{GOE}}
\def\toW{\overset{W}{\to}}
\def\toWp{\overset{W_p}{\to}}
\def\toWtwo{\overset{W_2}{\to}}
\def\1{\mathbf{1}}
\def\bW{{\mathbf{W}}}
\def\bbE{{\mathbb{E}}}
\def\bi{\mathbf{i}}
\def\bx{\mathbf{x}}
\def\bs{\mathbf{s}}
\def\calE{\mathcal{E}}
\def\calV{\mathcal{V}}
\def\bu{{\mathbf{u}}}
\def\bz{{\mathbf{z}}}
\def\bff{{\mathbf{f}}}
\def\eps{\varepsilon}
\def\Tr{\operatorname{Tr}}
\def\Haar{\operatorname{Haar}}
\def\OO{\mathbb{O}}
\newcommand{\degext}{\deg_\mathrm{ext}}
\newcommand{\ext}{\mathrm{ext}}
\newcommand{\tdiag}{\mathrm{diag}}
\begin{document}

\begin{abstract}
Approximate Message Passing (AMP) algorithms provide a valuable tool for
studying mean-field approximations and dynamics in a variety of applications.
Although these algorithms are often first derived for matrices having
independent Gaussian entries or satisfying rotational invariance in law, their
state evolution characterizations are expected to hold over larger universality
classes of random matrix ensembles.

We develop several new results on AMP universality. For AMP algorithms tailored
to independent Gaussian entries, we show that their state evolutions hold over
broadly defined generalized Wigner and white noise ensembles, including matrices
with heavy-tailed entries and heterogeneous entrywise variances that may arise
in data applications. For AMP algorithms tailored to rotational invariance in
law, we show that their state evolutions hold over
delocalized sign-and-permutation-invariant
matrix ensembles that have a limit distribution over the diagonal,
including sensing matrices composed of subsampled Hadamard or Fourier transforms
and diagonal operators.

We establish these results via a simplified moment-method proof, reducing AMP
universality to the study of products of random matrices and diagonal tensors
along a tensor network. As a by-product of our analyses, we show that the
aforementioned matrix ensembles satisfy a notion of asymptotic freeness with
respect to such tensor networks, which parallels usual definitions of freeness
for traces of matrix products.
\end{abstract}

\maketitle

\section{Introduction}\label{sec:intro}

Approximate Message Passing (AMP) algorithms are a general family of iterative
algorithms, driven by a random matrix $\W$,
whose iterates admit a simple distributional characterization in the asymptotic
limit of increasing dimensions. Their origins may be traced separately in the
engineering, statistics, and probability literatures
\cite{kabashima2003cdma,donoho2009message,bolthausen2014iterative}, where these
algorithms have since provided an important tool for studying mean-field
phenomena in many probabilistic models. Without seeking
to be exhaustive, we mention here their applications to analyses
of spin glass and perceptron models
\cite{bolthausen2018morita,ding2019capacity,fan2021replica,bolthausen2021gardner,fan2022tap},
recovery thresholds and asymptotic phenomena in high-dimensional statistical
models
\cite{donoho2009message,rangan2011generalized,rangan2012iterative,
donoho2013information,donoho2016high,dia2016mutual,bu2019algorithmic,sur2019likelihood,
montanari2021estimation,mondelli2021pca,zhong2021approximate,li2021minimum},
and mean-field dynamics of other first-order optimization
algorithms including discrete-time and continuous-time gradient descent
\cite{celentano2020estimation,celentano2021high}.
We refer readers to \cite{feng2022unifying} for a recent review. 

Asymptotic distributional characterizations of the AMP iterates, known as their
\emph{state evolutions}, are often first proved for orthogonally invariant
matrices $\W$ using an inductive
conditioning technique. For $\W$ with i.i.d.\ Gaussian entries, this
method was developed in \cite{bolthausen2014iterative,bayati2011dynamics}
and has been extended to analyze AMP algorithms of increasing generality in
\cite{rangan2011generalized,javanmard2013state,berthier2020state,montanari2021estimation,gerbelot2021graph}.
For $\W$ satisfying rotational invariance in law, a similar technique
has been applied to analyze various AMP algorithms in
\cite{rangan2019vector,schniter2016vector,ma2017orthogonal,takeuchi2017rigorous,
takeuchi2020convolutional,takeuchi2021bayes,liu2021memory,fan2022approximate},
with a parallel line of work
\cite{cakmak2014s,opper2016theory,ccakmak2019memory,ccakmak2020dynamical}
deriving related algorithms using non-rigorous methods of
dynamic functional theory.

It is expected---and in some settings known---that the state evolution
characterizations of AMP algorithms should extend beyond orthogonally 
invariant matrices, to describe also the limit distributions of iterates
when applied to broader universality classes of random matrix ensembles. 
For example, it was shown in \cite{bayati2015universality} that AMP algorithms
designed for i.i.d.\ Gaussian matrices and having
polynomial non-linearities admit state evolutions that are universal across
matrices with sub-Gaussian entries of common variance.
In \cite{chen2021universality}, universality over a similar matrix class
for AMP with Lipschitz non-linearities
was proven using a different Gaussian interpolation method, and extended
to spectrally initialized algorithms for spiked matrix models.
Moving beyond matrices with independent entries,
in \cite{dudeja2020universality} it was shown that the state evolution of a linear 
AMP algorithm for phase retrieval holds universally for
sub-sampled Hadamard matrices. Recently, results of
\cite{dudeja2022universality,dudeja2022spectral}---fruit of parallel research
efforts---showed universality for AMP
algorithms having divergence-free non-linearities over a broad
model of semi-random matrices with randomly signed rows/columns and delocalized
entries. The latter work \cite{dudeja2022spectral} also applied these results
to establish universality classes of matrices for more general first-order
iterative algorithms, including proximal gradient methods and general versions
of AMP. We discuss the relation of these results to our work in
more detail at the conclusion of the following section.

\subsection{Contributions}

Our current work has the two-fold goal of extending the scope of some of these
universality results of
\cite{bayati2015universality,chen2021universality,dudeja2022universality},
and of presenting a more direct and elementary proof for AMP universality. We summarize our
contributions as follows:
\begin{enumerate}
\item For AMP algorithms designed for i.i.d.\ Gaussian matrices, we show that their state
evolutions hold more broadly over generalized Wigner and white
noise ensembles, with entries having potentially heteroskedastic variances and
higher moments growing rapidly with the dimension $n$.
This includes standardized adjacency matrices
of sparse random graphs down to sparsity levels of $(\log n)/n$, as well as data
matrices arising in contexts of count-valued and missing observations after
applying practical row and column normalization schemes. We discuss two
motivating applications in Examples \ref{ex:SBM} and \ref{ex:biwhitenedcount}
of Section \ref{subsec:examples}. In the random matrix theory literature,
global spectral laws and spectral CLTs for related ensembles
were studied in \cite{anderson2006clt}, and universality of local
spectral statistics in \cite{erdHos2012bulk,erdHos2012rigidity}.
\item For AMP algorithms designed for rotationally invariant matrix
ensembles, we show that their state evolutions hold over universality classes of
``generalized invariant matrices''
that satisfy only invariances of permutation and sign and 
whose generated algebra over the diagonal, in the sense of
\cite{au2021freeness}, consists of matrices with delocalized entries and common
normalized trace.
Importantly, this includes matrices composed of subsampled Hadamard
or discrete Fourier transforms and diagonal operators, which admit fast
matrix-vector multiplication for signal processing applications. We discuss a
specific application to universality of the compressed sensing
phase transition for AMP \cite{donoho2009observed,bayati2015universality}
in Example \ref{ex:CS} of Section \ref{subsec:examples}. Related models of
permutation-and-sign-invariant matrices have been studied in the context of
asymptotic liberation in \cite{anderson2014asymptotically}.
\item We introduce a simplified two-step proof of AMP universality, in the 
first step reducing universality to the study of products of $\W$ with
diagonal tensors along a tensor network, and in the second step establishing
universality of the values of these matrix-tensor products. The second step
admits a simple combinatorial analysis for all of the preceding matrix ensembles. Our argument for the first step is general and holds irrespective of the specific matrix ensemble. 
We propose this two-step proof framework
in part to enable easier extensions of AMP universality to other random matrix 
models (e.g.\ having sufficiently weak or short-range correlation across
entries) as this need arises in applications.
\item For symmetric matrices $\W \in \R^{n \times n}$, our definition of a tensor
network is a natural generalization of expressions of the form
\[\frac{1}{n}\u^\top \W\bT_1\W\bT_2 \cdots \bT_k\W\v\]
for deterministic vectors $\u,\v$ and diagonal matrices
$\bT_1,\ldots,\bT_k$ to expressions involving higher-order diagonal tensors.
As a by-product of our analyses, we show for both the preceding classes of
generalized Wigner and generalized invariant matrices
$\W$ that they satisfy a notion of
asymptotic freeness with respect to such tensor networks, namely, that if all
diagonal tensors have asymptotically vanishing normalized trace, then 
evaluations of expressions of this form are also 0 in the asymptotic limit. This
is parallel to notions of asymptotic freeness \cite{voiculescu1992free}, usually
defined with respect to normalized traces of matrix products,
in settings of products with higher-order tensors. 
Our analysis of tensor
networks has also similarities to the analysis of graph observables in the
theory of traffic freeness developed in \cite{male2020traffic}.
\end{enumerate}

Our proofs use a moment-method and polynomial approximation strategy,
similar to \cite{bayati2015universality}. In heuristic derivations of AMP
algorithms from belief propagation for matrices in the Gaussian universality
class, the Onsager correction terms arise from the removal of
single-step-backtracking messages. The arguments of
\cite{bayati2015universality} showed a corresponding
equivalence between such AMP algorithms and a
tensorial unfolding of AMP using non-backtracking paths. To our knowledge, the
correction terms in the algorithms of \cite{fan2022approximate} for
rotationally-invariant ensembles do not have a similar combinatorial
interpretation, motivating us to analyze a
simpler tensorial unfolding without non-backtracking structure.
Our results for the Gaussian universality class may be obtained via either
approach; we take the opportunity to present unified proofs for both the
Gaussian and non-Gaussian universality classes using the same unfolding,
and to simplify
the polynomial approximation arguments of \cite{bayati2015universality}
using more recent state evolution results of
\cite{fan2022approximate} for AMP with non-Lipschitz functions.
We remark that, as in the AMP universality analysis
of \cite{chen2021universality} which developed a
different continuous interpolation argument, our method of proof applies also
to more general first-order iterative algorithms of the form studied in
\cite{celentano2020estimation} that are characterizable by an asymptotic state
evolution.

Our analyses for generalized invariant ensembles (Definitions
\ref{def:syminvariant} and \ref{def:rectinvariant}) are complementary to those of
the recent works \cite{dudeja2022universality,dudeja2022spectral}, which
studied an important family of Vector-AMP style
methods that have divergence-free non-linearities
\cite{schniter2016vector,ma2017orthogonal,takeuchi2017rigorous,ccakmak2019memory}.
As discussed in \cite{dudeja2022spectral},
the universality classes for these algorithms are broader than that of the more
general AMP algorithms we study here, for example containing matrices
with differing spectral distributions having common second moment.
\cite{dudeja2022universality,dudeja2022spectral} prove universality of these
algorithms for semi-random sign-invariant matrices and i.i.d.\ side
information vectors, by developing a Hermite-polynomial unfolding of the
AMP iterations and leveraging the vanishing of certain terms in this unfolding
due to the divergence-free form. The latter work \cite{dudeja2022spectral}
extends this result to also derive certain Spectral and Strongly Semi-Random
universality classes for first-order algorithms
that do not have this divergence-free structure. Our methods here establish
universality over a class of matrices that has similarities to, and is
partially inspired by, these latter classes studied in \cite{dudeja2022spectral}
(c.f.\ Proposition \ref{prop:syminvariant}(b)). We obtain these results via
an alternative
analysis of a simpler tensorial unfolding in the standard monomial basis.
As we discuss in Remark \ref{remark:sym_tn_universality}, our proofs also
establish the existence and universality of the limit empirical distribution
of iterates for first-order methods applied to matrices beyond
the orthogonally invariant universality class, suggesting the possible
development of new iterative algorithms with characterizable state evolutions
for such matrices.

\subsection{Notation}
We denote entries of $\x \in \R^n$ and $\W \in \R^{n \times n}$ as
$x[i]$ and $W[i,j]$. For
vectors $\x_1,\ldots,\x_k \in \R^n$ and a random vector $(X_1,\ldots,X_k)$,
we write
\[(\x_1,\ldots,\x_k) \toWp (X_1,\ldots,X_k) \text{ as } n \to \infty\]
for the Wasserstein-$p$ convergence of the empirical
distribution of rows of $(\x_1,\ldots,\x_k) \in \R^{n \times k}$ to the joint
law of $(X_1,\ldots,X_k)$. This means, for any
continuous function $f:\R^k \to \R$ satisfying
\begin{equation}\label{eq:polygrowth}
|f(x_1,\ldots,x_k)| \leq C(1+\|(x_1,\ldots,x_k)\|_2^p)
\text{ for a constant } C>0,
\end{equation}
we have as $n \to \infty$
\begin{equation}\label{eq:Wconvergence}
\frac{1}{n}\sum_{i=1}^n f(x_1[i],\ldots,x_k[i]) \to \E[f(X_1,\ldots,X_k)].
\end{equation}
We write
\[(\x_1,\ldots,\x_k) \toW (X_1,\ldots,X_k)\]
to mean that the above Wasserstein-$p$ convergence holds
for every order $p \geq 1$.

For a function $f:\R^k \to \R$ and vectors $\x_1,\ldots,\x_k \in
\R^n$, we denote by $f(\x_1,\ldots,\x_k) \in \R^n$ the evaluation of $f(\cdot)$
on each row of $(\x_1,\ldots,\x_k) \in \R^{n \times k}$.
We write $\langle \cdot \rangle$ for the
empirical average of the coordinates of a vector, and introduce the shorthand
$\x_{1:k}=(\x_1,\ldots,\x_k)$ and $X_{1:k}=(X_1,\ldots,X_k)$. Thus
(\ref{eq:Wconvergence}) may be expressed as
$\langle f(\x_{1:k}) \rangle \to \E[f(X_{1:k})]$.

For vectors $\x,\y \in \R^n$, $\x \odot \y \in \R^n$ is their entrywise
product. $\diag(\x) \in \R^{n \times n}$ or $\diag(\x) \in \R^{n \times \cdots
\times n}$ denotes the diagonal matrix or tensor with $\x$ along the main
diagonal, i.e.\ $\diag(\x)[i,\ldots,i] = \x[i]$ and $\diag(\x)$ has all
other entries equal to 0.
For $\x \in \R^{\min(m,n)}$, we write also $\diag(\x) \in
\R^{m \times n}$ for the rectangular diagonal matrix where each $(i,i)$ entry
is $x[i]$; we will indicate the dimensions if needed to disambiguate these
notations. $\|\W\|_\op$ is the $\ell_2 \to \ell_2$ operator norm of the matrix
$\W$. We denote $[n]=\{1,\ldots,n\}$, and reserve Roman letters
$i,j,\ldots$ for indices in $[n]$ and Greek letters
$\alpha,\beta,\ldots$ for indices in $[m]$.

\section{Main results}\label{sec:result}

\subsection{Universality of AMP algorithms for symmetric matrices}\label{sec:result_sym}

Let $\W \in \R^{n \times n}$ be a symmetric random matrix. Consider an
initialization $\u_1 \in \R^n$ and auxiliary ``side information'' vectors
$\f_1,\ldots,\f_k \in \R^n$, independent of $\W$. In applications, such side information
vectors may play the role of the external
field in spin glass models, the true signal vector in spiked matrix models, or
the signal and residual error vectors in regression models. We refer
to~\cite{bayati2011dynamics,rangan2011generalized,rangan2012iterative} for
several examples. Let $u_2,u_3,u_4,\ldots$ be a
sequence of non-linear functions, where $u_{t+1}:\R^{t+k} \to \R$.
We study a general form for an AMP algorithm with separable non-linearities
that computes, for $t=1,2,3,\ldots$
\begin{subequations}
\label{eq:AMP}
\begin{align}
\z_t&=\W\u_t-\sum_{s=1}^t b_{ts}\u_s \label{eq:AMPz}\\
\u_{t+1}&=u_{t+1}(\z_1,\ldots,\z_t,\f_1,\ldots,\f_k)\label{eq:AMPu} 
\end{align}
\end{subequations}
where $\{b_{ts}\}_{s \leq t}$ are deterministic scalar ``Onsager correction''
coefficients. We will characterize the iterates of this algorithm in the large
system limit as $n \to \infty$, for fixed $k \geq 0$.

We assume throughout the following conditions for $(\u_1,\f_1,\ldots,\f_k)$.

\begin{assumption}\label{assump:ufconvergence}
Almost surely as $n \to \infty$,
\begin{equation}\label{eq:ufconvergence}
(\u_1,\f_1,\ldots,\f_k) \toW (U_1,F_1,\ldots,F_k)
\end{equation}
for a joint limit law $(U_1,F_1,\ldots,F_k)$ having finite moments of all
orders, where $\E[U_1^2]>0$. Furthermore, multivariate polynomials are
dense in the real $L^2$-space of functions $f:\R^{k+1} \to \R$ with inner-product
\[(f,g) \mapsto \E[f(U_1,F_1,\ldots,F_k)g(U_1,F_1,\ldots,F_k)].\]
\end{assumption}

\begin{remark}
The convergence (\ref{eq:ufconvergence}) holds, for example,
if rows of $(\u_1,\f_1,\ldots,\f_k) \in \R^{n \times (k+1)}$ are i.i.d.\
and equal in law to $(U_1,F_1,\ldots,F_k)$.
The density of polynomials holds if $\|(U_1,F_1,\ldots,F_k)\|_2$ has
finite moment generating function in a neighborhood of 0; see
\cite[Section 14.1 and Corollary 14.24]{schmudgen2017moment}.
\end{remark}

In an AMP algorithm, the coefficients $\{b_{ts}\}$ of (\ref{eq:AMP}) are defined
so that the iterates $\{\z_t\}$ are described by a simple \emph{state evolution}
in the asymptotic limit as $n \to \infty$. 
For $\W \sim \GOE(n)$ (c.f.\ Definition \ref{def:Wigner}), this may be
done as follows: Set $\bSigma_1=\E[U_1^2] \in \R^{1 \times 1}$. Inductively,
having defined $\bSigma_t \in \R^{t \times t}$,
let $Z_{1:t} \sim \Normal(0,\bSigma_t)$ be
independent of $(U_1,F_{1:k})$, set
$U_{s+1}=u_{s+1}(Z_{1:s},F_{1:k})$ for each $s=1,\ldots,t$, and define
\begin{equation}\label{eq:GOEbSigma}
\bSigma_{t+1}=(\E[U_rU_s])_{r,s=1}^{t+1} \in \R^{(t+1) \times (t+1)}.
\end{equation}
Let $b_{tt} = 0$, and for each $s < t$, define the coefficient 
$b_{ts}$ as
\begin{equation}\label{eq:GOEb}
b_{ts}=\E[\partial_s u_t(Z_{1:t-1},F_{1:k})]
\end{equation}
where $\partial_s u_t$ is the partial derivative of $u_t(\cdot)$ in its
$s^\text{th}$ argument. We will call (\ref{eq:GOEbSigma}) and (\ref{eq:GOEb}) the
\emph{GOE prescriptions} for $\bSigma_t$ and $b_{ts}$.
Results of \cite{bayati2011dynamics,javanmard2013state}
(see also \cite[Proposition 2.1]{montanari2021optimization} for this form)
then imply that, for any Lipschitz functions $u_t(\cdot)$,
the iterates of~(\ref{eq:AMP}) satisfy the state
evolution, almost surely as $n \to \infty$ for any fixed $t \geq 1$,
\[(\u_1,\f_1,\ldots,\f_k,\z_1,\ldots,\z_t) \toW
(U_1,F_1,\ldots,F_k,Z_1,\ldots,Z_t).\]
We note that a variant of this algorithm may instead use the empirical average
$b_{ts}=\langle \partial_s u_t(\z_{1:t-1},\f_{1:k}) \rangle$,
for which the same state evolution continues to hold
(cf.\ Remark~\ref{remark:symempirical}).

In \cite{fan2022approximate}, building upon work of \cite{opper2016theory},
an extension of this result was proven for a
larger class of orthogonally invariant matrices and non-linear functions:
We say that $\W$ is \emph{orthogonally invariant} if
it has spectral decomposition $\W=\bO\D\bO^\top$ where
$\bO \sim \Haar(\OO(n))$ is Haar-distributed
on the orthogonal group and independent of $\D=\diag(\bd)$.
Suppose that $\bd \toW D$ as $n \to \infty$, where $D$ represents the limit
spectral law of $\W$.
Set $\bSigma_1=\Var[D] \cdot \E[U_1^2] \in \R^{1 \times 1}$. Having defined
$\bSigma_t \in \R^{t \times t}$, let $Z_{1:t} \sim \Normal(0,\bSigma_t)$ be
independent of $(U_1,F_{1:k})$, let
$U_{s+1}=u_{s+1}(Z_{1:s},F_{1:k})$ for each $s=1,\ldots,t$, and define
\begin{equation}\label{eq:symorthobSigma}
\bSigma_{t+1}=\bSigma_{t+1}\Big(\{\E[U_rU_s]\}_{1 \leq r,s \leq t+1},
\{\E[\partial_r u_{s+1}(Z_{1:s},F_{1:k})]\}_{1 \leq r \leq s \leq t}\Big)
\in \R^{(t+1) \times (t+1)}
\end{equation}
for a continuous function $\bSigma_{t+1}(\cdot)$ whose form depends only on the law of $D$.
For each $s \leq t$ and a continuous function $b_{ts}(\cdot)$ whose 
form also depends only on the law of $D$, define
\begin{equation}\label{eq:symorthob}
b_{ts}=b_{ts}\Big(\{\E[U_qU_r]\}_{1 \leq q,r \leq t},
\{\E[\partial_q u_{r+1}(Z_{1:r},F_{1:k})]\}_{1 \leq q \leq r<t}\Big).
\end{equation}
We will call (\ref{eq:symorthobSigma}) and (\ref{eq:symorthob}) the
\emph{orthogonally invariant prescriptions} for $\bSigma_t$ and $b_{ts}$.
We refer to \cite[Section 4]{fan2022approximate} for
their precise functional forms, which will not be important for our current
work. When $\W \sim \GOE(n)$ and $D$ has Wigner's semicircle
law on $[-2,2]$, these reduce to the previous GOE prescriptions
of (\ref{eq:GOEbSigma}) and (\ref{eq:GOEb}). 
It was shown in \cite{fan2022approximate} that
for weakly differentiable functions $u_t(\cdot)$ whose derivatives have
at most polynomial growth, the iterates of (\ref{eq:AMP}) again satisfy the
state evolution, almost surely as $n \to \infty$ for any fixed $t \geq 1$,
\[(\u_1,\f_1,\ldots,\f_k,\z_1,\ldots,\z_t) \toW
(U_1,F_1,\ldots,F_k,Z_1,\ldots,Z_t).\]

Our main results are universality statements that extend the state evolution
characterizations of these AMP algorithms to more general
random matrix ensembles. Corresponding to $\W \sim \GOE(n)$, we
study the following universality class of generalized Wigner matrices, having
possibly heteroskedastic entrywise variances and heavy-tailed entries.

\begin{definition}\label{def:Wigner}
$\W \in \R^{n \times n}$ is a \emph{generalized Wigner matrix} with
(deterministic) variance profile $\S \in \R^{n \times n}$ if
\begin{enumerate}[(a)]
\item $\W$ is symmetric, and entries on and above the diagonal
$(W[i,j]: 1 \leq i \leq j \leq n)$ are independent.
\label{assump:Wigner:Wind}
\item Each $W[i,j]$ has mean 0, variance $n^{-1}S[i,j]$, and higher
moments satisfying, for each integer $p \geq 3$,
\[\lim_{n \to \infty} n\cdot \max_{i,j=1}^n \E[|W[i,j]|^p]=0.\]
\label{assump:Wigner:Wmoment}
\item For a constant $C>0$ independent of $n$,
\[\max_{i,j=1}^n S[i,j] \leq C \quad \text{ and } \quad
\lim_{n \to \infty} \max_{i=1}^n \bigg|\frac{1}{n}\sum_{j=1}^n
S[i,j]-1\bigg|=0.\]
\label{assump:Wigner:S}
\end{enumerate}
We write $\W \sim \GOE(n)$ for the special case where
$W[i,j] \sim \Normal(0,1/n)$ and $S[i,j]=1$ for all $i<j$,
and $W[i,i] \sim \Normal(0,2/n)$ and $S[i,i]=2$ for all $i$.
\end{definition}

The moment assumption in condition (b) weakens a uniform sub-Gaussianity
condition for $\sqrt{n}\,W[i,j]$ that is assumed in the previous AMP universality
results of \cite{bayati2015universality,chen2021universality} and that would
require instead $\E[|W[i,j]|^p] \lesssim n^{-p/2}$ for all $p \geq 3$.
This condition (b) is weak enough to encompass centered
and normalized adjacency matrices of sparse random graphs with slowly growing
average vertex degree. Condition (c) allows general patterns of entrywise
variances whose rows and columns have approximately the same sum, where we also
require in (\ref{eq:Wignerunaligned}) of Theorem \ref{thm:Wigner} below that
these rows and columns are ``asymptotically unaligned'' with the
initialization and side information vectors $\u_1,\f_1,\ldots,\f_k$.
We discuss two applications in Examples \ref{ex:SBM} and
\ref{ex:biwhitenedcount} of Section \ref{subsec:examples}.

The following theorem shows that the state evolution of AMP algorithms
for GOE random matrices remains valid for matrices $\W$ in this generalized
Wigner universality class.

\begin{theorem}\label{thm:Wigner}
Let $\W \in \R^{n \times n}$ be a generalized Wigner matrix with variance
profile $\S$, and let $\u_1,\f_1,\ldots,\f_k$ be independent of $\W$
and satisfy Assumption \ref{assump:ufconvergence}. Suppose that
\begin{enumerate}
\item Each function $u_{t+1}:\R^{t+k} \to \R$ is continuous, satisfies the 
polynomial growth condition \eqref{eq:polygrowth} for some order $p \geq 1$, 
and is Lipschitz in its first $t$ arguments.
\item $\|\W\|_\op<C$ for a constant $C>0$ almost surely for all large $n$.
\item Let $\s_i$ be the $i^\text{th}$ row of $\S$. For any fixed
polynomial function $q:\R^{k+1} \to \R$, almost surely as $n \to \infty$,
\begin{equation}\label{eq:Wignerunaligned}
\max_{i=1}^n \Big|\langle q(\u_1,\f_1,\ldots,\f_k) \odot \s_i \rangle
-\langle q(\u_1,\f_1,\ldots,\f_k) \rangle \cdot \langle \s_i \rangle
\Big| \to 0.
\end{equation}
\end{enumerate}
Let $\{b_{ts}\}$ and $\{\bSigma_t\}$ be defined by the GOE prescriptions
(\ref{eq:GOEbSigma}) and (\ref{eq:GOEb}), where each matrix $\bSigma_t$ is
non-singular. Then for any fixed $t \geq 1$, almost surely as $n \to \infty$,
the iterates of (\ref{eq:AMP}) satisfy
\[(\u_1,\f_1,\ldots,\f_k,\z_1,\ldots,\z_t) \toWtwo
(U_1,F_1,\ldots,F_k,Z_1,\ldots,Z_t)\]
where $(Z_1,\ldots,Z_t) \sim \Normal(0,\bSigma_t)$ is independent of
$(U_1,F_1,\ldots,F_k)$, i.e.\ this limit has the same joint law as described 
by the AMP state evolution for $\W \sim \GOE(n)$.
\end{theorem}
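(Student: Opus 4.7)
The plan is to use the two-step framework outlined in the introduction: first reduce universality of the AMP iterates to universality of certain tensor-network evaluations involving $\W$ and diagonal tensors formed from polynomial functions of $(\u_1,\f_1,\ldots,\f_k)$, then prove the latter by a moment-method combinatorial argument tailored to the generalized Wigner ensemble. The reduction begins with polynomial approximation: fix $t$ and $\eps>0$, and use the density-of-polynomials clause of Assumption~\ref{assump:ufconvergence} to approximate each $u_s$ ($s\le t$) by a polynomial $u_s^{(\eps)}$ to $L^2$-error $\eps$ under the GOE state-evolution limit law. The Lipschitz hypothesis in the first $s-1$ arguments, together with the almost-sure bound $\|\W\|_\op<C$, gives a deterministic recursion showing the perturbed iterates stay within $O(\eps)$ of the original iterates in empirical $W_2$ distance, uniformly in $n$ large. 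Hence it suffices to prove the theorem assuming each $u_s$ is polynomial.

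For polynomial $u_s$, unrolling (\ref{eq:AMP}) writes each $\z_t$ as an explicit finite sum of monomials of the form
\[\W\,\diag(\v_1)\,\W\,\diag(\v_2)\cdots\W\,\diag(\v_{m-1})\,\W\,\v_m,\]
with each $\v_j$ a polynomial in $(\u_1,\f_1,\ldots,\f_k)$, together with the Onsager subtractions $-\sum_s b_{ts}\u_s$. The operator-norm bound yields uniform-in-$n$ moment bounds of all orders on the iterates, so the claimed $W_2$ convergence is implied by the almost-sure convergence of $\langle q(\u_1,\f_1,\ldots,\f_k,\z_1,\ldots,\z_t)\rangle$ to its GOE value for every polynomial test function $q$. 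Upon expansion, each such empirical average is a linear combination of tensor-network values that schematically take the form
\[\frac{1}{n}\sum_{i_0,\ldots,i_m=1}^n W[i_0,i_1]\,W[i_1,i_2]\cdots W[i_{m-1},i_m]\;T[i_0,\ldots,i_m],\]
where $T$ is a diagonal tensor whose entries are products of coordinates of $(\u_1,\f_1,\ldots,\f_k)$ at the distinct indices of the configuration.

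The core of the proof is a moment-method analysis of these tensor-network values. Raising to the $p$-th power and expanding using independence of the entries of $\W$, one obtains a sum over pairings of the $\W$-edges across the $p$ copies. Condition (b) of Definition~\ref{def:Wigner} annihilates all non-pair-matched configurations, because any group of three or more $\W$-factors sharing a common edge costs a factor $o(1/n)$. The pair-matched diagrams split into \emph{tree} diagrams (in bijection with the non-crossing pair partitions that generate the GOE semicircle contribution) and non-tree diagrams (subleading by a planarity/genus count). For each tree diagram, condition (c) forces the row sums of $\S$ to tend to $1$, and assumption (\ref{eq:Wignerunaligned}) is designed precisely so that a factor $S[i,j]$ along each edge-pair can be separated from the polynomial test factor and replaced by $\langle\s_i\rangle\to 1$ in the leading evaluation. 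This recovers the GOE contribution exactly; higher-moment bounds together with Borel--Cantelli upgrade convergence in expectation to almost-sure convergence, and induction on $t$ closes the polynomial case.

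The main obstacle I anticipate is verifying that the Onsager subtractions with the GOE coefficients $b_{ts}$ of (\ref{eq:GOEb}) cancel the \emph{backtracking} contributions --- pair-matched diagrams in which two consecutive $\W$-edges share their middle index and are paired to each other --- uniformly both for $\W\sim\GOE(n)$ and for generalized Wigner $\W$. For the GOE this cancellation is the content of the semicircle free-cumulant expansion; the universality content is that the same coefficients $b_{ts}$ suffice in the generalized Wigner case because (\ref{eq:Wignerunaligned}) asymptotically decouples the polynomial test factor from the variance profile at leading order. Once this cancellation is in hand, everything else is driven by the moment bound of condition (b) together with the operator-norm hypothesis, and the tree/non-tree classification reduces the generalized Wigner case to the GOE case edge by edge.
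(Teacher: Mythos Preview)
Your overall plan---polynomial approximation, unrolling polynomial AMP into tensor-network values, and a moment-method analysis of those values for generalized Wigner matrices---matches the paper's two-step architecture (Lemma~\ref{lemma:Wignercompare} plus Lemma~\ref{lemma:Wignermoments}). The tensor-network analysis you sketch, with pair-matchings surviving, tree diagrams dominating, and condition~(\ref{eq:Wignerunaligned}) decoupling the variance profile from the polynomial test factors, is essentially the content of Lemma~\ref{lemma:Wignermoments} and its auxiliary Lemmas~\ref{lem:VarianceProfile_universality}--\ref{lem:Wignermoments_concentration}.

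However, your anticipated ``main obstacle''---verifying directly that the GOE Onsager coefficients $b_{ts}$ cancel the backtracking contributions for generalized Wigner $\W$---is precisely what the paper's approach \emph{avoids}. The tensor-network expansion of $\langle p(\tilde\u_{1:t},\tilde\z_{1:t},\f_{1:k})\rangle$ in Lemma~\ref{lemma:polynomial_tensor_network_decomposition} is the \emph{same} linear combination of tensor-network values whether the algorithm is applied to $\W$ or to $\G\sim\GOE(n)$; the coefficients $b_{ts}$ are just scalars absorbed into that expansion. Once Lemma~\ref{lemma:Wignermoments} shows each individual tensor-network value has the same limit for $\W$ as for $\G$, the polynomial AMP iterates for $\W$ automatically inherit the state evolution of $\G$, which is already known from~\cite{fan2022approximate} (see Lemma~\ref{lem:polyLawLimit}). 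The Onsager cancellation is hidden inside that black-box result for $\G$ and never needs to be reverified combinatorially. The direct verification you propose is closer to the approach of~\cite{bayati2015universality}, which the paper explicitly contrasts with its own as more combinatorially involved.

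A secondary gap: your polynomial-approximation step claims a ``deterministic recursion'' bounding the difference between Lipschitz and polynomial iterates by $O(\eps)$, but the approximation $u_{s}\approx u_{s}^{(\eps)}$ is only in $L^2$ under the limit law, not uniform. To turn that into an empirical bound on $n^{-1/2}\|u_{s+1}(\tilde\z_{1:s},\f_{1:k})-\tilde u_{s+1}(\tilde\z_{1:s},\f_{1:k})\|_2$, you must already know that $(\tilde\z_{1:s},\f_{1:k})\toW(\widetilde Z_{1:s},F_{1:k})$ for the \emph{polynomial} AMP applied to $\W$. The paper establishes this first (Lemma~\ref{lem:polyLawLimit}, via tensor-network universality plus the known state evolution for $\G$) and only then runs the inductive comparison (Lemma~\ref{lemma:AMPpolyAPprox}); in the order you describe, this step is circular.
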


Next, corresponding to orthogonal invariance,
we study universality classes of matrices that are permutation-and-sign-invariant
in law
and that have limit distributions over the diagonal, in the following sense
inspired by \cite{au2021freeness}:
Let $\Delta: \RR^{n\times n} \to \RR^{n\times n}$ be the diagonal map that
preserves only the entries on the diagonal, i.e.
\[\Delta(\bM)=\diag(M[1,1],\ldots,M[n,n]) \in \R^{n \times n}.\]
Let $\Delta \langle \x \rangle$ denote the set of all words in $\x$ and
$\Delta(\cdot)$, for example
\[\x\x, \quad \x\Delta(\x\x)\x, \quad \Delta(\x\x\Delta(\x))\x,
\quad \x\x\x\Delta(\Delta(\x))\Delta(\x\x).\]
We refer to $\Delta\langle \x \rangle$ as the set of \emph{diagonal monomials}
in $\x$. For $p(\bx) \in \Delta\langle \x \rangle$ and $\bM \in \R^{n \times n}$,
we write $p(\bM) \in \R^{n \times n}$ for its evaluation at $\x=\bM$.

\begin{definition}\label{def:diagdistr}
The \emph{distribution over the diagonal} of $\bM$ is the mapping\footnote{We
define the distribution over the diagonal by the values of
$\tfrac{1}{n} \Tr p(\bM) \in \R$ rather than $\Delta(p(\bM)) \in \R^{n \times
n}$ as might be more standard in operator-valued free probability.}
\begin{align*}
p(\bx) \in \Delta \langle \bx \rangle \mapsto \tfrac{1}{n} \Tr p(\bM).
\end{align*}
Matrices $\bM \in \R^{n \times n}$ \emph{converge in diagonal distribution a.s.}
if $\lim_{n \to \infty} \tfrac{1}{n} \Tr p(\bM)$ exists almost surely
(and is finite) for every fixed $p(\bx) \in \Delta \langle \x \rangle$.
The \emph{limit diagonal distribution} of $\bM$, which we will refer to as
$\cD_\tdiag$, is then the mapping
\begin{align*}
p(\bx) \in \Delta\langle\bx\rangle \mapsto
\lim_{n\to\infty} \tfrac{1}{n} \Tr p(\bM).
\end{align*}
\end{definition} 

We remark that $\cD_\tdiag$ specifies the limit of
$\tfrac{1}{n} \Tr \bM^\nu$ for each fixed integer $\nu \geq 1$,
and hence also the limit spectral distribution
of $\bM$ when this distribution has compact support.

We call $\bPi=\bP\bXi \in
\R^{n \times n}$ a \emph{uniformly random signed permutation matrix} if
$\bXi=\diag(\xi[1],\ldots,\xi[n]) \in \R^{n \times n}$ where each diagonal entry 
$\xi[i]$ is independently chosen from $\{+1,-1\}$ with equal probability, and 
$\bP \in \R^{n \times n}$ is
a uniformly random permutation matrix independent of $\bXi$.
Note that for any symmetric matrix $\bM$
and signed permutation matrix
$\bPi$, we have $\Delta(\bPi\bM\bPi^\top)=\bPi\Delta(\bM)\bPi^\top$, so also
$p(\bPi\bM\bPi^\top)=\bPi\,p(\bM)\,\bPi^\top$ for every diagonal monomial
$p(\bx) \in \Delta\langle \bx \rangle$. In particular, $\bM$ and
$\bPi\bM\bPi^\top$ have the same distributions over the diagonal.
The following then defines our universality class.

\begin{definition}\label{def:syminvariant}
$\W=\bPi \bM \bPi^\top\in \R^{n \times n}$ is a 
\emph{symmetric generalized invariant matrix}\footnote{More formally, these
definitions of generalized Wigner and generalized invariant matrices are
describing sequences of matrices $\bW \in \R^{n \times n}$ of increasing
dimensions $n \to \infty$, rather than a single matrix.
We will choose not make this terminological distinction in our work.}
with limit diagonal distribution $\cD_\tdiag$ if, as $n \to \infty$,
\begin{enumerate}[(a)]
\item $\Mb \in \R^{n \times n}$ converges in
diagonal distribution a.s.\ to a limit $\cD_\tdiag$.
\item For any $\eps>0$ and any fixed $p(\x)\in\Delta\langle\x\rangle$, almost
surely for all large $n$,
\begin{align*}
	\max_{i\neq j} |p(\Mb)[i,j]| < n^{-1/2+\eps}.
\end{align*}
\item $\bPi\in\RR^{n\times n}$ is a uniformly random signed permutation,
independent of $\Mb$.
\end{enumerate}
\end{definition}

Our result on AMP universality will pertain specifically
to such matrices $\W$ whose limit
diagonal distribution $\cD_\tdiag$ coincides with that of an orthogonally
invariant matrix. In this setting, the next proposition clarifies
that $\cD_\tdiag$ is determined uniquely by the limit spectral law of $\bW$,
and it also provides simpler conditions inspired by the Spectral Universality
Class in \cite{dudeja2022spectral} that imply Definition \ref{def:syminvariant}.
We have stated Definition \ref{def:syminvariant} for more general
limits $\cD_\tdiag$ because, as
discussed in Remark \ref{remark:sym_tn_universality} to follow, we will in fact prove
a general lemma showing the existence and universality of the limit empirical
distribution of iterates for first-order iterative algorithms
applied to any such matrix $\W$, even if $\cD_\tdiag$ does not correspond to an
orthogonally-invariant model.

\begin{proposition}\label{prop:syminvariant}
Let $\W \in \R^{n \times n}$ be a symmetric matrix with eigenvalues
$\bd \in \R^n$ satisfying $\bd \toW D$ almost surely as $n \to \infty$,
where $D$ has finite moments of all orders.
\begin{enumerate}[(a)]
\item If $\W$ is orthogonally invariant, then $\W$ is a symmetric generalized
invariant matrix in the sense of Definition \ref{def:syminvariant}, and
its limit diagonal distribution $\cD_\tdiag$ is determined uniquely by the law
of $D$.
\item Suppose that either
\begin{enumerate}[1.]
\item $\W=\bO\bD\bO^\top$ where $\bD=\diag(\bd)$ and
$\bO=\bPi_V\bH\bPi_E$, such that
$\bPi_V,\bPi_E \in \R^{n \times n}$ are uniformly random signed permutations
independent of each other and of $(\bD,\bH)$, and $\bH$ is an orthogonal matrix
with entries satisfying
\begin{equation}\label{eq:symdelocalization}
\max_{i,j \in [n]} |H[i,j]|<n^{-1/2+\eps}
\end{equation}
for any fixed $\eps>0$, almost surely for all large $n$.
\item $\W=\bPi\bM\bPi^\top$ such that $\bPi$ is a
uniformly random signed permutation independent of $\bM$ (which has
eigenvalues $\bd$),
and for each fixed integer $\nu \geq 1$, the matrix $\bM^\nu$ satisfies
\begin{equation}\label{eq:Mcondition}
\max_{i=1}^n \big|M^\nu[i,i] - \tfrac{1}{n}\Tr \bM^\nu \big| <
n^{-1/2+\eps},
\quad \max_{i\neq j} \big|M^\nu[i,j]\big| < n^{-1/2+\eps}
\end{equation}
for any fixed $\eps>0$, almost surely for all large $n$.
\end{enumerate}
Then $\W$ is a generalized invariant matrix in the sense of Definition
\ref{def:syminvariant}, and its limit diagonal distribution $\cD_\tdiag$ coincides
with that of the orthogonally invariant matrix in part (a).
\end{enumerate}
\end{proposition}

We prove Proposition \ref{prop:syminvariant} in Appendix
\ref{appendix:syminvariant}.
Important examples for applications are when $\W$ is
a composition of permutations, deterministic Hadamard/Fourier
matrices, and diagonal operators.
We discuss one such application to compressed sensing in Example \ref{ex:CS} of 
Section~\ref{subsec:examples}.

The following is our main theorem on AMP universality in this context,
showing that the state evolution of AMP algorithms for
orthogonally invariant matrices holds universally over the class of
generalized invariant matrices with matching limit diagonal distribution.

\begin{theorem}\label{thm:syminvariant}
Let $\W \in \R^{n \times n}$ be a symmetric generalized invariant
matrix whose limit diagonal distribution $\cD_\tdiag$ coincides with
that of an orthogonally invariant matrix $\G$.
Let $\u_1,\f_1,\ldots,\f_k$ be independent of $\W$ and satisfy 
Assumption~\ref{assump:ufconvergence}. 
Suppose that
\begin{enumerate}
\item Each function $u_{t+1}:\R^{t+k} \to \R$ is continuous, satisfies the 
polynomial growth condition \eqref{eq:polygrowth} for some order $p \geq 1$, and is
Lipschitz in its first $t$ arguments.
\item $\|\W\|_{\op}<C$ for a constant
$C>0$ almost surely for all large $n$.
\end{enumerate}
Let $\{b_{ts}\}$ and $\{\bSigma_t\}$ be defined by the orthogonally invariant
prescriptions (\ref{eq:symorthobSigma}) and (\ref{eq:symorthob}) for the
limit spectral distribution $D$ specified by $\cD_\tdiag$. Suppose that
$\Var[D]>0$ and each matrix $\bSigma_t$ is non-singular.
Then for any fixed $t \geq 1$, almost
surely as $n \to \infty$, the iterates of (\ref{eq:AMP}) satisfy
\[(\u_1,\f_1,\ldots,\f_k,\z_1,\ldots,\z_t) \toWtwo
(U_1,F_1,\ldots,F_k,Z_1,\ldots,Z_t)\]
where $(Z_1,\ldots,Z_t) \sim \Normal(0,\bSigma_t)$ is independent of
$(U_1,F_1,\ldots,F_k)$, i.e.\ this limit has the same joint law as
described by the AMP state evolution for $\G$.
\end{theorem}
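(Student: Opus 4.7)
The plan is to follow the two-step framework the authors advertise in the introduction. First I would use a density-of-polynomials argument to reduce to the case in which each non-linearity $u_{t+1}$ is a polynomial. By Assumption \ref{assump:ufconvergence} polynomials are dense in the relevant $L^2$-space, so I can approximate $u_{t+1}$ by a polynomial $\tilde u_{t+1}$ in a weighted $L^2$ norm. Since $\|\W\|_\op$ is almost surely bounded, an inductive argument shows that the iterates $\tilde\z_t$ of the polynomial AMP approximate the true iterates $\z_t$ in empirical $\ell_2$ uniformly in $n$, and the state-evolution quantities $\{b_{ts}\}$, $\{\bSigma_t\}$ defined by \eqref{eq:symorthobSigma}--\eqref{eq:symorthob} vary continuously in the mixed moments and derivative expectations, so the $W_2$ state evolution for polynomial non-linearities implies the $W_2$ state evolution for Lipschitz ones.

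For polynomial AMP, unrolling the recursion \eqref{eq:AMP} expresses each $\z_t$ as a finite sum of tensor-network terms of the form
\[
\W\,\bT_{r_1}\,\W\,\bT_{r_2}\,\cdots\,\W\,\bT_{r_\ell}\,\v,
\]
where each $\bT_{r_i}$ is a diagonal tensor whose entries are polynomials in coordinates of $(\u_1,\f_{1:k})$ and of earlier iterates, and $\v$ is likewise a polynomial vector in $(\u_1,\f_{1:k})$. To prove the $W_2$ convergence it suffices to match every joint polynomial moment $\langle p(\u_1,\f_{1:k},\z_{1:t})\rangle$ with its prescribed expectation under the state evolution, which reduces to computing limits of normalized tensor-network evaluations. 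I would carry out this reduction carefully enough that the Onsager coefficients $b_{ts}$ appear as exactly the corrections that cancel the "backtracking" contributions, so that the surviving tensor-network terms match those one gets from a Haar-invariant $\W$.

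The heart of the proof is then a universality statement for tensor-network evaluations: for $\W$ a symmetric generalized invariant matrix, any normalized evaluation of the form above converges almost surely to the same limit as when $\W$ is orthogonally invariant with the same spectral law $D$. Writing $\W=\bPi_V\bH\bPi_E\bD\bPi_E^\top\bH^\top\bPi_V^\top$ and substituting into the tensor network, I would take moments over the independent signed permutations $\bPi_V,\bPi_E$. The sign invariance forces each column/row index of $\bH$ to appear an even number of times across the product, which selects a combinatorial subset of index pairings; the permutation invariance then allows me to replace diagonal-tensor contractions by their normalized traces, which by Assumption \ref{assump:ufconvergence} converge to deterministic polynomial expectations. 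Finally, the delocalization bound $\max|H[i,j]|<n^{-1/2+\eps}$ from Definition \ref{def:syminvariant}(c) ensures that only the leading pair-partition configurations survive, and a counting argument shows these are precisely the non-crossing pairings weighted by moments of $D$, recovering the free-probabilistic limit of the Haar ensemble.

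The main obstacle I expect is the combinatorial bookkeeping in this third step: identifying precisely which pairings/partitions of indices dominate in the moment expansion, bounding the subleading terms using delocalization, and verifying that the resulting sum depends on $(\bH,\bD)$ only through the moments of $D$ so that the limit coincides with the Haar case. A convenient intermediate object is the \emph{asymptotic freeness with respect to tensor networks} mentioned in the introduction: once I establish that diagonal tensors with vanishing normalized trace make vanishing contributions, the usual free-cumulant recursion reduces every admissible tensor network to a product of normalized traces of $\bD^k$ and normalized traces of the diagonal tensors, which matches the orthogonally invariant prescription by construction. Combining this universality with the polynomial-to-Lipschitz transfer in the first step concludes the proof.
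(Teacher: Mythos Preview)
Your two-step skeleton---polynomial approximation, then universality of tensor-network values---matches the paper. But two aspects of your plan diverge in ways that would make the proof substantially harder than necessary.

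First, you propose to ``carry out this reduction carefully enough that the Onsager coefficients $b_{ts}$ appear as exactly the corrections that cancel the `backtracking' contributions.'' The paper deliberately avoids this. The tensor-network decomposition (Lemma~\ref{lemma:polynomial_tensor_network_decomposition}) is a purely algebraic unrolling of the polynomial AMP recursion; it holds for \emph{any} choice of Onsager coefficients, which simply become scalar multipliers in front of individual tensor-network values. There is no cancellation to track. Likewise, the paper never matches tensor-network limits against the state-evolution predictions directly. It shows only that each tensor-network value $\val_T(\W;\u_1,\f_{1:k})$ has the \emph{same} almost-sure limit for $\W$ as for a Haar-invariant matrix $\G$ with spectral law $D$ (Lemma~\ref{lemma:syminvmoments}), and then invokes the already-established state evolution for $\G$ from \cite{fan2022approximate}. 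This bypasses entirely the problem of computing what those limits are or why they equal the state-evolution moments.

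Second, your proposed route through asymptotic freeness is inverted relative to the paper, and the reduction you sketch is not obviously valid. The paper proves universality of $\val_T$ first (via Lemmas~\ref{lemma:Esyminvmoments}--\ref{lemma:syminvfourthmoment}) and obtains freeness (Proposition~\ref{prop:freeness}(b)) as a corollary; it is not used as a tool. Your claim that ``the usual free-cumulant recursion reduces every admissible tensor network to a product of normalized traces of $\bD^k$ and normalized traces of the diagonal tensors'' is loose: freeness only kills tensor networks where \emph{all} diagonal tensors are traceless, and decomposing each tensor into traceless-plus-scalar does not straightforwardly collapse a tree tensor network into products of trace moments. For the actual universality step, the paper's key device is Lemma~\ref{lemma:Hlemma}: after averaging over the signed permutations, the residual $\bH$-dependent quantity $H_n(\sigma,\sigma')$ is handled by an induction on the number of edges that uses the \emph{orthogonality} relation $\bH^\top\bH=\mathrm{Id}$ to contract degree-two vertices, with delocalization controlling only the case of too few vertices. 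This shows $n^{-|\sigma\vee\sigma'|}H_n(\sigma,\sigma')\to H(\sigma,\sigma')\in\{0,1\}$ depending only on $(\sigma,\sigma')$, not on $\bH$---universality then follows without ever identifying the surviving configurations as non-crossing pairings.
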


\begin{remark}\label{remark:symempirical}
Theorems \ref{thm:Wigner} and \ref{thm:syminvariant} hold equally for AMP
algorithms where, in the prescriptions (\ref{eq:GOEb}) and (\ref{eq:symorthob})
for $b_{ts}$, the quantities
$\E[\partial_r u_{s+1}(Z_{1:s},F_{1:k})]$ and $\E[U_rU_s]$
are replaced by the empirical averages
\[\langle \partial_r u_{s+1}(\z_{1:s},\f_{1:k}) \rangle
=\frac{1}{n} \sum_{i=1}^n \partial_r u_{s+1}(z_{1:s}[i],f_{1:k}[i]),
\qquad \langle \u_r \odot \u_s \rangle=\frac{1}{n}\sum_{i=1}^n u_r[i]u_s[i].\]
For example, such an AMP algorithm for GOE matrices $\W$ and non-linearities
$u_{t+1}(z_{1:t},f_{1:k})=u_{t+1}(z_t)$ consists of the iterations
\[\z_t=\W\u_t-\langle u_t'(\z_{t-1}) \rangle \u_{t-1}, \qquad
\u_{t+1}=u_{t+1}(\z_t).\]

To see this, note that $b_{11}$ depends only on $\E[U_1^2]$, so these 
prescriptions for $b_{11}$ asymptotically coincide by Assumption
\ref{assump:ufconvergence}. Then the state evolution
holds for $\z_1$. 
Inductively, validity of the state evolution for 
$\z_{1:t}$ ensures that, almost surely as $n \to \infty$,
\begin{align*}
\langle \partial_r u_{s+1}(\z_{1:s},\f_{1:k}) \rangle &\to
\E[\partial_r u_{s+1}(Z_{1:s},F_{1:k})] \text{ for all } r \leq s \leq t,\\
\langle \u_r \odot \u_s \rangle &\to \E[U_rU_s] \text{ for all } r,s \leq t+1
\end{align*}
where the first statement follows from Wasserstein-2 convergence of
$(\z_{1:s},\f_{1:k})$ and Stein's lemma
(c.f.\ \cite[Proposition E.5]{fan2022approximate}).
Then the presciptions of (\ref{eq:GOEb}) and (\ref{eq:symorthob}) for
$\{b_{t+1,s}\}_{s \leq t+1}$
asymptotically coincide with their empirical versions defined by
$\langle \partial_r u_{s+1}(\z_{1:s},\f_{1:k}) \rangle$
and $\langle \u_r \odot \u_s \rangle$, which in turn implies validity of the
state evolution for $\z_{1:(t+1)}$.
\end{remark}

\begin{remark}
Theorems \ref{thm:Wigner} and \ref{thm:syminvariant} show universality
of AMP algorithms with an initialization $\bu_1$ that is independent of $\bW$.
For spiked matrix models with a low-rank signal component, alternative
AMP algorithms with spectral initializations have been studied for example in
\cite{montanari2021estimation,mondelli2021pca,zhong2021approximate}. Universality for such
algorithms may be shown using the preceding results, by approximating the
spectral initialization with a large number of linear AMP iterations starting
from an initialization $\bu_1$ that is independent of $\bW$ but
correlated with the true signal; we refer
to \cite[Section 8]{chen2021universality} and \cite[Section
A.2]{zhong2021approximate} for examples of this type of argument.

Since we allow the non-linearities $u_{t+1}(\cdot)$ to be functions of
all preceding iterates $\z_1,\ldots,\z_t$,
universality of AMP with matrix-valued iterates in $\R^{n \times J}$ for a
fixed dimension $J \geq 1$ may also be
deduced from the preceding results, by simulating each iteration of
any such algorithm using $J$ iterations of an algorithm with iterates
in $\R^n$. We leave the further study of these extensions to future work,
as the need arises in applications.
\end{remark}

\subsection{Tensor networks and strategy of proof}

We describe here our high-level strategy of proof for
Theorems \ref{thm:Wigner} and \ref{thm:syminvariant}.
The full proofs of these results are contained in Section \ref{sec:proofs}.

\begin{definition}\label{def:tensornetwork}
A \emph{diagonal tensor network} $T=(\cV,\cE,\{q_v\}_{v \in \cV})$ in $k$
variables is an undirected tree graph with vertices $\cV$ and edges
$\cE \subset \cV \times \cV$, each of whose vertices $v \in \cV$ is labeled by
a polynomial function $q_v:\R^k \to \R$.
The \emph{value} of $T$ on a symmetric matrix
$\W \in \R^{n \times n}$ and vectors $\x_1,\ldots,\x_k \in \R^n$ is
\[\val_T(\W;\x_1,\ldots,\x_k)=\frac{1}{n} \sum_{\bi \in [n]^\cV}
q_{\bi|T} \cdot W_{\bi|T}\]
where, for each index tuple $\bi=(i_v:v \in \cV) \in [n]^\cV$, we set
\[q_{\bi|T}=\prod_{v \in \cV} q_v(x_1[i_v],\ldots,x_k[i_v]),
\qquad W_{\bi|T}=\prod_{(u,v) \in \cE} W[i_u,i_v].\]
\end{definition}

This value may be understood as:
\begin{enumerate}
\item Associating to each vertex $v \in \cV$ a diagonal tensor
$\bT_v=\diag(q_v(\x_1,\ldots,\x_k)) \in \R^{n \times \cdots \times n}$,
where the order of this tensor equals the degree of $v$ in the tree.\footnote{
We remind readers our notation that
$q_v(\x_1,\ldots,\x_k) \in \R^n$ indicates the
application of $q_v:\R^k \to \R$ row-wise to $(\x_1,\ldots,\x_k) \in \R^{n
\times k}$, and $\bT_v$ is then a diagonal tensor with
$q_v(\x_1,\ldots,\x_k) \in \R^n$ along its main diagonal.}
\item Associating to each edge the symmetric matrix $\W$.
\item Iteratively contracting all tensor-matrix-tensor products represented by
the edges of the tree.
\end{enumerate}
For example, if $\cV=[w+1]$ and $T$ is the line graph
$1-2-\cdots-w-(w+1)$, then
$\bT_1,\bT_{w+1} \in \R^n$ are vectors, $\bT_v \in \R^{n \times n}$
is a diagonal matrix for each vertex $v \in \{2,\ldots,w\}$,
and the value (in usual matrix-vector product notation) is
\[\val_T(\W;\x_1,\ldots,\x_k)=\frac{1}{n}\,\bT_1^\top
\W\bT_2\W \cdots \W\bT_w\W\bT_{w+1}.\]
When each tensor $\bT_v$ has all 1's along the main diagonal, this
definition is an example of the graph sum used to show asymptotic freeness of
Wigner and diagonal matrices in \cite{mingo2012sharp,mingo2017free},
and it is also a specific case of a ``graph monomial'' in the notion of
traffic freeness in \cite{male2020traffic}.

We will show in Lemma
\ref{lemma:polynomial_tensor_network_decomposition}
that for any AMP algorithm (or more generally, any
first-order iterative algorithm of the form (\ref{eq:AMP}))
with polynomial non-linearities $u_2,u_3,u_4,\ldots$,
and for any polynomial test function $p(\cdot)$, the coordinate average
\[\langle p(\u_{1:t},\z_{1:t},\f_{1:k}) \rangle=
\frac{1}{n}\sum_{i=1}^n p(u_{1:t}[i],z_{1:t}[i],f_{1:k}[i])\]
of $p(\cdot)$ evaluated on the AMP iterates and side information vectors
is a linear combination of values of different tensor networks on $\W$ and
$\u_1,\f_1,\ldots,\f_k$. Then,
leveraging state evolution results of \cite{fan2022approximate}
to perform an inductive polynomial approximation argument, the proof reduces
the universality of AMP for Lipschitz non-linearities
to the universality of these tensor
network values. This reduction is encapsulated in the following lemma.

\begin{lemma}\label{lemma:Wignercompare}
Let $\u_1,\f_1,\ldots,\f_k \in \R^n$ satisfy Assumption
\ref{assump:ufconvergence}. Let $\W,\G \in \R^{n \times n}$ be symmetric
random matrices independent of $\u_1,\f_1,\ldots,\f_k$ such that
\begin{enumerate}
\item $\G=\Ob\Db\Ob^\top$ is an orthogonally invariant matrix, where
$\Db=\diag(\db)$ and $\db \toW D$ for a limit law $D$ with compact support and
$\Var[D]>0$.
\item $\|\W\|_\op<C$ for a constant $C>0$, almost surely for all large $n$.
\item For every diagonal tensor network $T$ in $k+1$ variables,
almost surely as $n \to \infty$,
\[\val_T(\W;\u_1,\f_1,\ldots,\f_k)-\val_T(\G;\u_1,\f_1,\ldots,\f_k) \to 0.\]
\end{enumerate}
Let $u_{t+1}:\R^{t+k} \to \R$ be continuous functions which satisfy the 
polynomial growth condition \eqref{eq:polygrowth} for some order $p \geq 1$,
and are Lipschitz in their first $t$ arguments. Let
$\{b_{ts}\}$ and $\{\bSigma_t\}$ be defined by the orthogonally invariant
prescriptions (\ref{eq:symorthobSigma}) and (\ref{eq:symorthob}) for the limit
law $D$, where each $\bSigma_t$ is non-singular. Then the iterates
(\ref{eq:AMP}) applied to $\W$ satisfy, almost surely as $n \to \infty$ for any
fixed $t \geq 1$,
\[(\u_1,\f_1,\ldots,\f_k,\z_1,\ldots,\z_t) \toWtwo
(U_1,F_1,\ldots,F_k,Z_1,\ldots,Z_t)\]
where this limit has the same joint law as described by the AMP state evolution
for $\G$.
\end{lemma}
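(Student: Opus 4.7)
The strategy is a two-stage reduction. First I would prove the conclusion when each non-linearity $u_{t+1}$ is a polynomial by directly invoking the tensor network hypothesis (3) of the lemma, and then extend to Lipschitz $u_{t+1}$ by an inductive polynomial approximation argument leveraging the operator norm bound $\|\W\|_\op<C$. Throughout, the already-proven state evolution for the comparison matrix $\G$ from \cite{fan2022approximate} supplies the target limit distribution.

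For the polynomial case, I would unroll the recursion \eqref{eq:AMPz}--\eqref{eq:AMPu} so that each coordinate $z_t[i]$ is an explicit polynomial in the entries of $\W$, the coordinates of $\u_1,\f_1,\ldots,\f_k$, and the deterministic scalars $\{b_{rs}\}$. For a polynomial test function $p$, the empirical average $\langle p(\u_{1:t},\z_{1:t},\f_{1:k})\rangle$ is then a finite linear combination of sums of the form $n^{-1}\sum_\bi q_{\bi|T}W_{\bi|T}$, each of which is exactly $\val_T(\W;\u_1,\f_1,\ldots,\f_k)$ for a diagonal tensor network $T$ in $k+1$ variables whose tree structure is dictated by the combinatorial shape of the unrolled recursion. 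Hypothesis (3) then implies, almost surely as $n\to\infty$, that $\langle p(\u_{1:t},\z_{1:t},\f_{1:k})\rangle$ has the same limit when the iterates are driven by $\W$ as when driven by $\G$ with identical Onsager coefficients. Combined with the state evolution for $\G$ from \cite{fan2022approximate}, this yields convergence of every polynomial test function average to the prescribed Gaussian limit. Upgrading from polynomial moment convergence to $\toWtwo$ convergence is then standard, using moment determinacy supplied by Assumption \ref{assump:ufconvergence} together with uniform integrability of second moments, the latter coming from $\|\W\|_\op<C$ and the polynomial growth of the $u_t$.

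For Lipschitz non-linearities I would induct on $t$. Under the inductive hypothesis $(\u_1,\f_{1:k},\z_1,\ldots,\z_{t-1})\toWtwo(U_1,F_{1:k},Z_1,\ldots,Z_{t-1})$, the non-singularity of $\bSigma_{t-1}$ combined with Assumption \ref{assump:ufconvergence} makes polynomials dense in the $L^2$-space of the joint limit law. I would approximate each $u_s$, $s\leq t+1$, by a polynomial $u_s^{(L)}$ in this $L^2$, and run the polynomial AMP driven by $\W$ with the approximants $u_s^{(L)}$ and the corresponding orthogonally invariant prescriptions $b_{ts}^{(L)},\bSigma_t^{(L)}$. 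The polynomial case of the lemma applies to this approximating algorithm and gives its state evolution with limit covariance $\bSigma_t^{(L)}$, while continuity of the prescriptions \eqref{eq:symorthobSigma}--\eqref{eq:symorthob} in their defining arguments ensures $b_{ts}^{(L)}\to b_{ts}$ and $\bSigma_t^{(L)}\to\bSigma_t$ as $L\to\infty$. The $\ell_2$-discrepancy between the Lipschitz and polynomial iterates at step $t$ is controlled by the $L^2$ approximation error at step $t$, plus $\|\W\|_\op$ times the discrepancies at previous steps and the differences $|b_{rs}-b_{rs}^{(L)}|$, yielding a uniform-in-$n$ bound. Sending $n\to\infty$ first and then $L\to\infty$ recovers the desired $\toWtwo$ convergence.

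The main obstacle is the Lipschitz-to-polynomial passage, specifically controlling how approximation errors at step $s$ propagate through the remaining $t-s$ iterations without amplification, while the Onsager coefficients themselves drift with $L$. Three ingredients manage this: (i) the operator norm bound $\|\W\|_\op<C$, which prevents $\ell_2$ errors from blowing up under multiplication by $\W$; (ii) continuity of the orthogonally invariant prescriptions, which synchronizes the state evolution parameters of the polynomial approximants with those of the Lipschitz algorithm; and (iii) non-singularity of every $\bSigma_t$, which both certifies polynomial density in the limit $L^2$-space at each inductive step and ensures continuity of the prescriptions at the relevant base point.
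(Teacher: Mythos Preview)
Your proposal is correct and follows essentially the same two-stage route as the paper: a tensor-network decomposition of polynomial AMP (the paper's Lemma~\ref{lemma:polynomial_tensor_network_decomposition} and Lemma~\ref{lem:polyLawLimit}) to transfer the state evolution from $\G$ to $\W$, followed by an inductive $\ell_2$-comparison between the Lipschitz and polynomial iterates using $\|\W\|_\op<C$ and continuity of the prescriptions (the paper's Lemmas~\ref{lem:limit_law_compare}--\ref{lemma:AMPpolyAPprox}). The only cosmetic difference is that the paper chooses the polynomial approximants $\tilde u_{t+1}$ to be $\eps$-close in the $L^2$-space of the \emph{auxiliary} state evolution $\widetilde Z_{1:t}$ (built sequentially) rather than the original $Z_{1:t}$, and frames the argument as a single induction on $s\leq t$ for fixed $t$ rather than an outer induction on $t$; your outer inductive hypothesis is not actually used, since polynomial density in the limit $L^2$-space follows directly from Assumption~\ref{assump:ufconvergence} and Gaussianity of $Z_{1:t}$ without needing empirical convergence of $\z_{1:t-1}$.
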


This lemma applies also in the special case of $\G \sim \GOE(n)$,
where the definitions of $\{b_{ts}\}$ and $\{\bSigma_t\}$ reduce to the GOE 
prescriptions of (\ref{eq:GOEbSigma}) and (\ref{eq:GOEb}).
The lemma does not assume any particular matrix model for $\W$, and thus
may be used as a tool to establish AMP universality for matrix
models beyond the ones we consider in this work.

Theorems \ref{thm:Wigner} and \ref{thm:syminvariant} then follow 
from the next two lemmas, which verify the universality of tensor network
values for the classes of generalized Wigner matrices and symmetric
generalized invariant matrices.

\begin{lemma}\label{lemma:Wignermoments}
Let $\x_1,\ldots,\x_k \in \R^n$ be (random or deterministic) vectors and let
$(X_1,\ldots,X_k)$ have finite moments of all orders,
such that almost surely as $n \to \infty$,
\begin{equation}\label{eq:WignerMomentsWpConvergence}
(\x_1,\ldots,\x_k) \toW (X_1,\ldots,X_k).
\end{equation}
Let $\W \in \R^{n \times n}$ be a generalized Wigner matrix,
independent of $\x_1,\ldots,\x_k$, with variance profile matrix $\S$.
Let $\s_i$ be the $i^\text{th}$ row of $\S$,
and suppose for each fixed polynomial function $q:\R^k \to \R$ that
\begin{equation}\label{eq:WignerMomentsVP}
\max_{i=1}^n \Big|\langle q(\x_1,\ldots,\x_k) \odot \s_i \rangle
-\langle q(\x_1,\ldots,\x_k) \rangle \cdot \langle \s_i \rangle \Big| \to 0.
\end{equation}
Then for any diagonal tensor network $T$ in $k$ variables, there is a
deterministic value $\limval_T(X_1,\ldots,X_k)$ depending only
on $T$ and the joint law of $(X_1,\ldots,X_k)$ such that almost surely,
\[\lim_{n \to \infty} \val_T(\W;\x_1,\ldots,\x_k)
=\limval_T(X_1,\ldots,X_k).\]
In particular, this limit value is the same for $\W$ as for $\G \sim \GOE(n)$.
\end{lemma}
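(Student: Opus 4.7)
The plan is to prove Lemma \ref{lemma:Wignermoments} by the moment method: compute $\E[\val_T(\W;\x_{1:k})^{p}]$ for every integer $p\geq 1$, identify the almost sure limit of the mean ($p=1$), and lift expectation-convergence to almost sure convergence via a higher-moment Borel--Cantelli argument.

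For the mean I would expand
\[\E[\val_T(\W;\x_{1:k})]=\frac{1}{n}\sum_{\bi \in [n]^\cV} q_{\bi|T}\,\E[W_{\bi|T}]\]
and group tuples $\bi$ by the partition $\pi$ of $\cE$ into equivalence classes of ``coincident'' edges (edges whose two endpoints share the same unordered index pair under $\bi$). By independence of the entries on and above the diagonal and the mean-zero property, only partitions in which every block has size at least $2$ survive. For a partition $\pi$ with $\ell$ blocks, the contribution is of order
\[n^{-1+|\cV(\pi)|-\ell}\cdot \prod_{B \in \pi:\,|B|\geq 3} o(1),\]
where $|\cV(\pi)|$ is the number of distinct free indices after all endpoint identifications forced by $\pi$, the factor $n^{-\ell}$ reflects the $\E[W^2]=S[i,j]/n$ contribution from each size-$2$ block, and the $o(1)$ factors come from the higher-moment bound in Definition \ref{def:Wigner}(b). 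A dimension count by induction on the tree (peeling pairs of leaves two at a time) gives $|\cV(\pi)|\leq \ell+1$, so only partitions with $|\cV(\pi)|=\ell+1$ and all blocks of size exactly $2$ survive; in particular $|\cE|$ must be even, and each surviving $\pi$ corresponds to a ``folding'' of $T$ onto a quotient tree with $\ell=|\cE|/2$ edges.

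For each such folding, the surviving sum factors along the edges of the quotient tree into weighted averages of the form $\tfrac{1}{n}\sum_j S[i,j]\,q(x_1[j],\ldots,x_k[j])$ for various polynomials $q$. Using \eqref{eq:WignerMomentsVP} and Definition \ref{def:Wigner}(c), each such weighted average is asymptotically equivalent to $\langle q(\x_{1:k})\rangle$, and then Wasserstein convergence \eqref{eq:WignerMomentsWpConvergence} produces a deterministic limit $\limval_T(X_1,\ldots,X_k)$, expressed as a sum over foldings of products of mixed moments of $(X_1,\ldots,X_k)$. Since only second moments of $\W$ appear in this expression, the limit is the same for every generalized Wigner matrix satisfying Definition \ref{def:Wigner}, and in particular coincides with the value for $\W\sim\GOE(n)$.

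For almost sure convergence I would bound $\E[(\val_T - \E[\val_T])^{2p}]$ for each fixed $p$ by applying the same expansion to $2p$ disjoint copies of $T$; the centering kills every partition that respects the copy structure, and each cross-copy pair reduces the free-vertex count by one relative to the separable case, giving $\E[(\val_T-\E[\val_T])^{2p}]=O(n^{-p})$ and hence almost sure convergence by Markov's inequality and Borel--Cantelli. The main obstacle is the combinatorial inequality $|\cV(\pi)|\leq \ell+1$ and its cross-copy analogue; both crucially use that $T$ is a tree (no independent cycles), and encode the sense in which trees allow the largest possible number of surviving foldings. Once these combinatorial bounds are in hand, the Wasserstein hypothesis, the variance-profile control of Definition \ref{def:Wigner}(c), and the non-alignment condition \eqref{eq:WignerMomentsVP} combine cleanly to yield the deterministic limit.
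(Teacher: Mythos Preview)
Your proposal follows essentially the same strategy as the paper: decompose the expected value by combinatorial type of the index tuple, use connectedness of $T$ for the dimension count, identify the surviving contributions as tree-quotients with every edge doubled, evaluate those via the variance-profile hypotheses, and then upgrade to almost-sure convergence by a moment bound and Borel--Cantelli.

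Two differences are worth flagging. First, the paper groups tuples $\bi$ by the induced partition of $\cV$ (vertices with equal index) rather than of $\cE$ (edges with equal unordered index pair). Your edge partition is strictly coarser: on the path $1\!-\!2\!-\!3$, the single-block edge partition is consistent with both $i_1=i_2=i_3$ and $i_1=i_3\neq i_2$, so the quantity you call $|\cV(\pi)|$ is not determined by $\pi$ alone. You would therefore have to refine to vertex identifications anyway to control the $q_{\bi|T}$ factors. Grouping directly by the vertex partition yields a connected multigraph $G$ on the quotient vertex set, and the key inequality $|\cV_G|\leq |\cE_G|_*+1$ is then immediate from connectedness of $G$ rather than from a leaf-peeling induction. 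Second, for concentration the paper works only with the fourth centered moment, showing by an explicit algebraic cancellation that index configurations with $3$ or $4$ connected components contribute exactly zero, while the $\leq 2$-component contribution is $O(n^{-2})$; this already suffices for Borel--Cantelli. Your $2p$-th moment claim is correct and rests on the same mechanism (any isolated copy contributes an independent centered factor with mean zero, and the surviving configurations have at most $p$ components, giving $|\cV_G|\leq |\cE_G|_*+p$ and hence $O(n^{-p})$), but the phrase ``each cross-copy pair reduces the free-vertex count by one'' is not the right bookkeeping---the clean statement is the component bound $|\cV_G|\leq |\cE_G|_*+c$ for a graph with $c$ components.
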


\begin{lemma}\label{lemma:syminvmoments}
Let $\x_1,\ldots,\x_k \in \R^n$ be (random or deterministic) vectors and let
$(X_1,\ldots,X_k)$ have finite moments of all orders,
such that almost surely as $n \to \infty$,
\begin{equation}
(\x_1,\ldots,\x_k) \toW (X_1,\ldots,X_k).
\end{equation}
Let $\W \in \R^{n \times n}$ be a symmetric generalized invariant
matrix, independent of $\x_1,\ldots,\x_k$,
with limit diagonal distribution $\cD_\tdiag$.
Then for any diagonal tensor
network $T$ in $k$ variables, there is a deterministic limit value
$\limval_T(X_1,\ldots,X_k,\cD_\tdiag)$ depending only on $T$, the joint law of
$(X_1,\ldots,X_k)$, and $\cD_\tdiag$ such that almost surely,
\[\lim_{n \to \infty} \val_T(\W;\x_1,\ldots,\x_k)
=\limval_T(X_1,\ldots,X_k,\cD_\tdiag).\]
In particular, if there exists an orthogonally invariant matrix $\G$ having 
the same limit diagonal distribution $\cD_\tdiag$, then this limit value is 
the same for $\W$ as for $\G$.
\end{lemma}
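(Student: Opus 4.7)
The plan is to prove Lemma \ref{lemma:syminvmoments} by a moment method, computing $\E[\val_T(\W;\x_1,\ldots,\x_k)]$ and establishing concentration around this mean. Conditioning on $(\bH,\bd,\x_1,\ldots,\x_k)$ and averaging first only over the two uniformly random signed permutations $\bPi_V=\bP_V\bXi_V$ and $\bPi_E=\bP_E\bXi_E$, I aim to show that the conditional expectation converges to a deterministic functional of $(\bH,\bd,\x_1,\ldots,\x_k)$ that is expressible through empirical averages of the form $\langle p_j(\bd)\rangle$ and $\langle q(\x_1,\ldots,\x_k)\rangle$ for polynomials $p_j,q$ induced by $T$. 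By (\ref{eq:Dconvergence}) and the Wasserstein hypothesis on $(\x_1,\ldots,\x_k)$, these averages converge almost surely to the corresponding expectations under $(D,X_1,\ldots,X_k)$, which will be the asserted $\limval_T(X_1,\ldots,X_k,D)$. Universality against Haar-distributed eigenvectors is then automatic, because the limit will depend on $\bH$ only through orthogonality and delocalization, both of which the Haar matrix also satisfies.

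The key combinatorial step is as follows. I would expand each edge factor as $W[i_u,i_v]=\sum_a O[i_u,a]\,d[a]\,O[i_v,a]$ and substitute $O[i,a]=\xi_V[i]\,H[\sigma_V(i),\sigma_E(a)]\,\xi_E[a]$, where $\sigma_V,\sigma_E$ are the permutations underlying $\bP_V,\bP_E$. For each edge $e=(u_e,v_e)$ the two $O$-factors share the summation index $a_e$, so the sign $\xi_E[a_e]$ appears with even multiplicity and cancels, leaving only the vertex-sign factor $\prod_{v\in\cV}\xi_V[i_v]^{\deg_T(v)}$. Taking expectation over $\bXi_V$ then restricts the tuple $\bi=(i_v)$ to configurations in which each distinct index value appears with even total incident degree. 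Averaging over $\bP_V,\bP_E$ and using the orthogonality relations $\sum_a H[i,a]H[j,a]=\delta_{ij}$ together with the delocalization $|H[i,j]|<n^{-1/2+\eps}$, the leading contributions come from a specific family of ``collapsed'' configurations of $\bi$ whose structure is dictated by the tree $T$; each such contribution evaluates to a product of the aforementioned empirical averages.

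For concentration I would compute $\E\bigl[(\val_T-\E\val_T)^{2r}\bigr]$ for every fixed $r\geq 1$. This expectation is expanded by replicating $T$ into $2r$ disjoint vertex-copies and repeating the sign-parity and permutation-averaging analysis; the same parity constraints together with the delocalization bound should force cross-copy contributions to be of strictly smaller order, yielding $\E[(\val_T-\E\val_T)^{2r}]\leq n^{-c(r)}$ with $c(r)\to\infty$ as $r\to\infty$. Markov's inequality with $r$ large, followed by Borel--Cantelli, then upgrades convergence in mean to almost-sure convergence. The matching with an orthogonally invariant $\G$ is immediate because the Haar matrix is itself sign-and-permutation invariant with entries of order $n^{-1/2}$ up to logarithmic factors, so the identical reduction yields an identical limit, depending only on the spectral moments of $D$ and the joint moments of $(X_1,\ldots,X_k)$.

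The main obstacle will be the enumerative step: identifying exactly which collapsed index configurations contribute at leading order once (i) the parity constraints from $\bXi_V$, (ii) the orthogonality-induced collapses in the sums over $\ba\in[n]^\cE$, and (iii) the uniform averaging over $\bP_V,\bP_E$ are applied simultaneously. The tree structure of $T$ should be used essentially here, since acyclicity allows one to process the contractions inductively from the leaves inward, each edge contraction reducing to a weighted sum over ``merged-vertex'' subnetworks, so that the full value collapses by induction on $|\cE|$ to a combinatorial sum indexed by partitions of $\cV$ respecting the parity constraints. Controlling the accumulated $n^{\eps}$ losses from the delocalization bound uniformly in the moment order $r$, so that the concentration estimate is not overwhelmed, will also require a careful accounting that I expect to mirror standard delocalization-based moment arguments for Wigner-type ensembles.
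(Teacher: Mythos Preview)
Your plan is correct in outline and matches the paper's two-step structure: first show $\E[\val_T]$ (expectation over $\bPi_V,\bPi_E$ only, conditional on $\bH,\bd,\x_{1:k}$) converges to a deterministic limit depending only on moments of $D$ and $(X_1,\ldots,X_k)$, then establish concentration. The paper's execution differs in a few places that are worth knowing.

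First, the paper does \emph{not} separate $\bPi_V=\bP_V\bXi_V$ into a sign average followed by a permutation average. Instead it averages over the signed permutation as a whole, yielding the simple formula $\E\big[\prod_\rho \Pi_V[i_\rho,k_\rho]\big]=n^{-s}(1+o(1))$ when $\pi(\bi)=\pi(\bk)$ is an even partition with $s$ blocks, and $0$ otherwise. This is cleaner than tracking the permutation average separately. Second, the natural combinatorial index is not partitions of $\cV$ but partitions of the set $[2w]$ of \emph{vertex-edge incidences}; two distinguished partitions $\pi_V,\pi_E\in\cP$ encode which incidences share a vertex and which share an edge, and the expectation reduces (after M\"obius inversion on $\cP$ to remove distinctness constraints) to sums of the form $\frac{1}{n}H_n(\sigma,\sigma')=\frac{1}{n}\sum_{\pi(\bk)\geq\sigma}\sum_{\pi(\bl)\geq\sigma'}\prod_\rho H[k_\rho,l_\rho]$ over pairs of even partitions. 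Third, the key enumerative step is \emph{not} an induction on the tree $T$ by leaf-stripping, but an induction on $w$ applied to a bipartite multigraph $G$ built from $(\sigma,\sigma')$: one locates a degree-$2$ vertex of $G$ and uses $\sum_k H[k,l']H[k,l'']=\1\{l'=l''\}$ to contract, proving $\frac{1}{n}H_n(\sigma,\sigma')\to H(\sigma,\sigma')\in\{0,1\}$ independent of $\bH$. Your leaf-stripping intuition is close in spirit but operates on the wrong graph.

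For concentration, arbitrary $2r$-th moments are unnecessary: the paper computes only the fourth central moment, exploiting algebraic cancellations (indicators $\chi_a$ tracking whether $\pi,\pi'$ separate the $a$-th replica) to show that surviving terms have $|\sigma\vee\sigma'|\leq 2$, whence $\E[(\val_T-\E\val_T)^4]\leq C/n^2$ and Borel--Cantelli applies directly. This avoids the bookkeeping you anticipate in controlling accumulated $n^\eps$ losses uniformly in $r$.
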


\begin{remark}\label{remark:sym_tn_universality}
Lemma~\ref{lemma:syminvmoments} applies to any class of 
symmetric generalized invariant matrices satisfying Definition
\ref{def:syminvariant}, where the limit diagonal distribution
$\cD_\tdiag$ does not necessarily coincide with that of an orthogonally
invariant model.

This has the following implication: Consider any first-order iterative algorithm
having the structure (\ref{eq:AMP}), where $b_{ts}$ are arbitrary
fixed constants and $u_{t+1}:\R^{t+k} \to \R$ are polynomial functions applied
entrywise. Then for any polynomial test function $p(\cdot)$, the value
\[\frac{1}{n}\sum_{i=1}^n p(u_{1:t}[i],z_{1:t}[i],f_{1:k}[i])\]
is a linear combination of tensor network values (c.f.\ Lemma
\ref{lemma:polynomial_tensor_network_decomposition}) and hence has a universal
limit as $n \to \infty$. Under mild moment assumptions, this implies that
there exists a
limit law for the empirical distribution of each iterate $\u_t$ and $\z_t$,
and this
law is universal across such matrices having the same limit diagonal
distribution $\cD_\tdiag$.

When $\cD_\tdiag$ is not described by an orthogonally invariant model, we
believe it may be an interesting open question to develop such an
algorithm that has a more succinct state-evolution
characterization of its iterates in terms of this limit diagonal law.
\end{remark}

The proofs of Lemmas \ref{lemma:Wignermoments} and \ref{lemma:syminvmoments}
result in forms for the limit tensor network values that are, in general,
combinatorially complex. However, a by-product of the proofs is that these forms
reduce to 0 when all diagonal
tensors of the tensor network have vanishing normalized trace.
This may be viewed as a version of asymptotic freeness for tensor networks, and 
we state the result here for independent interest.

\begin{proposition}\label{prop:freeness}
\begin{enumerate}[(a)]
\item In the setting of Lemma \ref{lemma:Wignermoments}, let $T$ be a diagonal
tensor network such that, for every vertex of $v$ of $T$, almost surely
\begin{equation}\label{eq:qnormalized}
\lim_{n \to \infty} \langle q_v(\x_1,\ldots,\x_k) \rangle
=\lim_{n \to \infty} \frac{1}{n}\sum_{i=1}^n q_v(x_1[i],\ldots,x_k[i])=0.
\end{equation}
Then $\limval_T(X_1,\ldots,X_k)=0$.
\item In the setting of Lemma \ref{lemma:syminvmoments}, suppose $T$ is a
diagonal tensor network for which (\ref{eq:qnormalized}) holds almost surely
for every vertex $v$.
Suppose also that there exists an orthogonally invariant matrix having the same 
limit diagonal distribution $\cD_\tdiag$ as $\bW$, and 
$\lim_{n \to \infty} \frac{1}{n} \Tr \W=0$.
Then $\limval_T(X_1,\ldots,X_k,\cD_\tdiag)=0$.
\end{enumerate}
\end{proposition}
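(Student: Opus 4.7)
The plan is to deduce both parts of Proposition~\ref{prop:freeness} directly from the explicit combinatorial descriptions of $\limval_T$ that the proofs of Lemmas~\ref{lemma:Wignermoments} and~\ref{lemma:syminvmoments} will produce. In both cases, $\limval_T$ arises as a finite sum of contributions indexed by combinatorial structures, each factoring as a product of ``normalized-trace'' expressions of the form $\E[\prod_{v \in B} q_v(X_{1:k})]$ over the blocks $B$ of an induced partition of $\cV$, together (in part (b)) with additional factors depending on the law of $D$. The strategy is to show that, under the stated hypotheses, every contribution in the sum contains at least one vanishing factor, either a singleton vertex block or (in (b)) a singleton ``spectral'' block.

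For part (a), the Wigner moment-method expansion of $\E[\val_T(\W;\x_1,\ldots,\x_k)]$ reduces via Wick-type identities (with higher-than-second cumulant contributions negligible by Definition~\ref{def:Wigner}(b)) to a sum over pair matchings $M$ of the edge set $\cE$ of $T$. Using the unalignment condition \eqref{eq:WignerMomentsVP}, each matching's contribution factorizes asymptotically as a product over the blocks of the induced vertex partition $\Pi_M$ of $\cV$, with each block $B$ giving $\E[\prod_{v \in B} q_v(X_{1:k})]$, times an averaged product of the variance profile which tends to $1$. The leading-order, $O(1)$ contributions are exactly those for which the quotient multigraph $T/M$ is itself a tree, a standard Euler-characteristic condition in the Wigner moment method. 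Since $T$ has $|\cV|-1$ edges, a tree quotient has $(|\cV|-1)/2$ edges and hence $(|\cV|+1)/2$ vertices, so the $|\cV|$ vertices of $T$ partition into $(|\cV|+1)/2$ blocks; the average block size is strictly less than $2$, and pigeonhole produces at least one singleton $\{v\}$. Its factor $\lim_n \langle q_v(\x_{1:k}) \rangle = 0$ kills the term, so the whole sum vanishes.

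For part (b), the corresponding formula from Lemma~\ref{lemma:syminvmoments} should express $\limval_T$ as a sum over pairs $(\Pi,\sigma)$ consisting of a vertex partition $\Pi$ of $\cV$ and a compatible ``spectral'' partition $\sigma$ encoding the free cumulants of $D$, with contribution $\prod_{B \in \Pi} \E[\prod_{v \in B} q_v(X_{1:k})] \cdot \prod_{C \in \sigma} \kappa_{|C|}(D)$. An Euler-characteristic identity controlling the leading-order terms forces the total number of vertex- and spectral-blocks to satisfy a rigid relation with $|\cE|$; pigeonhole then guarantees that either $\Pi$ or $\sigma$ contains a singleton. A singleton vertex block kills the contribution as in (a), while a singleton spectral block contributes $\kappa_1(D) = \E[D] = \lim_n \tfrac{1}{n} \Tr \W = 0$ by the extra hypothesis. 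The main obstacle lies in pinning down the precise combinatorial form of the expansion and the correct Euler identity in (b); once these are available from the proof of Lemma~\ref{lemma:syminvmoments}, the pigeonhole step parallels part (a) and the conclusion follows.
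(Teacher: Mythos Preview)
Your argument for part~(a) is correct and essentially identical to the paper's: the surviving contributions to $\limval_T$ are indexed by partitions of $\cV$ into $|\cE|/2+1=(|\cV|+1)/2$ blocks, and pigeonhole forces a singleton block whose factor $\E[q_v(X_{1:k})]=0$ kills the term. (A minor quibble: the paper indexes the surviving terms directly by partitions of $\cV$, not by pairings of $\cE$; your ``induced vertex partition $\Pi_M$'' is not well-defined from a pairing alone, but this is cosmetic.)

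For part~(b), however, your proposal does not match the paper and has a genuine gap. The formula for $\limval_T$ that the proof of Lemma~\ref{lemma:syminvmoments} actually produces is
\[
\sum_{\substack{\text{even }\pi\geq\pi_V\\\text{even }\pi'\geq\pi_E}}\ \sum_{\substack{\sigma\geq\pi\\\sigma'\geq\pi'}} q(\pi)\,D(\pi')\,\mu(\pi,\sigma)\,\mu(\pi',\sigma')\,H(\sigma,\sigma'),
\]
where $D(\pi')$ involves \emph{moments} of $D$ (not free cumulants), and $H(\sigma,\sigma')\in\{0,1\}$ is an opaque limit whose value is determined only by an inductive argument. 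There is no clean Euler identity of the form you conjecture that constrains $|\pi|+|\pi'|$ for the nonvanishing terms. The paper's argument instead runs as follows: if some block of $\pi$ corresponds to a single vertex then $q(\pi)=0$; if some block of $\pi'$ corresponds to a single edge then $D(\pi')=0$ since $\E[D]=0$; and in the remaining case one has $|\pi|\leq(w+1)/2$ and $|\pi'|\leq w/2$, whence $|\sigma|\leq(w+1)/2$, $|\sigma'|\leq w/2$, and the \emph{trivial entrywise bound} $|H[k,l]|<n^{-1/2+\eps}$ gives $n^{-1}|H_n(\sigma,\sigma')|\leq n^{-1/2+2w\eps}\to 0$. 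So the third case is handled not by a rigidity/pigeonhole argument on the limiting formula but by the delocalization hypothesis applied before passing to the limit. Your free-cumulant ansatz and hoped-for Euler identity are not available from the paper's expansion, and you correctly flag this as the obstacle; the actual resolution is the delocalization bound, which you should use instead.
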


\subsection{Universality of AMP algorithms for rectangular matrices}\label{sec:result_rec}

Let $\W \in \R^{m \times n}$ be a rectangular random matrix. Consider an
initialization $\u_1 \in \R^m$ and vectors of
side information $\f_1,\ldots,\f_k \in \R^m$ and $\g_1,\ldots,\g_\ell \in \R^n$,
all independent of $\W$. Let $v_1,v_2,v_3,\ldots$ and $u_2,u_3,u_4,\ldots$ be two sequences
of non-linear functions where $v_t:\R^{t+\ell} \to \R$ and
$u_{t+1}:\R^{t+k} \to \R$. We study an AMP algorithm that
computes, for $t=1,2,3,\ldots$
\begin{subequations}\label{eq:AMPrect}
\begin{align}
\z_t&=\W^\top \u_t-\sum_{s=1}^{t-1} b_{ts}\v_s \label{eq:AMPrectz}\\
\v_t&=v_t(\z_1,\ldots,\z_t,\g_1,\ldots,\g_\ell)\\
\y_t&=\W\v_t-\sum_{s=1}^t a_{ts} \u_s\\
\u_{t+1}&=u_{t+1}(\y_1,\ldots,\y_t,\f_1,\ldots,\f_k)\label{eq:AMPrectu}
\end{align}
\end{subequations}
where $\{b_{ts}\}_{s<t}$ and $\{a_{ts}\}_{s \leq t}$ are deterministic ``Onsager
correction'' coefficients. We will characterize the iterates of this algorithm
in the limit as $m,n \to \infty$ proportionally with
$m/n \to \gamma \in (0,\infty)$, for fixed $k,\ell \geq 0$. 
For Gaussian
and bi-orthogonally invariant matrices $\bW$ (see the definition after 
Definition~\ref{def:rectinvariant}), we review
the forms for these correction coefficients and the corresponding state
evolutions in 
(\ref{eq:whitenoisebSigma}-\ref{eq:whitenoiseb})
and (\ref{eq:rectorthobSigma}--\ref{eq:rectorthob}) of
Appendix~\ref{appendix:rect}.

We assume the following condition for $(\u_1,\f_1,\ldots,\f_k)$ and
$(\g_1,\ldots,\g_\ell)$, which is
analogous to Assumption~\ref{assump:ufconvergence}.

\begin{assumption}\label{assump:ufgconvergence}
Almost surely as $m,n \to \infty$,
\[(\u_1,\f_1,\ldots,\f_k) \toW (U_1,F_1,\ldots,F_k) \quad
\text{ and } \quad (\g_1,\ldots,\g_\ell) \toW (G_1,\ldots,G_\ell)\]
for joint limit laws $(U_1,F_1,\ldots,F_k)$ and $(G_1,\ldots,G_\ell)$ having
finite moments of all orders, where $\E[U_1^2]>0$. Multivariate polynomials
are dense in the real $L^2$-spaces of functions $f:\R^{k+1} \to \R$
and $g:\R^{\ell} \to \R$ with the inner-products
\begin{align*}
(f,\tilde f) &\mapsto \E[f(U_1,F_1,\ldots,F_k)\tilde f(U_1,F_1,\ldots,F_k)]\\
(g,\tilde g) &\mapsto 
\E[g(G_1,\ldots,G_\ell)\tilde g(G_1,\ldots,G_\ell)].
\end{align*}
\end{assumption}

Our main results show that the state evolution characterizations of AMP
algorithms for Gaussian and orthogonally invariant matrices are
universal across the following matrix ensembles, analogous to
Definitions \ref{def:Wigner} and \ref{def:syminvariant} in the symmetric
setting.

\begin{definition}\label{def:rect}
$\W \in \R^{m \times n}$ is a \emph{generalized white noise matrix} with
(deterministic) variance profile $\S \in \R^{m \times n}$ if
\begin{enumerate}[(a)]
\item All entries $W[\a,i]$ are independent.
\item Each entry $W[\a,i]$ has mean 0, variance $n^{-1}S[\a,i]$, and higher moments
satisfying, for each integer $p \geq 3$,
\[\lim_{m,n \to \infty} n \cdot \max_{\a=1}^m \max_{i=1}^n \E[|W[\a,i]|^p]=0.\]
\item For a constant $C>0$ independent of $m,n$,
\[\max_{\a=1}^m \max_{i=1}^n S[\a,i] \leq C, \quad
\lim_{m,n \to \infty} \max_{\a=1}^m \bigg|\frac{1}{n}\sum_{i=1}^n S[\a,i]-1\bigg|
=0,\]
\[
\lim_{m,n \to \infty} \max_{i=1}^n \bigg|\frac{1}{m}\sum_{\a=1}^m S[\a,i]-1\bigg|
=0.\]
\end{enumerate}
We call $\W$ a \emph{Gaussian white noise matrix} in the special case
where $W[\a,i] \sim \Normal(0,1/n)$ and $S[\a,i]=1$ for all 
$(\alpha,i) \in [m] \times [n]$.
\end{definition}

Next, we introduce a notion of diagonal distribution for rectangular
matrices, analogous to Definition~\ref{def:diagdistr}. 
Recall the diagonal map $\Delta(\cdot)$,
and let $\Delta\langle\bx, \bI_m, \bI_n\rangle$ be the set of all words in 
$\bx, \bI_m, \bI_n$ and $\Delta(\cdot)$, for example
\[\bx\bI_m, \quad \bx\Delta(\bI_n\bx)\bI_m,
\quad \Delta(\bx\bx\Delta(\bI_n))\bx,
\quad \bI_m\bx\bI_n\Delta(\Delta(\bx))\Delta(\bx\bI_m).\]
For $p(\bx)\in\Delta\langle\bx,\bI_m,\bI_n\rangle$ and $\bM\in\RR^{m\times n}$,
we write $p(\widetilde\bM) \in \RR^{(m+n)\times(m+n)}$ for its evaluation at
$\bx=\widetilde\bM$,
	\[\bI_m=\begin{pmatrix} \Id_m & 0 \\ 0 & 0 \end{pmatrix}
\in \RR^{(m+n)\times(m+n)}, \quad
\bI_n=\begin{pmatrix} 0 & 0 \\ 0 & \Id_n \end{pmatrix} \in
\RR^{(m+n)\times(m+n)},\]
where we define the symmetric embedding
\begin{equation}\label{eq:Membedding}
	\widetilde\bM = \begin{pmatrix}
		0 & \bM\\
		\bM^\top & 0
	\end{pmatrix} \in \RR^{(m+n)\times(m+n)}
\end{equation}
and the identity matrices $\Id_m \in \R^{m \times m}$ and $\Id_n \in \R^{n
\times n}$.

\begin{definition}\label{def:rec_diag}
The distribution over the diagonal of a rectangular matrix $\bM\in\RR^{m\times n}$ 
is the mapping
\begin{align*}
p(\bx) \in \Delta \langle \bx, \bI_m, \bI_n \rangle \mapsto 
\tfrac{1}{m+n} \Tr p(\widetilde\bM).
\end{align*}
Matrices $\bM \in \R^{m \times n}$ converge in diagonal distribution a.s.\ if
$\lim_{m,n \to \infty} \frac{1}{m+n} \Tr p(\widetilde\bM)$ exists almost
surely (and is finite) for every fixed $p(\bx) \in \Delta \langle \x, \bI_m,
\bI_n \rangle$, as $m,n \to \infty$ with $m/n\to\gamma \in (0,\infty)$.
The limit diagonal distribution of $\bM$, which we will refer to as
$\cD_\tdiag$, is then the mapping
\begin{align*}
p(\bx) \in \Delta\langle\bx,\bI_m,\bI_n\rangle \mapsto
\lim_{m,n\to\infty} \tfrac{1}{m+n} \Tr p(\widetilde\bM).
\end{align*}
\end{definition}

Note that $\cD_\tdiag$ and $\gamma$ specify the
limit of $\frac{1}{m}\Tr (\bM\bM^\top)^\nu$ for each fixed integer
$\nu \geq 1$, and hence also the limit
singular value distribution of $\bM$ when this distribution has compact support.
Note also that, similarly to the symmetric setting, $\bM$ and
$\bPi_U \bM\bPi_V^\top$ must have the same limit diagonal distribution
$\cD_{\tdiag}$ for any signed permutation matrices
$\bPi_U \in \R^{m \times m}$ and $\bPi_V \in \R^{n \times n}$.

\begin{definition}\label{def:rectinvariant}
$\W=\bPi_U\bM\bPi_V^\top \in \R^{m \times n}$ is a
\emph{rectangular generalized invariant matrix}
with limit diagonal distribution $\cD_\tdiag$ if,
as $m,n\to\infty$ with $m/n\to\gamma\in(0,\infty)$,
\begin{enumerate}[(a)]
\item $\bM$ converges in diagonal distribution a.s.\ to a limit
$\cD_\tdiag$.
\item For any $\eps>0$ and any fixed $p(\bx) \in \Delta\langle \bx,\bI_m,\bI_n \rangle$,
almost surely for all large $m,n$,
\[\max_{i \neq j} |p(\widetilde \bM)[i,j]|<n^{-1/2+\eps}\]
where $\widetilde{\bM}$ is the symmetric embedding (\ref{eq:Membedding}).
\item $\bPi_U\in\RR^{m\times m}$ and $\bPi_V\in\RR^{n\times n}$ are uniformly
random signed permutations independent of each other and of $\bM$.
\end{enumerate}
\end{definition}

We call $\bW \in \R^{m \times n}$ \emph{bi-orthogonally invariant}
if it has singular value decomposition $\bW=\bO\bD\bQ^\top$ where
$\bO \sim \Haar(\OO(m))$ and
$\bQ \sim \Haar(\OO(n))$ are Haar-distributed on the orthogonal groups
independently of each other and of $\bD=\diag(\bd) \in \R^{m \times n}$. We verify in Proposition
\ref{prop:rectinvariant} of Appendix \ref{appendix:rect} that such
bi-orthogonally invariant matrices
satisfy Definition \ref{def:rectinvariant}, where $\cD_\tdiag$ is determined
uniquely by $\gamma=\lim_{m,n \to \infty} m/n$ and the limit singular value
distribution of $\bD$.

The following theorems show that the state evolution of AMP algorithms for
Gaussian white noise matrices holds universally for generalized white noise
matrices as in Definition \ref{def:rect},
and the state evolution for
bi-orthogonally invariant matrices holds universally for rectangular generalized
invariant matrices as in Definition \ref{def:rectinvariant}.

\begin{theorem}\label{thm:rect}
Let $\W \in \R^{m \times n}$ be a generalized white noise matrix with variance
profile matrix $\S$, and let $\u_1,\f_1,\ldots,\f_k,\g_1,\ldots,\g_\ell$ be independent
of $\W$ and satisfy Assumption \ref{assump:ufgconvergence}. Suppose that
\begin{enumerate}
\item Each function $v_t:\R^{t+\ell} \to \R$ and
$u_{t+1}:\R^{t+k} \to \R$ is continuous, satisfies the 
polynomial growth condition \eqref{eq:polygrowth} for some order $p \geq 1$, 
and is Lipschitz in its first $t$ arguments.
\item $\|\W\|_\op<C$ for a constant $C>0$ almost surely for all large $m,n$.
\item Let $\s_\a$ be the $\a^\text{th}$ row of $\S$ and $\s^i$ be
the $i^\text{th}$ column of $\S$. 
For any fixed polynomial
functions $p:\R^{k+1} \to \R$ and $q:\R^\ell \to \R$, almost surely as $m,n
\to \infty$,
\begin{equation}\label{eq:rectprofileassumption}
\begin{aligned}
\max_{\a=1}^m \Big| \langle p(\u_1,\f_1,\ldots,\f_k) \odot \s_\alpha \rangle
-\langle p(\u_1,\f_1,\ldots,\f_k) \rangle \cdot \langle \s_\alpha \rangle\Big|
&\to 0,\\
\max_{i=1}^n \Big| \langle q(\g_1,\ldots,\g_\ell) \odot \s^i \rangle
-\langle q(\g_1,\ldots,\g_\ell) \rangle \cdot \langle \s^i \rangle\Big| &\to 0.
\end{aligned}
\end{equation}
\end{enumerate}
Let $\{a_{ts}\},\{b_{ts}\},\{\bOmega_t\},\{\bSigma_t\}$ be defined by the
white noise prescriptions
(\ref{eq:whitenoisebSigma}) and (\ref{eq:whitenoiseb}), where
each matrix $\bOmega_t$ and $\bSigma_t$ is non-singular.
Then for any fixed $t \geq 1$, almost surely as $m,n \to \infty$ with $m/n \to
\gamma \in (0,\infty)$, the iterates of (\ref{eq:AMPrect}) satisfy
\begin{align*}
(\u_1,\f_1,\ldots,\f_k,\y_1,\ldots,\y_t) &\toWtwo (U_1,F_1,\ldots,F_k,
Y_1,\ldots,Y_t),\\
(\g_1,\ldots,\g_\ell,\z_1,\ldots,\z_t) &\toWtwo
(G_1,\ldots,G_\ell,Z_1,\ldots,Z_t)
\end{align*}
where $(Z_1,\ldots,Z_t) \sim \Normal(0,\bOmega_t)$ and $(Y_1,\ldots,Y_t) \sim
\Normal(0,\bSigma_t)$ are independent of $(U_1,F_1,\ldots,F_k)$ and
$(G_1,\ldots,G_\ell)$, i.e.\ these limits have the same joint laws as
described by the AMP state evolution for a Gaussian white noise matrix $\W$.
\end{theorem}

\begin{theorem}\label{thm:rectinvariant}
Let $\W \in \R^{m \times n}$ be a rectangular generalized invariant matrix
whose limit diagonal distribution $\cD_\tdiag$ coincides with that of a
bi-orthogonally invariant matrix $\Gb$.
Let $\u_1,\f_1,\ldots,\f_k,\g_1,\ldots,\g_\ell$ be independent of $\W$ 
and satisfy Assumption \ref{assump:ufgconvergence}. Suppose that
\begin{enumerate}
\item Each function $v_t:\R^{t+\ell} \to \R$ and
$u_{t+1}:\R^{t+k} \to \R$ is continuous, satisfies the 
polynomial growth condition \eqref{eq:polygrowth} for some order $p \geq 1$, 
and is Lipschitz in its first $t$ arguments.
\item $\|\W\|_\op<C$ for a constant $C>0$ almost surely for all large $m,n$.
\end{enumerate}
Let $\{a_{ts}\},\{b_{ts}\},\{\bOmega_t\},\{\bSigma_t\}$ be defined by the
bi-orthogonally invariant prescriptions (\ref{eq:rectorthobSigma}) and
(\ref{eq:rectorthob}) for the limit singular value distribution $D$
specified by $\cD_{\tdiag}$ and $\gamma$.
Suppose that $\E[D^2]>0$ and each $\bOmega_t$ and $\bSigma_t$ is non-singular.
Then for any fixed $t \geq 1$, almost surely as $m,n \to \infty$ with
$m/n \to \gamma \in (0,\infty)$, the iterates of (\ref{eq:AMPrect}) satisfy
\begin{align*}
(\u_1,\f_1,\ldots,\f_k,\y_1,\ldots,\y_t) &\toWtwo (U_1,F_1,\ldots,F_k,
Y_1,\ldots,Y_t),\\
(\g_1,\ldots,\g_\ell,\z_1,\ldots,\z_t) &\toWtwo
(G_1,\ldots,G_\ell,Z_1,\ldots,Z_t)
\end{align*}
where $(Z_1,\ldots,Z_t) \sim \Normal(0,\bOmega_t)$ and $(Y_1,\ldots,Y_t) \sim
\Normal(0,\bSigma_t)$ are independent of $(U_1,F_1,\ldots,F_k)$ and
$(G_1,\ldots,G_\ell)$, i.e.\ these limits have the same joint laws as
described by the AMP state evolution for $\Gb$.
\end{theorem}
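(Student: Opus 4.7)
My plan is to follow the two-step framework used to prove Theorem \ref{thm:syminvariant}, establishing the rectangular analogs of Lemmas \ref{lemma:Wignercompare} and \ref{lemma:syminvmoments}. In the rectangular setting the relevant combinatorial object is a bipartite diagonal tensor network $T=(\cV_U \sqcup \cV_V,\cE,\{q_v\})$ whose vertex set is split into ``row vertices'' $v \in \cV_U$ indexed in $[m]$ and ``column vertices'' $v \in \cV_V$ indexed in $[n]$, with every edge joining one vertex of each type. Vertex polynomials $q_v$ are evaluated on $(\u_1,\f_{1:k})$ when $v \in \cV_U$ and on $(\g_{1:\ell})$ when $v \in \cV_V$, and edges correspond to entries of $\W$, so that the value takes the form
\[
\val_T(\W;\u_1,\f_{1:k},\g_{1:\ell})=\frac{1}{n}\sum_{\bi \in [m]^{\cV_U} \times [n]^{\cV_V}} q_{\bi|T} \cdot W_{\bi|T}.
\]
These are natural generalizations of alternating expressions such as $\frac{1}{n}\,\u^\top \W \bT_1 \W^\top \bT_2 \W \cdots \v$ that arise once the iterates of (\ref{eq:AMPrect}) are expanded in $\W$ and $\W^\top$.

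First I would prove a rectangular reduction lemma analogous to Lemma \ref{lemma:Wignercompare}: if $\W$ and a bi-orthogonally invariant matrix $\G$ with the same limit singular value law $D$ yield asymptotically equal values on every bipartite tensor network in $k+\ell+1$ variables, then the iterates of (\ref{eq:AMPrect}) applied to $\W$ obey the bi-orthogonally invariant state evolution. The argument would transfer almost verbatim from the symmetric case: I take the state evolution for $\G$ from \cite{fan2022approximate} as the base of an inductive polynomial approximation of the $u_t$ and $v_t$, observe that for polynomial non-linearities every coordinate average $\langle p(\u_{1:t},\y_{1:t},\f_{1:k}) \rangle$ or $\langle p(\v_{1:t},\z_{1:t},\g_{1:\ell}) \rangle$ expands into a finite linear combination of bipartite tensor network values, and invoke the hypothesis to transfer these averages from $\G$ to $\W$. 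The alternating $\u$/$\v$ structure of (\ref{eq:AMPrect}) naturally produces bipartite trees rather than arbitrary ones; otherwise the structure of the argument, including the use of non-singularity of $\bOmega_t,\bSigma_t$ to pass limits through the prescriptions (\ref{eq:rectorthobSigma}) and (\ref{eq:rectorthob}), is unchanged from the symmetric setting.

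Second, and this is where I expect the main work to lie, I would prove the rectangular analog of Lemma \ref{lemma:syminvmoments}: for any bipartite tensor network $T$, $\val_T(\W;\u_1,\f_{1:k},\g_{1:\ell})$ has a deterministic almost-sure limit that depends only on the joint law of $(U_1,F_{1:k})$, the joint law of $(G_{1:\ell})$, and the singular value law $D$, and that this limit coincides with the value for a bi-orthogonally invariant $\G$. Writing $\bO=\bPi_U\bH\bPi_E$ and $\bQ=\bPi_V\bK\bPi_F$, I would substitute $W[\a,i]=\sum_{\b,j}O[\a,\b]D[\b,j]Q[i,j]$ into $W_{\bi|T}$, split the expectation across the four independent signed permutations, and use the entrywise delocalization of $\bH$ and $\bK$ together with the moment method for $\bPi_U,\bPi_V,\bPi_E,\bPi_F$ to isolate the dominant contributions. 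The hard part will be organizing the bipartite combinatorics: one must show that the only index assignments surviving in the limit are those in which the four permutations pair the edges of $T$ in a way that factorizes cleanly into (i) moments of $D$ associated with ``internal'' $\D$-paths and (ii) normalized traces $\langle q_v(\cdot) \rangle$ at the vertices, thereby matching the limit obtained for a bi-orthogonally invariant $\G$. The separate row-side pair $(\bPi_U,\bPi_E)$ and column-side pair $(\bPi_V,\bPi_F)$ play the roles that $(\bPi_V,\bPi_E)$ played in the symmetric analysis, and bipartiteness of $T$ ensures that the row and column combinatorics decouple, so the enumeration reduces to two parallel copies of the argument used for Lemma \ref{lemma:syminvmoments}, stitched together along the singular-value moments of $D$.

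Theorem \ref{thm:rectinvariant} then follows by combining these two rectangular lemmas with the state evolution of \cite{fan2022approximate} for a bi-orthogonally invariant $\G$ with the same limit singular value distribution $D$, which serves as the reference dynamics in the reduction lemma.
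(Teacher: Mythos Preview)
Your proposal is correct and follows essentially the same route as the paper: the paper introduces \emph{alternating diagonal tensor networks} (your bipartite tensor networks), proves the rectangular reduction Lemma \ref{lemma:rectcompare} by the same polynomial-approximation induction you describe, and then proves Lemma \ref{lemma:rectinvmoments} by expanding $\W=\bPi_U\bH\bPi_E\,\D\,\bPi_F^\top\bK^\top\bPi_V^\top$ and taking expectations over the four signed permutations, applying Lemma \ref{lemma:Hlemma} separately to the $\bH$- and $\bK$-sums. One point to sharpen when you carry this out: the surviving index configurations are governed by \emph{even} partitions of the edge set $[w]$ (not pairings), and the row and column sides do not fully decouple---they are linked through a third partition $\pi''$ for the $\D$-indices, with the key estimate being $|\pi\vee\pi''|+|\pi'\vee\pi''|\le 1+|\pi''|$, proved by a connectivity argument on an auxiliary graph.
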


\begin{remark}\label{remark:rectempirical}
As in Remark \ref{remark:symempirical},
Theorems \ref{thm:rect} and \ref{thm:rectinvariant} hold equally for AMP
algorithms where, in the prescriptions (\ref{eq:whitenoiseb}) and
(\ref{eq:rectorthob}) for $a_{ts}$ and $b_{ts}$, the quantities
$\E[\partial_r v_s(Z_{1:s},G_{1:\ell})]$,
$\E[\partial_r u_{s+1}(Y_{1:s},F_{1:k})]$, $\E[U_rU_s]$, and $\E[V_rV_s]$ are
replaced by the empirical averages
\[\langle \partial_r v_s(\z_{1:s},\g_{1:\ell}) \rangle, \qquad
\langle \partial_r u_{s+1}(\y_{1:s},\f_{1:k}) \rangle, \qquad
\langle \u_r \odot \u_s \rangle, \qquad \langle \v_r \odot \v_s \rangle.\]
For example, such an AMP algorithm for Gaussian white noise matrices $\W$ and
non-linearities $v_t(z_{1:t},g_{1:\ell})=v(z_t)$ and
$u_{t+1}(y_{1:t},f_{1:k})=u(y_t)$ consists of the iterations
\[\z_t=\W^\top \u_t-\gamma \langle u'(\y_{t-1}) \rangle \v_{t-1}, 
\qquad \v_t=v(\z_t),\]
\[\y_t=\W\v_t-\langle v'(\z_t) \rangle \u_t, 
\qquad \u_{t+1}=u(\y_t).\]
\end{remark}

The proofs of Theorems \ref{thm:rect} and \ref{thm:rectinvariant} are similar
to those of Theorems \ref{thm:Wigner} and \ref{thm:syminvariant} for 
symmetric matrices, and we defer them to Appendix \ref{appendix:rect}.

\subsection{Applications}\label{subsec:examples}

\begin{example}\label{ex:SBM}
AMP algorithms for the Gaussian universality class
may be heuristically derived by approximating belief
propagation on dense graphical models
\cite{kabashima2003cdma,donoho2010messagea}. Our assumptions in Theorems
\ref{thm:Wigner} and \ref{thm:rect} are
sufficiently weak to show that their state evolutions remain
valid in sparse random graphs down to sparsity levels of $(\log n)/n$.

As a concrete example, consider the symmetric stochastic block model where $G$
is an undirected graph over $n$ vertices, divided into two communities $\cV_+$ and $\cV_-$ of equal sizes $n/2$. For two $n$-dependent probabilities $p_n>q_n$,
each pair of vertices $(i,j)$ in $G$ (including self-loops, for simplicity of
discussion) is independently connected with probability
\[\PP[i \text{ is connected to } j]=\begin{cases} p_n & \text{ if } i,j \in
\cV_+ \text{ or } i,j \in \cV_-, \\
q_n & \text{ if } i \in \cV_+ \text{ and } j \in \cV_- \text{ or if } i \in
\cV_- \text{ and } j \in \cV_+. \end{cases}\]
Let $\A \in \{0,1\}^{n \times n}$ be the adjacency matrix of $G$, and let
$\bar{p}_n=(p_n+q_n)/2$ be the mean connectivity. Then the centered and
normalized adjacency matrix takes the form
\begin{equation}\label{eq:SBM}
\frac{\A-\bar{p}_n}{\sqrt{n\bar{p}_n(1-\bar{p}_n)}}
=\frac{\sqrt{\lambda_n}}{n}\,\f\f^\top+\W
\end{equation}
where $\lambda_n=n(p_n-q_n)^2/[4\bar{p}_n(1-\bar{p}_n)]$
is a parameter representing the signal-to-noise ratio of the model,
$\f \in \{+1,-1\}^n$ is the binary indicator vector representing the membership
of the vertices, and $\W$ is a symmetric noise matrix with independent
entries. It may checked for each $(i,j) \in [n] \times [n]$ that
\[\E[W[i,j]]=0, \quad \E[W[i,j]^2] \in
\left\{\frac{p_n(1-p_n)}{n\bar{p}_n(1-\bar{p}_n)},
\frac{q_n(1-q_n)}{n\bar{p}_n(1-\bar{p}_n)}\right\},
\quad |W[i,j]| \leq \frac{1}{\sqrt{n\bar{p}_n(1-\bar{p}_n)}}.\]

In the asymptotic regime where $n\bar{p}_n(1-\bar{p}_n) \to \infty$ and
$\lambda_n \to \lambda$ a positive constant, we have that
$S[i,j]:=n \cdot \E[W[i,j]^2] \to 1$ uniformly over $(i,j) \in [n] \times [n]$, 
so that $\W$ is a generalized Wigner matrix in the sense of 
Definition~\ref{def:Wigner} (with variance profile $\S$ approximately constant 
in every entry).
Furthermore, under a slightly stronger assumption
\begin{equation}\label{eq:sparsity}
n\bar{p}_n(1-\bar{p}_n) \geq c\log n
\end{equation}
for any constant $c>0$, \cite[Theorem 2.7 and Eq.\ (2.4)]{benaych2020spectral}
implies that $\|\W\|_{\op}<C$ almost surely for all large
$n$. This encompasses the stochastic block model in regimes with sparsity
$\bar{p}_n \gtrsim (\log n)/n$.\footnote{Our universality result for AMP
with polynomial non-linearities does not require the operator norm bound
$\|\W\|_{\op}<C$ and hence holds for any sparsity $\bar{p}_n \gg 1/n$, c.f.\
Remark \ref{remark:polynomialuniversality}. We
believe that the operator norm requirement in condition 2 of Theorem
\ref{thm:Wigner} may be an artifact of our
polynomial approximation proof.

In contrast, we do not expect AMP universality
to hold for random graph models with sparsity $\bar{p}_n \asymp 1/n$, where
the belief propagation recursions on such graphs may not admit asymptotic
Gaussian approximations.}

It was shown in \cite{deshpande2017asymptotic} that the mutual information 
between $G$ and $\f$ has an asymptotic limit depending only on the limit
signal-to-noise ratio $\lambda$, which is non-trivial when $\lambda>1$.
This was proven by interpolating between the model (\ref{eq:SBM}) and a
``$\ZZ_2$-synchronization'' model where $\W \sim \GOE(n)$, and applying
an AMP analysis in the latter model. Our result of
Theorem \ref{thm:Wigner} implies that, under the additional condition
(\ref{eq:sparsity}), this AMP analysis may instead be directly applied to the
model (\ref{eq:SBM}), bypassing interpolation to the GOE.
\end{example}

\begin{example}\label{ex:biwhitenedcount}
Let $\Y \in \R^{m \times n}$ be a signal-plus-noise data matrix modeled as
\[\Y=\X+\Eb\]
where $\X=\E[\Y]=\sum_{j=1}^k \f_j\g_j^\top \in \R^{m \times n}$ is a low-rank
signal matrix, and $\Eb=\Y-\X$ is a mean-zero matrix of residual noise. We
assume that $\Eb$ has independent entries, although
in many applications involving count observations or missing data, these
entries may have a heteroskedastic variance profile $\Vb$ where
\[V[\a,i]:=\Var[E[\a,i]].\]
Such models where the variance $V[\a,i]$ is a quadratic
function $a+bX[\a,i]+cX[\a,i]^2$ of the mean were discussed recently
in \cite{landa2021biwhitening}, including Poisson and
negative-binomial models for $\Y$ in the context of
single-cell RNA sequencing applications
\cite{townes2019feature,hafemeister2019normalization,sarkar2021separating}.
Such models encompass also simple models of missing data, where $\Y$ is a partial
observation of an underlying low-rank signal matrix $\widetilde{\bX}$ so that
\[Y[\a,i]=\begin{cases} \widetilde{X}[\a,i] & \text{ with probability } p,\\
0 & \text{ with probability } 1-p,
\end{cases}\]
independently for each entry. 
Then $X[\a,i]=p \cdot \widetilde{X}[\a,i]$
and $V[\a,i]=p(1-p) \cdot \widetilde{X}[\a,i]^2$ are the corresponding means and
variances.

When the entries $V[\a,i]$ are heteroskedastic, the singular
value spectrum of $\Eb$ does not generally conform to the Marcenko-Pastur law.
However, row and column normalization is typically applied in
practice prior to data analysis,
with \cite{landa2021biwhitening} suggesting the following normalization
scheme: Determine via Sinkhorn iteration two
diagonal matrices $\D_1 \in \R^{m \times m}$ and $\D_2 \in \R^{n \times n}$
for which $\S=\D_1 \Vb \D_2$
has all rows summing to $n$ and all columns summing to $m$, and use these
to standardize $\Y$ into the biwhitened matrix
\[\widetilde{\Y}=\frac{1}{\sqrt{n}} \cdot \D_1^{1/2} \Y \D_2^{1/2}
=\frac{1}{\sqrt{n}} \cdot \D_1^{1/2} \X \D_2^{1/2}+\W,
\qquad \W=\frac{1}{\sqrt{n}} \cdot \D_1^{1/2}\Eb\D_2^{1/2}.\]
\cite{landa2021biwhitening} proved that such biwhitened count matrices 
have singular value spectra asymptotically described by the Marcenko-Pastur law,
and showed a remarkable empirical agreement with the Marcenko-Pastur law
for matrices arising in several domains of application,
from single-cell biology to topic modeling of text.

In this standardized model $\widetilde{\Y}$, the error matrix $\W$ now has
variance profile $\S=\D_1 \Vb\D_2$ which satisfies by construction
$\frac{1}{n}\sum_{i=1}^n S[\a,i]=\frac{1}{m}\sum_{\a=1}^m S[\a,i]=1$,
and Theorem \ref{thm:rect} describes conditions under which
state evolution holds for Gaussian AMP algorithms applied to this
matrix $\W$. We note that to analyze AMP applied instead
to $\widetilde{\Y}$, the condition
(\ref{eq:rectprofileassumption}) represents a potentially strong restriction
on the relation between the variance profile matrix $\bS$ and the low-rank
mean signal. A modified analysis of AMP may be needed in settings where this
restriction does not hold, and we leave this as a direction to explore in
future work.
\end{example}

\begin{example}\label{ex:CS}
Much of the early development of AMP algorithms was motivated by compressed sensing
applications of reconstructing sparse signals from linear
measurements. Consider a model of $m$ measurements
\[\y=\W \x+\bm{\varepsilon} \in \R^m\]
where $\x \in \R^n$ is the underlying signal, $\W \in \R^{m \times n}$ is a
random sensing matrix, and $\bm{\varepsilon}$ is measurement noise. 
For i.i.d.\ Gaussian sensing matrices $\W$, pioneering work of
\cite{donoho2009message,donoho2010messagea,donoho2010messageb}
proposed an AMP algorithm for reconstructing $\x$, where the non-linearities are
soft-thresholding functions tailored to the sparsity of $\x$.
Analysis of the dynamics of this algorithm leads to a derivation of a
sparsity-undersampling phase transition curve that matches a phase transition
for $\ell_1$-based reconstruction
in this model
\cite{donoho2005neighborliness,donoho2009message,bayati2011dynamics,bayati2015universality}.

Extensive numerical experiments performed in
\cite{donoho2009observed,monajemi2013deterministic} suggested that this
phase transition curve is universal across broad classes of non-Gaussian
sensing matrices. Theorem \ref{thm:syminvariant} provides an extension of the
AMP universality shown in \cite{bayati2015universality} for this application,
broadening the universality
class to matrices composed of subsampled Fourier or
Hadamard transforms and diagonal operators. Importantly, matrix-vector
multiplication operations for such matrices may be computed in $O(n \log n)$
time without explicitly storing the matrices in memory, allowing
applications of AMP at much larger scales than would be possible with
i.i.d.\ sensing designs.

As an example, consider
\begin{equation}\label{eq:CSmatrix}
\W=(\bPi_U\bH\bPi_E)\D(\bPi_V \bK\bPi_F)^\top \in \R^{m \times n}
\end{equation}
where $\D \in \R^{m \times n}$ is diagonal with its diagonal entries sampled
i.i.d.\ from a Marcenko-Pastur law;
$\bH,\bK \in \R^{n \times n}$ are orthogonal matrices
representing deterministic Hadamard or discrete Fourier transforms; and
$\bPi_U,\bPi_E,\bPi_V,\bPi_F$
are independent random signed permutations. 
We verify in Proposition \ref{prop:rectinvariant}(b1) that this class of
matrices satisfies Definition \ref{def:rectinvariant}. 
If the signal vector $\x$, residual error $\bm{\varepsilon}$, and
initialization $\x_1$ are each comprised of i.i.d.\ entries, then
the random permutations in $\bPi_U,\bPi_V$ may be further
absorbed into $\x_1,\x,\bm{\varepsilon}$. Thus
the AMP iterates are equal in law to those of AMP applied with a
simpler sensing matrix
\[\widetilde{\bW}=\bXi_U\bH\widetilde{\D}\widetilde{\bK}^\top\bXi_V\]
where $\bXi_U,\bXi_V$ are diagonal matrices of i.i.d.\ $\{+1,-1\}$ signs,
$\widetilde{\bK}^\top \in \R^{m \times n}$ is a random subsampling of $m$ rows 
of $\bK^\top$, and $\widetilde{\D} \in \R^{m \times m}$ is a diagonal matrix
whose diagonal entries are given by those of $\D$ also multiplied by 
i.i.d.\ $\{+1,-1\}$ signs.

Theorem \ref{thm:rectinvariant} implies
that AMP applied to the above matrix $\W$ admits
the same state evolution as when applied to an
i.i.d.\ Gaussian sensing matrix $\G$.
This universality extends beyond the Gaussian setting, to sensing matrices
(\ref{eq:CSmatrix}) where the diagonal entries of $\D$ are sampled from an
arbitrary compactly supported singular value distribution.
Theorem \ref{thm:rectinvariant} then shows that the state evolution
characterizations for the more general AMP algorithms
of \cite{fan2022approximate}---derived originally for bi-orthogonally invariant
ensembles---are valid in such settings. For this compressed sensing application,
we note that the resulting AMP algorithms are similar to the convolutional AMP
algorithms developed and studied recently in
\cite{takeuchi2020convolutional,takeuchi2021bayes}.
\end{example}

\section{Proofs for symmetric matrices}\label{sec:proofs}

\subsection{Universality for generalized Wigner matrices}\label{sec:wigner}

In this section, we prove
Lemma~\ref{lemma:Wignermoments} on the universality of the tensor
network value for generalized Wigner matrices.

Fix a tensor network $T = (\cV, \cE, \{q_v\}_{v \in \cV})$.
Let $\cP$ be the set of all partitions of $\cV$. For each
index tuple $\bi \in [n]^\cV$, define its induced partition $\pi(\bi) \in \cP$
such that vertices $u,v \in \cV$ belong to the same block of $\pi(\bi)$
if and only if $i_u=i_v$. 
Then we can decompose the value of $T$ as
\begin{align}\label{eq:sym_value_decomp}
    \val_T(\bW; \xb_1, \ldots, \xb_k) = \frac{1}{n}
\sum_{\pi \in \cP} \sum_{\bi \in [n]^\cV:\pi(\bi)=\pi} q_{\bi|T} \cdot W_{\bi|T}.
\end{align}

\begin{definition}\label{def:graphpartition}
Let $(\cV,\cE)$ be an undirected graph. For any partition $\pi$ of $\cV$,
the \emph{image of $(\cV,\cE)$ under $\pi$} is the undirected
multi-graph $G_\pi=(\cK_\pi,\cF_\pi)$ that is the image of $(\cV,\cE)$ under the
graph homomorphism sending each vertex $u \in \cV$ to the block of $\pi$
containing $u$.

I.e., the vertices $\cK_\pi \equiv \pi$ of $G_\pi$ are
the blocks of $\pi$, and $G_\pi$ has the same number of edges $|\cF_\pi|$
(counting multiplicity and self-loops) as $|\cE|$.
For each edge $(u,v) \in \cE$, there is a
corresponding edge $(U,V) \in \cF_\pi$ where $U,V \in \pi$ are the blocks
for which $u \in U$ and $v \in V$.
\end{definition}

For each $\pi \in \cP$, let $G_\pi=(\cK_\pi,\cF_\pi)$ be the image of
$(\cV,\cE)$ under $\pi$.
For each block $U \in \cK_\pi$, define the polynomial $Q_U=\prod_{u \in U} q_u$,
and for each unique (undirected) edge $(U,V)$ of $G_\pi$,
let $e(U,V)$ be the number of times it appears in $\cF_\pi$.
Then, identifying the sum over $\{\bi:\pi(\bi)=\pi\}$ as a sum
over one distinct index in $[n]$ for each block $U \in \cK_\pi$, we have
\[\sum_{\ib \in [n]^{\cV}:\pi(\bi)=\pi} q_{\bi|T} \cdot W_{\bi|T}
=\sum_{\bi \in [n]^{\cK_\pi}}^* Q_{\bi|G_\pi} \cdot W_{\bi|G_\pi}^e\]
where $\overset{*}{\sum}$ denotes the restriction of the summation to index tuples
$\bi=(i_U:U \in \cK_\pi) \in [n]^{\cK_\pi}$ having all indices distinct, and
\[Q_{\bi|G_\pi}=\prod_{U \in \cK_\pi} Q_U(x_1[i_U], \ldots, x_k[i_U]),
\qquad W_{\bi|G_\pi}^e=\prod_{\text{unique edges } (U,V) \text{ of } G_\pi}
W[i_U,i_V]^{e(U,V)}.\]
Applying this to (\ref{eq:sym_value_decomp}), we obtain
\begin{align}\label{eq:sym_value_decomp2}
\val_T(\bW; \xb_1, \ldots, \xb_k) = \frac{1}{n} \sum_{\pi \in \cP}
\sum_{\ib \in [n]^{\cK_\pi}}^* Q_{\bi|G_\pi} \cdot W_{\bi|G_\pi}^e.    
\end{align}

We will compute the expectation of (\ref{eq:sym_value_decomp2}), and see that
the only non-vanishing contributions in the limit $n \to \infty$
arise from partitions $\pi$ where $G_\pi$ is itself a tree and
$e(U,V)=2$ for each unique edge $(U,V)$ of $G_\pi$. These non-vanishing terms
may be related to the values of a reduced tensor network associated to
$G_\pi$, evaluated on the matrix $\bS/n$ in place of $\bW$.

In anticipation of this computation, we first show the following
lemma which establishes universality of the value of any tensor network
evaluated on $\S/n$.

\begin{lemma}\label{lem:VarianceProfile_universality}
Under the assumptions of Lemma~\ref{lemma:Wignermoments}, for any diagonal 
tensor network $T=(\cV,\cE,\{q_v\}_{v \in \cV})$ in $k$ variables,
\begin{align*}
\lim_{n \to \infty}
\val_{T}(\bS/n; \bx_1,\ldots,\bx_k)=\prod_{v \in \cV}
\E[q_v(X_1,\ldots,X_k)].
\end{align*}
\end{lemma}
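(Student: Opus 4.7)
\textbf{Proof plan for Lemma~\ref{lem:VarianceProfile_universality}.}
The plan is to proceed by induction on $|\cV|$, peeling off one leaf of the tree $T$ at a time and using the asymptotic ``unalignment'' condition \eqref{eq:WignerMomentsVP} together with condition (c) of Definition~\ref{def:Wigner} to replace the contribution of each leaf by its limiting expectation. Since $T$ is a tree, $|\cE|=|\cV|-1$ and
\[\val_T(\S/n;\x_1,\ldots,\x_k)=\frac{1}{n^{|\cV|}}\sum_{\bi \in [n]^{\cV}} q_{\bi|T} \cdot S_{\bi|T},\]
where $S_{\bi|T}=\prod_{(u,v)\in\cE}S[i_u,i_v]$. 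In the base case $|\cV|=1$, the statement reduces to $\langle q_v(\x_1,\ldots,\x_k)\rangle \to \E[q_v(X_1,\ldots,X_k)]$, which is immediate from the Wasserstein convergence \eqref{eq:WignerMomentsWpConvergence} together with the polynomial growth of $q_v$.

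For the inductive step, pick any leaf $v$ of $T$ with unique neighbor $u$, and let $T'=(\cV\setminus\{v\},\cE\setminus\{(u,v)\},\{q_{v'}\}_{v'\ne v})$ be the smaller tree obtained by deleting $v$. Performing the sum over $i_v$ first yields
\[\val_T(\S/n;\x_1,\ldots,\x_k)=\frac{1}{n^{|\cV|-1}}\sum_{\bi \in [n]^{\cV\setminus\{v\}}} q_{\bi|T'} \cdot S_{\bi|T'} \cdot a(i_u),\]
where $a(i_u)=\langle q_v(\x_1,\ldots,\x_k)\odot \s_{i_u}\rangle$. By the hypothesis \eqref{eq:WignerMomentsVP} applied to the polynomial $q_v$,
\[\max_{i_u}\bigl|a(i_u)-\langle q_v(\x_1,\ldots,\x_k)\rangle\cdot \langle \s_{i_u}\rangle\bigr|\to 0 \text{ a.s.},\]
and by condition (c) of Definition~\ref{def:Wigner}, $\max_{i_u}|\langle\s_{i_u}\rangle-1|\to 0$. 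Combined with $\langle q_v(\x_1,\ldots,\x_k)\rangle\to \E[q_v(X_1,\ldots,X_k)]$ from Wasserstein convergence, this gives the uniform approximation
\[\max_{i_u}\bigl|a(i_u)-\E[q_v(X_1,\ldots,X_k)]\bigr|\to 0 \quad \text{a.s.}\]

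Substituting this uniform approximation into the displayed expression, one obtains
\[\val_T(\S/n;\x_1,\ldots,\x_k)=\E[q_v(X_1,\ldots,X_k)]\cdot \val_{T'}(\S/n;\x_1,\ldots,\x_k)+R_n,\]
with $|R_n|$ bounded by $\max_{i_u}|a(i_u)-\E[q_v(X_1,\ldots,X_k)]|$ times the analogous tensor-network value for $T'$ with each $q_{v'}$ replaced by $|q_{v'}|$. The latter quantity is almost surely $O(1)$: either bound it crudely using $S[i,j]\le C$ and Wasserstein-$p$ convergence applied to the polynomial-growth function $\prod_{v'\ne v}|q_{v'}|$, or apply the inductive hypothesis to the absolute-value tensor network (which requires only Wasserstein convergence of $\langle|q_{v'}(\x_1,\ldots,\x_k)|\rangle$, valid since $|q_{v'}|$ has polynomial growth). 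Hence $R_n\to 0$ almost surely. The inductive hypothesis on $T'$ then yields $\val_{T'}(\S/n;\x_1,\ldots,\x_k)\to \prod_{v'\ne v}\E[q_{v'}(X_1,\ldots,X_k)]$, and multiplying by $\E[q_v(X_1,\ldots,X_k)]$ completes the induction.

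I do not anticipate a genuine obstacle here; the only subtle point is that the approximation $a(i_u)\approx \E[q_v(X_1,\ldots,X_k)]$ must hold \emph{uniformly} in $i_u$ so that the error term $R_n$ survives the outer sum over the remaining $|\cV|-1$ indices, but this uniformity is precisely what \eqref{eq:WignerMomentsVP} and the row-sum condition in Definition~\ref{def:Wigner}(c) supply.
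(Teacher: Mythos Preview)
Your proposal is correct and follows essentially the same leaf-peeling argument as the paper: the paper orders the vertices so that each can be removed in turn while keeping a tree, sums over the removed index using \eqref{eq:WignerMomentsVP} and Definition~\ref{def:Wigner}(c) to extract a factor $\E[q_v(X_{1:k})]+o(1)$, and iterates. Your formal induction with the explicit remainder $R_n$ is a slightly more careful packaging of the same idea; one minor caveat is that your option (b) for bounding the absolute-value network would need \eqref{eq:WignerMomentsVP} for the non-polynomial $|q_{v'}|$, but your option (a) via $S[i,j]\le C$ already suffices.
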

\begin{proof}
Observe first that for any diagonal tensor network
$T=(\cV,\cE,\{q_v\}_{v \in \cV})$, we have
\begin{equation}\label{eq:absoluteqbound}
\frac{1}{n}\sum_{\bi \in [n]^{\cV}}
\prod_{v \in \cV} |q_v(x_{1:k})[i_v]| \cdot \prod_{(u,v) \in \cE} \frac{1}{n}
\leq C
\end{equation}
for a constant $C:=C(T)>0$, almost surely for all large $n$.
Indeed, since $T$ is a tree, 
we have $\frac{1}{n}\prod_{(u,v) \in \cE} \frac{1}{n}=n^{-|\cV|}$
in the above. Each function $|q_v|$ is continuous and satisfies the polynomial
growth condition (\ref{eq:polygrowth}), so (\ref{eq:absoluteqbound}) follows
from the assumption \eqref{eq:WignerMomentsWpConvergence}.

Now note that since $T$ is a tree, we can order its vertices as 
$1,2,\ldots,|\calV|$ such that removing one vertex at a time in this order,
the remaining graph is always still a tree. Denote the remaining tensor network
after removing vertices $1,\ldots,h-1$ by $T_h=(\calV_h,\calE_h,\{q_v\}_{v \geq h})$. 
The vertex $h$ has only one neighbor in $T_h$, which we denote by 
$u_h \in \{h+1,\ldots,|\calV|\}$. Then
\begin{align*}
&\val_{T}(\bS/n; \bx_{1:k})\\
&=\frac{1}{n}\sum_{\bi \in [n]^{\calV}} \prod_{v\in\calV} q_v(x_{1:k}[i_v]) 
\prod_{(u,v)\in\calE} \frac{S[i_u,i_v]}{n}\\
& =\frac{1}{n} \sum_{i_2, \ldots, i_{|\cV|} = 1}^n \bigg( \prod_{v\in\calV_2} q_v(x_{1:k}[i_v]) 
\prod_{(u,v) \in \calE_2} \frac{S[i_u,i_v]}{n} \bigg) 
\cdot \bigg(\sum_{i_1=1}^n q_1(x_{1:k}[i_1]) \frac{S[i_1,i_{u_1}]}{n} \bigg)\\
& = \frac{1}{n} \sum_{i_2, \ldots, i_{|\cV|} = 1}^n 
\bigg( \prod_{v \in \calV_2} q_v(x_{1:k}[i_v]) \prod_{(u,v) \in \calE_2} \frac{S[i_u,i_v]}{n}  \bigg) 
\cdot \Big(\langle q_1(\bx_{1:k}) \rangle \cdot \langle \bs_{i_{u_1}}\rangle +
\delta(i_{u_1})\Big) \\
& = \frac{1}{n}\sum_{i_2, \ldots, i_{|\cV|} = 1}^n 
\bigg( \prod_{v \in \calV_2} q_v(x_{1:k}[i_v]) \prod_{(u,v) \in \calE_2} \frac{S[i_u,i_v]}{n}  \bigg) 
\cdot \Big( \bbE[q_1(X_{1:k})] + \delta'(i_{u_1})\Big).
\end{align*}
Here, $\delta(i),\delta'(i)$ denote errors that 
satisfy $\lim_{n \to \infty} \max_{i \in [n]} |\delta(i)|,|\delta'(i)|=0$,
as follows from \eqref{eq:WignerMomentsVP},
the conditions of Definition~\ref{def:Wigner}(c), and
\eqref{eq:WignerMomentsWpConvergence}. Note that
\[\left|\frac{1}{n}\sum_{i_2, \ldots, i_{|\cV|} = 1}^n 
\bigg( \prod_{v \in \calV_2} q_v(x_{1:k}[i_v]) \prod_{(u,v) \in \calE_2} \frac{S[i_u,i_v]}{n}  \bigg) 
\cdot \delta'(i_{u_1})\right| \to 0\]
as $n \to \infty$
by the condition $|S[i_u,i_v]| \leq C$ of Definition \ref{def:Wigner}(c),
the bound (\ref{eq:absoluteqbound}) applied to the network $T_2$ with vertex 1
removed, and the convergence $\max_{i \in [n]} |\delta'(i)| \to 0$. Thus
\[\lim_{n \to \infty}
\val_{T}(\bS/n; \bx_{1:k})=
\lim_{n \to \infty} \frac{1}{n}\sum_{i_2, \ldots, i_{|\cV|} = 1}^n 
\bigg( \prod_{v \in \calV_2} q_v(x_{1:k}[i_v]) \prod_{(u,v) \in \calE_2}
\frac{S[i_u,i_v]}{n}  \bigg) \cdot \bbE[q_1(X_{1:k})].\]
Repeating the above procedure by removing vertices $1,2,\ldots,|\cV|-1$
sequentially, we are left with the single vertex $|\cV|$ and no edges, and
\[\lim_{n \to \infty} \val_{T}(\bS/n; \bx_{1:k})
=\lim_{n \to\infty} \frac{1}{n}\sum_{i_{|\cV|}=1}^n q_{|\cV|}(x_{1:k}[i_{|\cV|}])
\cdot \prod_{v=1}^{|\cV|-1} \bbE[q_v(X_{1:k})]
=\prod_{v=1}^{|\cV|} \bbE[q_v(X_{1:k})].\]
\end{proof}

The next lemma relates summations over distinct indices to ones without
the distinctness requirement.

\begin{lemma}\label{lemma:distinctness}
Let $\x_1,\ldots,\x_k \in \R^n$ and $(X_1,\ldots,X_k)$ be such that
\[(\x_1,\ldots,\x_k) \toW (X_1,\ldots,X_k).\]
For a finite index set $\cS$, let $(q_s:s \in \cS)$ be $|\cS|$
continuous functions satisfying the polynomial
growth condition~(\ref{eq:polygrowth}) for some order $p \geq 1$. Then
\begin{align*}
\lim_{n \to \infty} \frac{1}{n^{|\cS|}}\sum_{\bi \in [n]^\cS}^*
\prod_{s \in \cS} q_s(x_1[i_s],\ldots,x_k[i_s])
&=\lim_{n \to \infty} \frac{1}{n^{|\cS|}}\sum_{\bi \in [n]^\cS}
\prod_{s \in \cS} q_s(x_1[i_s],\ldots,x_k[i_s])\\
&=\prod_{s \in \cS} \E[q_s(X_1,\ldots,X_k)].
\end{align*}
\end{lemma}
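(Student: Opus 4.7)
The plan is to handle the two equalities separately, first establishing the identity with the unrestricted sum and then showing that restriction to distinct tuples only removes a negligible correction.

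For the unrestricted sum, the summand factors along $s \in \cS$, so
\[
\frac{1}{n^{|\cS|}} \sum_{\bi \in [n]^\cS} \prod_{s \in \cS} q_s(x_1[i_s],\ldots,x_k[i_s])
= \prod_{s \in \cS} \langle q_s(\x_{1:k}) \rangle.
\]
Since each $q_s$ is continuous with polynomial growth of order $p$, the Wasserstein-$p$ convergence hypothesis $(\x_{1:k}) \toW (X_{1:k})$ yields $\langle q_s(\x_{1:k}) \rangle \to \E[q_s(X_{1:k})]$. Taking the product over the finite set $\cS$ gives $\prod_{s \in \cS} \E[q_s(X_{1:k})]$, establishing the second equality in the statement.

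For the first equality, I would partition the index set $[n]^\cS$ according to the partition $\pi(\bi)$ of $\cS$ induced by equalities among the indices $i_s$. The discrete partition (all singletons) contributes exactly $\sum^*$. For any coarser partition $\pi$ with $m := |\pi| < |\cS|$ blocks $B_1,\ldots,B_m$, the corresponding sub-sum takes the form
\[
\sum_{\bj \in [n]^m, \text{ all } j_b \text{ distinct}} \prod_{b=1}^m Q_b(\x_{1:k}[j_b])
\quad \text{where } Q_b = \prod_{s \in B_b} q_s,
\]
and each $Q_b$ is continuous with polynomial growth. Bounding this sub-sum by its unrestricted version and factoring, one gets
\[
\Big|\sum_{\bj \in [n]^m, \text{distinct}} \prod_b Q_b(\x_{1:k}[j_b])\Big|
\le \prod_b \sum_{j=1}^n |Q_b(\x_{1:k}[j])| = n^m \prod_b \langle |Q_b(\x_{1:k})| \rangle,
\]
and by $\toW$ convergence each $\langle |Q_b(\x_{1:k})| \rangle$ stays bounded. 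Hence the contribution of any non-discrete partition, after dividing by $n^{|\cS|}$, is $O(n^{m-|\cS|}) = o(1)$. Summing over the finitely many partitions $\pi$ shows the difference between the restricted and unrestricted normalized sums vanishes.

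This strategy is entirely elementary and I do not expect substantive obstacles. The only technical point to keep straight is that the product of polynomial-growth functions is again of polynomial growth of some finite order, so that the $\toW$ hypothesis can be applied to each $Q_b$; this is immediate since $\toW$ covers all orders $p \geq 1$ and $\cS$ is finite.
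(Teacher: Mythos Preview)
Your proposal is correct and follows essentially the same approach as the paper: factor the unrestricted sum and apply $\toW$ convergence for the second equality, and for the first equality stratify $[n]^\cS$ by the induced partition $\pi(\bi)$, bound each non-discrete stratum by the product of $\langle |Q_b(\x_{1:k})|\rangle$'s (which are $O(1)$ by $\toW$), and conclude the total correction is $O(n^{-1})$. The only cosmetic difference is that the paper treats the equalities in the opposite order.
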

\begin{proof}
Let $\cP$ be the set of partitions of $\cS$, and let
$\pi(\bi) \in \cP$ be the partition induced by $\bi \in [n]^{\cS}$.
Let $0_{\cP}$ the
partition having $|\cS|$ singleton blocks, corresponding to $\bi$ having all
indices distinct. Then for the first equality, it suffices to show that
\begin{equation}\label{eq:sym_S_residual}
\Delta:=\frac{1}{n^{|\cS|}}
\sum_{\pi \in \cP: \pi \neq 0_{\cP}}
\sum_{\bi \in [n]^{\cS}:\pi(\bi)=\pi} \prod_{s \in \cS} |q_s(x_{1:k}[i_s])|
\end{equation}
vanishes as $n \to \infty$.

For any $\pi \in \cP$ and block $R \in \pi$, define
$Q_R=\prod_{u \in R} q_u$, and let $|\pi|$ be the number of blocks of
$\pi$. Then, identifying the sum over $\{\bi:\pi(\bi)=\pi\}$ with the sum
over one distinct index in $[n]$ for each block of $\pi$,
\[\frac{1}{n^{|\pi|}} \sum_{\ib \in [n]^{\cS}:\pi(\bi)=\pi}
\prod_{s \in \cS} |q_s(x_{1:k}[i_s])|
=\frac{1}{n^{|\pi|}} \sum_{\bi \in [n]^\pi}^*
\prod_{R \in \pi} |Q_R(x_{1:k}[i_R])|.\]
As an upper bound, adding back the excluded index tuples $\bi \in [n]^{\pi}$
where some indices coincide,
\[\frac{1}{n^{|\pi|}} \sum_{\ib \in [n]^{\cS}:\pi(\bi)=\pi}
\prod_{s \in \cS} |q_s(x_{1:k}[i_s])|
\leq \prod_{R \in \pi} \left(\frac{1}{n}\sum_{i=1}^n |Q_R(x_{1:k}[i])|\right).\]
Since $\x_{1:k} \toW X_{1:k}$
and $|Q_R|$ is a continuous function satisfying the polynomial
growth condition (\ref{eq:polygrowth}),
this upper bound is at most a constant $C(\pi)$ for
all large $n$. For any $\pi \neq 0_\cP$, we have $|\pi| \leq |\cS|-1$.
As the number of partitions $\pi \in \cP$ is independent of $n$, applying these
observations to (\ref{eq:sym_S_residual}) shows
$\Delta \leq C/n$ for a constant $C>0$ and all large $n$,
and hence $\Delta \to 0$ as desired.

The second equality of the lemma follows from the given condition $\x_{1:k} \toW 
X_{1:k}$, hence
\[\frac{1}{n^{|\cS|}}\sum_{\bi \in [n]^\cS}
\prod_{s \in \cS} q_s(x_{1:k}[i_s])=\prod_{s \in \cS} \frac{1}{n}\sum_{i=1}^n
q_s(x_{1:k}[i]) \to \prod_{s \in \cS} \E[q_s(X_{1:k})].\]
\end{proof}

We now show that the limit tensor network value is universal in
expectation over $\W$.

\begin{lemma}\label{lem:Wignermoments_expected_value}
Let $\EE$ denote the expectation over $\W$, conditional on $\x_1,\ldots,\x_k$.
Then Lemma~\ref{lemma:Wignermoments} holds for
$\EE[\val_T(\W;\x_1,\ldots,\x_k)]$ in place of $\val_T(\W;\x_1,\ldots,\x_k)$.
\end{lemma}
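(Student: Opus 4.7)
The plan is to compute $\EE$ termwise on the decomposition~\eqref{eq:sym_value_decomp2} and classify contributions by the combinatorial type of the multi-graph $G(\cI)$. Independence of the on-and-above-diagonal entries of $\W$ factorizes the edge expectation as $\EE[W_{\bi|G(\cI)}^e] = \prod_{(U,V) \in \cE_{G(\cI)}} \EE[W[i_U,i_V]^{e(U,V)}]$, and since each entry is mean zero, this vanishes unless every unique edge of $G(\cI)$ has multiplicity at least $2$.

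\textbf{Power counting (Step 1).} For such $\cI$, write $r$ for the number of unique edges of $G(\cI)$ with multiplicities $\{e_j\}_{j=1}^r$. By Definition~\ref{def:Wigner}(b), $\max |\EE[W[i,j]^2]| \le C/n$ while $\max |\EE[W[i,j]^{e_j}]| = o(1/n)$ when $e_j \ge 3$, and Assumption~\ref{assump:ufconvergence} gives $\langle |Q_U| \rangle \to \E[|Q_U(X_{1:k})|]$ a.s., which bounds the vertex factor $\frac{1}{n}\sum_\bi |Q_{\bi|G(\cI)}|$ by $O(n^{|\cV_{G(\cI)}|-1})$. Multiplying, the $\cI$-contribution is $O(n^{|\cV_{G(\cI)}|-1-r})$, with an extra $o(1)$ if some $e_j \ge 3$. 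Since $G(\cI)$ is connected (as a contraction of the tree $T$) we have $|\cV_{G(\cI)}| \le r+1$, with equality exactly when the unique-edge graph of $G(\cI)$ is a spanning tree. Combined with $\sum_j e_j = |\cV|-1$ and $e_j \ge 2$ (forcing $r \le (|\cV|-1)/2$), the only non-vanishing terms come from ``double-tree'' $G(\cI)$: every $e_j = 2$, and the unique edges of $G(\cI)$ form a tree on $|\cV_{G(\cI)}| = (|\cV|+1)/2$ vertices (so $|\cV|$ must be odd for any contribution at all).

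\textbf{Identifying the limit (Step 2).} For an admissible $\cI$, each edge contributes $S[i_U,i_V]/n$, and the surviving term is $\frac{1}{n}\sum_{\bi}^* Q_{\bi|G(\cI)} \prod_{(U,V)} S[i_U,i_V]/n$. I would remove the distinctness constraint by the same partition-over-coincidences bookkeeping as in Lemma~\ref{lemma:distinctness}: every coincidence kills one free index (factor $n$) while every $S[i,j]/n$ stays $O(1/n)$, so the correction is $O(1/n)$. What remains is $\val_{G(\cI)}(\S/n;\x_{1:k})$, viewed as a tensor network on the simple tree $G(\cI)$ with vertex polynomials $Q_U$, and Lemma~\ref{lem:VarianceProfile_universality} gives the limit $\prod_{U \in \cV_{G(\cI)}} \E[Q_U(X_{1:k})]$. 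Summing over admissible $\cI$ yields an explicit combinatorial expression depending only on $T$ and the joint law of $(X_1,\ldots,X_k)$; since the specific law of $\W$ has dropped out entirely, the same limit holds for $\G \sim \GOE(n)$.

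\textbf{Main obstacle.} The technically delicate ingredient is the power count, which must simultaneously exploit the tree structure of $T$ (to bound $\sum_j e_j$), the connectivity of quotients of $T$ (to bound $|\cV_{G(\cI)}|$ against $r$), and the mild moment condition $n \cdot \max \E[|W[i,j]|^p] \to 0$ for $p \ge 3$ (to kill multi-edges of multiplicity $\ge 3$). This last ingredient is where the generalized Wigner hypothesis enters non-trivially; without the $o(1/n)$ gain, heavy-tailed contributions from small high-multiplicity configurations could survive and spoil the combinatorial match with the GOE.
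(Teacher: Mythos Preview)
The proposal is correct and follows essentially the same approach as the paper: expectation on the decomposition~\eqref{eq:sym_value_decomp2}, the mean-zero observation forcing all edge multiplicities $\ge 2$, the power count $|\cV_{G(\cI)}|-1-r$ using connectivity of $G(\cI)$ and the moment condition of Definition~\ref{def:Wigner}(b) to isolate the double-tree configurations, removal of the distinctness constraint via Lemma~\ref{lemma:distinctness}, and identification of the surviving contribution with $\val_{G(\cI)}(\S/n;\x_{1:k})$ through Lemma~\ref{lem:VarianceProfile_universality}. One minor slip: the bound $\langle |Q_U|\rangle \to \E[|Q_U(X_{1:k})|]$ follows from the Wasserstein convergence hypothesis~\eqref{eq:WignerMomentsWpConvergence} in the statement of Lemma~\ref{lemma:Wignermoments}, not from Assumption~\ref{assump:ufconvergence} (which concerns $\u_1,\f_{1:k}$ rather than $\x_{1:k}$).
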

\begin{proof}
Recall the decomposition of $\val_T(\bW; \xb_1, \ldots, \xb_k)$
in~\eqref{eq:sym_value_decomp2}, where $\cP$ is the set of partitions of the
vertices $\cV$ of $T$. Taking expectation on both sides yields
\begin{equation}\label{eq:Wignerval1}
\EE[\val_T(\bW; \xb_{1:k})]=\frac{1}{n} \sum_{\pi \in \cP}
\sum_{\bi \in [n]^{\cK_\pi}}^* Q_{\bi|G_\pi} \cdot \E[W_{\bi|G_\pi}^e].
\end{equation}
First note that, since the indices of $\bi$ are distinct and all entries of
$\bW$ have mean 0, $\EE[W_{\bi|G_\pi}^e]$ is nonzero only if 
each unique edge of $G_\pi=(\cK_\pi,\cF_\pi)$ appears at least twice.
Let $|\cK_\pi|$ and $|\cF_\pi|_*$ be the number of vertices and number
of \emph{unique} (undirected) edges of $G_\pi$. The graph $G_\pi$ must be connected
since the original tree $T$ was connected, so $|\cK_\pi| \leq
|\cF_\pi|_*+1$. Then any $G_\pi$ where each unique edge appears at least
twice has $|\cK_\pi| \leq |\cF_\pi|_*+1 \leq |\cE|/2+1$,
where $|\cE|$ is the number of edges of the original tree $T$.

Furthermore, we claim that the contribution from partitions $\pi$ where
$|\cK_\pi| \leq |\cE|/2$ is negligible.
To see this, we apply Definition~\ref{def:Wigner}(b) to get 
\begin{align*}
    \left|\EE[W_{\bi|G_\pi}^e]\right| &=
    \prod_{(U,V):e(U,V)=2} \EE[W[i_U,i_V]^2]
    \prod_{\substack{(U,V):e(U,V)>2}} \left|\EE[W[i_U,i_V]^{e(U,V)}]\right|\\
    &\leq \prod_{\substack{(U,V):e(U,V)=2}} \frac{C}{n}
    \prod_{\substack{(U,V):e(U,V)>2}} \frac{o(1)}{n}.
\end{align*}
If there is an edge $(U,V)$ of $G_\pi$ with $e(U,V)>2$, then this shows
$|\EE[W_{\bi|G_\pi}^e]| \leq o(1)/n^{|\cF_\pi|_*}
\leq o(1)/n^{|\cK_\pi|-1}$. If, conversely, every edge in $G_\pi$ appears exactly
twice, then by assumption $|\cF_\pi|_*=|\cE|/2 \geq |\cK_\pi|$,
so this shows $|\EE[W_{\bi|G_\pi}^e]| \leq (C/n)^{|\cF_\pi|_*}
\leq o(1)/n^{|\cK_\pi|-1}$ also. Therefore,
\[\bigg|\frac{1}{n}\sum_{\bi \in [n]^{\cK_\pi}}^* Q_{\bi|G_\pi}
\cdot \E[W_{\bi|G_\pi}^e]\bigg|
\leq \frac{o(1)}{n^{|\cK_\pi|}} \sum_{\ib \in
[n]^{\cK_\pi}}^* \left|Q_{\bi|G_\pi}\right|.\]
As an upper bound,
adding back the excluded tuples $\bi \in [n]^{\cK_\pi}$ where not all
indices are distinct, we have
\begin{align}
\frac{1}{n^{|\cK_\pi|}} \sum_{\ib \in [n]^{\cK_\pi}}^* |Q_{\bi|G_\pi}|
&\leq \prod_{U \in \cK_\pi} \frac{1}{n}
\sum_{i=1}^n |Q_U(x_{1:k}[i])|\label{eq:Qbound}
\end{align}
By \eqref{eq:WignerMomentsWpConvergence},
this upper bound is at most a constant $C(\pi)$ for all large $n$, so
\[\bigg|\frac{1}{n}\sum_{\bi \in [n]^{\cK_\pi}}^* Q_{\bi|G_\pi}
\cdot \E[W_{\bi|G_\pi}^e]\bigg| \to 0\]
as claimed.

Thus the only non-vanishing contributions to (\ref{eq:Wignerval1})
come from partitions $\pi$
where $|\cK_\pi|=|\cF_\pi|_*+1=|\cE|/2+1$.
Then each unique edge of $G_\pi$ appears exactly twice, and these edges
form a tree. In this case, we have
\begin{equation}\label{eq:reductiontoGnetwork}
\frac{1}{n}\sum_{\bi \in [n]^{\cK_\pi}}^* Q_{\bi|G_\pi}
\cdot \E[W_{\bi|G_\pi}^e]
=\frac{1}{n}\sum_{\bi \in [n]^{\cK_\pi}}^* 
Q_{\bi|G_\pi} \cdot \prod_{\text{unique edges } (U,V)\text{ of } G_\pi}
\frac{S[i_U,i_V]}{n}.
\end{equation}
Let $\cI_*$ be the set of tuples $\bi \in [n]^{\cK_\pi}$
where all indices are distinct. Then, applying $|S[i_U,i_V]| \leq C$ from
Definition \ref{def:Wigner}(c), for a constant $C'=C(\pi)>0$,
\[\bigg|\frac{1}{n} \sum_{\ib \in [n]^{\cK_\pi} \setminus \cI_*}
Q_{\bi|G_\pi} \prod_{\text{unique edges } (U,V) \text{ of } G_\pi} 
\frac{S[i_U,i_V]}{n}\bigg|
\leq \frac{C'}{n^{1+|\cF_\pi|_*}} 
\sum_{\ib \in [n]^{\cK_\pi} \setminus \cI_*} |Q_{\bi|G_\pi}|.\]
Since $|\cK_\pi|=|\cF_\pi|_*+1$, the first equality of
Lemma \ref{lemma:distinctness} shows
that this vanishes as $n \to \infty$. Then the right side of
(\ref{eq:reductiontoGnetwork}) has the same limit as
\[\frac{1}{n}\sum_{\bi \in [n]^{\cK_\pi}}
Q_{\bi|G_\pi} \cdot \prod_{\text{unique edges } (U,V)\text{ of } G_\pi}
\frac{S[i_U,i_V]}{n}.\]
This is the limit value of the
tensor network $T_\pi=(\cK_\pi,\text{unique edges of
}\cF_\pi,\{Q_U\}_{U \in \cK_\pi})$ applied to $\S/n$, which by
Lemma~\ref{lem:VarianceProfile_universality} equals
$\prod_{U \in \cK_\pi} \E[Q_U(X_{1:k})]$. Applying this back to
(\ref{eq:reductiontoGnetwork}) and (\ref{eq:Wignerval1}),
\begin{align}
\lim_{n \to \infty} \E[\val_T(\W;\x_{1:k})]
&=\sum_{\pi \in \cP:\,|\cK_\pi|=|\cF_\pi|_*+1=|\cE|/2+1}\;\;
\prod_{U \in \cK_\pi} \E[Q_U(X_{1:k})]\label{eq:limit}\\
&=:\limval_T(X_1,\ldots,X_k).\notag
\end{align}
This limit depends only
on $T$ and the joint law of $X_1,\ldots,X_k$, concluding the proof.
\end{proof}

We make a brief interlude to show here the asymptotic freeness result of 
Proposition \ref{prop:freeness}(a).

\begin{proof}[Proof of Proposition \ref{prop:freeness}(a)]
From the preceding proof, only partitions $\pi$ where
$|\cK_\pi|=|\cE|/2+1$ contribute to $\limval_T(X_1,\ldots,X_k)$.
For every such partition, since $T$ has $|\cE|+1$ vertices, this implies that
some vertex of $\cK_\pi$, i.e.\ some block $U$ of $\pi$,
contains only a single vertex $v$ of $T$. For this block $U$, we have
\[\E[Q_U(X_{1:k})]=\E[q_v(X_{1:k})]=0\]
by the condition (\ref{eq:qnormalized}). 
Therefore, every summand in \eqref{eq:limit} vanishes, implying
as desired $\limval_T(X_1,\ldots,X_k)=0$.
\end{proof}

To complete the proof of Lemma~\ref{lemma:Wignermoments},
it remains to establish the concentration of the value of any tensor 
network around its mean as $n \to \infty$.

\begin{lemma}\label{lem:Wignermoments_concentration}
Let $\EE$ denote the expectation over $\W$, conditional on $\x_1,\ldots,\x_k$.
    Under the setting of Lemma~\ref{lemma:Wignermoments}, almost 
    surely as $n \to \infty$, 
    \begin{align*}
        \val_T(\bW; \xb_1, \ldots, \xb_k) 
        - \EE[\val_T(\bW; \xb_1, \ldots, \xb_k)] \to 0.
    \end{align*}
\end{lemma}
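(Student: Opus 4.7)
The plan is to bound higher moments of $\val_T-\EE[\val_T]$ conditional on $\x_{1:k}$ and invoke Borel--Cantelli. For any fixed integer $p\geq 2$, I would write
\[\EE\big[(\val_T(\W;\x_{1:k})-\EE[\val_T(\W;\x_{1:k})])^{2p}\big]
=\frac{1}{n^{2p}}\sum_{\bi_1,\ldots,\bi_{2p}} \prod_{j=1}^{2p} q_{\bi_j|T}\cdot \EE\Bigg[\prod_{j=1}^{2p}\big(W_{\bi_j|T}-\EE[W_{\bi_j|T}]\big)\Bigg]\]
as a sum over the $2p$-fold disjoint-union tensor network $T^{\sqcup 2p}$, and classify tuples $(\bi_1,\ldots,\bi_{2p})\in[n]^{\cV^{\sqcup 2p}}$ by the equivalence relation $\sim$ and induced multi-graph $G$ of the proof of Lemma~\ref{lem:Wignermoments_expected_value}.

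The central combinatorial observation is that centering forces the $2p$ copies of $T$ to cluster into groups of size at least $2$. Letting $S_j\subseteq\binom{[n]}{2}$ denote the unordered pairs $\{i_{j,u},i_{j,v}\}$ arising from copy $j$, declare copies $j,j'$ adjacent whenever $S_j\cap S_{j'}\neq\emptyset$ and take transitive closure to obtain clusters on $\{1,\ldots,2p\}$. Because $\W$ has independent entries, the factors $W_{\bi_j|T}$ are independent across distinct clusters, so the inner expectation factors across clusters, and any cluster of size $1$ contributes a factor $\EE[W_{\bi_j|T}-\EE[W_{\bi_j|T}]]=0$. Hence the number $r$ of clusters satisfies $r\leq p$, and since each connected component of $G$ is contained in a single cluster, the number $c$ of components of $G$ also satisfies $c\leq p$.

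Combining this with the Wigner edge-pairing argument of Lemma~\ref{lem:Wignermoments_expected_value}---each distinct edge of $G$ must appear at least twice for $\EE[W_{\bi|G}^e]$ to be non-vanishing, whence $|\EE[W_{\bi|G}^e]|\leq (C/n)^{|\cE_G|_*}$---and the forest bound $|\cV_G|\leq|\cE_G|_*+c\leq|\cE_G|_*+p$, each equivalence class contributes at most
\[\frac{1}{n^{2p}}\cdot n^{|\cV_G|}\cdot\frac{C^{|\cE_G|_*}}{n^{|\cE_G|_*}}\cdot C(\x_{1:k})\;\leq\; C(\x_{1:k})\,n^{-p},\]
where $C(\x_{1:k})$ absorbs the $Q_{\bi|G}$ factors via the estimate~\eqref{eq:Qbound}. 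Summing over the (independent of $n$) number of equivalence classes, $\EE[(\val_T-\EE[\val_T])^{2p}]\leq C'(\x_{1:k})\,n^{-p}$. Taking $p=2$ and applying conditional Markov gives $\PP(|\val_T-\EE[\val_T]|>\eps\mid\x_{1:k})\leq C(\eps,\x_{1:k})/n^2$, which is summable on the almost-sure event where the limit moments of $(X_{1:k})$ are finite; a conditional Borel--Cantelli argument then yields the claimed almost sure convergence.

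The main technical subtlety is the centering-and-clustering argument yielding $c\leq p$: one has to verify that expanding the product $\prod_j(W_{\bi_j|T}-\EE[W_{\bi_j|T}])$ and using independence of $\W$'s entries across disjoint $S_j$ genuinely eliminates all configurations with a singleton cluster, and that each cluster's contribution to $G$ lies within a single connected component. Both facts are routine cumulant-style manipulations once the tensor-network bookkeeping of the preceding lemmas is in place, after which the vertex-counting and edge-pairing estimates reduce directly to those already established for the first-moment computation.
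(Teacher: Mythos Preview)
Your approach is correct and reaches the same $O(n^{-2})$ fourth-moment bound, but via a different route from the paper. The paper does not center: it expands $\EE[(\val-\EE\val)^4]$ as $\EE[\val^4]-4\EE[\val^3]\EE[\val]+6\EE[\val^2]\EE[\val]^2-3\EE[\val]^4$, represents each term via independent copies $\W^{(1)},\ldots,\W^{(4)}$ of $\W$, partitions index tuples $\bi_{1:4}$ by the number of connected components of the induced multigraph $G(\bi_{1:4})$, and shows by explicit symmetry cancellations that the contributions from $3$ or $4$ components vanish identically---leaving only the $\leq 2$-component case, where $|\cV_G|\leq|\cE_G|_*+2$ gives the bound directly. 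Your clustering-by-shared-edges argument is more conceptual and extends immediately to all even moments, which is a genuine advantage.

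Two points in your write-up need tightening. First, the containment is reversed: it is each \emph{cluster} that lies in a single connected component of $G$ (edge-sharing implies vertex-sharing, hence connectivity in $G$), yielding $c\leq r\leq p$; the statement ``each connected component of $G$ is contained in a single cluster'' would give the opposite inequality. Second, the edge-pairing bound $(C/n)^{|\cE_G|_*}$ is asserted for the uncentered $\EE[W_{\bi|G}^e]$, whereas the quantity you must control is the centered $\EE\big[\prod_j(W_{\bi_j|T}-\EE[W_{\bi_j|T}])\big]$. The bound does hold after expanding over subsets $A\subseteq[2p]$: in any nonvanishing summand, each $j\in A$ has all its within-copy edge multiplicities $\geq 2$ and the $A^c$-union has all its edge multiplicities $\geq 2$, which together force every distinct edge of $G$ to have total multiplicity $\geq 2$ and to contribute at least one factor of $O(n^{-1})$. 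But that verification is the substantive step here and should be spelled out rather than deferred to the uncentered case.
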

\begin{proof}
We write $\val(\W)=\val_T(\W;\x_1,\ldots,\x_k)$. We will bound the
fourth moment of $\val(\bW)-\E[\val(\bW)]$ and apply the Borel-Cantelli lemma.
(Note that $\val(\bW)-\E[\val(\bW)]$ typically fluctuates on the order of
$1/\sqrt{n}$, so that bounding the variance would not suffice to show
almost-sure convergence.)

First, we expand
\begin{align}
&\EE[\left(\val(\bW) - \EE[\val(\bW)]\right)^4]\nonumber\\
    &= \EE[\val(\bW)^4] - 4 \EE[\val(\bW)^3] \EE[\val(\bW)]
    + 6 \EE[\val(\bW)^2] \EE[\val(\bW)]^2 - 3 \EE[\val(\bW)]^4.
\label{eq:sym_moment_expand}
\end{align}
We introduce four independent copies of the matrix $\W$ as
$\bW^{(1)},\bW^{(2)},\bW^{(3)},\bW^{(4)}$, define four index tuples
$\bi_1,\bi_2,\bi_3,\bi_4 \in [n]^{\cV}$, and write as shorthand
\begin{align*}
q_{\bi_{1:4}}=q_{\bi_1|T} \cdot q_{\bi_2|T} \cdot q_{\bi_3|T} \cdot q_{\bi_4|T},
\qquad W_{\bi_{1:4}}^{(a_1,a_2,a_3,a_4)}
= W_{\bi_1|T}^{(a_1)} \cdot W_{\bi_2|T}^{(a_2)} \cdot W_{\bi_3|T}^{(a_3)}
\cdot W_{\bi_4|T}^{(a_4)}
\end{align*}
where each $W_{\bi|T}^{(a)}$ is defined by the copy $\W^{(a)}$. Then
\begin{align}\label{eq:sym_moments}
\begin{aligned}
    \EE[\val(\bW)^4] &= \frac{1}{n^4} \sum_{\ib_1,\ldots,\ib_4 \in [n]^{\cV}} 
    q_{\ib_{1:4}} \cdot \EE[W_{\bi_{1:4}}^{(1,1,1,1)}],\\
    \EE[\val(\bW)^3] \EE[\val(\bW)] &= \frac{1}{n^4} 
    \sum_{\ib_1,\ldots,\ib_4 \in [n]^{\cV}} 
    q_{\ib_{1:4}} \cdot \EE[W_{\bi_{1:4}}^{(1,1,1,2)}],\\
    \EE[\val(\bW)^2] \EE[\val(\bW)]^2 &= \frac{1}{n^4} 
    \sum_{\ib_1,\ldots,\ib_4 \in [n]^{\cV}} 
    q_{\ib_{1:4}} \cdot \EE[W_{\bi_{1:4}}^{(1,1,2,3)}],\\
    \EE[\val(\bW)]^4 &= \frac{1}{n^4} \sum_{\ib_1,\ldots,\ib_4 \in [n]^{\cV}} 
    q_{\ib_{1:4}} \cdot \EE[W_{\bi_{1:4}}^{(1,2,3,4)}].
\end{aligned}
\end{align}

Corresponding to each index tuple $\bi_{1:4}$, consider a multi-graph
$G(\ib_{1:4})$ whose vertices are the unique index values in $\bi_{1:4}$,
with one edge $(i_{a,u},i_{a,v})$ for every combination of
$a=1,2,3,4$ and edge $(u,v) \in \cE$, counting multiplicity.
(One may visualize $G(\ib_{1:4})$ as a
multi-graph whose vertices are a subset of $[n]$, and having edges of 4 colors
corresponding to $a=1,2,3,4$.) Then the edges of $G(\ib_{1:4})$
corresponding to each single index $a=1,2,3,4$ must belong to a single connected
component, so the number of connected 
components in $G(\ib_{1:4})$ can be either 1, 2, 3 or 4.
Let us partition the index tuples $\bi_1,\bi_2,\bi_3,\bi_4 \in [n]^\cV$
into the three sets
\begin{align*}
    \cI_2 &= \{\ib_{1:4}: G(\ib_{1:4}) \text{ has 1 or 2 connected components}\},\\
    \cI_3 &= \{\ib_{1:4}: G(\ib_{1:4}) \text{ has 3 connected components}\},\\
    \cI_4 &= \{\ib_{1:4}: G(\ib_{1:4}) \text{ has 4 connected components}\},
\end{align*}
and define correspondingly for $j = 2, 3, 4$,
\begin{align}\label{eq:sym_moment_decomp}
    A_j = \frac{1}{n^4} \sum_{\ib_{1:4} \in \cI_j} q_{\ib_{1:4}} 
    \Big(\EE[W_{\ib_{1:4}}^{(1, 1, 1, 1)}] 
    - 4 \cdot \EE[W_{\ib_{1:4}}^{(1, 1, 1, 2)}]
    + 6 \cdot \EE[W_{\ib_{1:4}}^{(1, 1, 2, 3)}] 
    - 3 \cdot \EE[W_{\ib_{1:4}}^{(1, 2, 3, 4)}]\Big).
\end{align}
Then by \eqref{eq:sym_moment_expand} and \eqref{eq:sym_moments}, we have 
$\EE[(\val(\bW) - \EE[\val(\bW)])^4] = A_2 + A_3 + A_4$.
Below, we will show that $A_3 = A_4 = 0$ and $A_2=O(1/n^2)$ as $n \to \infty$.

For $A_4$, observe that
since $G(\ib_{1:4})$ has 4 connected components, the tuples
$\ib_1, \ib_2, \ib_3, \ib_4$ have no common indices.
Then due to the independence between entries of $\bW$, we have 
\begin{align*}
    \EE[W^{(a_1, a_2, a_3, a_4)}_{\ib_{1:4}}] &= \EE[W^{(a_1)}_{\ib_1}] \cdot 
    \EE[W^{(a_2)}_{\ib_2}] \cdot \EE[W^{(a_3)}_{\ib_3}] \cdot
\EE[W^{(a_4)}_{\ib_4}]\\
    &= \EE[W^{(1)}_{\ib_1}] \cdot \EE[W^{(1)}_{\ib_2}] \cdot \EE[W^{(1)}_{\ib_3}] 
    \cdot \EE[W^{(1)}_{\ib_4}]
    = \EE[W_{\ib_{1:4}}^{(1, 1, 1, 1)}]
\end{align*}
for any $a_1, a_2, a_3, a_4$.
Applying this to $A_4$ defined in \eqref{eq:sym_moment_decomp}, we get $A_4=0$.

Next, for $A_3$, we write $\ib_j \parallel \ib_{j'}$ if $\ib_j$ and
$\ib_{j'}$ share at
least one index. Note that for any $\ib_{1:4} \in \cI_3$, there is a unique pair
$\ib_j, \ib_{j'}$ such that $\ib_j \parallel \ib_{j'}$, so
\begin{align*}
    \cI_3 = \bigsqcup_{1 \leq j<j' \leq 4} \{\ib_{1:4} \in \cI_3:
    \ib_j \parallel \ib_{j'}\}.
\end{align*}
We will repeatedly apply this six-fold decomposition of $\cI_3$ and the
independence between the different copies of $\bW$.
If $\bi_3 \parallel \bi_4$, we have
\begin{align*}
    \EE[W_{\ib_{1:4}}^{(1, 1, 1, 1)}] &= \EE[W_{\ib_1}^{(1)}] \cdot 
    \EE[W_{\ib_2}^{(1)}] \cdot \EE[W_{\ib_3}^{(1)} W_{\ib_4}^{(1)}]
    = \EE[W_{\ib_{1:4}}^{(1, 2, 3, 3)}]
\end{align*}
which together with permutation symmetry between the labels 1, 2, 3, and 4
further implies that 
\begin{align}\label{eq:sym_moment_decomp_2_1}
    \sum_{\ib_{1:4} \in \cI_3} q_{\ib_{1:4}} \cdot \EE[W_{\ib_{1:4}}^{(1, 1, 1, 1)}]
    &= 6 \sum_{\ib_{1:4} \in \cI_3, \ib_3 \parallel \ib_4} 
    q_{\ib_{1:4}} \cdot \EE[W_{\ib_{1:4}}^{(1, 1, 1, 1)}]
    = 6 \sum_{\ib_{1:4} \in \cI_3, \ib_3 \parallel \ib_4} q_{\ib_{1:4}} 
    \cdot \EE[W_{\ib_{1:4}}^{(1, 2, 3, 3)}].
\end{align}
Similarly, considering the two cases $\bi_3 \parallel \bi_4$
and $\bi_2 \parallel \bi_3$ and their symmetric equivalents,
\begin{align}\label{eq:sym_moment_decomp_2_2}
    \sum_{\ib_{1:4} \in \cI_3} q_{\ib_{1:4}} \cdot \EE[W_{\ib_{1:4}}^{(1, 1, 1, 2)}]
    &= 3 \sum_{\ib_{1:4} \in \cI_3, \ib_3 \parallel \ib_4}
    q_{\ib_{1:4}} \cdot \EE[W_{\ib_{1:4}}^{(1, 2, 3, 4)}]
    + 3 \sum_{\ib_{1:4} \in \cI_3, \ib_2 \parallel \ib_3} q_{\ib_{1:4}} 
    \cdot \EE[W_{\ib_{1:4}}^{(1, 2, 2, 3)}]\notag \\
    &= 3 \sum_{\ib_{1:4} \in \cI_3, \ib_3 \parallel \ib_4}
    q_{\ib_{1:4}} \cdot \EE[W_{\ib_{1:4}}^{(1, 2, 3, 4)}]
    + 3 \sum_{\ib_{1:4} \in \cI_3, \ib_3 \parallel \ib_4} q_{\ib_{1:4}} 
    \cdot \EE[W_{\ib_{1:4}}^{(1, 2, 3, 3)}].
\end{align}
Considering the three cases $\bi_3 \parallel \bi_4$, $\bi_2 \parallel \bi_4$,
$\bi_1 \parallel \bi_2$ and their symmetric equivalents,
\begin{align}\label{eq:sym_moment_decomp_2_3}
    &\sum_{\ib_{1:4} \in \cI_3} q_{\ib_{1:4}} \cdot \EE[W_{\ib_{1:4}}^{(1, 1, 2,
3)}]\notag\\
    &= \sum_{\ib_{1:4} \in \cI_3, \ib_3 \parallel \ib_4}
    q_{\ib_{1:4}} \cdot \EE[W_{\ib_{1:4}}^{(1, 2, 3, 4)}]
    + 4 \sum_{\ib_{1:4} \in \cI_3, \ib_2 \parallel \ib_3} q_{\ib_{1:4}} 
    \cdot \EE[W_{\ib_{1:4}}^{(1, 2, 3, 4)}]
    + \sum_{\ib_{1:4} \in \cI_3, \ib_1 \parallel \ib_2} q_{\ib_{1:4}} 
    \cdot \EE[W_{\ib_{1:4}}^{(1, 1, 2, 3)}] \notag\\
    &= 5\sum_{\ib_{1:4} \in \cI_3, \ib_3 \parallel \ib_4} q_{\ib_{1:4}} 
    \cdot \EE[W_{\ib_{1:4}}^{(1, 2, 3, 4)}]
    + \sum_{\ib_{1:4} \in \cI_3, \ib_3 \parallel \ib_4} q_{\ib_{1:4}} 
    \cdot \EE[W_{\ib_{1:4}}^{(1, 2, 3, 3)}].
\end{align}
Finally, by symmetry,
\begin{align}\label{eq:sym_moment_decomp_2_4}
    \sum_{\ib_{1:4} \in \cI_3} q_{\ib_{1:4}} \cdot \EE[W_{\ib_{1:4}}^{(1, 2, 3,
4)}]&=6\sum_{\ib_{1:4} \in \cI_3, \ib_3 \parallel \ib_4}
    q_{\ib_{1:4}} \cdot \EE[W_{\ib_{1:4}}^{(1, 2, 3, 4)}].
\end{align}
Collecting \eqref{eq:sym_moment_decomp_2_1}, \eqref{eq:sym_moment_decomp_2_2},
\eqref{eq:sym_moment_decomp_2_3}, and \eqref{eq:sym_moment_decomp_2_4}
and applying them to $A_3$ defined in \eqref{eq:sym_moment_decomp}, we get
$A_3=0$.
 
Finally, we bound $A_2$. Let $|\cV_{G(\ib_{1:4})}|$ and
$|\cE_{G(\ib_{1:4})}|_*$ be the number of vertices and number of
unique (undirected) edges of
$G(\ib_{1:4})$. Since $G(\ib_{1:4})$ has at most 2 connected
components, we have $|\cV_{G(\ib_{1:4})}| \leq |\cE_{G(\ib_{1:4})}|_*+2$.
By Definition~\ref{def:Wigner}(b), for a constant $C>0$ and any
$a_1,a_2,a_3,a_4$, we have 
$|\EE[W_{\ib_{1:4}}^{(a_1, a_2, a_3, a_4)}]| \leq
C/n^{|\cE_{G(\ib_{1:4})}|_*} \leq C/n^{|\cV_{G(\ib_{1:4})}|-2}$. Therefore, 
\begin{align*}
    \frac{1}{n^4} \bigg|\sum_{\ib_{1:4} \in \cI_2} q_{\ib_{1:4}} 
    \EE[\bW_{\ib_{1:4}}^{(a_1, a_2, a_3, a_4)}]\bigg| &\leq \frac{1}{n^4} 
\sum_{v=1}^{4|\cV|} \frac{C}{n^{v-2}} 
    \sum_{\ib_{1:4} \in \cI_2:\,|\cV_{G(\ib_{1:4})}|=v} |q_{\ib_{1:4}}|.
\end{align*}
Stratifying the inner summation over $\{\bi_{1:4} \in
\cI_2:|\cV_{G(\bi_{1:4})}|=v\}$ by its induced partition
$\pi(\bi_{1:4})$ of the $4|\cV|$ total indices (having exactly $v$
blocks), and applying the same argument as in
(\ref{eq:Qbound}), this inner summation may be bounded
as $\sum_{\ib_{1:4} \in \cI_2:|\cV_{G(\ib_{1:4})}|=v} |q_{\ib_{1:4}}| \leq
Cn^v$ for a constant $C>0$. Applying this bound for each term of $A_2$,
we obtain $|A_2| \leq C/n^2$.

Combining the analyses of $A_2$, $A_3$, and $A_4$, we get 
$\EE[(\val(\bW) - \EE[\val(\bW)])^4] \leq C/n^2$.
Then by Markov's inequality, for any $\eps > 0$,
$\PP[|\val(\bW) - \EE[\val(\bW)]| > \eps] \leq C/(\eps^4 n^2)$. This
bound is summable over all $n \geq 1$, so almost-sure convergence follows by
the Borel-Cantelli lemma.
\end{proof}

Combining Lemmas~\ref{lem:Wignermoments_expected_value} and
\ref{lem:Wignermoments_concentration} concludes the proof of
Lemma~\ref{lemma:Wignermoments}.

\subsection{Universality for symmetric generalized invariant matrices}

In this section, we prove Lemma~\ref{lemma:syminvmoments} on the universality
of tensor network values for generalized invariant matrices.

Fix the tensor network $T=(\cV,\cE,\{q_v\}_{v \in \cV})$.
Expanding the product $\W=\bPi\bM\bPi^\top$, the tensor network value is given by
\[\val_T(\W;\x_{1:k})=\frac{1}{n}\sum_{\ib \in [n]^{\cV}}
\sum_{\bj,\bl \in [n]^{\cE}}
\prod_{v \in \cV} q_v(x_{1:k}[i_v])
\prod_{e=(u,v) \in \cE} \Pi[i_u,j_e]M[j_e,l_e]\Pi[i_v,l_e].\]
The matrix $\bPi$ may be written as $\bPi=\bXi\bP$
where $\bXi$ is a random sign matrix and $\bP$ is a random permutation matrix
independent of $\bXi$.
Let $\sigma$ denote the permutation of $[n]$ for which
$P[i,\sigma(i)]=1$ for all $i\in[n]$.
Then $\Pi[i_u,j_e]$ is non-zero if and only if $j_e=\sigma(i_u)$.
Therefore, the tensor network value is equivalently expressed as
\begin{equation}\label{eq:invariantval}
	\val_T(\W;\x_{1:k})=\frac{1}{n}\sum_{\ib \in [n]^{\cV}}
	\prod_{v \in \cV} q_v(x_{1:k}[i_v]) \prod_{e=(u,v) \in \cE} 
	\Xi[i_u]\cdot\Xi[i_v]\cdot M[\sigma(i_u),\sigma(i_v)].
\end{equation}

Let $\cP$ be the set of partitions of $\cV$. For each $\pi \in \cP$, let
$G_\pi=(\cK_\pi,\cF_\pi)$ be the image of $(\cV,\cE)$ under $\pi$, in the sense of
Definition \ref{def:graphpartition}. For each $\bi\in [n]^\cV$,
let $\pi(\bi) \in \cP$ be the partition induced by $\bi$.
Stratifying the summation over $\bi\in[n]^\cV$ by its
induced partition $\pi(\bi)$,
\[\val_T(\W;\x_{1:k})=\sum_{\pi \in \cP} \frac{1}{n}\sum_{\ib \in
[n]^{\cV}:\pi(\bi)=\pi}
	\prod_{v \in \cV} q_v(x_{1:k}[i_v]) \prod_{e=(u,v) \in \cE} 
	\Xi[i_u]\cdot\Xi[i_v]\cdot M[\sigma(i_u),\sigma(i_v)].\]
Then, defining $Q_R=\prod_{u \in R} q_u$ for the blocks $R \in \cK_\pi \equiv \pi$,
and identifying the sum over $\{\bi\in[n]^\cV:\pi(\bi)=\pi\}$
as a sum over one distinct index for each block
$R \in \cK_\pi$, we have
\begin{align*}
	\val_T(\W;\x_{1:k}) &= \sum_{\pi\in\cP}\frac{1}{n}
	\sum_{\bi\in[n]^{\cK_\pi}}^* \prod_{R \in \cK_\pi} Q_R(x_{1:k}[i_R])
	\prod_{(R,S)\in\cF_\pi} \Xi[i_R] \cdot \Xi[i_S] \cdot
	M[\sigma(i_R),\sigma(i_S)]
\end{align*}
where we recall the notation that $\overset{*}{\sum}$ restricts the summation 
to index tuples $\bi$ having all indices distinct.
For every $R\in\cK_\pi$, let $\degext(R)$ be its \emph{external degree} in $G_\pi$,
i.e.\ the total number of edges of $\cF_\pi$ containing $R$ (counting multiplicity)
that are not self-loops. Then for every $R\in\cK_\pi$, the number of times the
factor $\Xi[i_R]$ appears in the above product is exactly $\degext(R)$ plus
twice the number of self-loops on $R$. Since $\Xi[i_R]^2=1$, this implies
\begin{align}\label{eq:orthovalasO}
	\val_T(\W;\x_{1:k}) &= \sum_{\pi\in\cP}\frac{1}{n}
	\sum_{\bi\in[n]^{\cK_\pi}}^* \prod_{R \in \cK_\pi} Q_R(x_{1:k}[i_R])
	\Xi[i_R]^{\degext(R)} \prod_{(R,S)\in\cF_\pi} M[\sigma(i_R),\sigma(i_S)].
\end{align}

\begin{lemma}\label{lemma:Esyminvmoments}
Let $\E$ be the expectation over $\bPi$
conditional on $\bM$ and $\x_1,\ldots,\x_k$.
Then Lemma~\ref{lemma:syminvmoments} holds for
$\E[\val_T(\W;\x_1,\ldots,\x_k)]$ in place of $\val_T(\W;\x_1,\ldots,\x_k)$.
\end{lemma}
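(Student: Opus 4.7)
The plan is to compute $\E[\val_T(\W;\x_{1:k})]$ directly from the expansion (\ref{eq:orthoval2}) by taking expectation over $\bPi_V = \bP_V\bXi_V$ and $\bPi_E = \bP_E\bXi_E$ in two stages. First I would take expectations over the sign matrices: writing $\Pi_V[i,k] = P_V[i,k]\,\xi_V[k]$, the factor $\prod_\rho \xi_V[k_\rho]$ has expectation $1$ exactly when every block of the partition $\pi(\bk)$ induced by $\bk \in [n]^{2w}$ has even size, and $0$ otherwise, with an analogous statement for $\pi(\bl)$ under $\bXi_E$. Second I would take expectations over the uniform random permutations: the factor $\prod_\rho P_V[i_\rho, k_\rho]$ is nonzero only when the mapping $k_\rho \mapsto i_\rho$ is consistent and injective, forcing $\pi(\bi) = \pi(\bk)$, and in that case its expectation is $(n-|\pi(\bk)|)!/n!$; similarly for $\bP_E$. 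Altogether, $\E[\val_T]$ reduces to a sum indexed by partitions $\pi_k \geq \pi_V$ and $\pi_l \geq \pi_E$ both having all blocks of even size, of weighted products of $q_v$'s, $D$-entries along the diagonal, and $H$-entries.

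To extract the asymptotic limit I would exploit both the delocalization bound $|H[k,\ell]| < n^{-1/2+\eps}$ from Definition~\ref{def:syminvariant}(c) and the orthogonality $\bH\bH^\top = \bI$. A crude application of delocalization gives $|\prod_\rho H[k_\rho, \ell_\rho]| \leq n^{-w+2w\eps}$; combined with the enumeration $\sim n^{|\pi_k|+|\pi_l|}$ of index tuples respecting $(\pi_k,\pi_l)$, the factor $(n-|\pi_k|)!(n-|\pi_l|)!/(n!)^2$ from permutation averaging, and the overall $1/n$ prefactor, only partitions $(\pi_k,\pi_l)$ of a critical size can contribute in the limit. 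To pin down the exact leading contributions I would iteratively apply $\sum_\ell H[k,\ell]H[k',\ell] = \mathbbm{1}\{k=k'\}$ to contract pairs of $H$-entries whose $\ell$-indices are mates under $\pi_l$, reducing the $H$-sum to a purely combinatorial sum whose structure mirrors the Haar orthogonal case.

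In the surviving terms the weights factor into moments of $D$ (from $\prod_e D[j_{\rho(e)}, j_{\rho(e)}]$ evaluated under $\bd \toW D$), joint moments of $(X_{1:k})$ (from $\prod_v q_v(x_{1:k}[i_{\rho(v)}])$ evaluated under $(\x_{1:k}) \toW (X_{1:k})$), and universal combinatorial coefficients depending only on $T$ and the block structure of the contributing partitions; comparison with the Haar orthogonal computation identifies the resulting expression as the universal limit $\limval_T(X_{1:k}, D)$. The main obstacle will be the combinatorial bookkeeping that distinguishes contributing from non-contributing partition structures: one must show that partitions containing blocks of size strictly greater than $2$, or pair partitions that ``cross'' relative to the tree $T$, produce strictly sub-leading contributions once orthogonality of $\bH$ is applied. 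This parallels the \emph{asymptotic liberation} phenomenon of \cite{anderson2014asymptotically} for matrix traces, and is where the essential work lies, since the tree structure of $T$ must interact with the symmetries supplied by $\bPi_V$ and $\bPi_E$ in a coordinated way to match the Haar calculation.
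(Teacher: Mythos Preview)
Your proposal follows essentially the same route as the paper's proof: the two-stage expectation over signs then permutations is exactly the paper's identity~(\ref{eq:EPi}), and your plan to combine the delocalization bound with the orthogonality relation $\sum_\ell H[k,\ell]H[k',\ell]=\1\{k=k'\}$ is precisely the mechanism behind the paper's Lemma~\ref{lemma:Hlemma}. So the high-level strategy is correct.

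There is, however, a gap in the middle step. After reducing to sums over $\bk,\bl$ with $\pi(\bk)=\pi$ and $\pi(\bl)=\pi'$, you propose to iteratively contract pairs via orthogonality. But the constraint $\pi(\bl)=\pi'$ forces indices in \emph{different} blocks of $\pi'$ to be distinct, so the sum over a single $\ell$-index does not range freely over $[n]$ and the identity $\sum_\ell H[k,\ell]H[k',\ell]=\1\{k=k'\}$ cannot be applied directly. The paper handles this by M\"obius inversion over the partition lattice (the inclusion--exclusion relation~(\ref{eq:inclusionexclusion})), converting the constrained sums $\{\pi(\bk)=\pi\}$ into unconstrained sums $\{\pi(\bk)\geq\sigma\}$; the resulting quantity $H_n(\sigma,\sigma')$ in~(\ref{eq:Hndef}) is then analyzed by an inductive graph-reduction argument (Lemma~\ref{lemma:Hlemma}) in which orthogonality applies cleanly. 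Without this step your iterative contractions are not justified as written.

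A second point: your expectation that only non-crossing pair partitions survive is not how the paper's main argument proceeds. The sum in~(\ref{eq:orthoval4}) runs over \emph{all} even partitions $\pi\geq\pi_V$, $\pi'\geq\pi_E$ (so blocks of size $4,6,\ldots$ are allowed), and the limit $H(\sigma,\sigma')\in\{0,1\}$ is established abstractly by induction rather than by restricting to a special class of partitions. The key structural input you do not mention is that $\pi_V\vee\pi_E=1_{2w}$ by connectedness of $T$, which forces $|\sigma\vee\sigma'|=1$ and gives the correct $1/n$ normalization in Lemma~\ref{lemma:Hlemma}. (A pairing-based representation does exist, but it comes from the separate Weingarten computation in Appendix~\ref{appendix:orthogonal}, not from the universality argument itself.)
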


\begin{proof}
Taking expectations in (\ref{eq:orthovalasO})
with respect to the independent signs $\bXi$ and
permutation $\sigma$, observe that
\begin{itemize}
\item If $R\in\cK_\pi$ is such that 
$\degext(R)$ is odd, then $\EE[\Xi[i_R]^{\degext(R)}]=0$.
Thus by independence of the diagonal entries of $\bXi$ and distinctness of the
indices of $\bi$,
\begin{align}\label{eq:sign}
	\EE\bigg[\prod_{R\in\cK_\pi} \Xi[i_R]^{\degext(R)}\bigg] = 
	\1\{\degext(R) \text{ is even for all } R\in\cK_\pi\}.
\end{align}
\item Since $\sigma$ is a uniformly random permutation on $[n]$,
for any fixed tuple $\bi \in [n]^{\cK_\pi}$ with all entries distinct,
\begin{align}\label{eq:permutation}
	\EE\bigg[\prod_{(R,S)\in\cF_\pi} M[\sigma(i_R),\sigma(i_S)]\bigg]
	&= \frac{(n-|\cK_\pi|)!}{n!} \sum_{\bj\in[n]^{\cK_\pi}}^* 
	\prod_{(R,S)\in\cF_\pi} M[j_R,j_S]
\end{align}
where $n!/(n-|\cK_\pi|)!$ counts the total number of tuples
$\bj \in [n]^{\cK_\pi}$ having distinct entries, and the right side
represents a uniform average over such tuples $\bj$.
\end{itemize}

Let us call a partition $\pi\in\cP$ \emph{even} if  
every vertex of $\cK_\pi$ has even external degree.
Then applying the above observations to take the expectation in
\eqref{eq:orthovalasO}, we obtain
\begin{equation}\label{eq:orthoval3}
\EE[\val_T(\W;\x_{1:k})] = \sum_{\text{even }\pi\in\cP}
B_n(\pi) \cdot Q_n(\pi) \cdot M_n(\pi)
\end{equation}
where we set
\begin{align}
B_n(\pi)&=n^{|\cK_\pi|} \cdot \frac{(n-|\cK_\pi|)!}{n!},\label{eq:An}\\
Q_n(\pi)&=\frac{1}{n^{|\cK_\pi|}} \sum_{\bi \in [n]^{\cK_\pi}}^*
	\prod_{R \in \cK_\pi} Q_R(x_{1:k}[i_R]),\label{eq:Qn}\\
M_n(\pi)&=\frac{1}{n}\sum_{\bj \in [n]^{\cK_\pi}}^* 
	\prod_{(R,S)\in\cF_\pi} M[j_R,j_S].\label{eq:Mn}
\end{align}

It is clear that $\lim_{n \to \infty} B_n(\pi)=1$ for every fixed $\pi$.
For $Q_n(\pi)$, we may apply Lemma~\ref{lemma:distinctness} with the 
identifications $\cS \leftrightarrow \cK_\pi$
and $\{q_s:s \in \cS\} \leftrightarrow \{Q_R:R \in \cK_\pi\}$. Then
\[\lim_{n \to \infty} Q_n(\pi)=\prod_{R \in \cK_\pi} \EE[Q_R(X_{1:k})].\]
For $M_n(\pi)$, since the original tensor network is connected, the graph
$G_\pi=(\cK_\pi,\cF_\pi)$ corresponding to each partition $\pi$
must also be connected.
Consequently, applying Lemma~\ref{lemma:Hlemma} below (with $p_e(\bM)=\bM$ for
every edge $e$), there exists a 
deterministic limit value $M(G_\pi,\cD_\tdiag)$ depending only on $G_\pi$ and 
$\cD_\tdiag$ such that, almost surely,
$\lim_{n \to \infty} M_n(\pi)=M(G_\pi,\cD_\tdiag)$.
Applying these statements to
every $\pi$ in \eqref{eq:orthoval3}, we obtain 
\begin{align*}
	\lim_{n\to\infty}\E[\val_T(\bW;\x_{1:k})] &= 
	\sum_{\text{even } \pi \in \cP}
\bigg(\prod_{R \in \cK_\pi} \EE[Q_R(X_{1:k})]\bigg)M(G_\pi, \cD_\tdiag)
=:\limval_T(X_{1:k},\cD_\tdiag)
\end{align*}
This limit value depends only on $T$, the joint law of $(X_1,\ldots,X_k)$, and
the limit diagonal distribution $\cD_\tdiag$, and does not depend on the specific 
matrix $\bM$, concluding the proof.
\end{proof}

\begin{lemma}\label{lemma:Hlemma}
Let $\Mb \in \R^{n \times n}$ be a deterministic symmetric matrix with 
limit diagonal distribution $\cD_\tdiag$, satisfying the following
condition: For any fixed $\eps>0$, any diagonal monomial
$p(\x) \in \Delta\langle \x \rangle$, and all large $n$,
\[\max_{i\neq j} |p(\bM)[i,j]|<n^{-1/2+\eps}.\]
Let $G=(\cK,\cF)$ be a connected multi-graph such that the external degree
$\degext(R)$ is even for every vertex $R\in\cK$.
For every edge $e\in\cF$, let $p_e(\x)$ be a diagonal monomial labeling this edge.
Then there exists a value $M(G,\cD_\tdiag)$ depending only on
$G$ and $\cD_\tdiag$ such that
\[\lim_{n \to \infty} \frac{1}{n}
\sum_{\bj\in[n]^{\cK}}^* \prod_{e=(R,R')\in\cF} p_e(\bM)[j_R, j_{R'}]
= M(G,\cD_\tdiag).\]
\end{lemma}

\begin{proof}
For convenience, we denote
\begin{align*}
	M_n(G) := \frac{1}{n}\sum_{\bj\in[n]^{\cK}}^* \prod_{e=(R,R')\in\cF} 
	p_e(\bM)[j_R, j_{R'}].
\end{align*}
We proceed by induction over the number of vertices $|\cK|$.
For the base case $|\cK|=1$, all edges of $\cF$ must be self-loops, and we have
\begin{align*}
	M_n(G) 
	= \frac{1}{n} \sum_{j=1}^n \prod_{e\in\cF} p_e(\bM)[j,j]
	= \frac{1}{n} \Tr \prod_{e\in\cF}\Delta(p_e(\bM))
\end{align*}
Here $\prod_{e\in\cF} \Delta(p_e(\x))$ is a diagonal monomial.
Then, since $\bM$ has a limit diagonal distribution, the above quantity 
admits a limit value as $n\to\infty$.

Next, supposing that the result is true for every multi-graph $G=(\cK,\cF)$
with $|\cK|\leq K$, we prove the result for $|\cK| = K+1$.
Define $\cK_* := \{R\in\cK: \degext(R)=2\}$. 

First, consider the case where $|\cK_*|=0$. Then
\begin{itemize}
\item Since every $\degext(R)$ is even, we must have $\degext(R) \geq 4$
for all $R\in\cK$. Therefore, denoting by $\cF_{\ext} \subseteq \cF$ those edges
that are not self-loops, we have $4|\cK| \leq 2|\cF_{\ext}|$.
\item We may assume
without loss of generality that each vertex $R \in \cK$ has
exactly one self-loop: For $R$ without a self-loop, we may add the self-loop
$e=(R,R)$ with the identity label $p_e(\bM)=\Id$. For $R$ with multiple
self-loops $\{e \in \cF:e=(R,R)\}$, we may replace these by a single self-loop
$e'=(R,R)$ having label $p_{e'}(\bM)=\prod_{e \in \cF:e=(R,R)}
\Delta(p_e(\bM))$.
These operations do not change the value of $M_n(G)$.
\end{itemize}
We denote by $e_R$ the unique self-loop on each vertex
$R \in \cK$. Then it follows that 
\begin{align*}
|M_n(G)| &\leq \frac{1}{n}\sum_{\bj\in[n]^{\cK}}^*
	\prod_{e=(R,R')\in\cF} |p_e(\bM)[j_R, j_{R'}]|\\
	&\leq \frac{1}{n} \sum_{\bj \in [n]^{\cK}}
\prod_{R \in \cK} |p_{e_R}(\bM)[j_R,j_R]|
\cdot \prod_{e\in\cF_{\ext}}
	\max_{i\neq j}  |p_e(\bM)[i,j]|\\
	&\leq \frac{1}{n} \cdot n^{(-1/2+\eps)|\cF_{\ext}|}
\cdot \sum_{\bj \in [n]^{\cK}} \prod_{R \in \cK} |p_{e_R}(\bM)[j_R,j_R]|\\
&\leq n^{-1+\eps|\cF_{\ext}|}
\prod_{R \in \cK} \bigg(\frac{1}{n}\sum_{j=1}^n
|p_{e_R}(\bM)[j,j]|\bigg).
\end{align*}
Here, the second inequality uses the constraint that indices $j_R,j_{R'}$ are
distinct if $R \neq R'$, 
the third inequality holds for any fixed $\eps>0$ and all large $n$
by the given assumption on $\bM$, and the last inequality applies
$n^{-|\cF_{\ext}|/2} \leq n^{-|\cK|}$ as follows from the above bound
$4|\cK| \leq 2|\cF_{\ext}|$. By Cauchy-Schwarz, we have
\[\Bigg(\frac{1}{n}\sum_{j=1}^n |p_{e_R}(\bM)[j,j]|\Bigg)^2
\leq \frac{1}{n}\sum_{j=1}^n p_{e_R}(\bM)[j,j]^2
=\frac{1}{n}\Tr \Delta(p_{e_R}(\bM))\Delta(p_{e_R}(\bM))\]
where $\Delta(p_{e_R}(\x))\Delta(p_{e_R}(\x))$ is a diagonal monomial. Then
this quantity has a limit value as $n \to \infty$, for each $R \in \cK$.
Choosing $\eps<1/|\cF_{\ext}|$, we conclude that $M_n(G) \to 0$.

Next, consider the case where $|\cK_*|>0$. We pick an arbitrary vertex
$R_*\in\cK_*$, 
and let $R_1,R_2\in\cK$ be its two neighbors (where $R_1 \neq R_*$ and $R_2 \neq
R_*$, but possibly $R_1=R_2$). Denote $e_1=(R_*,R_1)$, $e_2=(R_*,R_2)$, and
assume without loss of generality as above that $R_*$ has a unique self-loop
$e_*=(R_*,R_*)$. Then
\begin{align*}
	M_n(G) &= 
	\frac{1}{n}\sum_{\bj\in[n]^{\cK\setminus R_*}}^* \prod_{e=(R,R')\in\cF\setminus\{e_1,e_2\}} 
	p_e(\bM)[j_R,j_{R'}]\\
	&\qquad \times \mathop{\sum_{j_{R_*}=1}^n}_{j_{R_*} \notin \{j_S:S \in \cK \setminus
R_*\}} p_{e_1}(\bM)[j_{R_*},j_{R_1}] 
	p_{e_2}(\bM)[j_{R_*}, j_{R_2}] p_{e_*}(\bM)[j_{R_*},j_{R_*}]\\
	&:=\mathrm{I}-\sum_{S \in \cK \setminus R_*} \mathrm{II}(S)
\end{align*}
where we set
\begin{align*}
\mathrm{I}&=\frac{1}{n}\sum_{\bj\in[n]^{\cK\setminus R_*}}^*
\left(\prod_{e=(R,R')\in\cF\setminus\{e_1,e_2\}} p_e(\bM)[j_R,j_{R'}]\right)
\cdot \big(p_{e_1}\Delta(p_{e_*}) p_{e_2}\big)(\bM)
	[j_{R_1}, j_{R_2}],\\
\mathrm{II}(S)&=\frac{1}{n}\sum_{\bj\in[n]^{\cK\setminus R_*}}^*
\left(\prod_{e=(R,R')\in\cF\setminus\{e_1,e_2\}} p_e(\bM)[j_R,j_{R'}]\right)\\
&\hspace{1.5in}
\times p_{e_1}(\bM)[j_S,j_{R_1}]p_{e_2}(\bM)[j_S,j_{R_2}]p_{e_*}(\bM)[j_S,j_S].\notag
\end{align*}
Here $\mathrm{I}$ corresponds to the full summation over $j_{R_*} \in [n]$
without restriction, and each term $-\mathrm{II}(S)$ removes the contribution from
the case $j_{R_*}=j_S$.

The term $\mathrm{I}$ is exactly equal to $M_n(G')$ for a 
multi-graph $G'$ obtained from $G$ by removing vertex $R_*$ and the edges
$e_1,e_2$, adding a new edge between $R_1$ and $R_2$ with label
$p_{e_1}(\x) \Delta(p_{e_*}(\x)) p_{e_2}(\x)$. This graph
$G'$ is connected and has one fewer vertex than $G$. Each remaining
vertex in $G'$ has the same external degree as in $G$ if
$R_1 \neq R_2$, and if $R_1=R_2$ then the external degree of $R_1=R_2$ is
reduced by 2. In both cases, all external degrees in $G'$ remain
even. Then applying the inductive hypothesis to $G'$,
$\lim_{n \to \infty} \mathrm{I}$ exists and depends only on
$(G',\cD_\tdiag)$.

Each term $\mathrm{II}(S)$ is exactly equal to $M_n(G')$ for a multi-graph $G'$
that merges the vertices $S$ and $R_*$ of $G$ into a single vertex $S_*$ in $G'$,
and preserves all edges and their labels. The new vertex $S_*$
in $G'$ has external degree
equal to $\degext(S)+\degext(R_*)-2|\{e \in \cF:e=(S,R_*)\}|$, which is even.
It is clear that
$G'$ remains connected, and the external degrees of all other
vertices of $G'$ remain the same as in $G$. Then applying the inductive
hypothesis to $G'$, also $\lim_{n \to \infty} \mathrm{II}(S)$
exists and depends only on $(G',\cD_\tdiag)$, completing the induction.
\end{proof}

\begin{remark}
In the language of \cite{male2020traffic}, our proof of Lemma
\ref{lemma:Esyminvmoments} shows that if $\W$ is invariant in law
under conjugation by
permutations, then the expected tensor network value has a limit if
$\W$ converges in traffic distribution, and this value is universal
across matrices having the same limiting traffic distribution. Our arguments of
Lemmas \ref{lemma:Esyminvmoments} and \ref{lemma:Hlemma} further establish that
if $\W$ is also invariant under conjugation by random signs and satisfies the 
additional delocalization conditions of Definition \ref{def:syminvariant},
then it has a limit traffic distribution that is uniquely determined by
its limit diagonal law.
\end{remark}

We provide in Appendix~\ref{appendix:orthogonal} an alternative computation of 
$\limval_T(X_1,\ldots,X_k,\cD_\tdiag)$ for the special case where
$\bW$ is orthogonally invariant in law,
using the orthogonal Weingarten calculus \cite{collins2006integration}. We
establish the asymptotic freeness statement of Proposition \ref{prop:freeness}(b)
also in Appendix~\ref{appendix:orthogonal} via this computation.

Finally, we conclude the proof of Lemma~\ref{lemma:syminvmoments} by showing
concentration of the tensor network value.

\begin{lemma}\label{lemma:syminvfourthmoment}
Let $\E$ be the expectation over $\bPi$ conditional on $\bM$
and $\x_1,\ldots,\x_k$. Under the setting of Lemma \ref{lemma:syminvmoments},
almost surely as $n \to \infty$,
\[\val_T(\W;\x_1,\ldots,\x_k)-\E[\val_T(\W;\x_1,\ldots,\x_k)] \to 0.\]
\end{lemma}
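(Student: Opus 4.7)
The plan is to bound the fourth central moment $\E[(\val_T(\W;\x_{1:k}) - \E[\val_T(\W;\x_{1:k})])^4]$ by $O(n^{-2})$ and conclude via Markov's inequality and the Borel-Cantelli lemma, in direct analogy with Lemma~\ref{lem:Wignermoments_concentration}. As there, a variance bound would not suffice because the typical fluctuations of $\val_T(\W;\x_{1:k})$ about its conditional mean are of order $n^{-1/2}$.

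To set up the fourth moment, I would introduce four independent copies $(\bPi_V^{(j)}, \bPi_E^{(j)})$ for $j=1,2,3,4$ of the signed permutations and expand each of $\E[\val^4]$, $\E[\val^3]\E[\val]$, $\E[\val^2]\E[\val]^2$, and $\E[\val]^4$ as a sum over index tuples $(\bi_{1:4}, \bj_{1:4}, \bk_{1:4}, \bl_{1:4})$ via the representation (\ref{eq:orthoval2}), where copies sharing the same label of $\bPi_V, \bPi_E$ correspond to shared randomness. Analogous to the Wigner proof, I would associate to each tuple a multi-graph recording which of the four copies share common $\bi$-indices (and likewise for $\bk$-, $\bj$-, $\bl$-indices), and split the sum by the number of connected components of this multi-graph, yielding a decomposition $A_1 + A_2 + A_3 + A_4$ of the centered fourth moment.

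The terms $A_4$ (all four copies index-disjoint) vanish by independence of the four copies of $(\bPi_V^{(j)}, \bPi_E^{(j)})$, since the expected product of entries of $\bPi_V, \bPi_E$ appearing in (\ref{eq:EPi}) factors across the copies. The terms $A_3$ (exactly one pair of copies sharing indices) cancel by the same combinatorial symmetry argument as in the Wigner proof: factorization of the expectation over the three components lets us pair up the four summands of the centered moment with matching signs, using that the single-copy and two-copy expectations can be re-expressed interchangeably. For $A_1$ and $A_2$, the bound relies on (\ref{eq:EPi}) for $\bPi_V$ and $\bPi_E$, the delocalization assumption $|H[k,l]| < n^{-1/2+\eps}$, and careful dimension counting of the remaining index sums, following the same inductive orthogonality reductions as in Lemma~\ref{lemma:Hlemma}. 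Taking $\eps$ small enough yields a bound of order $n^{-2+o(1)}$.

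The main obstacle will be carrying out the cancellation argument for $A_3$ and the dimension counting for $A_1, A_2$ in the presence of the two-layer structure $\bO = \bPi_V \bH \bPi_E$, since one must track four sets of partitions simultaneously (from four copies of each of $\bPi_V$ and $\bPi_E$) along with the constraints that the relevant partitions be even. The key point is that each sharing of a $k$- or $l$-index across copies both shrinks the summation range (yielding a factor of $n$) and introduces an additional constraint via (\ref{eq:EPi}), so that the product of these gains, combined with $2w$ factors of $n^{-1/2+\eps}$ from $\bH$, produces the required $n^{-2}$ savings over the three decoupled copies.
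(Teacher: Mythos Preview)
Your overall strategy---bound the fourth central moment by $O(n^{-2})$ and apply Borel--Cantelli---matches the paper's, and your treatment of the ``few connected components'' case via delocalization and Lemma~\ref{lemma:Hlemma} is on the right track. However, the cancellation you propose for $A_3$ and $A_4$ does not go through as stated.

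The issue is that you are transplanting the Wigner argument, where independence of the entries of $\W$ makes $\E[W_{\bi_{1:4}}^{(j_1,\ldots,j_4)}]$ factor \emph{exactly} across index-disjoint tuples. Here the relevant randomness sits in the signed permutations $\bPi_V,\bPi_E$, whose entries are \emph{not} independent: for instance $\E[\Pi_V[1,1]^2\Pi_V[2,2]^2]=\tfrac{1}{n(n-1)}\neq\tfrac{1}{n^2}=\E[\Pi_V[1,1]^2]\,\E[\Pi_V[2,2]^2]$. Consequently, even when the four tuples $(\bi_a,\bk_a)$ are pairwise index-disjoint, the single-copy expectation $\E[\prod_{a,\rho}\Pi_V[i_{a,\rho},k_{a,\rho}]]$ equals $\frac{(n-s)!}{n!}$ with $s=\sum_a|\pi(\bi_a)|$, which does not coincide with the product of the per-copy expectations. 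So your $A_4$ is not zero, and the symmetry cancellation you invoke for $A_3$ likewise fails, since it too rests on exact factorization.

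The paper avoids this by organizing the fourth moment at the level of partitions of $[8w]$ rather than at the level of index tuples. After taking expectations via (\ref{eq:EPi}), the entire expression is a sum over even partitions $\pi\geq\pi_V^{\oplus 4}$ and $\pi'\geq\pi_E^{\oplus 4}$ (and $\sigma,\sigma'$ above them), and one introduces indicators $\chi_a=\1\{\pi,\pi'\leq\pi_a\}$ recording whether the $a^{\text{th}}$ copy of $[2w]$ is separated from the others. The key observation is that computing $\E[\val^3]\E[\val]$, $\E[\val^2]\E[\val]^2$, $\E[\val]^4$ with independent copies of $(\bPi_V,\bPi_E)$ amounts precisely to restricting this same sum to $\chi_4=1$, $\chi_3\chi_4=1$, $\chi_2\chi_3\chi_4=1$ respectively. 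Expanding $1=\prod_a(\chi_a+\bar\chi_a)$ and using $\bar\chi_1\chi_2\chi_3\chi_4=0$ then gives a purely algebraic cancellation in the combination $1-4+6-3$, leaving only terms with $\bar\chi_1\bar\chi_2\bar\chi_3\bar\chi_4=1$. For those terms one checks $|\sigma\vee\sigma'|\leq 2$, and Lemma~\ref{lemma:Hlemma} delivers the $n^{-2}$ saving directly---no separate ``connected component'' case analysis is needed.

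In short: the cancellation must be carried out on the partition lattice $\cP_4$, where the constraint imposed by independent copies is exactly $\chi_a=1$, rather than on a multi-graph of shared index values, where the would-be factorization is only approximate.
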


\begin{proof}
Let us write as shorthand $\val(\W)=\val_T(\W;\x_{1:k})$.
By Jensen's inequality,
\[\E[(\val(\W)-\E \val(\W))^4] \leq \E[(\val(\W)-\val(\bar{\W}))^4]\]
where $\bar{\W}=\bar{\bPi}\bM\bar{\bPi}^\top$, $\bar{\bPi}$ is an independent
copy of $\bPi$, and the expectation on the right side is over $(\bPi,\bar{\bPi})$.
We proceed to bound this expectation.

Recall the tensor network $T=(\cV,\cE,\{q_v\}_{v \in \cV})$.
Let $(\cV^{(1)},\cE^{(1)}),\ldots,(\cV^{(4)},\cE^{(4)})$ denote four
copies of the tree $(\cV,\cE)$. For any subset $A \subseteq \{1,2,3,4\}$, let
\[(\cV_A,\cE_A) \cong \bigsqcup_{a \in A} (\cV^{(a)},\cE^{(a)})\]
denote the graph that is the disjoint union of those copies corresponding to
$a \in A$, i.e.\ $(\cV_A,\cE_A)$ has $|A|$ connected components, each a copy of
$(\cV,\cE)$. We label each vertex $v \in \cV^{(a)} \subseteq \cV_A$ with the same
label $q_v$ as in the original tensor network $T$.
We write $\bar{A}=\{1,2,3,4\} \setminus
A$ as the complement of $A$, and $\bar{\bXi}$ and $\bar\sigma$ for the random
sign matrix and random permutation corresponding to $\bar\bPi$.
Then we have, similarly to (\ref{eq:invariantval}),
\begin{align}
&(\val(\W)-\val(\bar\W))^4\notag\\
&=\sum_{A \subseteq \{1,2,3,4\}} (-1)^{|A|} \prod_{a \in A} \val(\W)
\prod_{a \in \bar{A}} \val(\bar\W)\notag\\
&=\sum_{A \subseteq \{1,2,3,4\}} (-1)^{|A|}\frac{1}{n^4}
\sum_{\bi \in [n]^{\cV_A}}
\prod_{v \in \cV_A} q_v(x_{1:k}[i_v]) \prod_{(u,v) \in \cE_A}
\Xi[i_u] \cdot \Xi[i_v] \cdot M[\sigma(i_u),\sigma(i_v)]\notag\\
&\hspace{1in} \times \sum_{\bj \in [n]^{\cV_{\bar A}}}
\prod_{v \in \cV_{\bar A}} q_v(x_{1:k}[j_v]) \prod_{(u,v) \in \cE_{\bar A}}
\bar \Xi[j_u] \cdot \bar \Xi[j_v] \cdot M[\bar
\sigma(j_u),\label{eq:invariantvalfourth}
\bar \sigma(j_v)].
\end{align}

Let $\cP_A$ be the set of partitions of $\cV_A$, and denote by
$\pi(\bi) \in \cP_A$ the partition induced by $\bi \in [n]^{\cV_A}$. For each
$\pi \in \cP_A$,
let $G_\pi=(\cK_\pi,\cF_\pi)$ be the image of $(\cV_A,\cE_A)$ under $\pi$, in
the sense of Definition \ref{def:graphpartition}. Note that here,
$G_\pi$ is not necessarily connected but
can consist of up to $|A| \leq 4$ connected components.
Define $B_n(\pi)$ and $Q_n(\pi)$ exactly as in (\ref{eq:An}--\ref{eq:Qn}),
let $\cC(\pi)$ denote the set of connected components of $G_\pi$, and define
\begin{equation}\label{eq:Mngeneral}
M_n(\pi)=\frac{1}{n^{|\cC(\pi)|}} \sum_{\bj \in [n]^{\cK_\pi}}^*
\prod_{(R,S) \in \cF_\pi} M[j_R,j_S].
\end{equation}
This coincides with our previous definition of (\ref{eq:Mn}) when $\cC(\pi)=1$.
Define similarly $B_n(\bar\pi),Q_n(\bar\pi),M_n(\bar\pi)$ via the graph
$G_{\bar \pi}=(\cK_{\bar \pi},\cF_{\bar\pi})$ that is the image of $(\cV_{\bar
A},\cE_{\bar A})$ under $\bar\pi \in \cP_{\bar A}$.
Then, stratifying the sums over $\bi$ and $\bj$
by $\pi(\bi) \in \cP_A$ and $\pi(\bj) \in \cP_{\bar A}$,
and taking the expectation in (\ref{eq:invariantvalfourth})
over $(\bPi,\bar\bPi)$ using 
(\ref{eq:sign}--\ref{eq:permutation}), we get analogously to
(\ref{eq:orthoval3})
\begin{align}\label{eq:orthoEvalfourth}
&\E[(\val(\W)-\val(\bar\W))^4]\notag\\
&\qquad=\sum_{A \subseteq \{1,2,3,4\}} (-1)^{|A|}
\mathop{\sum_{\text{even } \pi \in \cP_A}}_{\text{even } \bar\pi \in \cP_{\bar A}}
\frac{n^{|\cC(\pi)|+|\cC(\bar\pi)|}}{n^4}
B_n(\pi)B_n(\bar\pi) \cdot Q_n(\pi)Q_n(\bar\pi) \cdot M_n(\pi)M_n(\bar\pi).
\end{align}

For $\pi \in \cP_A$ and $\bar\pi \in \cP_{\bar A}$,
we define $\tau=\pi \oplus \bar\pi \in \cP_{\{1,2,3,4\}}$ as the combined
partition of all vertices in $\cV_{\{1,2,3,4\}}$ given by taking the blocks of
both $\pi$ and $\bar\pi$. We write $G_\tau=(\cK_\tau,\cF_\tau)$ as the image of
$(\cV_{\{1,2,3,4\}},\cE_{\{1,2,3,4\}})$ under $\tau$;
this is the disjoint union of $G_\pi$ and $G_{\bar\pi}$, so in particular
\[|\cK_\tau|=|\cK_\pi|+|\cK_{\bar\pi}|,
\qquad |\cC(\tau)|=|\cC(\pi)|+|\cC(\bar\pi)|.\]
We now proceed to approximate $B_n(\pi)B_n(\bar\pi)$,
$Q_n(\pi)Q_n(\bar\pi)$, and $M_n(\pi)M_n(\bar\pi)$ by quantities that depend
only on $\tau$, and not on the individual partitions $\pi,\bar\pi$. We write
$O(n^{-\nu})$ for any error of magnitude at most $C/n^{\nu}$ for a constant
$C:=C(\pi,\bar\pi)>0$ and all large $n$.

For $B_n$, observe from the definition (\ref{eq:An}) that
\begin{align*}
B_n(\tau)&=\frac{n}{n} \cdot \frac{n}{n-1} \cdot \frac{n}{n-2}
\cdot \ldots \cdot \frac{n}{n-|\cK_\tau|+1}\\
&=1+\frac{\sum_{k=0}^{|\cK_\tau|-1} k}{n}+O(n^{-2})
=1+n^{-1}\binom{|\cK_\tau|}{2}+O(n^{-2}).
\end{align*}
Similarly,
\begin{align*}
B_n(\pi)B_n(\bar\pi)&=
1+n^{-1}\left(\binom{|\cK_\pi|}{2}+\binom{|\cK_{\bar\pi}|}{2}\right)+O(n^{-2}).
\end{align*}
In particular,
\begin{equation}\label{eq:Anapprox1}
B_n(\pi)B_n(\bar\pi)=B_n(\tau)+O(n^{-1})=1+O(n^{-1}).
\end{equation}
In the case where $G_\tau=(\cK_\tau,\cF_\tau)$ has 4 connected components,
i.e.\ each block of both $\pi$ and $\bar\pi$ is contained
within a single copy $\cV^{(a)}$ of $\cV$, let us write $G_\tau(a)=(\cK_\tau(a),
\cF_\tau(a))$ for the component corresponding to the partition of $\cV^{(a)}$.
Given any $\pi \in \cP_A$, $\bar\pi \in \cP_{\bar A}$, and
$\tau=\pi \oplus \bar\pi$, we then have
\[\binom{|\cK_\tau|}{2}
=\binom{|\cK_\pi|}{2}+\binom{|\cK_{\bar\pi}|}{2}
+\sum_{a \in A,b \notin A} |\cK_\tau(a)| \cdot |\cK_\tau(b)|\]
because to choose two elements of $\cK_\tau$, we may choose them both from
$\cK_\pi$, both from $\cK_{\bar\pi}$, or one from $\cK_\tau(a) \subseteq \cK_\pi$
and the other from $\cK_\tau(b) \subseteq \cK_{\bar \pi}$ for some $a \in A,b
\in \bar A$. This gives a refinement of (\ref{eq:Anapprox1}),
\begin{equation}\label{eq:Anapprox2}
B_n(\pi)B_n(\bar\pi)=B_n(\tau)+\sum_{a \in A,b \notin A}
B_n(\tau,a,b)+O(n^{-2})
\end{equation}
where we define $B_n(\tau,a,b)=-n^{-1}|\cK_\tau(a)| \cdot |\cK_\tau(b)|$.
Here $B_n(\tau,a,b)=O(n^{-1})$.

For $Q_n$, observe from the definition (\ref{eq:Qn}) that
the distinction between $Q_n(\pi)Q_n(\bar\pi)$ and $Q_n(\tau)$ is that the
former does not restrict indices of summation corresponding to $\pi$ to be
distinct from those corresponding to $\bar\pi$, i.e.
\begin{align*}
Q_n(\pi)Q_n(\bar\pi)&=Q_n(\tau)
+\frac{1}{n^{|\cK_\tau|}} \sum_{\bi \in [n]^{\cK_\pi}}^*
\sum_{\bj \in [n]^{\cK_{\bar\pi}}}^* \1\{\text{there is at least 1
pair of coinciding}\\
&\hspace{0.5in} \text{ indices between } \bi \text{ and } \bj\}
\times \prod_{R \in \cK_\pi} Q_R(x_{1:k}[i_R])
\prod_{R' \in \cK_{\bar\pi}} Q_{R'}(x_{1:k}[j_{R'}]).
\end{align*}
By Lemma \ref{lemma:distinctness}, the quantities $Q_n(\pi)$, $Q_n(\bar\pi)$,
and $Q_n(\tau)$ all have deterministic limit values as $n \to \infty$.
Furthermore, by a simple inclusion-exclusion argument together with
Lemma \ref{lemma:distinctness}, in the above double summation
the contribution from pairs $(\bi,\bj)$ coinciding in exactly $k$ pairs of
indices is of size $O(n^{-k})$. Then in particular,
\begin{equation}\label{eq:Qnapprox1}
Q_n(\pi)Q_n(\bar\pi)=Q_n(\tau)+O(n^{-1})=O(1).
\end{equation}
In the case where $G_\tau$ has 4 connected components, let us write more
explicitly
\begin{align*}
Q_n(\pi)Q_n(\bar\pi)&=Q_n(\tau)
+\frac{1}{n^{|\cK_\tau|}} \sum_{\bi \in [n]^{\cK_\pi}}^*
\sum_{\bj \in [n]^{\cK_{\bar\pi}}}^* \1\{\text{there is exactly 1 pair of
coinciding}\\
&\text{ indices between } \bi \text{ and } \bj\}
\times \prod_{R \in \cK_\pi} Q_R(x_{1:k}[i_R])
\prod_{R' \in \cK_{\bar\pi}} Q_{R'}(x_{1:k}[j_{R'}]) + O(n^{-2}).
\end{align*}
We may choose the coinciding index pair by choosing 1 vertex
$R \in \cK_\tau(a) \subseteq \cK_\pi$ for some $a \in A$,
and 1 vertex $R' \in \cK_\tau(b) \subseteq \cK_{\bar \pi}$
for some $b \in \bar A$. Now viewing $R \in \pi$ and $R' \in \bar\pi$ as disjoint
blocks of vertices of $\cV$, note that if $S=R \cup R' \subseteq \cV$
is the block obtained upon merging $R,R'$,
then by definition $Q_S=Q_R \cdot Q_{R'}$. Thus, the above is equivalent to
\begin{equation}\label{eq:Qnapprox2}
Q_n(\pi)Q_n(\bar\pi)=Q_n(\tau)+\sum_{a \in A,b \notin A} Q_n(\tau,a,b)+O(n^{-2})
\end{equation}
where we define
\begin{align*}
Q_n(\tau,a,b)
&=\frac{1}{n^{|\cK_\tau|}} \sum_{\bj \in [n]^{\cK_\tau}}
\1\{\bj \text{ has } |\cK_\tau|-1 \text{ distinct indices, and 1 index from }\\
&\hspace{1in} \cK_\tau(a) \text{ coincides with 1 index
from } \cK_\tau(b)\} \times \prod_{R \in \cK_\tau} Q_R(x_{1:k}[j_R]).
\end{align*}
By the preceding arguments, $Q_n(\tau,a,b)=O(n^{-1})$.

For $M_n$, consider any $A \subseteq \{1,2,3,4\}$ and
$\pi \in \cP_A$. Recall that $\cC(\pi)$ is the set of
connected components of $G_\pi$. Each component in
$\cC(\pi)$ takes the form $G_\sigma=(\cK_\sigma,\cF_\sigma)$ where $\sigma$ is a
partition that contains a subset of the blocks of $\pi$. Let us write
$\sum_{\bj \in [n]^{\cK_\pi}}^{**}$
for the summation over tuples $\bj$ such that indices corresponding to
each component $\cK_\sigma \subseteq \cK_\pi$ are distinct,
but they are not necessarily distinct
across different components. Recalling (\ref{eq:Mngeneral}), define
\[M_n^{**}(\pi):=\prod_{G_\sigma \in \cC(\pi)} M_n(\sigma)
=\frac{1}{n^{|\cC(\pi)|}} \sum_{\bj \in [n]^{\cK_\pi}}^{**} \prod_{(R,S) \in
\cF_\pi} M[j_R,j_S]\]
where $M_n^{**}(\pi)$ is now a multiplicative function over connected components
of $\pi$. Since each $G_\sigma$ is connected, Lemma \ref{lemma:Hlemma}(a) implies
the existence of the limit
\begin{equation}\label{eq:tildeMnlimit}
M_n^{**}(\pi) \to \prod_{G_\sigma \in \cC(\pi)} M(G_\sigma,\cD_{\tdiag}).
\end{equation}
Comparing the definitions of $M_n^{**}(\pi)$ and $M_n(\pi)$, we have
$M_n^{**}(\pi)=M_n(\pi)$ if $G_\pi$ has a single connected component
$|\cC(\pi)|=1$, and more generally
\begin{align*}
M_n(\pi)&=M_n^{**}(\pi)-\frac{1}{n^{|\cC(\pi)|}}
\sum_{\bj \in [n]^{\cK_\pi}}^{**}
\1\{\text{some indices of } \bj \text{ for different connected }\\
&\hspace{1in} \text{ components of } G_\pi \text{ coincide}\}
\times \prod_{(R,S) \in \cF_\pi} M[j_R,j_S].
\end{align*}
For each $\bj$ where this summand is non-zero, define
$\pi'(\bj) \in \cP_A$ as the partition that merges those blocks of $\pi$
where the corresponding indices of $\bj$ coincide.
Let $\cP(\pi)$ be the set of all possible such partitions $\pi'(\bj)$.
(If $|\cC(\pi)|=1$, then $\cP(\pi)=\emptyset$.)
Then, stratifying the summation over $\bj$ by $\pi'(\bj) \in \cP(\pi)$,
letting $G_{\pi'}=(\cK_{\pi'},\cF_{\pi'})$ be the image of $(\cV_A,\cE_A)$
under $\pi'$, and identifying the sum over $\{\bj:\pi'(\bj)=\pi'\}$ as a sum over
one distinct index for each $R \in \cK_{\pi'}$,
\begin{align}
M_n(\pi)&=M_n^{**}(\pi)-\frac{1}{n^{|\cC(\pi)|}} \sum_{\pi' \in \cP(\pi)}
\sum_{\bj \in [n]^{\cK_{\pi'}}}^* \prod_{(R,S) \in \cF_{\pi'}} M[j_R,j_S]\notag\\
&=M_n^{**}(\pi)-\sum_{\pi' \in \cP(\pi)} \frac{1}{n^{|\cC(\pi)|-|\cC(\pi')|}}
M_n(\pi').\label{eq:Mnrelation}
\end{align}
For any $\pi' \in \cP(\pi)$, its number of connected components
satisfies $|\cC(\pi')| \leq |\cC(\pi)|-1$. In particular, if $|\cC(\pi)|=2$,
then $|\cC(\pi')|=1$ for all $\pi' \in \cP(\pi)$, so $M_n(\pi')=M_n^{**}(\pi')$
on the right side of (\ref{eq:Mnrelation}).
If $|\cC(\pi)| \geq 3$, then we may apply this identity (\ref{eq:Mnrelation})
recursively to further approximate $M_n(\pi')$ on the right
side of (\ref{eq:Mnrelation}) by $M_n^{**}(\pi')$, until only instances of
$M_n^{**}$ and no instances of $M_n$ remain.
Applying (\ref{eq:tildeMnlimit}) to each instance of $M_n^{**}$ in
this final expression, this shows that
\[M_n(\pi)=M_n^{**}(\pi)+O(n^{-1})=O(1).\]
Applying this for $\pi \in \cP_A$, $\bar\pi \in \cP_{\bar A}$, and $\tau=\pi
\oplus \bar\pi$, and recalling that
$M_n^{**}$ is multiplicative across connected components so that
$M_n^{**}(\tau)=M_n^{**}(\pi)M_n^{**}(\bar\pi)$, this yields
\begin{equation}\label{eq:Mnapprox1}
M_n(\pi)M_n(\bar\pi)=M_n(\tau)+O(n^{-1})=O(1).
\end{equation}

When $G_\tau$ has 4 connected components, let us derive a more explicit
expression for this $O(n^{-1})$ error. Applying (\ref{eq:Mnrelation}) and the
above arguments to $\tau$, we have
\[M_n(\tau)=M_n^{**}(\tau)
-\frac{1}{n}\sum_{\tau' \in \cP(\tau):|\cC(\tau')|=|\cC(\tau)|-1}
M_n^{**}(\tau')+O(n^{-2}).\]
If $\tau' \in \cP(\tau)$ and $|\cC(\tau')|=|\cC(\tau)|-1$, then $\tau'$ is
obtained by picking exactly two connected components of $G_\tau$, say
$G_\tau(a)=(\cK_\tau(a),\cF_\tau(a))$ and $G_\tau(b)=(\cK_\tau(b),\cF_\tau(b))$,
and merging one or more pairs of blocks $R \in \cK_\tau(a)$ with $R' \in
\cK_\tau(b)$. We write the set of such partitions $\tau' \in \cP(\tau)$
corresponding to the two fixed indices $a \neq b$ as $\cP(\tau,a,b)$. Then
\[M_n(\tau)=M_n^{**}(\tau)
-\frac{1}{n}\sum_{1 \leq a<b \leq 4} \sum_{\tau' \in \cP(\tau,a,b)}
M_n^{**}(\tau')+O(n^{-2}).\]
Similarly
\begin{align*}
M_n(\pi)&=M_n^{**}(\pi)
-\frac{1}{n}\mathop{\sum_{a<b}}_{a,b \in A} \sum_{\pi' \in \cP(\pi,a,b)}
M_n^{**}(\pi')+O(n^{-2}),\\
M_n(\bar\pi)&=M_n^{**}(\bar\pi)
-\frac{1}{n}\mathop{\sum_{a<b}}_{a,b \in \bar A} \sum_{\bar\pi' \in \cP(\bar\pi,a,b)}
M_n^{**}(\bar\pi')+O(n^{-2}).
\end{align*}
Taking the product of these two expressions and applying multiplicativity of
$M_n^{**}$, we deduce
\begin{equation}\label{eq:Mnapprox2}
M_n(\pi)M_n(\bar\pi)=M_n(\tau)+\sum_{a \in A,b \in \bar A}
M_n(\tau,a,b)+O(n^{-2})
\end{equation}
where we define
$M_n(\tau,a,b)=\frac{1}{n}\sum_{\tau' \in \cP(\tau,a,b)} M_n^{**}(\tau')$.
Here again, $M_n(\tau,a,b)=O(n^{-1})$.

Equipped with these approximations, we now bound (\ref{eq:orthoEvalfourth}).
Given $\tau \in \cP_{\{1,2,3,4\}}$, let $\cA(\tau)$ be the set of $A \subseteq
\{1,2,3,4\}$ for which $\tau=\pi \oplus \bar\pi$ for some $\pi \in \cP_A$ and
$\bar\pi \in \cP_{\bar A}$, i.e.\ $A \in \cA(\tau)$ if and only if each
connected component of $G_\tau$ corresponds to vertices belonging entirely to
$\cV_A$ or entirely to $\cV_{\bar A}$. Note that given
$\tau=\pi \oplus \bar\pi$ and $A \in \cA(\tau)$, this uniquely determines
$\pi \in \cP_A$ and $\bar\pi \in \cP_{\bar A}$. Then, stratifying the summation 
in (\ref{eq:orthoEvalfourth}) by the number of connected components
$|\cC(\tau)|=|\cC(\pi)|+|\cC(\bar\pi)|$, we have
\[\E[(\val(\W)-\val(\bar \W))^4]=E_1+E_2+E_3+E_4\]
where
\begin{align*}
E_j&=\frac{1}{n^{4-j}}\mathop{\sum_{\text{even } \tau \in
\cP_{\{1,2,3,4\}}}}_{|\cC(\tau)|=j}
\sum_{A \in \cA(\tau)} (-1)^{|A|} B_n(\pi)B_n(\bar\pi)
\cdot Q_n(\pi)Q_n(\bar\pi) \cdot M_n(\pi)M_n(\bar\pi).
\end{align*}
Here, $\pi$ and $\bar\pi$ on the right side denote those partitions that are
uniquely determined by $\tau=\pi \oplus \bar\pi$ and $A \in \cA(\tau)$;
we omit their dependence on $(\tau,A)$ for brevity.

Applying the simple the bound
$B_n(\pi)B_n(\bar \pi)Q_n(\pi)Q_n(\bar \pi)M_n(\pi)M_n(\bar \pi)=O(1)$
from (\ref{eq:Anapprox1}), (\ref{eq:Qnapprox1}), and (\ref{eq:Mnapprox1}), we
get $E_1=O(n^{-3})$ and $E_2=O(n^{-2})$.

For $E_3$, applying the approximation
$B_n(\pi)B_n(\bar \pi)=B_n(\tau)+O(n^{-1})$,
$Q_n(\pi)Q_n(\bar \pi)=Q_n(\tau)+O(n^{-1})$, and
$M_n(\pi)M_n(\bar \pi)=M_n(\tau)+O(n^{-1})$ 
from (\ref{eq:Anapprox1}), (\ref{eq:Qnapprox1}), and (\ref{eq:Mnapprox1}),
we have
\[E_3=\frac{1}{n}\sum_{\text{even } \tau \in
\cP_{\{1,2,3,4\}}:|\cC(\tau)|=3} B_n(\tau)Q_n(\tau)M_n(\tau)
\sum_{A \in \cA(\tau)} (-1)^{|A|}+O(n^{-2}).\]
Importantly, the leading term $B_n(\tau)Q_n(\tau)M_n(\tau)$ does not depend on
$A$, so we have factored it outside of the sum over $A$, and the lower order
terms all contribute to the $O(n^{-2})$ error.
For any $\tau$ where $|\cC(\tau)|=3$, we have $\sum_{A \in
\cA(\tau)} (-1)^{|A|}=0$: For example, if the 3 connected components of
$G_\tau$ correspond to vertices in $\cV_1$, $\cV_2$, and $\cV_{\{3,4\}}$, then
$\cA(\tau)=\{\emptyset,\{1\},\{2\},\{1,2\},\{3,4\},\{1,3,4\},\{2,3,4\},
\{1,2,3,4\}\}$. Thus, we get $E_3=O(n^{-2})$.

Finally, for $E_4$, we apply the finer approximations
(\ref{eq:Anapprox2}), (\ref{eq:Qnapprox2}), and (\ref{eq:Mnapprox2}) which hold
when $|\cC(\tau)|=4$. In this case $\cA(\tau)$ consists of all subsets of
$\{1,2,3,4\}$, so
\begin{align*}
E_4&=\sum_{\text{even } \tau \in
\cP_{\{1,2,3,4\}}:|\cC(\tau)|=4} \Bigg(B_n(\tau)Q_n(\tau)M_n(\tau)
\sum_{A \subseteq \{1,2,3,4\}} (-1)^{|A|}\\
&\qquad+\sum_{a \neq b \in \{1,2,3,4\}} \Big[B_n(\tau)Q_n(\tau)M_n(\tau,a,b)
+B_n(\tau)Q_n(\tau,a,b)M_n(\tau)\\
&\hspace{1in}+B_n(\tau,a,b)Q_n(\tau)M_n(\tau)\Big]
\mathop{\sum_{A \subseteq \{1,2,3,4\}}}_{a \in A,b \in \bar{A}}
(-1)^{|A|}\Bigg)+O(n^{-2}).
\end{align*}
Importantly, we have exchanged the order of summations over $A$ and
over $(a \in A,b \in \bar A)$, and used that each term $B_n(\tau,a,b),Q_n(\tau,a,b),M_n(\tau,a,b)$
does not depend on the assignment of
the remaining indices $\{1,2,3,4\} \setminus \{a,b\}$ to $A$ and $\bar A$.
Then, applying $\sum_{A \subseteq \{1,2,3,4\}} (-1)^{|A|}=0$
and also $\sum_{A \subseteq \{1,2,3,4\}:a \in A,b \in \bar{A}} (-1)^{|A|} =0$ for each fixed
pair $a,b$, we get $E_4=O(n^{-2})$.

Combining the above, we have $\E[(\val(\W)-\E \val(\W))^4] \leq C/n^2$
for a constant $C>0$ and all large $n$.
Then Lemma \ref{lemma:syminvfourthmoment} follows from
Markov's inequality and the Borel-Cantelli lemma.
\end{proof}

Combining Lemmas~\ref{lemma:Esyminvmoments} and \ref{lemma:syminvfourthmoment}
completes the proof of Lemma~\ref{lemma:syminvmoments}.

\subsection{Universality of AMP via polynomial approximation}
We now prove Lemma \ref{lemma:Wignercompare},
showing that the universality of AMP for Lipschitz non-linearities\footnote{By
this we mean that each non-linearity $u_{t+1}(\cdot)$ is Lipschitz 
in its first $t$ arguments $z_1,\ldots,z_t$.} can be obtained from universality of tensor network values by polynomial approximation.

For the given AMP algorithm with Lipschitz non-linearities $u_{t+1}(\cdot)$,
we approximate it by an auxiliary AMP algorithm with polynomial 
non-linearities $\tilde{u}_{t+1}(\cdot)$, where each $\tilde{u}_{t+1}$ is an 
$L_2$-approximation for $u_{t+1}$ with respect to the state evolution of its 
arguments.
A similar method of approximation was recently
used in \cite{dudeja2022universality}.
Combining this approximation, the validity of state evolution for
polynomial AMP applied to $\G$, and
the universality of tensor network values for $\G$ and $\W$,
we show that iterates of the Lipschitz and polynomial AMP algorithms applied to
$\W$ are close in (normalized) $\ell_2$ distance. This will imply the desired 
$W_2$-convergence of the AMP iterates \eqref{eq:AMP} to their state evolution.

We construct the auxiliary AMP algorithm as follows: Fix any $\eps>0$.
For the same initialization $\tilde \ub_1 = \ub_1$ and vectors of side 
information $\mathbf{f}_1, \ldots, \mathbf{f}_k$ as in the given Lipschitz AMP 
algorithm \eqref{eq:AMP}, define the iterates for $t=1,2,3,\ldots$
\begin{align}
\begin{aligned}\label{eq:aux_AMP_sym}
\tilde\zb_t &= \bW \tilde \ub_t - \sum_{s=1}^t \tilde b_{ts} \tilde \ub_s\\
\tilde\ub_{t+1} &= \tilde{u}_{t+1}(\tilde\zb_1, \ldots, \tilde\zb_t, 
\fb_1, \ldots, \fb_k)
\end{aligned}
\end{align}
such that
\begin{enumerate}
\item Each coefficient $\tilde{b}_{ts}$ is defined by $\tilde u_2,\tilde
u_3,\ldots,\tilde u_t$
and the orthogonally invariant prescription (\ref{eq:symorthob}).
\item Let $\widetilde{\bSigma}_t$ be the orthogonally invariant
prescription (\ref{eq:symorthobSigma}), and let
$(U_1,F_{1:k},\widetilde{Z}_{1:t})$ be the state evolution where 
$\widetilde{Z}_{1:t} \sim \Normal(0,\widetilde{\bSigma}_t)$ is independent of
$(U_1,F_{1:k})$. 
Then each polynomial $\tilde u_{t+1}(\cdot)$ is chosen to satisfy
\begin{equation}\label{eq:polyapprox}
\E\big[\big(\tilde u_{t+1}(\widetilde{Z}_{1:t},F_{1:k})
-u_{t+1}(\widetilde{Z}_{1:t},F_{1:k})\big)^2\big]<\eps.
\end{equation}
\item For any fixed arguments $z_{1:(t-1)}$ and $f_{1:k}$, the function $z_t
\mapsto \tilde u_{t+1}(z_{1:t},f_{1:k})$ has non-linear dependence in $z_t$.
\end{enumerate}
We write the iterates as $\tilde\zb_t(\bW), \tilde\ub_t(\bW)$ if we want to 
make explicit that the algorithm is evaluated on the matrix $\bW$.

The choice of $\tilde u_{t+1}$ in condition (2) above is possible by the
polynomial density condition in Assumption~\ref{assump:ufconvergence}, and by 
Lemma~\ref{lemma:prod_determinance} which
ensures that the same density condition holds for 
$(U_1,F_{1:k},\widetilde{Z}_{1:t})$.
If condition (3) does not hold for this polynomial $\tilde u_{t+1}$, then it 
must hold upon adding to $\tilde u_{t+1}$ a small multiple of $z_t^2$. 
The conditions of \cite[Assumption 4.2]{fan2022approximate} are verified by 
Assumption~\ref{assump:ufconvergence}, the condition $\Var[D]>0$ given
in Lemma \ref{lemma:Wignercompare}, and the above condition (3). Then
\cite[Theorem 4.3]{fan2022approximate} ensures, almost surely as $n \to \infty$,
\begin{equation}\label{eq:Gstateevolution}
(\u_1,\f_1, \ldots, \fb_k,\tilde{\z}_1(\G), \ldots, \tilde\zb_t(\bG)) \toW 
(U_1,F_1, \ldots, F_k,\widetilde{Z}_1, \ldots, \widetilde Z_t).
\end{equation}

\begin{lemma}\label{lemma:polynomial_tensor_network_decomposition}
Fix any $t \geq 1$.
Let $(\tilde\ub_1, \tilde\zb_1, \ldots, \tilde\zb_t, 
\tilde\ub_t)$ be the iterates of any algorithm of the
form \eqref{eq:aux_AMP_sym}, where $\{\tilde{b}_{ts}\}$ are scalar constants and
$\tilde{u}_{t+1}:\R^{t+k} \to \R$ are polynomial functions applied row-wise.
Then for any polynomial $p: \RR^{2t+k} \to \RR$ and for some finite set $\cF$ of
diagonal tensor networks in $k+1$ variables,
\begin{align*}
\langle p(\tilde\ub_1, \ldots, \tilde\ub_t,\tilde\zb_1, \ldots, \tilde\zb_t,
\fb_1, \ldots, \fb_k) \rangle 
= \sum_{T \in \cF} \val_T(\bW; \ub_1, \fb_1, \ldots, \fb_k). 
\end{align*} 
\end{lemma}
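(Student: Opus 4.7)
The plan is to prove the statement by induction on $t$, showing that each iterate coordinate $\tilde u_t[i]$ and $\tilde z_t[i]$ can be written as a linear combination of \emph{rooted tree expressions}. By a rooted tree expression at index $i$ I mean a quantity of the form
\[
R_T[i] \;=\; \sum_{\bi \in [n]^{\cV}\,:\,i_{v_0}=i}
\prod_{v \in \cV} q_v(u_1[i_v],f_1[i_v],\ldots,f_k[i_v])
\prod_{(u,v)\in \cE} W[i_u,i_v],
\]
where $T=(\cV,\cE,\{q_v\})$ is a diagonal tensor network in the variables $(u_1,f_1,\ldots,f_k)$ and $v_0 \in \cV$ is a distinguished \emph{root vertex} whose index is pinned to $i$ rather than summed. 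Once this representation is established at the level of individual coordinates, the lemma follows immediately: expanding the polynomial $p$ into monomials and multiplying rooted tree expressions at index $i$ produces another rooted tree expression at $i$ (by identifying the roots and multiplying the root polynomials, and absorbing the factors $f_j[i]$ into the root polynomial); averaging $\frac{1}{n}\sum_i$ then frees the root index and reproduces $\val_T(\W;\u_1,\f_{1:k})$.

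For the base case $t=1$, note that $\tilde u_1[i]=u_1[i]$ is a rooted tree expression on the single-vertex tree with $q_{v_0}(u,f_{1:k})=u$. For the inductive step, assume $\tilde u_s[i]$ and $\tilde z_s[i]$ are rooted tree expressions (or linear combinations thereof) for all $s \leq t$. From
\[
\tilde z_t[i] \;=\; \sum_{j=1}^n W[i,j]\,\tilde u_t[j] \;-\; \sum_{s=1}^t \tilde b_{ts}\,\tilde u_s[i],
\]
each summand $W[i,j]\,\tilde u_t[j]$ can be turned into a rooted tree expression at $i$ by introducing a new root vertex $v_0'$ (pinned to $i$, carrying the trivial polynomial $q_{v_0'}\equiv 1$) and a new edge from $v_0'$ to the old root $v_0$ of the tree representing $\tilde u_t[j]$ (whose index now becomes the dummy $j$, summed from $1$ to $n$); the Onsager terms $\tilde u_s[i]$ are already rooted tree expressions at $i$. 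Hence $\tilde z_t[i]$ is a linear combination of rooted tree expressions at $i$. Next, since $\tilde u_{t+1}$ is a polynomial,
\[
\tilde u_{t+1}[i] \;=\; \tilde u_{t+1}\!\bigl(\tilde z_1[i],\ldots,\tilde z_t[i],f_1[i],\ldots,f_k[i]\bigr)
\]
expands as a linear combination of monomials $\prod_a \tilde z_{s_a}[i]\cdot \prod_b f_{j_b}[i]$. The product of rooted tree expressions $R_{T_1}[i]\cdots R_{T_m}[i]$ is again a rooted tree expression at $i$, obtained by taking the disjoint union of $T_1,\ldots,T_m$ and then identifying all their root vertices into a single root whose polynomial is the product of the individual root polynomials; the remaining graph is still a tree because each $T_a$ is a tree and they are joined only at the root. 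The extra factors $f_{j_b}[i]$ are absorbed into this combined root polynomial. This closes the induction.

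The only point requiring a little care is the root-gluing operation in the inductive step for $\tilde u_{t+1}$: one must verify that merging the roots of several trees yields a tree (not a graph with cycles), which holds because a disjoint union of trees joined at a single common vertex is itself a tree. I do not expect any serious obstacle beyond keeping track of this bookkeeping, since the polynomial structure of $\tilde u_{t+1}$ guarantees finite summations at every step, so the set $\cF$ of tensor networks produced is finite. Finally, substituting into $\langle p(\tilde\u_{1:t},\tilde\z_{1:t},\f_{1:k})\rangle$, expanding $p$ into monomials, gluing the resulting rooted trees at the common root $i$, and averaging over $i \in [n]$ yields the desired identity
\[
\langle p(\tilde\u_1,\ldots,\tilde\u_t,\tilde\z_1,\ldots,\tilde\z_t,\f_1,\ldots,\f_k) \rangle
\;=\; \sum_{T \in \cF} \val_T(\W;\u_1,\f_1,\ldots,\f_k).
\]
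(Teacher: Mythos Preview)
Your proposal is correct and is essentially the same argument as the paper's, just organized in the opposite direction: you build up each coordinate $\tilde u_t[i],\tilde z_t[i]$ forward in $t$ as a linear combination of rooted-tree expressions, whereas the paper starts from the single-vertex tensor network $\val_T(\W;\tilde\u_{1:t},\tilde\z_{1:t},\f_{1:k})$ and peels off the highest-indexed variable one at a time via the two reductions \eqref{eq:sym_reduction_1} and \eqref{eq:sym_reduction_2}. Both arguments rest on the same two operations---attaching a new leaf vertex through $\W$ when expanding $\tilde\z_t=\W\tilde\u_t-\sum_s\tilde b_{ts}\tilde\u_s$, and merging several trees at a common vertex when applying a polynomial non-linearity---so the difference is purely organizational; the paper's version avoids introducing the ``rooted'' notion by working throughout with tensor networks in the intermediate variables $(\tilde\u_{1:t},\tilde\z_{1:t},\f_{1:k})$.
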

\begin{proof}
First note that 
\begin{align*}
\langle p(\tilde\ub_{1:t}, \tilde\zb_{1:t},\fb_{1:k}) \rangle 
= \val_T(\bW; \tilde\ub_{1:t}, \tilde\zb_{1:t}, \fb_{1:k})
\end{align*}
where $T$ is a tensor network with only one vertex $v$ whose associated 
polynomial is $q_v = p$.

We claim that given any tensor network $T=(\cV,\cE,\{q_v\}_{v \in \cV})$
in the variables $(\tilde u_{1:t},\tilde z_{1:t},f_{1:k})$, we can decompose
\begin{align}\label{eq:sym_reduction_1}
\val_T(\bW; \tilde\ub_{1:t}, \tilde\zb_{1:t}, \fb_{1:k}) 
&= \sum_{T' \in \cF} \val_{T'}(\bW; \tilde\ub_{1:t}, \tilde\zb_{1:(t-1)}, \fb_{1:k})
\end{align}
where $\cF$ is a finite set of tensor networks in the variables
$(\tilde u_{1:t}, \tilde z_{1:(t-1)}, f_{1:k})$. 
To show this, recall that
\begin{align*}
\val_T(\bW; \tilde\ub_{1:t}, \tilde\zb_{1:t}, \fb_{1:k})
&= \frac{1}{n}\sum_{\bi \in [n]^\cV} \prod_{v \in \cV}
q_v(\tilde u_{1:t}[i_v], \tilde z_{1:t}[i_v], f_{1:k}[i_v]) W_{\bi|T}.
\end{align*}
Applying $\tilde\zb_t = \bW \tilde\ub_t 
- \sum_{s=1}^t \tilde b_{ts} \tilde\ub_s$ and
expanding each $q_v$ in terms of $(\tilde\ub_{1:t}, \tilde\zb_{1:(t-1)}, 
\fb_{1:k})$ and $\bW\tilde \ub_t$, we have 
\begin{align*}
q_v(\tilde u_{1:t}[i_v], \tilde z_{1:t}[i_v], f_{1:k}[i_v]) = 
\sum_{\theta=0}^{\Theta_v} q_{v,\theta}(\tilde u_{1:t}[i_v], 
\tilde z_{1:(t-1)}[i_v], f_{1:k}[i_v]) 
\cdot \bigg(\sum_{j=1}^n W[i_v, j] \tilde u_t[j]\bigg)^\theta
\end{align*}
where $\Theta_v$ is the maximum degree of $q_v$ in $\tilde z_t$, and $q_{v,0},
q_{v,1}, \ldots, q_{v, \Theta_v}$ are polynomials that depend on $q_v$ and 
$\{\tilde b_{ts}\}$.
Therefore
\begin{align*}
\val_T(\bW; \tilde\ub_{1:t}, \tilde\zb_{1:t}, \fb_{1:k}) 
&=\frac{1}{n} \sum_{\btheta \in \prod_{v \in \cV} \{0,\ldots,\Theta_v\}}
\sum_{\bi \in [n]^\cV} \prod_{v \in \cV} q_{v,\theta_v}
(\tilde u_{1:t}[i_v], \tilde z_{1:(t-1)}[i_v], f_{1:k}[i_v])\notag\\
&\hspace{1in}\cdot \bigg(\sum_{j=1}^n W[i_v, j] \tilde u_t[j]\bigg)^{\theta_v} 
\prod_{(u,v)\in \cE} W[i_u,i_v]
\end{align*}
For each $\btheta \in \prod_{v\in\cV} \{0,\ldots,\Theta_v\}$, we define a new 
tensor network $T_{\btheta}$ from $T$ as follows: (1) for each $v \in \cV$, 
replace the associated polynomial $q_v$ by $q_{v, \theta_v}$; (2) for each 
$v \in \cV$, connect $v$ with $\theta_v$ new vertices, where the associated 
polynomial for each new vertex is 
$q(\tilde u_{1:t}, \tilde z_{1:(t-1)}, f_{1:k}) = \tilde u_t$.
Then the above is precisely
\begin{align*}
\val_T(\bW; \tilde\ub_{1:t}, \tilde\zb_{1:t}, \fb_{1:k}) = 
\sum_{\btheta \in \prod_{v \in \cV}\{0,\ldots,\Theta_v\}}
\val_{T_{\btheta}}(\bW; \tilde\ub_{1:t}, \tilde\zb_{1:(t-1)}, \fb_{1:k})
\end{align*}
which shows the claim \eqref{eq:sym_reduction_1}.

We next claim that for any tensor network $T$ in the variables
$(\tilde u_{1:t}, \tilde z_{1:(t-1)}, f_{1:k})$, we have
\begin{align}\label{eq:sym_reduction_2}
\val_T(\bW; \tilde\ub_{1:t}, \tilde\zb_{1:(t-1)},\fb_{1:k})
&=\val_{T'}(\bW; \tilde\ub_{1:(t-1)},\tilde\zb_{1:(t-1)}, \fb_{1:k})
\end{align}
for a tensor network $T'$ in the variables
$(\tilde u_{1:(t-1)}, \tilde z_{1:(t-1)}, f_{1:k})$. This holds because
$\tilde\ub_t = \tilde u_t(\tilde\zb_{1:(t-1)}, \fb_{1:k})$ is itself a 
polynomial of $(\tilde\zb_{1:(t-1)}, \fb_{1:k})$, so for each vertex $v$ of 
$T$, we may write
\[q_v(\tilde\ub_{1:(t-1)}, \tilde u_t(\tilde\zb_{1:(t-1)}, \fb_{1:k}), 
\tilde\zb_{1:(t-1)}, \fb_{1:k}) = \tilde q_v(\tilde\ub_{1:(t-1)}, 
\tilde\zb_{1:(t-1)}, \fb_{1:k})\]
for some polynomial $\tilde q_v$. Then we can define $T'$ by replacing each
polynomial $q_v$ with $\tilde q_v$ and preserving all other structures of $T$.

Having shown the reductions (\ref{eq:sym_reduction_1}) and
(\ref{eq:sym_reduction_2}), the proof is completed by recursively applying
these reductions for $t,t-1,t-2,\ldots,1$.
\end{proof}

Combining the above lemma, the state evolution (\ref{eq:Gstateevolution})
for the polynomial AMP algorithm applied to $\G$, and the given condition in
Lemma \ref{lemma:Wignercompare} that
tensor network values have the same limit for $\G$ and $\W$, we obtain the
following state evolution guarantee for the polynomial AMP algorithm applied to $\W$.

\begin{lemma}\label{lem:polyLawLimit}
In the setting of Lemma~\ref{lemma:Wignercompare}, for any fixed $t \geq 1$,
almost surely as $n \to \infty$
\[(\ub_1, \fb_1, \ldots, \fb_k, \tilde\zb_1(\W), \ldots, \tilde\zb_t(\bW)) 
\toW (U_1,F_1, \ldots, F_k,\widetilde Z_1, \ldots, \widetilde Z_t).\]
\end{lemma}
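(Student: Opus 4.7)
The plan is to establish convergence of all polynomial moments of $(\ub_1, \fb_{1:k}, \tilde\zb_{1:t}(\W))$ to those of $(U_1, F_{1:k}, \widetilde Z_{1:t})$, and then upgrade this to Wasserstein-$p$ convergence for every $p \geq 1$ via a uniform integrability argument. The key leverage is that Lemma~\ref{lemma:polynomial_tensor_network_decomposition} gives a purely algebraic (matrix-free) decomposition of polynomial test functions of the iterates into tensor network values, so that the tensor network universality hypothesis of Lemma~\ref{lemma:Wignercompare} transfers the state evolution from $\bG$ to $\W$.

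For the first step, fix any polynomial $p:\R^{t+k+1}\to\R$. Since $\tilde\ub_1=\ub_1$, we may regard $p(\ub_1,\fb_{1:k},\tilde\zb_{1:t})$ as a polynomial in $(\tilde\ub_{1:t},\tilde\zb_{1:t},\fb_{1:k})$. Applying Lemma~\ref{lemma:polynomial_tensor_network_decomposition} gives a finite family $\cF$ of diagonal tensor networks in $k+1$ variables, depending only on $p$, the polynomial non-linearities $\{\tilde u_s\}$, and the Onsager coefficients $\{\tilde b_{ts}\}$, such that
\[
\langle p(\ub_1,\fb_{1:k},\tilde\zb_{1:t}(\mathbf M))\rangle
=\sum_{T\in\cF}\val_T(\mathbf M;\ub_1,\fb_{1:k})
\]
for any symmetric matrix $\mathbf M$ substituted for the driving matrix of (\ref{eq:aux_AMP_sym}). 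Crucially, because $\{\tilde b_{ts}\}$ is defined by the orthogonally invariant prescription (\ref{eq:symorthob}) for the limit law $D$ shared by $\W$ and $\bG$, the same family $\cF$ arises for both matrices. Hypothesis (3) of Lemma~\ref{lemma:Wignercompare} and the finiteness of $\cF$ then yield
\[
\langle p(\ub_1,\fb_{1:k},\tilde\zb_{1:t}(\W))\rangle
-\langle p(\ub_1,\fb_{1:k},\tilde\zb_{1:t}(\bG))\rangle\to 0\text{ a.s.,}
\]
and the state evolution (\ref{eq:Gstateevolution}) for $\bG$, applied to the continuous polynomial-growth test function $p$, gives $\langle p(\ub_1,\fb_{1:k},\tilde\zb_{1:t}(\bG))\rangle\to\E[p(U_1,F_{1:k},\widetilde Z_{1:t})]$ a.s. Combining these establishes almost-sure convergence of all polynomial moments.

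For the second step, specializing the above to $p=z_s^{2q}$ for each $s\leq t$ and every integer $q\geq 1$ shows that the normalized moments $\tfrac{1}{n}\sum_i \tilde z_s(\W)[i]^{2q}$ converge a.s.\ to the finite limit $\E[\widetilde Z_s^{2q}]$, and are therefore uniformly bounded in $n$. Convergence of all polynomial moments together with these uniform $L^{2q}$-norm bounds provides both weak convergence of the empirical joint distribution of rows and the uniform integrability of $\|(\ub_1,\fb_{1:k},\tilde\zb_{1:t}(\W))\|_2^p$ for every $p$; this pair of properties is equivalent to Wasserstein-$p$ convergence for every $p\geq 1$ (see \cite[Chapter~6]{villani2009optimal}), which is exactly the $\toW$ convergence claimed.

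The main obstacle is conceptually packaged into Lemma~\ref{lemma:polynomial_tensor_network_decomposition} and the tensor network universality hypothesis, after which the proof above is essentially mechanical. The subtlety worth verifying carefully is that the decomposition into tensor networks uses the \emph{same} $\{\tilde b_{ts}\}$ and hence the same family $\cF$ for both $\W$ and $\bG$; this holds because the Onsager coefficients depend only on the state-evolution covariances and the limit spectral law $D$, not on the driving matrix.
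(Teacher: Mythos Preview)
Your approach is essentially the same as the paper's: decompose polynomial test functions via Lemma~\ref{lemma:polynomial_tensor_network_decomposition}, transfer from $\bG$ to $\W$ via the tensor network hypothesis, and then upgrade moment convergence to Wasserstein convergence. However, there is one genuine gap in the second step.

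You assert that ``convergence of all polynomial moments together with these uniform $L^{2q}$-norm bounds provides \ldots\ weak convergence of the empirical joint distribution of rows.'' This inference requires that the limit law $(U_1,F_{1:k},\widetilde Z_{1:t})$ be \emph{determined by its moments}; uniform moment bounds give tightness and force every subsequential weak limit to have the correct mixed moments, but without moment-determinacy you cannot conclude the limit is unique. The paper handles this by invoking Lemma~\ref{lemma:prod_determinance}: Assumption~\ref{assump:ufconvergence} gives polynomial density in $L^2$ of $(U_1,F_{1:k})$, $\widetilde Z_{1:t}$ is Gaussian (so polynomials are dense there too), and the lemma shows density is preserved under independent products. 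Polynomial density in $L^2$ implies moment-determinacy, after which the moment method (e.g.\ \cite[Theorem 30.2]{billingsley1995probability}) gives weak convergence. You should insert this step explicitly; once you do, your argument is complete and matches the paper's.
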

\begin{proof}
By Lemma~\ref{lemma:polynomial_tensor_network_decomposition}, for any polynomial 
$p: \RR^{t+k+1} \to \RR$, we have 
\begin{align*}
    \langle p(\u_1,\f_{1:k},\tilde\zb_{1:t}(\W))\rangle = \sum_{T \in \cF} 
    \val_T(\bW; \ub_1, \fb_{1:k})
\end{align*}
where $\cF$ is a finite set of diagonal tensor networks, and the same
decomposition holds for $\G$ in place of $\bW$. 
Then by the condition given in Lemma
\ref{lemma:Wignercompare} and the state evolution (\ref{eq:Gstateevolution}),
almost surely
\begin{equation}\label{eq:polyconvergence}
\lim_{n \to \infty} \langle p(\u_1,\f_{1:k},\tilde\zb_{1:t}(\W)) \rangle
=\lim_{n \to \infty} \langle p(\u_1,\f_{1:k},\tilde\zb_{1:t}(\G)) \rangle
=\EE[p(U_1,F_{1:k},\widetilde Z_{1:t})].
\end{equation}
In particular, this shows that on an event $\cE$ having probability 1,
all mixed moments of the empirical distribution of rows of
$(\u_1,\f_{1:k},\tilde\zb_{1:t}(\W))$ converge to those of
$(U_1,F_{1:k},\widetilde Z_{1:t})$.
Lemma~\ref{lemma:prod_determinance} implies that the joint law of
$(U_1,F_{1:k},\widetilde Z_{1:t})$ is uniquely determined by its mixed moments,
so on this event $\cE$, the empirical distribution of rows converges weakly to
$(U_1,F_{1:k},\widetilde Z_{1:t})$
(cf.\ \cite[Theorem 30.2]{billingsley1995probability}, which extends to the
multivariate setting by the same proof). On this event $\cE$, also
\[\lim_{n \to \infty} \frac{1}{n}\sum_{i=1}^n
\|(u_1[i],f_{1:k}[i],\tilde z_{1:t}(\W)[i])\|_2^{2d}
=\EE[\|(U_1,F_{1:k},\widetilde Z_{1:t})\|_2^{2d}]\]
for each integer $d \geq 1$, which shows (cf.\ \cite[Definition 6.8 and Theorem
6.9]{villani2009optimal}) that
\[(\u_1,\f_{1:k},\tilde\zb_{1:t}(\W)) \toW
(U_1,F_{1:k},\widetilde Z_{1:t}).\]
\end{proof}

\begin{remark}\label{remark:polynomialuniversality}
Lemmas \ref{lem:polyLawLimit}, \ref{lemma:Wignermoments}, and
\ref{lemma:syminvmoments} already imply
universality of the state evolution for polynomial AMP algorithms, 
without requiring the assumption $\|\W\|_{\op}<C$.
\end{remark}

We now proceed with an inductive comparison of the given Lipschitz AMP algorithm
(\ref{eq:AMP}) and the polynomial AMP algorithm (\ref{eq:aux_AMP_sym}), both
applied to $\W$. 
For each $t \geq 1$, let $(U_1,F_{1:k},Z_{1:t})$ describe the 
state evolution of the given Lipschitz AMP algorithm \eqref{eq:AMP}, where $Z_{1:t} 
\sim \cN(0, \bSigma_t)$ and $\bSigma_t$ is non-singular by assumption in Lemma
\ref{lemma:Wignercompare}.
Let $(U_1,F_{1:k},\widetilde Z_{1:t})$ describe
the state evolution of the auxiliary AMP algorithm~\eqref{eq:aux_AMP_sym} where 
$\widetilde Z_{1:t} \sim \cN(0, \widetilde\bSigma_t)$. We write as shorthand
\begin{align*}
&U_{s+1}=u_{s+1}(Z_{1:s},F_{1:k}), \qquad
\nabla U_{s+1}=(\partial_1 u_{s+1},\ldots,\partial_s u_{s+1})(Z_{1:s},F_{1:k}),\\
&\widetilde U_{s+1}=\tilde u_{s+1}(\widetilde Z_{1:s},F_{1:k}), \qquad
\nabla \widetilde U_{s+1}=(\partial_1 \tilde u_{s+1},\ldots,\partial_s \tilde
u_{s+1})(\widetilde Z_{1:s},F_{1:k})
\end{align*}
where the gradients are with respect to the first $s$ arguments.

All subsequent constants may depend on the Lipschitz
non-linearities $u_2,u_3,u_4,\ldots$, the corresponding Onsager coefficients
$\{b_{ts}\}$ and state evolution covariances $\{\bSigma_t\}$,
and joint laws of $(U_1,F_{1:k},Z_{1:t})$, which we treat as
fixed throughout this argument.

\begin{lemma}\label{lem:limit_law_compare}
Fix $t \geq 1$. Suppose (\ref{eq:polyapprox}) holds for $\eps>0$
and every polynomial $\tilde u_2,\ldots,\tilde u_{t+1}$. Suppose also
$\|\bSigma_t - \widetilde\bSigma_t\|_\op<\delta$ for $\delta>0$. Then for any
sufficiently small $\delta,\eps$, we have
\begin{align*}
\max_{s=1}^t \big\|\EE[\nabla U_{s+1}]-\EE[\nabla \widetilde U_{s+1}]
\big\|_2<\iota(\delta,\eps), \qquad
\max_{r, s = 1}^{t+1} \big|\EE[U_rU_s] 
- \EE[\widetilde U_r\widetilde U_s]\big|<\iota(\delta,\eps)
\end{align*}
for a constant $\iota(\delta,\eps)>0$ satisfying
$\iota(\delta,\eps) \to 0$ as $(\delta,\eps) \to (0,0)$.
\end{lemma}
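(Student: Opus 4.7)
The plan is to couple $Z_{1:t}$ and $\widetilde Z_{1:t}$ on a common probability space and then control both desired differences by triangle inequalities that separate the change of Gaussian argument ($Z\to\widetilde Z$) from the change of non-linearity ($u_r\to\tilde u_r$). Concretely, I take $\xi\sim\Normal(0,I_t)$ independent of $(U_1,F_{1:k})$ and set $Z_{1:t}=\bSigma_t^{1/2}\xi$ and $\widetilde Z_{1:t}=\widetilde\bSigma_t^{1/2}\xi$. Since $\bSigma_t$ is (by hypothesis) non-singular and fixed, the maps $M\mapsto M^{1/2}$ and $M\mapsto M^{-1}$ are smooth at $\bSigma_t$, so for $\delta$ sufficiently small,
\[\|\bSigma_t^{1/2}-\widetilde\bSigma_t^{1/2}\|_\op\leq C\delta, \qquad \|\bSigma_s^{-1}-\widetilde\bSigma_s^{-1}\|_\op\leq C\delta \quad \text{for all } s\leq t,\]
with $C$ depending only on $\bSigma_t$. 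In particular $\|Z_{1:r-1}-\widetilde Z_{1:r-1}\|_{L^2}\leq C\delta$ for every $r\leq t+1$.

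For the covariance bound I introduce the intermediate random variable $U_r':=u_r(\widetilde Z_{1:r-1},F_{1:k})$. The Lipschitz hypothesis on $u_r$ in its first $r-1$ arguments gives $\|U_r-U_r'\|_{L^2}\leq C\delta$; the approximation hypothesis (\ref{eq:polyapprox}) gives $\|U_r'-\widetilde U_r\|_{L^2}\leq\sqrt\eps$ directly; and the polynomial growth of $u_r,\tilde u_r$ together with the finite moments of $(Z,\widetilde Z,F_{1:k})$ uniformly bound $\|U_r\|_{L^2},\|U_r'\|_{L^2},\|\widetilde U_r\|_{L^2}$ for all small $\eps$. Cauchy--Schwarz then yields $|\E[U_rU_s]-\E[\widetilde U_r\widetilde U_s]|\leq C(\delta+\sqrt\eps)$.

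For the gradient bound I invoke Stein's lemma, applicable both to the Lipschitz $u_{s+1}$ (weakly differentiable with bounded weak derivative in its first $s$ arguments) and to the polynomial $\tilde u_{s+1}$, giving
\[\E[\nabla U_{s+1}]=\bSigma_s^{-1}\,\E[Z_{1:s}\,u_{s+1}(Z_{1:s},F_{1:k})], \qquad \E[\nabla\widetilde U_{s+1}]=\widetilde\bSigma_s^{-1}\,\E[\widetilde Z_{1:s}\,\tilde u_{s+1}(\widetilde Z_{1:s},F_{1:k})].\]
The difference splits via the matrix-inverse perturbation $\|\bSigma_s^{-1}-\widetilde\bSigma_s^{-1}\|_\op=O(\delta)$ (times a bounded norm) plus a three-term decomposition of the Stein product difference $\E[Z_{1:s}u_{s+1}]-\E[\widetilde Z_{1:s}\tilde u_{s+1}]$, in which one replaces $Z\to\widetilde Z$ first in the prefactor, then in the argument of $u_{s+1}$, and finally $u_{s+1}\to\tilde u_{s+1}$. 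Each term is bounded by $O(\delta+\sqrt\eps)$ via Cauchy--Schwarz and the earlier Lipschitz/approximation estimates, yielding $\|\E[\nabla U_{s+1}]-\E[\nabla\widetilde U_{s+1}]\|_2\leq C(\delta+\sqrt\eps)$, so one may take $\iota(\delta,\eps)=C(\delta+\sqrt\eps)$.

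The argument is essentially routine; the only subtlety worth flagging is that the polynomials $\tilde u_r$ depend on $\eps$ and may have arbitrarily large individual coefficients. This is harmless because only the $L^2$ norms of $\tilde u_r$ under the fixed Gaussian law of $(\widetilde Z,F_{1:k})$ enter our estimates, and these are bounded by $\|u_r\|_{L^2}+\sqrt\eps$ via the triangle inequality and (\ref{eq:polyapprox}).
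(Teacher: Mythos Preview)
Your proposal is correct and follows essentially the same approach as the paper: both couple $Z_{1:t}$ and $\widetilde Z_{1:t}$ on a common probability space, introduce the intermediate quantity $u_r(\widetilde Z_{1:r-1},F_{1:k})$, apply Stein's lemma to convert expected gradients into covariance-type expressions, and then bound all differences via the Lipschitz property of $u_r$, the approximation bound (\ref{eq:polyapprox}), and Cauchy--Schwarz. The only cosmetic difference is that you pull the matrix inverse outside the expectation and split the remaining Stein product into three terms, whereas the paper keeps $\bSigma_s^{-1}Z_{1:s}$ inside and uses a three-term split on the full expression; these are equivalent reorganizations of the same triangle inequality.
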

\begin{proof}
We write $\iota(\delta,\eps)$ for any positive constant satisfying
$\iota(\delta,\eps) \to 0$ as $(\delta,\eps) \to (0,0)$ and changing from
instance to instance.
Since $\|\bSigma_t-\widetilde \bSigma_t\|_\op<\delta$, $\bSigma_t$ is
invertible, and $\bSigma_s$ is the upper-left submatrix of $\bSigma_t$ for $s \leq t$,
for sufficiently small $\delta>0$ and each $s=1,\ldots,t$ we have
\begin{equation}\label{eq:Zcoupling}
\|\bSigma_s^{-1}-\widetilde \bSigma_s^{-1}\|_\op<\iota(\delta,\eps), \qquad
\E\big[\big\|Z_{1:s}-\widetilde Z_{1:s}\big\|_2^2\big]<\iota(\delta,\eps)
\end{equation}
for a coupling of $Z_{1:s}$ and $\widetilde Z_{1:s}$.

We introduce the additional abbreviations for the intermediate quantities
\[\overline U_{s+1}=u_{s+1}(\widetilde Z_{1:s},F_{1:k}),
\qquad \nabla \overline U_{s+1}=(\partial_1 u_{s+1},\ldots,\partial_s
u_{s+1})(\widetilde Z_{1:s},F_{1:k}).\]
Then for any $s \in [t]$,
\begin{equation}\label{eq:induction_b_diff}
\big\|\EE[\nabla U_{s+1}] - \EE[\nabla\widetilde U_{s+1}]\big\|_2 
\leq \big\|\EE[\nabla U_{s+1}] - \EE[\nabla \overline U_{s+1}]\big\|_2
+ \big\|\EE[\nabla \overline U_{s+1}] - \EE[\nabla \widetilde U_{s+1}]\big\|_2
\end{equation}
Applying Stein's lemma, $(a+b+c)^2 \leq 3(a^2+b^2+c^2)$, and Cauchy-Schwarz,
the first term of (\ref{eq:induction_b_diff}) is bounded as
\begin{align*}
&\big\|\EE[\nabla U_{s+1}] - \EE[\nabla \overline U_{s+1}]\big\|_2^2\\
&=\big\|\EE[U_{s+1} \cdot \bSigma_s^{-1} Z_{1:s}^\top]
- \EE[\overline U_{s+1} \cdot \widetilde \bSigma_s^{-1} \widetilde Z_{1:s}^\top]
\big\|_2^2\\
&\leq 3\E\big[\big(U_{s+1}-\overline U_{s+1}\big)^2\big] \cdot
\E\big[\big\|\bSigma_s^{-1}Z_{1:s}^\top\big\|_2^2\big]
+3\E[\overline U_{s+1}^2] \cdot
\E\big[\big\|(\bSigma_s^{-1}-\widetilde \bSigma_s^{-1})Z_{1:s}^\top\big\|_2^2\big]\\
&\qquad+3\E[\overline U_{s+1}^2] \cdot 
\E\big[\big\|\widetilde \bSigma_s^{-1}(Z_{1:s}^\top-\widetilde
Z_{1:s}^\top)\big\|_2^2\big].
\end{align*}
The latter two terms are at most $\iota(\delta,\eps)$ by (\ref{eq:Zcoupling}),
and for the first term we have
\begin{equation}\label{eq:barcompare}
\E\big[\big(U_{s+1}-\overline U_{s+1}\big)^2\big]
\leq L_s^2 \cdot \E\big[\big\|Z_{1:s}-\widetilde Z_{1:s}\big\|_2^2\big]
<\iota(\delta,\eps)
\end{equation}
where $L_s$ is the Lipschitz constant of $u_{s+1}(\cdot)$. The second term of 
(\ref{eq:induction_b_diff}) is bounded similarly as
\begin{align*}
\big\|\EE[\nabla \overline U_{s+1}] - \EE[\nabla \widetilde U_{s+1}]\big\|_2^2
&=\big\|\EE[\overline U_{s+1} \cdot \widetilde \bSigma_s^{-1} \widetilde Z_{1:s}^\top]
- \EE[\widetilde  U_{s+1} \cdot \widetilde \bSigma_s^{-1} \widetilde Z_{1:s}^\top]
\big\|_2^2\\
&\leq \E\big[\big(\overline U_{s+1}-\widetilde U_{s+1}\big)^2\big] \cdot
\E\big[\big\|\widetilde \bSigma_s^{-1} \widetilde Z_{1:s}^\top\big\|_2^2\big],
\end{align*}
where by (\ref{eq:polyapprox}) we have
\begin{equation}\label{eq:bartildecompare}
\E\big[\big(\overline U_{s+1}-\widetilde U_{s+1}\big)^2\big]
=\E[(u_{s+1}(\widetilde Z_{1:s}, F_{1:K}) 
- \tilde{u}_{s+1}(\widetilde Z_{1:s},F_{1:K}))^2]
<\eps.
\end{equation}
Combining these bounds and applying them to (\ref{eq:induction_b_diff})
yields the first claim of the lemma,
$\|\EE[\nabla U_{s+1}]-\EE[\nabla \widetilde U_{s+1}]\|_2<\iota(\delta,\eps)$.
For the second claim, for any $r,s \in [t+1]$, we have
\begin{align*}
\big|\EE[U_{r+1} U_{s+1}] - \EE[\widetilde U_{r+1} \widetilde U_{s+1}]\big|
\leq \big|\EE[(U_{r+1}-\widetilde U_{r+1})U_{s+1}]\big|
+\big|\EE[\widetilde U_{r+1} (U_{s+1}-\widetilde U_{s+1})]\big|.
\end{align*}
Applying again Cauchy-Schwarz and the bounds (\ref{eq:barcompare})
and (\ref{eq:bartildecompare}) yields the second claim
$|\EE[U_{r+1} U_{s+1}] - \EE[\widetilde U_{r+1} \widetilde U_{s+1}]|
<\iota(\delta,\eps)$.
\end{proof}

\begin{lemma}\label{lemma:AMPpolyAPprox}
Fix $t \geq 1$. Suppose (\ref{eq:polyapprox}) holds for $\eps>0$ and
every polynomial $\tilde u_2,\ldots,\tilde u_{t+1}$. Then for any sufficiently
small $\eps$, almost surely for all large $n$, we have
\[\max_{s=1}^t \frac{1}{\sqrt{n}} \|\zb_s(\W) - \tilde\zb_s(\W)\|_2<\iota(\eps),
\qquad \max_{s=1}^t \frac{1}{\sqrt{n}} \|\zb_s(\W)\|_2<C,
\qquad \|\bSigma_t - \widetilde{\bSigma}_t\|_\op<\iota(\eps)\]
for constants $C>0$ and $\iota(\eps)>0$ satisfying $\iota(\eps) \to 0$ as
$\eps \to 0$.
\end{lemma}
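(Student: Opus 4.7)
My plan is induction on $t$, establishing all three conclusions simultaneously for $s = 1, \ldots, t$. The base case $t=1$ is immediate: the common initialization $\tilde\u_1 = \u_1$ and the fact that $\bSigma_1, \tilde\bSigma_1, b_{11}, \tilde b_{11}$ all depend only on $\E[U_1^2]$ via the orthogonally invariant prescription give $\z_1 = \tilde\z_1$ and $\bSigma_1 = \tilde\bSigma_1$, while Lemma~\ref{lem:polyLawLimit} at $t=1$ yields $\frac{1}{n}\|\tilde\z_1\|_2^2 \to \bSigma_1 < \infty$.

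In the inductive step at level $t$, assuming all three bounds hold at $s \leq t-1$, I first invoke Lemma~\ref{lem:limit_law_compare} with $\delta = \iota(\eps)$ supplied by the inductive $\|\bSigma_{t-1} - \tilde\bSigma_{t-1}\|_\op$ estimate. This provides $\iota(\eps)$-closeness of the derivative expectations $\E[\nabla U_{s+1}]$ to $\E[\nabla \tilde U_{s+1}]$ and of $\E[U_rU_s]$ to $\E[\tilde U_r \tilde U_s]$ for the index ranges required at step $t$. By continuity of the orthogonally invariant prescriptions (\ref{eq:symorthobSigma})--(\ref{eq:symorthob}) in these moments (with form depending only on $D$), I obtain at once $\|\bSigma_t - \tilde\bSigma_t\|_\op < \iota(\eps)$ and $\max_{s \leq t}|b_{ts} - \tilde b_{ts}| < \iota(\eps)$.

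Next, for each $s \leq t$, I decompose
\[
\u_s - \tilde\u_s = \big[u_s(\z_{1:(s-1)}, \f_{1:k}) - u_s(\tilde\z_{1:(s-1)}, \f_{1:k})\big] + \big[u_s - \tilde u_s\big](\tilde\z_{1:(s-1)}, \f_{1:k}).
\]
Lipschitz continuity of $u_s$ bounds the first bracket in normalized $\ell_2$-norm by a multiple of the inductive $\|\z_r - \tilde\z_r\|_2$ estimates at indices $r < s \leq t$, and Lemma~\ref{lem:polyLawLimit} together with (\ref{eq:polyapprox}) bounds the second bracket's normalized squared norm by $\eps + o(1)$ almost surely. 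Feeding these into the recursion
\[
\z_t - \tilde\z_t = \W(\u_t - \tilde\u_t) - \sum_{s=1}^t (b_{ts} - \tilde b_{ts})\u_s - \sum_{s=1}^t \tilde b_{ts}(\u_s - \tilde\u_s),
\]
and using $\|\W\|_\op < C$ together with a uniform bound on $\frac{1}{\sqrt n}\|\u_s\|_2$ (propagated inductively from Assumption~\ref{assump:ufconvergence} via the Lipschitz/polynomial-growth of each $u_s$ applied to the previously controlled $\z_r$), yields $\frac{1}{\sqrt n}\|\z_t - \tilde\z_t\|_2 < \iota(\eps)$. The triangle inequality combined with Lemma~\ref{lem:polyLawLimit} at level $t$ then gives $\frac{1}{\sqrt n}\|\z_t\|_2 \leq \iota(\eps) + \sqrt{\E[\tilde Z_t^2] + o(1)} < C$.

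The main obstacle is sequencing these estimates within each inductive step to avoid circularity: the $\bSigma$ and Onsager-coefficient bounds must be obtained via Lemma~\ref{lem:limit_law_compare} before comparing $\z_t$ to $\tilde\z_t$, and the norm bound on $\tilde\z_t$ at the end invokes Lemma~\ref{lem:polyLawLimit} at the current level $t$. A subsidiary subtlety is that the operator-norm hypothesis $\|\W\|_\op < C$ from Lemma~\ref{lemma:Wignercompare} is essential: without it, $\W(\u_t - \tilde\u_t)$ could blow up as $\sqrt n$ despite the small normalized $\ell_2$-gap between $\u_t$ and $\tilde\u_t$, undermining the entire comparison.
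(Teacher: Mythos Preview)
Your proposal is correct and follows essentially the same inductive scheme as the paper's proof: invoke Lemma~\ref{lem:limit_law_compare} on the inductive $\bSigma$-closeness to control the Onsager coefficients and $\bSigma_t$, decompose $\u_s-\tilde\u_s$ via the Lipschitz property and the $L^2$ polynomial approximation~(\ref{eq:polyapprox}), and then propagate through the recursion using $\|\W\|_\op<C$ and Lemma~\ref{lem:polyLawLimit}. The only cosmetic differences are that the paper starts the induction at $t=0$ (where (a)--(c) are vacuous) and explicitly carries the bounds on $\|\u_{s+1}\|_2/\sqrt n$ and $\|\u_{s+1}-\tilde\u_{s+1}\|_2/\sqrt n$ as a separate claim (d), whereas you absorb these into the argument; your use of the triangle inequality with $\tilde\z_t$ to bound $\|\z_t\|_2/\sqrt n$ is an equally valid alternative to the paper's direct estimate.
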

\begin{proof}
We write $\zb_t,\tilde \zb_t,\ub_t,\tilde \ub_t$ for the iterates of the
Lipschitz and polynomial AMP algorithms applied to $\W$.
We prove the extended claim that there are constants $\iota(\eps)>0$ and $C>0$,
satisfying $\iota(\eps) \to 0$ as $\eps \to 0$,
for which almost surely for all large $n$,
\begin{enumerate}[(a)]
\item $\max_{s=1}^t |b_{ts} - \tilde b_{ts}|<\iota(\eps)$;
\item $\max_{s=1}^t \frac{1}{\sqrt{n}} \|\bz_s - \tilde\bz_s\|_2<\iota(\eps)$
and $\max_{s=1}^t \frac{1}{\sqrt{n}} \|\bz_s\|_2<C$;
\item $\|\bSigma_t - \widetilde{\bSigma}_t\|_\op<\iota(\eps)$;
\item $\max_{s=0}^t \frac{1}{\sqrt{n}} 
\|\bu_{s+1} - \tilde\bu_{s+1}\|_2<\iota(\eps)$
and $\max_{s=0}^t \frac{1}{\sqrt{n}} \|\bu_{s+1}\|_2<C$.
\end{enumerate}

We induct on $t$. For the base case $t=0$, statements (a--c) are vacuous, and
(d) holds by the equality of initializations $\tilde \ub_1 = \ub_1$ and by the
convergence of $\ub_1$ in Assumption \ref{assump:ufconvergence}.

Consider any $t \geq 1$, and suppose inductively that claims (a--d) all hold
for $t-1$. Denoting the constants in this inductive claim for $t-1$ as
$\iota_{t-1}(\eps)$ and $C_{t-1}$, let us write $\iota_t(\eps)$ and $C_t$ for
any positive constants depending on $\iota_{t-1}(\eps)$ and $C_{t-1}$,
satisfying $\iota_t(\eps) \to 0$ as $\eps \to 0$ and 
$\iota_{t-1}(\eps) \to 0$, and changing from instance to instance.

For (a), by the prescription (\ref{eq:symorthob}), each
$b_{ts}$ is a continuous function of $\EE[\nabla U_{s+1}]$ for $s \leq t-1$
and $\EE[U_r U_s]$ for $1 \leq r \leq s \leq t$. Then 
$\max_{s=1}^t |b_{ts}-\tilde b_{ts}|<\iota_t(\eps)$ by statement (c) of the
inductive hypothesis and Lemma~\ref{lem:limit_law_compare}.

For (b), by the definition of $\zb_t$ and $\tilde\zb_t$, we have
\[\frac{\|\bz_t - \tilde{\bz}_t\|_2}{\sqrt{n}} \leq 
\frac{\|\bW (\bu_t - \tilde{\bu}_t)\|_2}{\sqrt{n}} 
+ \sum_{s=1}^t |b_{ts}| \cdot \frac{\|\bu_s - \tilde\bu_s\|_2}{\sqrt{n}}
+ \sum_{s=1}^t |b_{ts} - \tilde{b}_{ts}| 
\cdot \frac{\|\tilde\bu_s\|_2}{\sqrt{n}}.\]
By the assumption that $\|\bW\|_\op \leq C$ almost surely for all large $n$,
by claim (d) of the inductive hypothesis, and by claim (a) already shown,
this is at most $\iota_t(\eps)$. Also,
\[\frac{\|\bz_t\|_2}{\sqrt{n}} \leq 
\frac{\|\bW \bu_t\|_2}{\sqrt{n}} 
+ \sum_{s=1}^t |b_{ts}| \cdot \frac{\|\bu_s\|_2}{\sqrt{n}}\]
which similarly is at most $C_t$.

For (c), by the prescription (\ref{eq:symorthobSigma}), the matrix $\bSigma_t$ is
(as in the proof of (a) above) a continuous function of
$\EE[\nabla U_{s+1}]$ for $s \leq t-1$ 
and $\EE[U_r U_s]$ for $1 \leq r \leq s \leq t$. Then $\|\bSigma_t-\widetilde
\bSigma_t\|_\op<\iota_t(\eps)$ again by statement (c) of the
inductive hypothesis and Lemma~\ref{lem:limit_law_compare}.

For (d), it follows from the 
definitions of $\ub_{t+1}$ and $\tilde\ub_{t+1}$ that
\begin{align*}
\frac{\|\bu_{t+1} - \tilde\bu_{t+1}\|_2}{\sqrt{n}}
&= \frac{\|u_{t+1}(\zb_{1:t}, \fb_{1:k}) 
- \tilde u_{t+1}(\tilde\zb_{1:t}, \fb_{1:k})\|_2}{\sqrt{n}}\\
&\leq \frac{\|u_{t+1}(\bz_{1:t}, \bff_{1:k}) 
- u_{t+1}(\tilde\bz_{1:t},\bff_{1:k})\|_2}{\sqrt{n}}
+ \frac{\|u_{t+1}(\tilde\bz_{1:t}, \fb_{1:k}) 
- \tilde u_{t+1}(\tilde\bz_{1:t},\bff_{1:k})\|_2}{\sqrt{n}}. 
\end{align*}
The first term is at most $\iota_t(\eps)$ by statement (b) already proved and
the fact that $u_{t+1}(\cdot)$ is Lipschitz.
For the second term, Lemma \ref{lem:polyLawLimit} shows
$(\tilde\zb_{1:t}, \fb_{1:k})
\toW (\widetilde Z_{1:t}, F_{1:k})$ almost surely as $n \to \infty$, and the
function $(u_{t+1}(\cdot)-\tilde u_{t+1}(\cdot))^2$ satisfies the polynomial
growth condition (\ref{eq:polygrowth}) by the given conditions for
$u_{t+1}(\cdot)$. Then
\begin{align}\label{eq:u_converge_2}
\lim_{n \to \infty} \frac{\|u_{t+1}(\tilde\zb_{1:t}, \fb_{1:k}) 
- \tilde u_{t+1}(\tilde\zb_{1:t},\fb_{1:k})\|_2^2}{n}
&= \EE[(u_{t+1}(\widetilde Z_{1:t}, F_{1:k}) 
- \tilde u_{t+1}(\widetilde Z_{1:t}, F_{1:k}))^2]<\eps
\end{align}
where the inequality is due to (\ref{eq:polyapprox}) in the construction of
$\tilde u_{t+1}$. Thus
$\|\bu_{t+1} - \tilde\bu_{t+1}\|_2/\sqrt{n}<\iota_t(\eps)$.
Similarly,
\begin{align*}
\frac{\|\bu_{t+1}\|_2}{\sqrt{n}}
&\leq \frac{\|u_{t+1}(\bz_{1:t}, \bff_{1:k}) -
u_{t+1}(\mathbf{0},\bff_{1:k})\|_2}{\sqrt{n}} +
\frac{\|u_{t+1}(\mathbf{0},\bff_{1:k})\|_2}{\sqrt{n}}.
\end{align*}
The first term is at most $C_t$ by statement (b) already proved and the fact
that $u_{t+1}(\cdot)$ is Lipschitz. For the second term, we have 
$\lim_{n \to \infty} \frac{1}{n}\|u_{t+1}(\mathbf{0},\bff_{1:k})\|_2^2
=\E[u_{t+1}(0,F_{1:k})^2]$
which is also at most a constant $C_t$. This concludes the proof of (d) and
completes the induction.
\end{proof}

Finally, we apply Lemma~\ref{lemma:AMPpolyAPprox} to prove 
Lemma~\ref{lemma:Wignercompare}.

\begin{proof}[Proof of Lemma~\ref{lemma:Wignercompare}]
Let $\ub_{1:t},\tilde \ub_{1:t},\z_{1:t},\tilde \zb_{1:t}$ be the iterates of 
the Lipschitz AMP algorithm and polynomial AMP algorithm applied to $\W$. 
We write $\iota(\eps)$ for a positive constant such that $\iota(\eps) \to 0$ as 
$\eps \to 0$, and changing from instance to instance.

To show $W_2$-convergence of $(\ub_1, \fb_{1:k}, \zb_{1:t})$,
consider any function $g: \RR^{t+k+1} \to \RR$ satisfying 
$|g(\xb) - g(\yb)| \leq C(1 + \|\xb\|_2 + \|\yb\|_2) \|\xb - \yb\|_2$ for a
constant $C > 0$. Then
\begin{align*}
&\big|\langle g(\ub_1, \fb_{1:k}, \zb_{1:t}) 
- g(\ub_1, \fb_{1:k}, \tilde\zb_{1:t})\rangle\big|\\
&\leq \frac{C}{n} \sum_{i=1}^n 
\left(1 + \|(u_1[i], f_{1:k}[i], z_{1:t}[i])\|_2
+ \|(u_1[i], f_{1:k}[i], \tilde z_{1:t}[i])\|_2\right) 
\cdot \|z_{1:t}[i] - \tilde z_{1:t}[i]\|_2\\
&\leq \frac{C}{n} \sqrt{3\sum_{i=1}^n 
(1 + \|(u_1[i], f_{1:k}[i], z_{1:t}[i])\|_2^2
+ \|(u_1[i], f_{1:k}[i], \tilde z_{1:t}[i])\|_2^2)}
\cdot \sqrt{\sum_{i=1}^n \|z_{1:t}[i] - \tilde z_{1:t}[i]\|_2^2}\\
&= \frac{C}{n} \sqrt{3n + 6\|\ub_1\|_2^2 + 6\sum_{j=1}^k \|\fb_j\|_2^2 
+ 3\sum_{s=1}^t (\|\zb_s\|_2^2 + \|\tilde\zb_s\|_2^2)} 
\cdot \sqrt{\sum_{s=1}^t \|\zb_s - \tilde\zb_s\|_2^2}
\end{align*}
This implies, by the statements for $\zb_{1:t}$ in
Lemma \ref{lemma:AMPpolyAPprox} and the convergence of
$(\u_1,\f_1,\ldots,\f_k)$ in Assumption~\ref{assump:ufconvergence}, that
almost surely for all large $n$,
\begin{equation}\label{eq:pseudo_lip_diff_bound}
\big|\langle g(\ub_1, \fb_{1:k}, \zb_{1:t}) 
- g(\ub_1, \fb_{1:k}, \tilde\zb_{1:t})\rangle\big|<\iota(\eps).
\end{equation}
Since $(\ub_1, \fb_{1:k}, \tilde\zb_{1:t}) \toW (U_1, F_{1:k}, 
\widetilde Z_{1:t})$ by Lemma \ref{lem:polyLawLimit}, we have 
\begin{align}\label{eq:pseudo_lip_aux_limit}
\lim_{n \to \infty} \langle g(\ub_1, \fb_{1:k}, \tilde\zb_{1:t})\rangle 
= \EE[g(U_1, F_{1:k}, \widetilde Z_{1:t})].
\end{align}
By the statement for $\bSigma_t$ in Lemma \ref{lemma:AMPpolyAPprox}, there 
is a coupling of $Z_{1:t}$ and $\widetilde Z_{1:t}$ such that
$\E[\|Z_{1:t}-\widetilde Z_{1:t}\|_2^2]<\iota(\eps)$. Then similarly
\begin{align}\label{eq:pseudo_lip_aux_limit_bound}
&\big|\EE[g(U_1, F_{1:k}, Z_{1:t})] 
- \EE[g(U_1, F_{1:k}, \widetilde Z_{1:t})]\big|\notag\\
&\qquad \leq C \cdot \EE\big[(1 + \|(U_1, F_{1:k}, Z_{1:t})\|_2 
+ \|(U_1, F_{1:k}, \widetilde Z_{1:t})\|_2)
\cdot \|Z_{1:t}-\widetilde Z_{1:t}\|_2\big]\notag\\
&\qquad \leq C \sqrt{\EE[3+6U_1^2+6\|F_{1:k}\|_2^2
+3\|Z_{1:t}\|_2^2+3\|\widetilde Z_{1:t}\|_2^2} \cdot
\sqrt{\E[\|Z_{1:t}-\widetilde Z_{1:t}\|_2^2]}<\iota(\eps).
\end{align}

Combining \eqref{eq:pseudo_lip_diff_bound}, 
\eqref{eq:pseudo_lip_aux_limit}, and \eqref{eq:pseudo_lip_aux_limit_bound}, we 
obtain for a (different) constant $\iota(\eps)>0$, almost surely for all large
$n$, $|\langle g(\ub_1, \fb_{1:k}, \zb_{1:t})\rangle - \EE[g(U_1, F_{1:k},
Z_{1:t})]| < \iota(\eps)$.
Since $\eps>0$ is arbitrary and $\iota(\eps) \to 0$ as $\eps \to 0$, we conclude
that
$\lim_{n \to \infty} \langle g(\ub_1, \fb_{1:k}, \zb_{1:t})\rangle = 
\EE[g(U_1, F_{1:k}, Z_{1:t})]$.
This holds for all bounded Lipschitz functions $g(\cdot)$ as well as for
$g(U_1, F_{1:k}, Z_{1:t})=\|(U_1, F_{1:k}, Z_{1:t})\|_2^2$,
which implies $(\ub_1, \fb_{1:k}, \zb_{1:t}) \toWtwo (U_1, F_{1:k}, Z_{1:t})$
(cf.\ \cite[Definition 6.8 and Theorem 6.9]{villani2009optimal}).
\end{proof}

Combining Lemmas~\ref{lemma:Wignercompare} and \ref{lemma:Wignermoments}
for $\bG \sim \GOE(n)$ concludes the proof of Theorem \ref{thm:Wigner}, and 
combining Lemmas~\ref{lemma:Wignercompare} and \ref{lemma:syminvmoments}
for an orthogonally invariant matrix $\bG$ with limit spectral distribution $D$
concludes the proof of Theorem \ref{thm:syminvariant}.

\section{Discussion}\label{sec:future}
In this work, we have established universality of the state evolution
for AMP algorithms applied to ensembles of matrices in both
Gaussian and non-Gaussian
universality classes, using an unfolding of polynomial AMP algorithms
into linear combinations of matrix-tensor networks.
Our analyses also reveal universality classes of matrices for which these tensor
networks have common limiting values, but where a more succinct
characterization of the limiting behavior of first-order iterative algorithms
is currently unknown. We hope that our work may inspire the development of
dynamical mean-field theory descriptions of such algorithms for these broader
matrix ensembles.

Recently, motivated by statistical applications, a burgeoning line of work
\cite{rush2018finite,li2022non,cademartori2023non,li2023approximate} has
studied non-asymptotic guarantees for AMP algorithms, in settings where the
underlying structure (e.g.\ sparsity) and the non-linearities applied may
depend on the dimension $n$, and for a number of iterations of the algorithm
that may also grow with the dimension $n$.
The study of AMP universality in such settings falls outside the scope of our
current analyses, and we believe this is an interesting direction for future work.

\appendix
\section{Density of polynomials}
\begin{lemma}\label{lemma:prod_determinance}
Let $\mu_X$ and $\mu_Y$ be probability laws on $\R^m$ and $\R^n$ having finite
moments of all orders, such that
multivariate polynomials are dense in the real $L^2$-spaces $L^2(\mu_X)$ and
$L^2(\mu_Y)$. Then multivariate polynomials are also dense in $L^2(\mu_X \times
\mu_Y)$.
\end{lemma}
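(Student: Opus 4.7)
My approach exploits the Hilbert-space tensor-product structure $L^2(\mu_X \times \mu_Y) \cong L^2(\mu_X) \otimes L^2(\mu_Y)$: first I will reduce approximation in $L^2(\mu_X \times \mu_Y)$ to approximation of simple tensors $f(x)g(y)$, and then I will use the hypothesized univariate polynomial density to approximate each factor by a polynomial. Since the product of a polynomial in $x$ and a polynomial in $y$ is itself a polynomial in $(x,y)$, this will yield a polynomial in the joint variables approximating the target function.

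The first step is to show that finite linear combinations of simple tensors $\{f(x)g(y) : f \in L^2(\mu_X),\, g \in L^2(\mu_Y)\}$ are dense in $L^2(\mu_X \times \mu_Y)$. This is a standard measure-theoretic fact, and I would prove it either by approximating any $h \in L^2(\mu_X \times \mu_Y)$ by simple functions $\sum_i c_i \1_{A_i \times B_i}$ (each $\1_{A_i \times B_i}$ being a simple tensor), or equivalently by using Fubini to show the orthogonal complement of the simple tensors is trivial. Thus, given $h \in L^2(\mu_X \times \mu_Y)$ and $\eps>0$, I obtain an approximation $\sum_{i=1}^N c_i f_i(x)g_i(y)$ with $L^2(\mu_X \times \mu_Y)$-error at most $\eps/2$.

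The second step is to replace each factor $f_i, g_i$ by polynomials $p_i, q_i$ using the hypothesized polynomial density in $L^2(\mu_X)$ and $L^2(\mu_Y)$, keeping $\|p_i\|_{L^2(\mu_X)}$ bounded (say by $\|f_i\|_{L^2(\mu_X)}+1$). By Fubini and a two-term triangle inequality,
\[\|f_ig_i - p_iq_i\|_{L^2(\mu_X \times \mu_Y)} \leq \|f_i-p_i\|_{L^2(\mu_X)}\|g_i\|_{L^2(\mu_Y)} + \|p_i\|_{L^2(\mu_X)}\|g_i-q_i\|_{L^2(\mu_Y)},\]
which can be made arbitrarily small. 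Summing over $i$ will produce a polynomial $P(x,y)=\sum_i c_i p_i(x)q_i(y)$ in $(x,y)$ with $\|h-P\|_{L^2(\mu_X \times \mu_Y)}<\eps$.

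I do not expect any real obstacle here beyond the standard simple-tensor density; the finite-moment assumption on $\mu_X$ and $\mu_Y$ transfers to $\mu_X \times \mu_Y$ by Fubini, which is what ensures the polynomials in $(x,y)$ all lie in $L^2(\mu_X \times \mu_Y)$ so that the approximation statement is well-posed.
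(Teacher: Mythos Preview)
Your proposal is correct and follows essentially the same approach as the paper's proof: both reduce to approximating simple tensors (the paper uses the special case of indicator functions $\chi_{A\times B}=\chi_A\chi_B$ of measurable rectangles) and then apply the identical two-term triangle inequality with the Fubini factorization $\|fg\|_{L^2(\mu_X\times\mu_Y)}=\|f\|_{L^2(\mu_X)}\|g\|_{L^2(\mu_Y)}$ to replace each factor by a polynomial.
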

\begin{proof}
Consider any measurable $A \subseteq \R^m$ and $B \subseteq \R^n$, and let
$\chi_A,\chi_B,\chi_{A \times B}$ be the indicator functions of $A$, $B$,
and $A \times B$. For any $\eps>0$, by the density conditions for $L^2(\mu_X)$
and $L^2(\mu_Y)$, we may take polynomials $p_A,p_B$ such that
$\|\chi_A-p_A\|_{L^2(\mu_X)}<\eps/2$ and 
$\|\chi_B-p_B\|_{L^2(\mu_Y)}<\eps/(2\|p_A\|_{L^2(\mu_X)})$.
Then
\begin{align*}
&\|\chi_{A \times B}-p_Ap_B\|_{L^2(\mu_X\times\mu_Y)}\\
&\leq \|\chi_A-p_A\|_{L^2(\mu_X)} \|\chi_B\|_{L^2{(\mu_Y)}} +
\|p_A\|_{L^2(\mu_X)} \|\chi_B - p_B\|_{L^2(\mu_Y)}<\eps.
\end{align*}
Taking $\eps \to 0$ shows that polynomials are dense in the linear span of
indicator functions $\{\chi_{A \times B}: \text{measurable } A \subseteq \R^m,B \subseteq \R^n\}$.
This linear span is in turn dense in $L^2(\mu_X \times \mu_Y)$, showing the
lemma.
\end{proof}

\section{Sufficient conditions for generalized
invariance}\label{appendix:syminvariant}

In this appendix, we prove Proposition~\ref{prop:syminvariant}, providing
examples of matrix models that satisfy the generalized invariance condition
of Definition \ref{def:syminvariant}.

\begin{lemma}\label{lemma:syminvariantb2}
Let $\bM \in \R^{n \times n}$ be a symmetric matrix having eigenvalues
$\bd \in \R^n$. Suppose $\bd \toW D$ almost surely as $n \to \infty$, where
$D$ has finite moments of all orders. Suppose $\bM$ satisfies
(\ref{eq:Mcondition}) almost surely for all large $n$.
Then for any $p(\bx) \in \Delta\langle \bx \rangle$,
\begin{enumerate}[(a)]
\item $\lim_{n \to \infty} \frac{1}{n} \Tr p(\bM)$ exists almost
surely, is finite, and depends only on the law of $D$.
\item For any $\eps>0$ and all large $n$, we have
\[\max_{i=1}^n |p(\bM)[i,i]-\tfrac{1}{n}\Tr p(\bM)|<n^{-1/2+\eps},
\qquad \max_{i \neq j} |p(\bM)[i,j]|<n^{-1/2+\eps}\]
\end{enumerate}
\end{lemma}
\begin{proof}
By the definition of diagonal monomials, every $p(\bx)\in\Delta\langle\bx
\rangle$ is a word of the form
\begin{align}\label{eq:diag_monomial}
	p(\bx) = \bx^{r_1} \Delta(p_1(\bx)) \bx^{r_2} \Delta(p_2(\bx))
	\cdots \bx^{r_{L}} \Delta(p_{L}(\bx))\bx^{r_{L+1}}
\end{align}
where each $p_\ell(\x) \in \Delta \langle \bx \rangle$ and each $r_\ell \geq 0$.
We define the \emph{depth} of $p(\bx)$, denoted by $\delta(p)$, as
$\delta(p)=0$ if $L=0$ (so that $p(\bx)=\bx^r$ for some $r\geq 0$), and
$\delta(p)=1+\max_{\ell=1}^L \delta(p_\ell)$ if $L \geq 1$. Thus
$\delta(p)$ is the maximum number of ``nested'' applications of
$\Delta(\cdot)$. We induct on $\delta(p)$.

For the base case where $\delta(p)=0$ and $p(\bx)=\bx^r$, we have
\[\tfrac{1}{n} \Tr p(\bM)= \tfrac{1}{n} \Tr(\bM^r) 
= \frac{1}{n} \sum_{i=1}^n d[i]^r 
\to \EE[D^r]\]
almost surely,
by the assumption $\bd\toW D$. Thus statement (a) holds, and statement (b) holds
by the assumed condition (\ref{eq:Mcondition}).

Suppose inductively that the lemma is true for all $p(\bx)$ with 
$\delta(p) \leq K$, and consider $p(\bx)$ with $\delta(p)=K+1$.
Fix any $\eps>0$. By the definition of depth, every $p_\ell(\bx)$ in
\eqref{eq:diag_monomial} satisfies $\delta(p_\ell)\leq K$.  
Then for every $\ell=1,2,\ldots,L$, by claim (b) of the induction hypothesis,
we can decompose $p_\ell(\bM)=\tfrac{1}{n} \Tr p_\ell(\bM) \cdot\Id+\bE_\ell$
where $\bE_\ell$ satisfies $\max_{i,j\in[n]} |E_\ell[i,j]|<n^{-1/2+\eps}$
almost surely for all large $n$.
Fix any $i,j\in[n]$ and write $i_0\equiv i$ and $i_{L+1}\equiv j$.
Then, applying this decomposition to every $p_\ell(\bM)$, we obtain
\begin{align*}
	p(\bM)[i,j] &= \sum_{\bi\in[n]^L} 
	M^{r_1}[i_0,i_1] p_1(\bM)[i_1,i_1]M^{r_2}[i_1,i_2]
	\cdots p_{L}(\bM)[i_{L},i_{L}]M^{r_{L+1}}[i_L,i_{L+1}]\\
	&= \sum_{\bi\in[n]^L} 
	\prod_{\ell=1}^{L+1} M^{r_\ell}[i_{\ell-1},i_{\ell}]
	\prod_{\ell=1}^L \bigg(\frac{1}{n} \Tr p_\ell(\bM) + E_\ell[i_{\ell}, i_{\ell}]\bigg)\\
	&= \sum_{\cJ\subseteq[L]} \bigg(\prod_{\ell\in[L]\setminus \cJ} 
	\frac{1}{n} \Tr p_\ell(\bM)\bigg)
	\sum_{\bi\in[n]^L} \prod_{\ell=1}^{L+1}
	M^{r_\ell}[i_{\ell-1},i_\ell] \prod_{\ell \in \cJ} E_\ell[i_\ell,i_\ell].
\end{align*}
By the induction hypothesis, the limit
\begin{equation}\label{eq:MJdef}
M_{\cJ}:=\lim_{n\to\infty}\prod_{\ell\in[L]\setminus \cJ} 
\frac{1}{n} \Tr p_\ell(\bM)
\end{equation}
exists, is finite, and depends only on the law of $D$.
We set $M_\cJ=1$ if $\cJ=[L]$.
Note that this convergence is uniform over pairs $i,j \in [n]$.
Therefore, for an error $\xi_\cJ=o(1)$ independent of $i$ and $j$,
\begin{align*}
	p(\bM)[i,j] &= \sum_{\cJ\subseteq[L]} (M_\cJ+\xi_\cJ)
	\sum_{\bi\in[n]^L} \prod_{\ell=1}^{L+1}
	M^{r_\ell}[i_{\ell-1},i_{\ell}] \prod_{\ell \in \cJ}
    E_\ell[i_\ell,i_\ell].
\end{align*}
We first sum over all indices $\{i_\ell:\ell \notin \cJ\}$:
Write explicitly 
$\cJ=\{\ell_1,\ldots,\ell_{|\cJ|}\}$ where $1\leq \ell_1<\ldots<\ell_{|\cJ|}\leq L$.
Let $\ell_0=0$ and $\ell_{|\cJ|+1}=L+1$, and denote
$R_\rho=r_{\ell_{\rho-1}+1}+\ldots+r_{\ell_\rho}$. Then this gives
\begin{align}\label{eq:diag_m_tr}
	p(\bM)[i,j] &= \sum_{\cJ\subseteq[L]} (M_\cJ+\xi_\cJ)
	\sum_{\bi\in[n]^\cJ} \prod_{\rho=1}^{|\cJ|+1}
	M^{R_\rho}[i_{\ell_{\rho-1}},i_{\ell_{\rho}}]
	\prod_{\ell \in\cJ}E_\ell[i_\ell,i_\ell].
\end{align}

We denote by $C>0$ a constant depending only on $p(\bx)$,
$\cJ$, and the law of $D$, and changing from instance to instance.
By (\ref{eq:Mcondition}), we have
$\max_{i \in [n]} |M^{R_\rho}[i,i]|<\frac{1}{n}\Tr \bM^{R_\rho}+n^{-1/2+\eps}<C$
and $\max_{i \neq j} |M^{R_\rho}[i,j]|<n^{-1/2+\eps}$
for each $\rho \in [|\cJ|+1]$, almost surely
for all large $n$. For any $\bi\in[n]^\cJ$, define 
$\Psi(\bi)=\{\rho\in[|\cJ|+1]:i_{\ell_{\rho-1}}\neq i_{\ell_{\rho}}\}$. Then
this implies
\begin{equation}\label{eq:MEprodbound}
	\prod_{\rho=1}^{|\cJ|+1} \big|
	M^{R_\rho}[i_{\ell_{\rho-1}},i_{\ell_{\rho}}]\big|
	\prod_{\ell \in\cJ} |E_\ell[i_\ell,i_\ell]| \leq 
	Cn^{(-1/2+\eps)(|\Psi(\bi)|+|\cJ|)}.
\end{equation}
Moreover, if $\psi \geq 1$, then note that $|\{\bi\in[n]^\cJ:|\Psi(\bi)|=\psi\}|
\leq Cn^{\psi-1}$, because $i_{\ell_0}=i_0=i$ and $i_{\ell_{|\cJ|+1}}=i_{L+1}=j$
are fixed, so there is freedom to choose $\psi-1$ remaining index values.
Combining this with (\ref{eq:MEprodbound}), for any $\psi \geq 1$,
\begin{align}\label{eq:diag_m_bound}
	\sum_{\bi\in[n]^\cJ:|\Psi(\bi)|=\psi}
	\prod_{\rho=1}^{|\cJ|+1} \big|
	M^{R_\rho}[i_{\ell_{\rho-1}},i_{\ell_{\rho}}]\big|
	\prod_{\ell \in\cJ} |E_\ell[i_\ell,i_\ell]| &\leq 
	Cn^{\psi-1} \cdot n^{(-1/2+\eps)(\psi+|\cJ|)}\notag\\
	&\leq Cn^{-1/2+\eps(2|\cJ|+1)}
\end{align}
where the last inequality follows from the observation that we always have 
$|\Psi(\bi)| \leq |\cJ|+1$. For $\psi=0$, we must have $i=j$ and
$|\{\bi\in[n]^\cJ:|\Psi(\bi)|=\psi\}|=1$. Then by (\ref{eq:MEprodbound}),
this bound
(\ref{eq:diag_m_bound}) still holds as long as $|\cJ| \geq 1$, i.e.\ $\cJ \neq
\emptyset$.

Applying \eqref{eq:diag_m_bound} for all non-empty 
$\cJ\subseteq [L]$, it follows from \eqref{eq:diag_m_tr} that
\begin{equation}\label{eq:diag_m_tr_final}
\big|p(\bM)[i,j] - \1\{i=j\}(M_\emptyset+\xi_\emptyset)
M^{R}[i,i]\big| \leq Cn^{-1/2+\eps(2L+1)}
\end{equation}
where we set $R=r_1+\ldots+r_{L+1}$. The above bounds all hold uniformly
over $i,j \in [n]$, and hence (\ref{eq:diag_m_tr_final})
holds simultaneously for all
pairs $i,j \in [n]$, almost surely for all large $n$. Thus, combining with the
condition (\ref{eq:Mcondition}) for $M^R[i,i]$, we conclude that both
$\max_{i\neq j} |p(\bM)[i,j]|$ and
$\max_{i=1}^n |p(\bM)[i,i]-(M_\emptyset+\xi_\emptyset) \cdot \frac{1}{n}\Tr
\bM^R|$ are at most $n^{-1/2+\eps(2L+2)}$ for all large $n$.
Then $\{p(\bM)[i,i]:i \in [n]\}$
are uniformly close to a value independent of $i \in [n]$, which implies also
$\max_{i=1}^n |p(\bM)[i,i]-\frac{1}{n}\Tr p(\bM)|<2n^{-1/2+\eps(2L+2)}$.
These statements hold for any $\eps>0$, showing the inductive claim (b) for
$p(\bx)$. Moreover, averaging (\ref{eq:diag_m_tr_final}) over $i=j$ gives
\begin{align*}
	\lim_{n\to\infty}\frac{1}{n}\Tr p(\bM)=\lim_{n\to\infty} 
	(M_\emptyset+\xi_\emptyset) \cdot \frac{1}{n} \Tr M^R = M_\emptyset \cdot \EE[D^R],
\end{align*}
and we recall from (\ref{eq:MJdef}) that $M_\emptyset$ depends only on the law
of $D$. This shows the inductive claim (a) for $p(\bx)$, completing the
induction.
\end{proof}

\begin{proof}[Proof of Proposition \ref{prop:syminvariant}]
Lemma \ref{lemma:syminvariantb2} implies that the matrix model
in Proposition~\ref{prop:syminvariant}(b2) satisfies
Definition \ref{def:syminvariant}, where the limit diagonal law $\cD_\tdiag$
is determined uniquely by the limit spectral distribution $D$. To complete the
proof of Proposition~\ref{prop:syminvariant}, it suffices to verify that
the orthogonally invariant matrix model of part (a)
and the model of part (b1) are both special cases of the model in part (b2).

If $\bW=\bO\bD\bO^\top$ is orthogonally invariant, i.e.\ $\bO \sim \Haar(\OO(n))$
is independent of $\bD$, then also $\bO\overset{L}{=} \bPi_V \bO \bPi_E$ where
$\bPi_V,\bPi_E$ are uniformly random signed permutations independent of $\bO$.
The entries of $\bO$ satisfy the delocalization condition
(\ref{eq:symdelocalization}) almost surely for all large $n$,
as is implied by \cite[Theorem 1]{jiang2005maxima}. Thus $\bW$ is a special case
of the model in part (b1).

Now suppose $\bW$ is any matrix satisfying the description of part (b1).
Then $\bW$ has the simpler form $\bW=\bPi\bM\bPi^\top$ where $\bPi=\bPi_V$,
\[\bM=\bH\bP\bD\bP^\top \bH^\top,\]
and $\bP=\bP_E$ is the random permutation
corresponding to $\bPi_E=\bP_E\bXi_E$. Here, we have eliminated the diagonal sign
matrices $\bXi_E$ from the expression using $\bXi_E\bD\bXi_E^\top=\bD$. To show
that $\bW$ is an example of the model in part (b2),
it remains to show that this matrix $\bM$ satisfies the condition
(\ref{eq:Mcondition}) almost surely for all large $n$.

Consider $\bM^\nu$ for any fixed integer $\nu \geq 1$.
Let $\bh_i \in \R^n$ denote the $i^\text{th}$ row of $\bH$,
and let $\sigma$ be the permutation of $[n]$ for which $P[i,\sigma(i)]=1$ for
all $i \in [n]$. Then
\begin{equation}\label{eq:Mnuexpr}
M^\nu[i,j]=(\bH\bP\bD^\nu \bP^\top \bH^\top)[i,j]
=\sum_{k=1}^n h_i[k]D^\nu[\sigma(k),\sigma(k)]h_j[k]
\end{equation}
We condition on $(\bD,\bH)$, and write $\E$ for the expectation over only the
permutation $\sigma$. Then for each fixed $k \in [n]$, we have
$\E[D^\nu[\sigma(k),\sigma(k)]]=n^{-1} \Tr \bD^\nu$, so
\[\E[M^\nu[i,j]]=\tfrac{1}{n}\Tr \bD^\nu \cdot \bh_i^\top \bh_j
=\tfrac{1}{n}\Tr \bM^\nu \cdot 1\{i=j\}.\]
We now show concentration of $M^\nu[i,j]$ around this expectation by
computing its high moments: Consider first any fixed $i \neq j \in [n]$,
and abbreviate $\tilde h[k]=h_i[k]h_j[k]$ and $\tilde d[k]=D^\nu[k,k]$. Then 
from (\ref{eq:Mnuexpr}), $M^\nu[i,j]=\sum_{k=1}^n \tilde h[k] \tilde d[\sigma(k)]$,
so for any even integer $p \geq 2$,
\[\E[(M^\nu[i,j])^p]=\sum_{\bk \in [n]^p}
\tilde h[k_1]\ldots \tilde h[k_p]
\E\Big[\tilde d[\sigma(k_1)]\ldots \tilde d[\sigma(k_p)]\Big].\]
Let $\cP$ be the lattice of partitions of $[p]$, endowed with the usual partial
ordering by refinement. For each $\bk \in [n]^p$, let
$\pi(\bk) \in \cP$ be the partition induced by $\bk$,
i.e.\ $i,j \in [p]$ belong to a common block of $\pi$ if and only if $k_i=k_j$.
Then
\begin{align}
\E[(M^\nu[i,j])^p]&=\sum_{\pi \in \cP}
\sum_{\bk \in [n]^p:\pi(\bk)=\pi} 
\tilde h[k_1]\ldots \tilde h[k_p]
\E\Big[\tilde d[\sigma(k_1)]\ldots \tilde d[\sigma(k_p)]\Big]
\notag\\
&=\sum_{\pi \in \cP} \mathop{\sum_{\bk \in [n]^p}}_{\pi(\bk)=\pi} \tilde h[k_1]\ldots \tilde h[k_p]
\cdot \frac{(n-|\pi|)!}{n!} 
\mathop{\sum_{\bl \in [n]^p}}_{\pi(\bl)=\pi} \tilde d[l_1]\ldots \tilde d[l_p],\label{eq:EMijoff}
\end{align}
the second equality using that the permutation $\sigma$ is uniformly random, so
the expectation over $\sigma$ yields a uniform average over new choices for the
$|\pi|$ distinct index values of $\bk$. 

Let $\mu(\pi,\pi')$ for $\pi \leq \pi'$ be the M\"obius function over $\cP$,
satisfying the inversion relation
(see e.g.\ \cite[Eq.\ (10.10)]{nica2006lectures})
$\sum_{\tau \in \cP:\pi \leq \tau \leq \pi'} \mu(\pi,\tau)
=\1\{\pi=\pi'\}$. Then for any function $f$,
\begin{equation}\label{eq:inclusionexclusion}
\mathop{\sum_{\bk \in [n]^p}}_{\pi(\bk)=\pi} f(\bk)
=\mathop{\sum_{\bk \in [n]^p}}_{\pi(\bk) \geq \pi} f(\bk) \cdot
\mathop{\sum_{\tau \in \cP}}_{\pi \leq \tau \leq \pi(\bk)} \mu(\pi,\tau)
=\mathop{\sum_{\tau \in \cP}}_{\tau \geq \pi}
\mu(\pi,\tau) \mathop{\sum_{\bk \in [n]^p}}_{\pi(\bk) \geq \tau} f(\bk).
\end{equation}
Applying this to the term involving $\tilde h$ in (\ref{eq:EMijoff}),
\[\sum_{\bk \in [n]^p:\pi(\bk)=\pi} \tilde h[k_1]\ldots \tilde h[k_p]
=\sum_{\tau \in \cP:\tau \geq \pi} \mu(\pi,\tau)
\prod_{R \in \tau} \sum_{k=1}^n \tilde h[k]^{|R|}.\]
Recalling $\tilde h[k]=h_i[k]h_j[k]$ where $i \neq j$, we have
$\sum_{k=1}^n \tilde h[k]=\bh_i^\top \bh_j=0$. Thus the summand for $\tau$
vanishes if $\tau$ has a singleton block.
For all other partitions $\tau \in \cP$, its number of blocks satisfies
$|\tau| \leq p/2$. Then applying $|\tilde{h}[k]| \leq n^{2(-1/2+\eps)}$
by the delocalization condition (\ref{eq:symdelocalization}) for $\bH$,
for any fixed $\eps>0$ and all large $n$,
\[\left|\prod_{R \in \tau} \sum_{k=1}^n \tilde h[k]^{|R|}\right|
\leq n^{2p(-1/2+\eps)} \cdot n^{|\tau|} \leq n^{-p/2+2p \eps}.\]
Thus $|\sum_{\bk \in [n]^p:\pi(\bk)=\pi} \tilde h[k_1]\ldots \tilde h[k_p]|
\leq Cn^{-p/2+2p \eps}$ where, here and below, we denote by $C>0$ a
$(\pi,D)$-dependent constant that may change from instance to instance.
By the assumption $\bd \toW D$ and
Lemma~\ref{lemma:distinctness} (applied with $\cS$
being the blocks of $\pi$ and $q_S(x)=\tilde{d}(x)^{|S|}$ for $S \in \pi$),
also
$n^{-|\pi|} |\sum_{\bl \in [n]^p:\pi(\bl)=\pi} \tilde d[l_1]\ldots \tilde d[l_p]| \leq C$.
Applying these to (\ref{eq:EMijoff}), we obtain
$\E[(M^\nu[i,j])^p] \leq Cn^{-p/2+2p \eps}$, so
$\PP[|M^\nu[i,j]|>n^{-1/2+3\eps}] \leq Cn^{-p\eps}$ by Markov's inequality.
Choosing even $p \geq 2$ sufficiently large and taking a union bound over all
$i \neq j$, this shows that the second condition of (\ref{eq:Mcondition}) 
holds almost surely for all large $n$.

The case $i=j$ is similar: Fix $i \in [n]$ and now abbreviate
$\tilde h[k]=h_i[k]^2$ and $\tilde d[k]=D^\nu[k,k]-n^{-1}\Tr \bD^\nu$.
Then from (\ref{eq:Mnuexpr}),
$M^\nu[i,i]-n^{-1}\Tr \bM^\nu=\sum_k \tilde h[k]\tilde d[k]$, so we obtain
analogously to (\ref{eq:EMijoff})
\[\E[(M^\nu[i,i]-n^{-1}\Tr \bM^\nu)^p]
=\mathop{\sum_{\bk \in [n]^p}}_{\pi(\bk)=\pi} \tilde h[k_1]\ldots \tilde h[k_p]
\cdot \frac{(n-|\pi|)!}{n!} 
\mathop{\sum_{\bl \in [n]^p}}_{\pi(\bl)=\pi} \tilde d[l_1]\ldots \tilde d[l_p].\]
Applying the M\"obius inversion relation (\ref{eq:inclusionexclusion}) now to
the second summation over $\bl$,
\[\sum_{\bl \in [n]^p:\pi(\bl)=\pi} \tilde d[l_1]\ldots \tilde d[l_p]
=\sum_{\tau \in \cP:\tau \geq \pi} \mu(\pi,\tau)
\prod_{R \in \tau} \sum_{l=1}^n \tilde{d}[l]^{|R|}.\]
Using that $\sum_{k=1}^n \tilde d[k]=0$, the summand for $\tau$ vanishes if
$\tau$ has a singleton block. For all other partitions $\tau \in \cP$,
applying $\bd \toW D$, we obtain
\[\left|\prod_{R \in \tau} \sum_{l=1}^n \tilde{d}[l]^{|R|}\right|
\leq Cn^{|\tau|} \leq Cn^{p/2}.\]
Then $|\sum_{\bl \in [n]^p:\pi(\bl)=\pi} \tilde d[l_1]
\ldots \tilde d[l_p]| \leq Cn^{p/2}$.
From (\ref{eq:symdelocalization}), we have also
\[n^{-|\pi|} \sum_{\bk \in [n]^p:\pi(\bk)=\pi} |\tilde h[k_1]\ldots \tilde h[k_p]|
\leq n^{-2p(1/2+\eps)}.\]
Then $\E[(M^\nu[i,i]-n^{-1}\Tr \bM^\nu)^p] \leq Cn^{-p/2+2p\eps}$,
so the first condition of (\ref{eq:Mcondition}) follows also by Markov's
inequality and a union bound. This verifies that $\bW$ satisfying part (b1) also
satisfies part (b2), as desired.
\end{proof}

\section{Tensor network value under orthogonal invariance}\label{appendix:orthogonal}

In this Appendix, we derive a more explicit combinatorial form for the tensor
network value of Lemma~\ref{lemma:syminvmoments} when $\bW$ is an
orthogonally invariant matrix, using the orthogonal Weingarten calculus. We then
prove the asymptotic freeness statement of Proposition \ref{prop:freeness}(b).

Let $T$ be a tensor network with $w+1$ vertices and $w$ edges.
Then there are $2w$ vertex-edge pairs $(v,e)$
where edge $e$ is incident to vertex $v$. We label these vertex-edge pairs
arbitrarily as $1,2,\ldots,2w$. Let $\cP$ be the lattice of partitions
of $[2w]$, endowed with the usual partial ordering by refinement.
We define two distinguished partitions $\pi_V,\pi_E \in \cP$, such that
vertex-edge pairs $\rho,\tau \in [2w]$ belong
to the same block of $\pi_V$ if and only if they have the same vertex $v$, and
to the same block of $\pi_E$ if and only if they have the same edge $e$. (Thus
$\pi_V$ has $w+1$ blocks, one for each vertex of $T$, and $\pi_E$ is a pairing
with $w$ pairs, one for each edge of $T$.)

Define a metric over $\cP$ by
\begin{equation}\label{eq:metric}
d(\pi,\pi')=|\pi|+|\pi'|-2|\pi \vee \pi'|
\end{equation}
where $\pi \vee \pi'$ is the join (i.e.\ least upper bound) of $\pi$ and $\pi'$.
This is shown in
\cite{arabie1973multidimensional,boorman1973metrics} to be equivalent to the
smallest number of merge and divide operations needed to transform $\pi$ into
$\pi'$, where a merge operation combines any two blocks into one block, and a
divide operation splits any one block into two blocks. From this
characterization, it is immediate that $d(\cdot,\cdot)$ satisfies the triangle
inequality $d(\pi,\pi')+d(\pi',\pi'') \geq d(\pi,\pi'')$.
We call a path $\pi_0 \to \pi_1 \to \ldots \to \pi_k$ of partitions
a \emph{$d$-geodesic} if it is a shortest path from $\pi_0$ to $\pi_k$ in the
metric $d(\cdot,\cdot)$, i.e.\ if
\[d(\pi_0,\pi_k)=d(\pi_0,\pi_1)+d(\pi_1,\pi_2)+\ldots+d(\pi_{k-1},\pi_k).\]

The main result of this Appendix is the following proposition.

\begin{proposition}\label{prop:orthogonallimval}
In the setting of Lemma \ref{lemma:syminvmoments}, suppose in addition that
$\bW=\bO\bD\bO^\top$ is orthogonally invariant,
where $\bD=\diag(\bd)$ and $\bd\toW D$ almost surely as $n \to \infty$.
For $\pi \geq \pi_V$ and $\pi' \geq \pi_E$, define
\begin{align}
q(\pi)&=\prod_{S \in \pi} \E\left[\prod_{\text{distinct vertices } v
\text{ in vertex-edge pairs of } S} q_v(X_1,\ldots,X_k)\right]
\label{eq:qlimortho_appendix}\\
D(\pi')&=\prod_{S \in \pi'} \E\left[D^{\text{number of distinct edges in
vertex-edge pairs of } S}\right].\label{eq:Dlimortho}
\end{align}
Then
\[\lim_{n \to \infty} \val_T(\W;\x_1,\ldots,\x_k)
=\sum_{j \geq 0} \mathop{\sum_{\text{distinct pairings }
\pi_0,\ldots,\pi_j \text{ of } [2w]}}_{\pi_V \to \pi_0 \to \ldots \to \pi_j
\to \pi_E \text{ is a } d\text{-geodesic}}
(-1)^j q(\pi_V \vee \pi_0)D(\pi_E \vee \pi_j).\]
(Here $\pi_j$ is not required to be distinct from $\pi_E$.)
\end{proposition}

To show this result, we apply the following statements
derived from the orthogonal Weingarten calculus
of \cite{collins2006integration} for mixed moments of entries of
Haar-orthogonal random matrices.

\begin{lemma}\label{lem:weingarten}
Let $\Ob \sim \Haar(\OO(n))$. Let $\bi=(i_1,\ldots,i_{2w})$ and
$\bj=(j_1,\ldots,j_{2w})$ be any index tuples in $[n]^{2w}$. Then
\begin{equation}\label{eq:orthogmoments}
\E\bigg[\prod_{p=1}^{2w} O[i_p,j_p]\bigg]
=\mathop{\sum_{\text{pairings } \pi,\pi' \text{ of } [2w]}}_{\pi \leq
\pi(\bi),\;\pi' \leq \pi(\bj)} \Wg_n[\pi,\pi']
\end{equation}
where $\Wg_n$ is the orthogonal Weingarten function. For fixed $w$,
as $n \to \infty$, this satisfies
\begin{equation}\label{eq:weingartenasymp}
\Wg_n[\pi,\pi']=n^{-w-d(\pi,\pi')/2} \cdot \Moeb(\pi,\pi')
+O(n^{-w-d(\pi,\pi')/2-1})
\end{equation}
where $d(\pi,\pi')$ is the metric (\ref{eq:metric}), and $\Moeb(\pi,\pi')$ is
the M\"obius function on the non-crossing partition lattice, given by
\begin{equation}\label{eq:moeb}
\Moeb(\pi,\pi')=\sum_{k \geq 0}
\mathop{\sum_{\text{distinct pairings } \pi_0,\pi_1,\ldots,\pi_k \text{ of }
[2w]}}_{\pi_0 \to \pi_1 \to
\ldots \to \pi_k \text{ is a } d\text{-geodesic from }
\pi_0=\pi \text{ to } \pi_k=\pi'} (-1)^k.
\end{equation}
\end{lemma}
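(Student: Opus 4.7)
The plan is to follow the standard derivation of the orthogonal Weingarten calculus via invariance plus Brauer algebra/Schur-Weyl considerations, and then extract the $n$-asymptotics by perturbatively inverting the ``Gram matrix of pairings.''

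First, for the moment identity~(\ref{eq:orthogmoments}), I would exploit that $\Ob$ is invariant in law under $\Ob \mapsto \bU\Ob\bV$ for any $\bU,\bV\in\OO(n)$. Writing $M(\bi,\bj)=\E\prod_{p=1}^{2w} O[i_p,j_p]$, this implies $M(\bi,\bj)=\sum_{\bi',\bj'} \prod_p U[i_p,i'_p]V[j_p,j'_p]\cdot M(\bi',\bj')$, i.e.\ $M$ is an $\OO(n)\times\OO(n)$-invariant tensor of rank $2w$ on each factor. By the first fundamental theorem of invariant theory for $\OO(n)$ (Brauer's theorem), the space of such invariants is spanned by the ``pair-contraction'' tensors $\Delta_\pi(\bi)\Delta_{\pi'}(\bj)$ indexed by pairings $\pi,\pi'$ of $[2w]$, where $\Delta_\pi(\bi)=\1\{\pi\leq\pi(\bi)\}=\prod_{\{p,q\}\in\pi}\1\{i_p=i_q\}$. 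This immediately gives an expansion $M(\bi,\bj)=\sum_{\pi,\pi'}\Wg_n[\pi,\pi']\cdot \Delta_\pi(\bi)\Delta_{\pi'}(\bj)$, defining $\Wg_n[\pi,\pi']$. For odd total order the moments vanish by $\Ob\overset{d}{=}-\Ob$, so the restriction to pairings of $[2w]$ is correct. The coefficients are determined (for $n$ large enough that the pairing tensors are linearly independent) by inverting this system.

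Second, to identify $\Wg_n$ concretely, I would compute the Gram matrix $G_n[\pi,\pi']:=\sum_{\bi\in[n]^{2w}}\Delta_\pi(\bi)\Delta_{\pi'}(\bi)=n^{|\pi\vee\pi'|}$, which is the standard count of index tuples compatible with both pairings. Summing~(\ref{eq:orthogmoments}) against $\Delta_\sigma(\bi)\Delta_{\sigma'}(\bj)$ and using the identity $\sum_{\bj}\Delta_{\sigma'}(\bj)\prod_p O[i_p,j_p]=\prod_{\{p,q\}\in\sigma'}(OO^\top)[i_p,i_q]=\prod\1\{i_p=i_q\}=\Delta_{\sigma'}(\bi)$ (using orthogonality $OO^\top=I$), gives the dual identity $\sum_{\pi}\Wg_n[\sigma,\pi]\,G_n[\pi,\sigma']=\Delta_{\sigma,\sigma'}$ on pairings, so $\Wg_n=G_n^{-1}$ (as a matrix indexed by pairings) for $n$ sufficiently large.

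Third, for the asymptotic~(\ref{eq:weingartenasymp}), note that $|\pi\vee\pi'|=w-d(\pi,\pi')/2$ for pairings, so $G_n[\pi,\pi']=n^w\cdot n^{-d(\pi,\pi')/2}$, i.e.\ $G_n=n^w(I+R_n)$ with $R_n[\pi,\pi']=n^{-d(\pi,\pi')/2}\1\{\pi\neq\pi'\}$ and strictly positive distances on the off-diagonal. Hence for large $n$, $\Wg_n=n^{-w}\sum_{k\geq 0}(-R_n)^k$, and the $(\pi,\pi')$ entry of $(-R_n)^k$ is
\[
(-1)^k\!\!\!\sum_{\pi=\pi_0\neq\pi_1\neq\cdots\neq\pi_k=\pi'}
n^{-\tfrac12\sum_{j=1}^k d(\pi_{j-1},\pi_j)}.
\]
By the triangle inequality for $d$, the exponent is at least $-d(\pi,\pi')/2$, with equality precisely on $d$-geodesics $\pi_0\to\pi_1\to\cdots\to\pi_k$. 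Collecting these leading terms across all $k$ gives
\[
\Wg_n[\pi,\pi']=n^{-w-d(\pi,\pi')/2}\sum_{k\geq 0}(-1)^k\!\!\mathop{\sum_{\pi_0,\ldots,\pi_k \text{ distinct}}}_{d\text{-geodesic } \pi_0=\pi\to\cdots\to\pi_k=\pi'}\!\!1+O(n^{-w-d(\pi,\pi')/2-1}),
\]
which matches~(\ref{eq:weingartenasymp}) with $\Moeb(\pi,\pi')$ as defined in~(\ref{eq:moeb}). The remainder bound follows since every non-geodesic sequence contributes an additional factor of at least $n^{-1/2}$, and one needs only to check $n^{-1/2}\cdot(\text{bounded number of sequences})=O(n^{-1})$ for the integrality of the correction order (using that $d(\pi,\pi')$ is always a non-negative integer for pairings).

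The main obstacle is the invocation of Brauer's first fundamental theorem in the first step, specifically checking that the pairing tensors span the $\OO(n)$-invariant subspace for all~$n$ relevant here; a clean alternative is to verify the formula directly by uniqueness, i.e., showing both sides satisfy the same invariance and have identical ``diagonal'' moments $\E\prod_p O[i_p,j_p]^2$ via independence of a column under Haar measure. The second mild difficulty is justifying the Neumann-series inversion for large but fixed $n$ and carefully tracking the $O(n^{-w-d/2-1})$ error uniformly over pairs $(\pi,\pi')$, which is routine once one observes that for pairings of $[2w]$ all degrees and all geodesic path lengths are uniformly bounded by constants depending only on $w$.
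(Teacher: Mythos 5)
Your proposal is correct, and it takes a genuinely different route from the paper. The paper's own proof is essentially a citation argument: it identifies pairings of $[2w]$ with fixed-point-free involutions in the symmetric group, checks that the metric $l(\pi,\pi')=|\pi\pi'|/2$ of \cite{collins2006integration} equals $d(\pi,\pi')/2$, and then reads off \eqref{eq:orthogmoments} and \eqref{eq:weingartenasymp} from Corollary 3.4 and Theorem 3.13 there, obtaining the geodesic form \eqref{eq:moeb} by comparing their Theorem 3.13 with their Lemma 3.12. You instead re-derive the orthogonal Weingarten calculus from scratch: bi-invariance plus the first fundamental theorem for $\OO(n)$ gives the expansion of the moment tensor over pairing contractions; the orthogonality identity $\sum_j O[i_p,j]O[i_q,j]=\1\{i_p=i_q\}$ yields $\Wg_n=G_n^{-1}$ for the Gram matrix $G_n[\pi,\pi']=n^{|\pi\vee\pi'|}$; and the Neumann expansion of $n^{-w}(I+R_n)^{-1}$ produces the asymptotics, with the geodesic characterization of the leading terms giving exactly \eqref{eq:moeb}. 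Your route buys self-containedness (modulo classical invariant theory) and a transparent derivation of the alternating geodesic sum for $\Moeb$; the paper's route buys brevity and validity of \eqref{eq:orthogmoments} for every $n$ (with $\Wg_n$ defined as a pseudo-inverse of $G_n$), whereas your argument establishes it only for $n$ large enough that the pairing functionals are linearly independent---which is all that the downstream use in Proposition \ref{prop:orthogonallimval} requires.

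Two points to tighten. First, in the error estimate the relevant fact is not that $d$ is an integer but that $d(\pi,\pi')=2w-2|\pi\vee\pi'|$ is \emph{even} on pairings, so every step of a path has length at least $2$ and any non-geodesic path overshoots $d(\pi,\pi')$ by at least $2$, contributing $O(n^{-w-d(\pi,\pi')/2-1})$ as required by \eqref{eq:weingartenasymp}; the tail of the Neumann series over large $k$ is controlled since length-$k$ paths contribute $O(n^{-k})$ and their number is at most $((2w-1)!!)^k$. Second, your leading terms range over geodesic paths with \emph{consecutive} pairings distinct (zero diagonal of $R_n$), while \eqref{eq:moeb} sums over geodesics with \emph{all} pairings distinct; these coincide because a repetition $\pi_i=\pi_j$ with $i<j$ on a geodesic would force the intermediate steps, each of strictly positive length, to sum to $d(\pi_i,\pi_j)=0$.
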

\begin{proof}
We may identify pairings $\pi,\pi'$ of $[2w]$ as permutations in
the symmetric group $\mathrm{S}_{2w}$, each a product of $w$ disjoint
transpositions corresponding to the $w$ pairs.
The cycle decomposition of their product $\pi\pi'$ in $\mathrm{S}_{2w}$
has exactly two cycles for each set of their join partition
$\pi \vee \pi'$. Then, the metric $l(\pi,\pi')=|\pi\pi'|/2$ used in
\cite[Section 3]{collins2006integration} (where $|\cdot|$ is the Cayley distance
to the identity permutation in $\mathrm{S}_{2w}$, given by $2w$ minus the number
of cycles) is equivalently
\begin{equation}\label{eq:metricequiv}
l(\pi,\pi')=\frac{2w-2|\pi \vee \pi'|}{2}=\frac{d(\pi,\pi')}{2}
\end{equation}
where the right side is our metric $d(\cdot,\cdot)$ restricted to pairings. The
statements (\ref{eq:orthogmoments}) and (\ref{eq:weingartenasymp}) then follow
from \cite[Corollary 3.4 and Theorem 3.13]{collins2006integration}.
The form (\ref{eq:moeb}) for the M\"obius function follows from comparing
\cite[Theorem 3.13]{collins2006integration} with \cite[Lemma
3.12]{collins2006integration}, noting that the leading-order terms of
\cite[Lemma 3.12]{collins2006integration} come from paths of pairings
satisfying $\pi_i \neq \pi_{i+1}$ for each $i=0,\ldots,k-1$
and also 
$l(\pi_0,\pi_1)+\ldots+l(\pi_{k-1},\pi_k)=l(\pi_0,\pi_k)$. Any such path must
be a geodesic of $k+1$ unique pairings in the metric $l(\cdot,\cdot)$, and
hence also in the metric $d(\cdot,\cdot)$ by the equivalence
(\ref{eq:metricequiv}), and this shows (\ref{eq:moeb}).
\end{proof}

\begin{proof}[Proof of Proposition \ref{prop:orthogonallimval}]

Expanding the product $\W=\Ob\Db\Ob^\top$,
the tensor network value is given by
\[\val_T(\W;\x_{1:k})=\frac{1}{n}\sum_{\bi \in [n]^{\cV}}
\sum_{\bj \in [n]^{\cE}} \prod_{v \in \cV} q_v(x_{1:k}[i_v])
\prod_{e=(u,v) \in \cE} O[i_u,j_e]D[j_e,j_e]O[i_v,j_e].\]
For each vertex $v$ or edge $e$, let $\rho(v),\rho(e) \in [2w]$ be an arbitrary
choice of vertex-edge pair containing this vertex or this edge. 
Then this is equivalently expressed as
\begin{equation}\label{eq:orthoval_appendix}
\val_T(\W;\x_{1:k})=\frac{1}{n}
\mathop{\sum_{\bi \in [n]^{2w}}}_{\pi(\bi) \geq \pi_V}
\mathop{\sum_{\bj \in [n]^{2w}}}_{\pi(\bj) \geq \pi_E}
\prod_{v \in \cV} q_v(x_{1:k}[i_{\rho(v)}])
\prod_{e \in \cE} D[j_{\rho(e)},j_{\rho(e)}] \prod_{\rho=1}^{2w} 
O[i_\rho,j_\rho].
\end{equation}
Note that by the constraints $\pi(\bi) \geq \pi_V$ and $\pi(\bj) \geq \pi_E$,
this expression is the same for any choices of vertex-edge pairs
$\rho(v),\rho(e) \in [2w]$. 

Let $\E$ be the expectation over $\bO$, conditional on $\x_1,\ldots,\x_k$ and
$\D$. By Lemma \ref{lem:weingarten}, we have
\[\E\bigg[\prod_{p=1}^{2w} O[i_p,j_p]\bigg]
=\sum_{\text{pairings } \pi,\pi' \text{ of } [2w]}
\1_{\pi(\bi) \geq \pi}\1_{\pi(\bj) \geq \pi'} \cdot
n^{-w-d(\pi,\pi')}(\Moeb(\pi,\pi')+o(1)).\]
Note that
\[\1_{\pi(\bi) \geq \pi_V}\1_{\pi(\bi) \geq \pi}=\1_{\pi(\bi) \geq
\pi_V \vee \pi}, \qquad
\1_{\pi(\bj) \geq \pi_E}\1_{\pi(\bj) \geq \pi'}=\1_{\pi(\bj) \geq
\pi_E \vee \pi'}.\]
Identifying summations over
$\bi,\bj \in [n]^{2w}$ with $\pi(\bi) \geq \pi_V \vee \pi$
and $\pi(\bj) \geq \pi_E \vee \pi'$ as a summation over one index in $[n]$ for
each block of $\pi_V \vee \pi$ and $\pi_E \vee \pi'$, and applying the
given conditions that $\x_{1:k} \toW X_{1:k}$ and $\diag(\D) \toW D$ almost
surely, observe that
\begin{align*}
\frac{1}{n^{|\pi_V \vee \pi|}}\sum_{\bi \in [n]^{2w}}
\1_{\pi(\bi) \geq \pi_V \vee \pi} \prod_{v \in \cV} q_v(x_{1:k}[i_{\rho(v)}])
&\to q(\pi_V \vee \pi),\\
\frac{1}{n^{|\pi_E \vee \pi|}}\sum_{\bj \in [n]^{2w}}
\1_{\pi(\bj) \geq \pi_E \vee \pi'} \prod_{e \in \cE} D[j_{\rho(e)},j_{\rho(e)}]
&\to D(\pi_E \vee \pi')
\end{align*}
where $q(\cdot)$ and $D(\cdot)$ are as defined in
(\ref{eq:qlimortho_appendix}) and (\ref{eq:Dlimortho}).
Then, taking the expectation over $\Ob$ in (\ref{eq:orthoval_appendix})
and applying these observations,
\begin{align}
\E[\val_T(\W;\x_{1:k})]
&=\sum_{\text{pairings } \pi,\pi' \text{ of } [2w]}
\frac{1}{n} \cdot n^{|\pi_V \vee \pi|} \cdot n^{|\pi_E \vee \pi'|}
\cdot n^{-w-d(\pi,\pi')}\notag\\
&\qquad \cdot \Big(\Moeb(\pi,\pi') \cdot q(\pi_V \vee \pi) \cdot
D(\pi_E \vee \pi')+o(1)\Big).\label{eq:finalorthoval}
\end{align}

Recall that $|\pi_V|=w+1$ and $|\pi|=|\pi'|=|\pi_E|=w$ as these are all pairings
of $[2w]$. Then by definition of the metric $d(\cdot,\cdot)$,
\[|\pi_V \vee \pi|=\frac{2w+1-d(\pi_V,\pi)}{2},
\qquad |\pi_E \vee \pi|=\frac{2w-d(\pi_E,\pi)}{2}.\]
So the above value simplifies to
\[\sum_{\text{pairings } \pi,\pi' \text{ of } [2w]}
n^{\frac{2w-1-d(\pi_V,\pi)-d(\pi,\pi')-d(\pi',\pi_E)}{2}}
\Big(\Moeb(\pi,\pi')q(\pi_V \vee \pi)D(\pi_E \vee \pi')+o(1)\Big).\]
Applying the triangle inequality for $d(\cdot,\cdot)$ and the identity $|\pi_V
\vee \pi_E|=1$ since $T$ is a connected tree, we have
\[d(\pi_V,\pi)+d(\pi,\pi')+d(\pi',\pi_E)
\geq d(\pi_V,\pi_E)=(w+1)+w-2=2w-1,\]
and equality holds if and only if $\pi_V \to \pi \to \pi' \to \pi_E$ is a
$d$-geodesic. Thus, we obtain the limit value
\begin{equation}\label{eq:orthoglimitvalue}
\lim_{n \to \infty} \E[\val_T(\W;\x_{1:k})]
=\mathop{\sum_{\text{pairings } \pi,\pi' \text{ of }
[2w]}}_{\pi_V \to \pi \to \pi' \to \pi_E \text{ is a } d\text{-geodesic}}
\Moeb(\pi,\pi')q(\pi_V \vee \pi)D(\pi_E \vee \pi').
\end{equation}
Here, $\pi$ and $\pi'$ are pairings of $[2w]$ that may coincide with each
other and/or with $\pi_E$.

Finally, we apply (\ref{eq:moeb}) to express $\Moeb(\pi,\pi')$ also as a
summation over geodesic paths of pairings from $\pi$ to $\pi'$, giving
\[\lim_{n \to \infty} \E[\val_T(\W;\x_{1:k})]
=\sum_{j \geq 0} \mathop{\sum_{\text{distinct pairings }
\pi_0,\ldots,\pi_j \text{ of } [2w]}}_{\pi_V \to \pi_0 \to \ldots \to \pi_j
\to \pi_E \text{ is a } d\text{-geodesic}}
(-1)^j q(\pi_V \vee \pi_0)D(\pi_E \vee \pi_j).\]
We have set $\pi_0=\pi$ and $\pi_j=\pi'$, and the terms of the sum with $j=0$
correspond to $\pi=\pi'$. This shows that the stated form is the almost-sure
limit of $\E[\val_T(\W;\x_{1:k})]$ where $\E$ is the expectation over
$\bO$. Comparing with the result of Lemma
\ref{lemma:syminvmoments}, we conclude that this must be
$\limval_T(X_{1:k},\cD_\tdiag)$.
\end{proof}

\begin{proof}[Proof of Proposition \ref{prop:freeness}(b)]
By the universality established in Lemma~\ref{lemma:syminvmoments}, it suffices
to check that the limit of $\E[\val_T(\W;\x_{1:k})]$ for orthogonally invariant
matrices $\W$, as computed in the preceding Proposition
\ref{prop:orthogonallimval}, equals 0
under the given conditions.

The given condition $\frac{1}{n}\Tr \W \to 0$ implies $\E[D]=0$.
If $\pi_E \vee \pi'$ has any block containing only the two 
vertex-edge pairs for a single edge, then this implies
$D(\pi_E \vee \pi')=0$ in (\ref{eq:Dlimortho}). Otherwise, each
block must correspond to at least two edges, so $|\pi_E \vee \pi'| \leq w/2$.
Similarly, if $\pi_V \vee \pi$ is such that any block contains the
vertex-edge pairs for only a single vertex, then the condition
(\ref{eq:qnormalized}) implies $q(\pi)=0$ in (\ref{eq:qlimortho_appendix}). 
Otherwise,each block must correspond to at least two vertices,
so $|\pi_V \vee \pi| \leq (w+1)/2$. Thus if $q(\pi_V  \vee \pi)D(\pi_E \vee
\pi') \neq 0$, then
\[\frac{1}{n} \cdot n^{|\pi_V \vee \pi|} \cdot n^{|\pi_E \vee \pi'|}
\cdot n^{-w-d(\pi,\pi')}
\leq n^{-1+(w+1)/2+w/2-w} \leq n^{-1/2}.\]
Applying this to (\ref{eq:finalorthoval}), we get
$\E[\val_T(\W;\x_{1:k})] \to 0$ as desired.
\end{proof}

\section{Details for rectangular matrices}\label{appendix:rect}

In Appendix \ref{subsec:rectsetup}, we provide further details on the
Onsager corrections and state evolutions of AMP algorithms for rectangular
matrices. In Appendix \ref{subsec:rectinvariant}, we state and prove a result
analogous to Proposition \ref{prop:syminvariant} in the symmetric setting,
providing examples of matrices $\bW \in \R^{m \times n}$ that
satisfy the generalized invariance conditions of Definition
\ref{def:rectinvariant}.

The remaining subsections prove
Theorems \ref{thm:rect} and \ref{thm:rectinvariant} on AMP universality:
In Appendix \ref{subsec:recttensor}, we introduce a definition of tensor networks with
dimensions alternating between $m$ and $n$, and show that AMP universality may
be reduced to universality of the values of such tensor networks. Then, in
Appendices \ref{subsec:whitenoise} and \ref{subsec:rectinv}, we establish
universality of the tensor network values for the classes of generalized white
noise and rectangular generalized invariant matrices.

\subsection{Onsager corrections and state evolution}\label{subsec:rectsetup}

In an AMP algorithm,
the coefficients $\{a_{ts}\}$ and $\{b_{ts}\}$ in (\ref{eq:AMPrect}) are chosen
so that $\{\y_t\}$ and $\{\z_t\}$ are described by simple state evolutions in 
the asymptotic limit as $m, n \to \infty$ with $m/n \to \gamma \in (0,\infty)$.
When $\W$ has i.i.d.\ $\Normal(0,1/n)$ entries, this may be done by setting
$\bOmega_1=\gamma \cdot \E[U_1^2] \in \R^{1 \times 1}$ and iteratively defining
\begin{equation}\label{eq:whitenoisebSigma}
\bSigma_t=(\E[V_rV_s])_{r,s=1}^t \in \R^{t \times t}, \qquad
\bOmega_{t+1}=(\gamma \cdot \E[U_rU_s])_{r,s=1}^{t+1} \in \R^{(t+1) \times (t+1)},
\end{equation}
where we set $Z_{1:t} \sim \Normal(0,\bOmega_t)$ independent of $G_{1:\ell}$;
$V_s=v_s(Z_{1:s},G_{1:\ell})$ for each $s=1,\ldots,t$;
$Y_{1:t} \sim \Normal(0,\bSigma_t)$ independent of $(U_1,F_{1:k})$; and
$U_{s+1}=u_{s+1}(Y_{1:s},F_{1:k})$ for each $s=1,\ldots,t$. We then define
$a_{ts}$ and $b_{ts}$ as
\begin{equation}\label{eq:whitenoiseb}
a_{ts}=\E[\partial_s v_t(Z_{1:t},G_{1:\ell})],
\qquad b_{ts}=\gamma \cdot \E[\partial_s u_t(Y_{1:(t-1)},F_{1:k})].
\end{equation}
We call (\ref{eq:whitenoisebSigma}) and (\ref{eq:whitenoiseb}) the
\emph{white noise prescriptions} for $\bOmega_t,\bSigma_t$ and $a_{ts},b_{ts}$.
Results of \cite{bayati2011dynamics,javanmard2013state} imply the state
evolutions, for Lipschitz functions $v_t(\cdot)$ and $u_t(\cdot)$,
almost surely as $n \to \infty$ for any fixed $t \geq 1$,
\begin{align*}
(\u_1,\f_1,\ldots,\f_k,\y_1,\ldots,\y_t) &\toW
(U_1,F_1,\ldots,F_k,Y_1,\ldots,Y_t),\\
(\g_1,\ldots,\g_\ell,\z_1,\ldots,\z_t) &\toW
(G_1,\ldots,G_\ell,Z_1,\ldots,Z_t).
\end{align*}

For a bi-orthogonally invariant matrix $\W=\bO\bD\bQ^\top \in \R^{m
\times n}$, let $\bD=\diag(\bd) \in \R^{m \times n}$ be the matrix of singular
values where $\bd \in \R^{\min(m,n)}$, and define
\begin{equation}\label{eq:bard}
\bar{\bd}=\bd \in \R^m \text{ if } m \leq n,
\qquad \bar{\bd}=(\bd,0,\ldots,0) \in \R^m \text{ if } m>n
\end{equation}
Suppose that $\bar\bd \toW D$ as $m,n \to \infty$.
We refer to $D$ as the limit singular value distribution of $\bW$.
The above may then be extended as follows: Set $\bOmega_1=\gamma
\cdot \E[D^2] \cdot \E[U_1^2]$. For two continuous functions $\bSigma_t(\cdot)$
and $\bOmega_{t+1}(\cdot)$ whose forms depend only on $\gamma$ and the law of $D$,
define iteratively
\begin{subequations}\label{eq:rectorthobSigma}
\begin{align}
\bSigma_t&=\bSigma_t\Big(\{\E[V_rV_s],\E[U_rU_s]\}_{r,s \leq t},\nonumber\\
&\hspace{1in}\{\E[\partial_r v_s(Z_{1:s},G_{1:\ell})]\}_{r \leq s \leq t},
\{\E[\partial_r u_{s+1}(Y_{1:s},F_{1:k})]\}_{r \leq s<t}\Big),\\
\bOmega_{t+1}&=\bOmega_{t+1}\Big(\{\E[V_rV_s]\}_{r,s \leq t},
\{\E[U_rU_s]\}_{r,s \leq t+1},\nonumber\\
&\hspace{1in}\{\E[\partial_r v_s(Z_{1:s},G_{1:\ell})],
\E[\partial_r u_{s+1}(Y_{1:s},F_{1:k})]\}_{r \leq s \leq t}\Big).
\end{align}
\end{subequations}
Then for continuous functions $a_{ts}(\cdot)$ and $b_{ts}(\cdot)$ whose forms
also depend only on $\gamma$ and the law of $D$, define
\begin{subequations}\label{eq:rectorthob}
\begin{align}
a_{ts}&=a_{ts}\Big(\{\E[V_qV_r],\E[U_qU_r]\}_{q,r \leq t},\nonumber\\
&\hspace{1in}
\{\E[\partial_q v_r(Z_{1:r},G_{1:\ell})]\}_{q \leq r \leq t},
\{\E[\partial_q u_{r+1}(Y_{1:r},F_{1:k})]\}_{q \leq r<t}\Big),\\
b_{ts}&=b_{ts}\Big(\{\E[V_qV_r]\}_{q,r<t}, \{\E[U_qU_r]\}_{q,r \leq t},\nonumber\\
&\hspace{1in}\{\E[\partial_q v_r(Z_{1:r},G_{1:\ell})],
\E[\partial_q u_{r+1}(Y_{1:r},F_{1:k})]\}_{q \leq r<t}\Big).
\end{align}
\end{subequations}
We call (\ref{eq:rectorthobSigma}) and (\ref{eq:rectorthob}) the
\emph{bi-orthogonally invariant prescriptions} for $\bOmega_t,\bSigma_t$ and
$a_{ts},b_{ts}$, and we refer to \cite[Section 5]{fan2022approximate} for their
exact forms. For weakly differentiable functions
$u_t(\cdot),v_t(\cdot)$ with derivatives having at most polynomial growth,
it is shown in \cite{fan2022approximate}
that the iterates of (\ref{eq:AMPrect}) satisfy the state evolution,
almost surely as $m,n \to \infty$ for any fixed $t \geq 1$,
\begin{align*}
(\u_1,\f_1,\ldots,\f_k,\y_1,\ldots,\y_t) &\toW
(U_1,F_1,\ldots,F_k,Y_1,\ldots,Y_t),\\
(\g_1,\ldots,\g_\ell,\z_1,\ldots,\z_t) &\toW
(G_1,\ldots,G_\ell,Z_1,\ldots,Z_t).
\end{align*}

\subsection{Sufficient conditions for generalized
invariance}\label{subsec:rectinvariant}

\begin{proposition}\label{prop:rectinvariant}
Let $\bW \in \R^{m \times n}$ have singular values $\bd \in \R^{\min(m,n)}$,
such that $\bar\bd$ defined by (\ref{eq:bard}) satisfies
$\bar\bd \toW D$ almost surely as $m,n \to \infty$ with $m/n \to \gamma \in
(0,\infty)$. Suppose $D$ has finite moments of all orders.
\begin{enumerate}[(a)]
\item If $\bW$ is bi-orthogonally invariant in law, then $\bW$ is a
rectangular generalized invariant matrix in the sense of Definition
\ref{def:rectinvariant}, and its limit diagonal distribution $\cD_\tdiag$ is
determined uniquely by $\gamma$ and the law of $D$.
\item Suppose that either
\begin{enumerate}[1.]
\item $\W=\bO\bD\bQ^\top$ where $\bD=\diag(\bd) \in \R^{m \times n}$,
$\bO=\bPi_U\bH\bPi_E$, and $\bQ=\bPi_V\bK\bPi_F$. Here, $\bPi_U,\bPi_E \in
\R^{m \times m}$ and $\bPi_V,\bPi_F \in \R^{n \times n}$ are uniformly random
signed permutations independent of each other and of $(\bD,\bH,\bK)$, and
$\bH \in \R^{m \times m}$ and $\bK \in \R^{n \times n}$ are orthogonal matrices
whose entries satisfy (\ref{eq:symdelocalization})
for any fixed $\eps>0$, almost surely for all large $m,n$.
\item $\bW=\bPi_U\bM\bPi_V^\top$ such that $\bPi_U \in \R^{m \times m}$ and $\bPi_V
\in \R^{n \times n}$ are uniformly random signed permutations independent of
$\bM \in \R^{m \times n}$. For any integer $\nu \geq 0$ and any fixed $\eps>0$,
the matrices $(\bM\bM^\top)^\nu$, $(\bM^\top\bM)^\nu$,
and $(\bM\bM^\top)^\nu\bM$ satisfy
\begin{equation}\label{eq:Mcondition_rec}
\begin{aligned}
\max_{\alpha \in [m]} \big|(MM^\top)^\nu[\alpha,\alpha]-\tfrac{1}{m}\Tr
(\bM\bM^\top)^\nu\big|<n^{-1/2+\eps},&
\quad \max_{\alpha \neq \beta} \big|(MM^\top)^\nu[\alpha,\beta]\big|<n^{-1/2+\eps},\\
\max_{i \in [n]} \big|(M^\top M)^\nu[i,i]-\tfrac{1}{n}\Tr (\bM^\top\bM)^\nu\big|
<n^{-1/2+\eps},&
\quad \max_{i \neq j} \big|(M^\top M)^\nu[i,j]\big|<n^{-1/2+\eps},\\
\max_{\alpha \in [m],i \in [n]} \big|(MM^\top)^\nu
M[\alpha,i]\big|&<n^{-1/2+\eps}
\end{aligned}
\end{equation}
almost surely for all large $m,n$.
\end{enumerate}
Then $\bW$ is a rectangular generalized invariant matrix in the sense of
Definition \ref{def:rectinvariant}, and its limit diagonal distribution
$\cD_\tdiag$ coincides with that of the bi-orthogonally invariant matrix in part
(a).
\end{enumerate}
\end{proposition}

To prove Proposition~\ref{prop:rectinvariant}, we first slightly relax the 
assumptions in Lemma~\ref{lemma:syminvariantb2} to 
obtain the following lemma for the rectangular setting.

\begin{lemma}\label{lemma:recinvariantb2}
Let $\bM\in\RR^{m\times n}$ be a rectangular matrix having singular values 
$\bd\in\RR^{\min(m,n)}$.
Suppose $\bar\bd$ defined by (\ref{eq:bard}) satisfies
$\bar\bd \toW D$ almost surely as $m,n \to \infty$ with $m/n \to \gamma \in
(0,\infty)$, and $D$ has finite moments of all orders.
Suppose $\bM$ satisfies \eqref{eq:Mcondition_rec} almost surely for all large
$m,n$, and let $\widetilde{\bM}$ be its symmetric embedding in
(\ref{eq:Membedding}).
Then for any $p(\bx)\in\Delta\langle\bx,\bI_m,\bI_n\rangle$, 
\begin{enumerate}[(a)]
\item The limits
\[\lim_{m,n \to \infty} \frac{1}{m}\sum_{i=1}^m
p(\widetilde\bM)[i,i],
\qquad \lim_{m,n \to \infty} \frac{1}{n}\sum_{i=m+1}^{m+n}
p(\widetilde\bM)[i,i]\]
both exist almost surely, are finite, and depend only on $\gamma$
and the law of $D$.
\item For any $\eps>0$, almost surely for all large $m,n$,
\[\max_{1 \leq i \leq m}
\bigg|p(\widetilde\bM)[i,i]-\frac{1}{m}\sum_{j=1}^m
p(\widetilde\bM)[j,j]\bigg|<n^{-1/2+\eps},\]
\[\max_{m+1 \leq i \leq m+n} \bigg|p(\widetilde\bM)[i,i]-\frac{1}{n}
\sum_{j=m+1}^{m+n} p(\widetilde\bM)[j,j]\bigg|<n^{-1/2+\eps},\]
\[\max_{i \neq j \in [n+m]} |p(\widetilde\bM)[i,j]|<n^{-1/2+\eps}.\]
\end{enumerate}
\end{lemma}

\begin{proof}
The proof is similar to that of Lemma~\ref{lemma:syminvariantb2}. Analogously to
(\ref{eq:diag_monomial}), we may represent
\begin{align}\label{eq:diag_monomial_rec}
p(\x)=w_1(\x)\Delta(p_1(\x))w_2(\x)\Delta(p_2(\x))
\cdots w_L(\x)\Delta(p_L(\x))w_{L+1}(\x)
\end{align}
where each $w_\ell(\x)$ is a word in $\{\x,\bI_m,\bI_n\}$. We again define the depth
$\delta(p)$ as $\delta(p)=0$ if $L=0$ and $\delta(p)=1+\max_{\ell=1}^L
\delta(p_\ell)$ if $L \geq 1$, and we induct on the value of $\delta(p)$.
For the base case $\delta(p)=0$, i.e.\ $p(\x)=w_1(\x)$, we note that
$p(\widetilde\bM)$ must be either a block diagonal matrix or a block off-diagonal matrix, 
and in the first case, it must have its upper-left block equal to $(\bM\bM^\top)^\nu$ or 0,
lower-right block equal to $(\bM^\top\bM)^\nu$ or 0,
while in the second case, it must have upper-right block equal to $(\bM\bM^\top)^\nu\bM$ or 0, and
lower-left block equal to $\bM^\top(\bM\bM^\top)^\nu$ or 0 for some (possibly
different) integer values $\nu \geq 0$. Then claim (a) holds by the convergence
$\bar{\bd} \toW D$, and claim (b) holds by the
conditions in \eqref{eq:Mcondition_rec}.

Next, suppose inductively the lemma is true for all $p(\bx)\in\Delta\langle\bx,\bI_m,\bI_n\rangle$
such that $\delta(p)\leq K$, and consider any
$p(\bx)\in\Delta\langle\bx,\bI_m,\bI_n\rangle$ with $\delta(p)=K+1$,
of the form \eqref{eq:diag_monomial_rec}.
Fix any $i,j\in[m+n]$ and write $i_0\equiv i$ and $i_{L+1}\equiv j$. Then 
$p(\widetilde\bM)[i,j]$ may be expressed as
\begin{align*}
	p(\widetilde\bM)[i,j] &= \sum_{\bi\in[m+n]^L} 
	w_1(\widetilde \bM)[i_0,i_1] p_1(\widetilde\bM)[i_1,i_1] w_2(\widetilde
\bM)[i_1,i_2]
	\cdots p_{L}(\widetilde\bM)[i_{L},i_{L}] w_{L+1}(\widetilde \bM)[i_L,i_{L+1}].
\end{align*}
Let us abbreviate $\Tr_m,\Tr_n$ for the traces of the upper-left $m \times m$
and lower-right $n \times n$ submatrices, respectively. Then
by the induction hypothesis,
$p_\ell(\widetilde\bM)=\diag(\frac{1}{m}\Tr_m p_\ell(\widetilde\bM) \cdot \Id_m,
\frac{1}{n}\Tr_n p_\ell(\widetilde\bM) \cdot \Id_n)+\bE_\ell$,
where $\Id_m$ and 
$\Id_n$ are $m\times m$ and $n\times n$ identity matrices and $\bE_\ell$
satisfies $\max_{i,j\in[m+n]} |E_\ell[i,j]| < n^{-1/2+\eps}$
almost surely for all large $m,n$.

For $\cJ\subseteq[L]$, we write $[L]\setminus\cJ=\bar\cJ_1 \sqcup \bar\cJ_2$
where $\bar\cJ_1$ will contain indices of $\bi$ taking values in
$\{1,\ldots,m\}$ and 
$\bar\cJ_2$ will contain indices of $\bi$ taking values in $\{m+1,\ldots,m+n\}$.
Further define $\cI(\bar\cJ_1,\bar\cJ_2)=\{\bi\in[m+n]^L: i_\ell
\leq m \text{ for all } \ell \in \bar\cJ_1 \text{ and } i_\ell \geq
m+1 \text{ for all } \ell \in \bar\cJ_2\}$.
Then 
\begin{align*}
	&p(\widetilde\bM)[i,j]\\
&= \sum_{\bi\in[m+n]^L} 
	\prod_{\ell=1}^{L+1} w_\ell(\widetilde \bM)[i_{\ell-1},i_{\ell}]
	\prod_{\ell=1}^L \bigg(\frac{1}{m}\Tr_m p_\ell(\widetilde{\bM}) \cdot
\1_{1\leq i_\ell\leq m}
	+ \frac{1}{n}\Tr_n p_\ell(\widetilde{\bM}) \cdot
\1_{m+1\leq i_\ell\leq m+n}
	+ E_\ell[i_{\ell}, i_{\ell}]\bigg)\\
	&= \sum_{\cJ \sqcup \bar\cJ_1 \sqcup \bar\cJ_2=[L]}
	\bigg(\prod_{\ell\in\bar\cJ_1} \frac{1}{m}\Tr_m p_\ell(\widetilde\bM)\bigg)
	\bigg(\prod_{\ell\in\bar\cJ_2} \frac{1}{n} \Tr_n p_\ell(\widetilde\bM)\bigg)
	\sum_{\bi\in\cI(\bar\cJ_1,\bar\cJ_2)} \prod_{\ell=1}^{L+1}
	w_\ell(\widetilde \bM)[i_{\ell-1},i_\ell] \prod_{\ell \in \cJ} E_\ell[i_\ell,i_\ell].
\end{align*}
Similarly to \eqref{eq:MJdef}, by part (a) of the induction hypothesis
we have $\prod_{\ell\in\bar\cJ_1} \frac{1}{m}
\Tr_m p_\ell(\widetilde\bM) = M_{\bar\cJ_1} + \xi_{\bar\cJ_1}$ and 
$\prod_{\ell\in\bar\cJ_2} \frac{1}{n}
\Tr_n p_\ell(\widetilde\bM) = M_{\bar\cJ_2} + \xi_{\bar\cJ_2}$, where 
$M_{\bar\cJ_1}, M_{\bar\cJ_2}$ are finite limit values depending only on $\gamma$
and the law of $D$, and the convergence
$\xi_{\bar\cJ_1},\xi_{\bar\cJ_2} \to 0$ holds uniformly over
$i,j \in [m+n]$.
This gives
\begin{align}\label{eq:rec_pm}
	p(\widetilde\bM)[i,j] &= \sum_{\cJ \sqcup \bar\cJ_1\sqcup\bar\cJ_2=[L]}
	(M_{\bar\cJ_1} + \xi_{\bar\cJ_1}) (M_{\bar\cJ_2} + \xi_{\bar\cJ_2})
	\sum_{\bi\in\cI(\bar\cJ_1,\bar\cJ_2)} \prod_{\ell=1}^{L+1}
	w_\ell(\widetilde \bM)[i_{\ell-1},i_\ell] \prod_{\ell \in \cJ} E_\ell[i_\ell,i_\ell].
\end{align}

Next, let us write explicitly
$\cJ=\{\ell_1,\ldots,\ell_{|\cJ|}\}$ where $1 \leq \ell_1<\ldots<\ell_{|\cJ|}
\leq L$, and set $\ell_0=0$ and $\ell_{|\cJ|+1}=L+1$. 
We can contract the summation over indices 
$\{i_\ell:\ell\notin \cJ\}$, incorporating the constraints
$i_\ell \leq m$ for $\ell \in \bar\cJ_1$ and $i_\ell \geq m+1$ for $\ell \in
\bar\cJ_2$ by introducing copies of $\bI_m$ and $\bI_n$. For example, if
$\ell_{\rho-1},\ell_\rho \in \cJ$ where
$\ell_\rho=\ell_{\rho-1}+3$, and $\ell_{\rho-1}+1 \in \bar\cJ_1$
and $\ell_{\rho-1}+2 \in \bar\cJ_2$, then
\begin{align*}
&\sum_{i_{\ell_{\rho-1}+1}=1}^m \sum_{i_{\ell_{\rho-1}+2}=m+1}^{m+n}
w_{\ell_{\rho-1}+1}(\widetilde{\bM})[i_{\ell_{\rho-1}},i_{\ell_{\rho-1}+1}]
w_{\ell_{\rho-1}+2}(\widetilde{\bM})[i_{\ell_{\rho-1}+1},i_{\ell_{\rho-1}+2}]
w_{\ell_\rho}(\widetilde{\bM})[i_{\ell_{\rho-1}+2},i_{\ell_\rho}]\\
&=\big(w_{\ell_{\rho-1}+1}(\widetilde{\bM}) \cdot \bI_m \cdot
w_{\ell_{\rho-1}+2}(\widetilde{\bM}) \cdot \bI_n
\cdot w_{\ell_\rho}(\widetilde{\bM})\big)[i_{\ell_{\rho-1}},i_{\ell_\rho}].
\end{align*}
Applying this argument gives, for some new words
$w_1^{\bar\cJ_1,\bar\cJ_2}(\x),\ldots,w_{|\cJ|+1}^{\bar\cJ
_1,\bar\cJ_2}(\x)$ in $\{\x,\bI_m,\bI_n\}$
whose definitions depend on $\bar\cJ_1,\bar\cJ_2$,
\begin{align*}
	p(\widetilde\bM)[i,j] &= \sum_{\cJ \sqcup \bar\cJ_1 \sqcup \bar\cJ_2=[L]} \;
	(M_{\bar\cJ_1} + \xi_{\bar\cJ_1}) (M_{\bar\cJ_2} + \xi_{\bar\cJ_2})
	\sum_{\bi \in [m+n]^{\cJ}} \prod_{\rho=1}^{|\cJ|+1}
	w_{R_\rho}^{\bar\cJ_1,\bar\cJ_2}(\widetilde \bM)[i_{\ell_{\rho-1}},i_{\ell_\rho}] \prod_{\ell \in \cJ} 
	E_\ell[i_\ell,i_\ell].
\end{align*}
The same argument as in Lemma~\ref{lemma:syminvariantb2} then yields that this
is at most $Cn^{-1/2+\eps(2|\cJ|+1)}$ unless $i=j$ and $\cJ=\emptyset$. Thus,
\[\left|p(\widetilde{\bM})[i,j]-\1\{i=j\}
\sum_{\bar\cJ_1 \sqcup \bar\cJ_2=[L]} \;
(M_{\bar\cJ_1} + \xi_{\bar\cJ_1}) (M_{\bar\cJ_2} + \xi_{\bar\cJ_2})
w_{R_1}^{\bar\cJ_1,\bar\cJ_2}(\widetilde\bM)[i,i]\right| \leq
Cn^{-1/2+\eps(2L+1)}.\]
In the case $i=j \leq m$ or $i=j \geq m+1$, further approximating
$w_{R_1}^{\bar\cJ_1,\bar\cJ_2}(\widetilde\bM)[i,i]$
by $\frac{1}{m}\Tr_m w_{R_1}^{\bar\cJ_1,\bar\cJ_2}$
or $\frac{1}{n}\Tr_n w_{R_1}^{\bar\cJ_1,\bar\cJ_2}$ respectively
and applying the same argument as in Lemma~\ref{lemma:syminvariantb2},
we obtain the inductive claim (b), and averaging over $i=j \leq m$ and $i=j
\geq m+1$ then shows the inductive claim (a).
\end{proof}

\begin{proof}[Proof of Proposition~\ref{prop:rectinvariant}]
Lemma \ref{lemma:recinvariantb2} implies that 
Definition \ref{def:rectinvariant} holds for matrices satisfying the
description of part (b2), where the limit diagonal law $\cD_{\tdiag}$
depends only on $\gamma$ and the law of $D$.

In the setting of part (a) where $\bW=\bO\bD\bQ^\top$ is bi-orthogonally
invariant, we have the equalities in law $\bO\overset{L}{=}\bPi_U\bO\bPi_E$
and $\bQ\overset{L}{=}\bPi_V\bQ\bPi_F$ for uniformly
random signed permutations $\bPi_U,\bPi_E,\bPi_V,\bPi_F$ independent of each
other and of $\bO,\bQ$. The
entries of $\bO,\bQ$ satisfy (\ref{eq:symdelocalization}) almost
surely for all large $m,n$ by \cite[Theorem 1]{jiang2005maxima}, so $\bW$ is
an example of the matrix model in part (b1).

To conclude the proof,
it remains to verify that if $\bW$ is a matrix described by
part (b1), then it also is an example of the matrix model in part (b2).
For this, write $\bW=\bPi_U\bM \bPi_V^\top$ where
\[\bM=\bH \bPi_E \bD \bPi_F^\top \bK^\top.\]
Observe that $\bM^\top \bM=\bK \bPi_F \bD^\top\bD \bPi_F^\top
\bK^\top$ and $\bM\bM^\top=\bH\bPi_E \bD\bD^\top \bPi_E^\top \bH^\top$.
Then for any integer $\nu \geq 0$,
$(\bM^\top \bM)^\nu$ and $(\bM\bM^\top)^\nu$ satisfy the conditions of
(\ref{eq:Mcondition_rec}) by the proof of Proposition
\ref{prop:syminvariant} in the symmetric setting.

Now fix any integer $\nu \geq 0$ and consider
\[(\bM\bM^\top)^\nu \bM=\bH\bPi_E(\bD\bD^\top)^\nu \bD\bPi_F^\top \bK^\top.\]
We suppose for notational simplicity that $m \leq n$; the case $m \geq n$ is
analogous. Write $\bPi_E=\bXi_E\bP_E$ and $\bPi_F=\bXi_F\bP_F$ for
permutation and sign matrices defining $\bPi_E,\bPi_F$, and let
$\sigma_E,\sigma_F$ be the permutations of $[m],[n]$ such that
$P_E[\beta,\sigma_E(\beta)]=1$
and $P_F[j,\sigma_F(j)]=1$ for all $\beta \in [m]$ and $j \in [n]$.
Fixing $\alpha \in [m]$ and $i \in [n]$, let $\bh,\bk$ be the
$\alpha^\text{th}$ row of $\bH$ and $i^\text{th}$ row of $\bK$.
Let $\tilde{\bD}=(\bD\bD^\top)^\nu \bD \in \R^{m \times n}$. Then
\[\Big((\bM\bM^\top)^\nu \bM\Big)[\alpha,i]
=\sum_{\beta \in [m]} \sum_{j \in [n]} h[\beta] \Xi_E[\beta,\beta]
\tilde{D}[\sigma_E(\beta),\sigma_F(j)]\Xi_F[j,j]k[j],\]
so for any even power $p \geq 2$,
\begin{align*}
\E\left[\Big((\bM\bM^\top)^\nu \bM\Big)[\alpha,i]^p\right]
&=\sum_{\bbeta \in [m]^p} \sum_{\bj \in [n]^p}
h[\beta_1]\ldots h[\beta_p]k[j_1]\ldots k[j_p]
\E\Big[\Xi_E[\beta_1,\beta_1]\ldots \Xi_E[\beta_p,\beta_p]\Big]\\
&\quad\times \E\Big[\Xi_F[j_1,j_1]\ldots \Xi_F[j_p,j_p]\Big]
\E\Big[\tilde{D}[\sigma_E(\beta_1),\sigma_F(j_1)]
\ldots \tilde{D}[\sigma_E(\beta_p),\sigma_F(j_p)]\Big].
\end{align*}
Let $\cP$ be the lattice of partitions of $[p]$, and let $\pi(\bbeta),\pi(\bj)
\in \cP$ be those partitions induced by $\bbeta,\bj$.
Observe that $\E[\Xi_E[\beta_1,\beta_1]\ldots \Xi_E[\beta_p,\beta_p]]=1$ if
$\pi(\bbeta)$ is even (i.e.\ all blocks have even cardinality) and 0 otherwise,
and similarly for $\bXi_F$. Since
$\tilde \bD$ is diagonal, the product $\tilde
D[\sigma_E(\beta_1),\sigma_F(j_1)]\ldots \tilde D[\sigma_E(\beta_p),\sigma_F(j_p)]$
is zero unless $\sigma_E(\beta_a)=\sigma_F(j_a)$ for every $a=1,\ldots,p$,
which can occur only when $\pi(\bbeta)=\pi(\bj)$. Fixing $\bbeta,\bj$ for which
$\pi(\bbeta)=\pi(\bj)=\pi$ and supposing $m \leq n$, observe that
\[\E\Big[\tilde{D}[\sigma_E(\beta_1),\sigma_F(j_1)]
\ldots \tilde{D}[\sigma_E(\beta_p),\sigma_F(j_p)]\Big]
=\frac{(m-|\pi|)!}{m!} \cdot \frac{(n-|\pi|)!}{n!}
\sum_{\bgamma \in [m]^p:\pi(\bgamma)=\pi} \tilde d[\gamma_1]
\ldots \tilde d[\gamma_p]\]
where we set $\tilde d[\gamma]=\tilde D[\gamma,\gamma]$, because
\begin{itemize}
\item Given any $\sigma_E$,
the probability over $\sigma_F$ that
$\sigma_F(j_a)=\sigma_E(\beta_a)$ for every $a=1,\ldots,p$ is
$(n-|\pi|)!/n!$.
\item The expectation of $\tilde{D}[\sigma_E(\beta_1),\sigma_E(\beta_1)]
\ldots \tilde{D}[\sigma_E(\beta_p),\sigma_E(\beta_p)]$ over 
$\sigma_E$ is a uniform average over all $m!/(m-|\pi|)!$ relabelings of the
$|\pi|$ distinct indices of $\bbeta$.
\end{itemize}
Thus we have
\begin{align*}
\E\left[\Big((\bM\bM^\top)^\nu \bM\Big)[\alpha,i]^p\right]
&=\sum_{\text{even } \pi \in \cP}
\frac{(m-|\pi|)!}{m!} \cdot \frac{(n-|\pi|)!}{n!}
\sum_{\bbeta \in [m]^p:\pi(\bbeta)=\pi} h[\beta_1]\ldots h[\beta_p]\\
&\hspace{1in}\times \sum_{\bj \in [n]^p:\pi(\bj)=\pi} k[j_1]\ldots k[j_p]
\sum_{\bgamma \in [m]^p:\pi(\bgamma)=\pi} \tilde d[\gamma_1]\ldots \tilde
d[\gamma_p]
\end{align*}
Lemma \ref{lemma:distinctness} and the assumption $\bar \bd \toW D$ imply
$|\sum_{\bgamma \in [m]^p:\pi(\bgamma)=\pi} \tilde d[\gamma_1]\ldots \tilde
d[\gamma_p]| \leq Cn^{|\pi|} \leq Cn^{p/2}$, where $|\pi| \leq p/2$ because
$\pi$ is even. Applying (\ref{eq:symdelocalization}) for
$\bH$ and $\bK$, we have
\[n^{-|\pi|}
\sum_{\bbeta \in [m]^p:\pi(\bbeta)=\pi} |h[\beta_1]\ldots h[\beta_p]| \leq
Cn^{p(-1/2+\eps)}, \quad n^{-|\pi|}
\sum_{\bj \in [n]^p:\pi(\bj)=\pi} |k[j_1]\ldots k[j_p]| \leq
Cn^{p(-1/2+\eps)}.\]
Thus $\E[((\bM\bM^\top)^\nu \bM)[\alpha,i]^p] \leq Cn^{-p/2+2p\eps}$.
Choosing even $p \geq 2$ sufficiently large, this
implies by Markov's inequality and a union bound over all $\alpha \in [m]$
and $i \in [n]$ that
\[\max_{\alpha \in [m],i \in [n]} |((\bM\bM^\top)^\nu \bM)[\alpha,i]|
<n^{-1/2+3\eps}\]
almost surely for all large $m,n$. Then
$(\bM\bM^\top)^\nu \bM$ also satisfies \eqref{eq:Mcondition_rec}.
So any $\bW$ as described by part (b1) also satisfies the conditions of
part (b2), concluding the proof.
\end{proof}

\subsection{Reduction to tensor networks}\label{subsec:recttensor}

\begin{definition}
An \emph{alternating diagonal tensor network} $T=(\cU,\cV,\cE,\{p_u\}_{u \in \cU},
\{q_v\}_{v \in \cV})$ in $(k,\ell)$ variables is an undirected tree graph 
with vertices $\cU \sqcup \cV$ and edges $\cE \subset \cU \times \cV$ where
\begin{itemize}
\item Each edge connects a vertex $u \in \cU$ to a vertex $v \in \cV$.
\item Each $u \in \cU$ is labeled by a polynomial function $p_u:\R^k \to \R$.
\item Each $v \in \cV$ is labeled by a polynomial function $q_v:\R^\ell \to \R$.
\end{itemize}
The \emph{value} of $T$ on a rectangular matrix $\W \in \R^{m \times n}$
and vectors $\x_1,\ldots,\x_k \in \R^m$ and $\y_1,\ldots,\y_\ell \in \R^n$ is
\[\val_T(\W;\x_1,\ldots,\x_k;\y_1,\ldots,\y_\ell)
=\frac{1}{n}\sum_{\balpha \in [m]^{\cU}} \sum_{\bi \in [n]^{\cV}}
p_{\balpha|T} \cdot q_{\bi|T} \cdot W_{\balpha,\bi|T}\]
where, for each index tuple $\balpha=(\alpha_u:u \in \cU) \in [m]^{\cU}$ and
$\bi=(i_v:v \in \cV) \in [n]^{\cV}$,
\[p_{\balpha|T}=\prod_{u \in \cU} p_u(x_1[\a_u],\ldots,x_k[\a_u]),
\qquad q_{\bi|T}=\prod_{v \in \cV} q_v(y_1[i_v],\ldots,y_\ell[i_v]),
\qquad W_{\balpha,\bi|T}=\prod_{(u,v) \in \cE} W[\a_u,i_v].\]
\end{definition}

This value may be understood as:
\begin{enumerate}
\item Associating to vertices $u \in \cU$ and $v \in \cV$ the diagonal
tensors $\bT_u=\diag(p_u(\x_1,\ldots,\x_k)) \in \R^{m \times \ldots \times m}$
and $\bT_v=\diag(q_v(\y_1,\ldots,\y_\ell)) \in \R^{n \times \ldots \times n}$,
whose orders equal the degrees of $u$ and $v$ in the tree.
\item Associating to each edge the matrix $\W$.
\item Iteratively contracting all tensor-matrix-tensor products represented by
edges of the tree, where each product involves a tensor in dimension $m$, $\W
\in \R^{m \times n}$, and a tensor in dimension $n$.
\end{enumerate}
For example, if $\cU=\{1,3,\ldots,w-1,w+1\}$ and $\cV=\{2,4,\ldots,w\}$ for an
even integer $w$,
and $T$ is the line graph $1-2-\ldots-w-(w+1)$, then $\bT_1,\bT_{w+1} \in \R^m$
are vectors, $\bT_3,\bT_5,\ldots,\bT_{w-1} \in \R^{m \times m}$ and
$\bT_2,\bT_4,\ldots,\bT_w \in \R^{n \times n}$ are matrices of alternating
dimensions, and the value is
\[\val_T(\W;\x_1,\ldots,\x_k;\y_1,\ldots,\y_\ell)
=\frac{1}{n}\,
\bT_1^\top \W \bT_2 \W^\top \bT_3 \W \cdots \W \bT_w \W^\top \bT_{w+1}.\]

The following lemma is analogous to Lemma \ref{lemma:Wignercompare}, and reduces
the universality of AMP to the universality of values of alternating diagonal
tensor networks.

\begin{lemma}\label{lemma:rectcompare}
Let $\ub_1, \fb_1, \ldots, \fb_k \in \RR^m$ and $\gb_1, \ldots, \gb_\ell \in 
\RR^n$ satisfy Assumption~\ref{assump:ufgconvergence}.
Let $\bW, \bG \in \RR^{m \times n}$ be random matrices independent of 
$\ub_1, \fb_1, \ldots, \fb_k, \gb_1, \ldots, \gb_\ell$ such that
\begin{enumerate}
\item $\bG=\bO\bD\bQ^\top$ is a bi-orthogonally invariant matrix such that
$\bD=\diag(\bd)$ and $\bar \bd \toW D$, where $\bar \bd$ is defined by
(\ref{eq:bard}) and $D$ is a compactly supported limit law with $\E[D^2]>0$.
\item $\|\W\|_\op <C$ for a constant $C>0$, 
almost surely for all large $m, n$.

\item For every alternating diagonal tensor network $T$ in $(k+1,\ell)$
variables, almost surely as $m, n \to \infty$,
\[\val_T(\W;\u_1,\f_1,\ldots,\f_k;\g_1,\ldots,\g_\ell)
-\val_T(\G;\u_1,\f_1,\ldots,\f_k;\g_1,\ldots,\g_\ell) \to 0.\]
\end{enumerate}
Let $v_t:\R^{t+\ell} \to \R$ and $u_{t+1}:\R^{t+k} \to \R$ be continuous
functions satisfying the polynomial growth condition \eqref{eq:polygrowth} for some 
order $p \geq 1$, and are Lipschitz in their first $t$ arguments. Let $\{a_{ts}\}, 
\{b_{ts}\}, \{\bOmega_t\}$ and $\{\bSigma_t\}$ be defined by the bi-orthogonally 
invariant prescriptions in \eqref{eq:rectorthobSigma} and \eqref{eq:rectorthob}
for the limit law $D$, where each $\bOmega_t$ and $\bSigma_t$ is non-singular.
Then the iterates \eqref{eq:AMPrect} applied to $\bW$ satisfy, almost surely as 
$n \to \infty$ for any fixed $t \geq 1$,
\begin{align*}
(\u_1,\f_1,\ldots,\f_k,\y_1,\ldots,\y_t) &\toWtwo
(U_1,F_1,\ldots,F_k,Y_1,\ldots,Y_t)\\
(\g_1,\ldots,\g_\ell,\z_1,\ldots,\z_t) &\toWtwo
(G_1,\ldots,G_\ell,Z_1,\ldots,Z_t)
\end{align*}
where these limits have the same joint laws as described by the AMP state 
evolution for $\bG$.
\end{lemma}

Lemma \ref{lemma:rectcompare} may be proven in the same way as
Lemma \ref{lemma:Wignercompare}. For the same initialization
$\tilde\ub_1 = \ub_1$ and vectors of side 
information $\fb_1, \ldots, \fb_k, \gb_1, \ldots, \gb_\ell$ as in the given
Lipschitz AMP algorithm, we consider an auxiliary AMP algorithm with polynomial
non-linearities
\begin{align}\label{eq:aux_AMP_rect}
\begin{aligned}
\tilde\zb_t &= \bW^\top \tilde\ub_t - \sum_{s=1}^{t-1} \tilde b_{ts} \tilde\vb_s\\
\tilde\vb_t &= \tilde v_t(\tilde\zb_1, \ldots, \tilde\zb_t, \gb_1, \ldots, \gb_\ell)\\
\tilde\yb_t &= \bW \tilde\vb_t - \sum_{s=1}^t \tilde a_{ts} \tilde\ub_s\\
\tilde\ub_{t+1} &= \tilde{u}_{t+1}(\tilde\yb_1, \ldots, \tilde\yb_t, \fb_1,
\ldots, \fb_k)
\end{aligned}
\end{align}
Fixing $\eps>0$, this is defined such that
\begin{enumerate}
\item Each coefficient $\tilde a_{ts}$ and $\tilde b_{ts}$ is defined by the
polynomials $\{\tilde u_{t+1}(\cdot)\}$, $\{\tilde v_t(\cdot)\}$ and the 
bi-orthogonally invariant prescriptions (\ref{eq:rectorthob}).
\item Let $\widetilde \bOmega_t$, $\widetilde \bSigma_t$ be the bi-orthogonally
invariant prescriptions (\ref{eq:rectorthobSigma}), and let
$(U_1,F_{1:k},\widetilde Y_{1:t})$ and $(G_{1:\ell},\widetilde Z_{1:t})$ be the
corresponding state evolutions. Then each polynomial $\tilde u_{t+1}(\cdot)$ and
$\tilde v_t(\cdot)$ is chosen to satisfy
\begin{align*}
\E\big[\big(\tilde u_{t+1}(\widetilde Y_{1:t},F_{1:k})
-u_{t+1}(\widetilde Y_{1:t},F_{1:k})\big)^2\big]&<\eps\\
\E\big[\big(\tilde v_t(\widetilde Z_{1:t},G_{1:\ell})
- v_t(\widetilde Z_{1:t},G_{1:\ell})\big)^2\big]&<\eps
\end{align*}
\item For any fixed arguments $y_{1:(t-1)}$, $f_{1:k}$, $z_{1:(1-t)}$, and
$g_{1:\ell}$, the functions $y_t \mapsto \tilde u_{t+1}(y_{1:t},f_{1:k})$ and
$z_t \mapsto v_t(z_{1:t},g_{1:\ell})$ have non-linear dependences in $y_t$ and
$z_t$.
\end{enumerate}
Again we write the iterates as $\tilde\ub_t(\bW), \tilde\zb_t(\bW), 
\tilde\vb_t(\bW), \tilde\yb_t(\bW)$ if we want to specify that the algorithm 
is applied to $\bW$.
Assumption \ref{assump:ufgconvergence}, the given condition $\E[D^2]>0$,
and condition (3) above verify the conditions of \cite[Assumption
5.2]{fan2022approximate}. Then
\cite[Theorem 5.3]{fan2022approximate} applies to show
that, for each fixed $t \geq 1$, almost surely as $n \to \infty$,
\begin{equation}\label{eq:Grectconvergence}
(\u_1,\f_{1:k},\tilde \y_{1:t}(\G)) \toW (U_1,F_{1:k},\widetilde Y_{1:t}),
\qquad (\g_{1:\ell},\tilde \z_{1:t}(\G)) \toW (G_{1:\ell},\widetilde Z_{1:t})
\end{equation}
when this algorithm is applied to the bi-orthogonally invariant matrix $\G$.

The following lemma, analogous to
Lemma~\ref{lemma:polynomial_tensor_network_decomposition}, shows that empirical
averages of polynomial test functions evaluated on the iterates of this
polynomial AMP algorithm may be decomposed as a sum of values of alternating
diagonal tensor networks.

\begin{lemma}\label{lemma:rec:polyf_tensor_network_decomp}
Fix any $t\geq 1$ and let $\tilde \u_1,\tilde \z_1, \tilde \v_1,
\tilde \y_1, \ldots, \tilde \u_t,\tilde \z_t, \tilde \v_t,
\tilde \y_t$ be the iterates of any algorithm of the
form \eqref{eq:aux_AMP_rect}, where $\{\tilde{a}_{ts},\tilde{b}_{ts}\}$ are
scalar constants and $\tilde{u}_{t+1}:\R^{t+k} \to \R$ and
$\tilde{v}_{t+1}:\R^{t+\ell} \to \R$ are polynomial functions applied row-wise.
For any polynomials $p:\RR^{2t+k}\to \RR$ and $q:\RR^{2t+l}\to\RR$, and for 
two finite sets $\cF_1$ and $\cF_2$ of alternating diagonal tensor networks
in $(k+1,\ell)$ variables,
\begin{align}
\langle p(\tilde\bu_1, \ldots, \tilde\ub_t, \tilde\by_1, \ldots, \tilde\yb_t, 
\bff_1, \ldots, \fb_k) \rangle 
&= \sum_{T \in \cF_1} \val_{T}(\bW; \bu_1, \bff_1, \ldots, \bff_k;
\gb_1, \ldots, \gb_\ell),\label{eq:pureduction}\\
\langle q(\tilde\vb_1, \ldots, \tilde\vb_t, \tilde\zb_1, \ldots, \tilde\zb_t, 
\gb_1, \ldots, \gb_\ell)\rangle
&= \sum_{T \in \cF_2} \val_T(\bW; \bu_1, \bff_1, \ldots, \bff_k;
\gb_1, \ldots, \gb_\ell).\label{eq:pvreduction}
\end{align}
\end{lemma}
\begin{proof}[Proof of Lemma~\ref{lemma:rec:polyf_tensor_network_decomp}]
We have
\begin{align*}
\langle p(\tilde \bu_{1:t},\tilde \by_{1:t}, \bff_{1:k}) \rangle 
=\val_{T}(\bW;\tilde \bu_{1:t},\tilde \by_{1:t},\bff_{1:k})
\end{align*}
where $T$ is a tensor network with only one vertex $\{u\} = \mathcal{U}$
and polynomial label $p_u = p$. 

We claim that for any tensor network $T$ in variables
$(\tilde y_{1:t},\tilde u_{1:{t}},f_{1:k};\tilde z_{1:t},\tilde v_{1:t},
g_{1:\ell})$, there exists a set of tensor networks $\cF$ such that
\begin{align}\label{eq:rec:reducey}
\val_{T}(\bW;\tilde\by_{1:t},\tilde\bu_{1:t},\tilde\bff_{1:k};
\tilde\bz_{1:t},\tilde\bv_{1:t},\tilde\bg_{1:\ell})
= \sum_{T'\in\cF}
\val_{T'}(\bW;\tilde\by_{1:(t-1)},\tilde\bu_{1:{t}},\tilde\bff_{1:k};
\tilde\bz_{1:t},\tilde\bv_{1:t},\tilde\bg_{1:\ell}).
\end{align}
To show this claim, recall $\tilde\by_t = \bW \tilde\bv_t - \sum_{s=1}^t \tilde a_{ts}
\tilde\bu_s$. Expanding each polynomial $p_u$ for $u \in \cU$
in terms of $(\tilde\by_{1:(t-1)},\tilde\bu_{1:t}, \bff_{1:k})$ and 
$\bW\tilde\bv_t$, we have 
\begin{align*}
p_u(\tilde y_{1:t}[\alpha],\tilde u_{1:t}[\alpha], f_{1:k}[\alpha]) 
= \sum_{\theta=0}^{\Theta_u} p_{u,\theta}(\tilde y_{1:(t-1)}[\alpha], 
\tilde u_{1:t}[\alpha], f_{1:k}[\alpha]) \cdot
\bigg(\sum_{i=1}^n W[\alpha,i] \tilde v_t[i]\bigg)^\theta
\end{align*}
for some polynomials $p_{u,0},\ldots,p_{u,\Theta_u}$, where $\Theta_u$ is the 
maximum degree of $p_u$ in $\tilde y_t$. Then we may write
\begin{align*}
&\val_{T}(\bW;\tilde \by_{1:t},\tilde \bu_{1:{t}},\bff_{1:k};\tilde
\bz_{1:t},\tilde \bv_{1:t},\bg_{1:\ell})\\
&\qquad = \frac{1}{n} \sum_{\balpha\in[m]^\cU} \sum_{\bi\in[n]^\cV} 
p_{\balpha|T} \cdot q_{\ib|T} \cdot W_{\balpha,\ib|T}\\
&\qquad =  \frac{1}{n} \sum_{\btheta \in \prod_{u \in \cU} \{0,\ldots,\Theta_u\}}
\sum_{\balpha\in[m]^{\cU}} \sum_{\bi\in[n]^{\cV}} 
\bigg(\prod_{u\in\cU} p_{u,\theta}(\tilde y_{1:(t-1)}[\alpha_u],
\tilde u_{1:t}[\alpha_u],f_{1:k}[\alpha_u])\bigg)\\
&\qquad\qquad \cdot q_{\bi|T} \cdot
\bigg(\sum_{i=1}^n W[\alpha_u,i] \tilde v_t[i]\bigg)^{\theta_u}
\cdot W_{{\balpha},\bi|T}.
\end{align*}
For each $\btheta \in \prod_{u\in\cU}\{0,\ldots,\Theta_u\}$,
we define a new tensor network $T_{\btheta}$ from $T$ as follows: For
each $u\in\cU$, replace the associated polynomial $p_u$ by $p_{u,\theta_u}$. 
Then, to each $u \in \cU$, add $\theta_u$ new edges connecting to $\theta_u$ new 
vertices in $\cV$, where each new vertex $v$ has the label
$q_v(\tilde z_{1:t}, \tilde v_{1:t}, g_{1:\ell}) = \tilde v_t$. 
Then the above is exactly
\begin{align*}
\val_T(\bW;\tilde\by_{1:t},\tilde\bu_{1:{t}},\bff_{1:k};
\tilde\bz_{1:t},\tilde\bv_{1:t},\bg_{1:\ell})
 = \sum_{\btheta\in\prod_{u\in\cU}\{0,\ldots,\Theta_u\}}
\val_{T_{\btheta}}(\bW; \tilde\by_{1:(t-1)}, \tilde\bu_{1:t}, \bff_{1:k};
\tilde\bz_{1:t},\tilde\bv_{1:t},\bg_{1:\ell})
\end{align*}
which verifies~\eqref{eq:rec:reducey}.

Next, for any tensor network $T$ in the variables $(\tilde
y_{1:(t-1)},\tilde u_{1:t},f_{1:k};\tilde z_{1:t},\tilde v_{1:t},g_{1:\ell})$,
applying $\tilde v_t=\tilde v_t(\tilde z_{1:t},g_{1:\ell})$ in the
polynomial label $p_v$ for each vertex $v \in \cV$, there exists a
tensor network $T'$ such that
\[\val_T(\bW;\tilde \by_{1:(t-1)},\tilde
\bu_{1:t},\bff_{1:k};\tilde\bz_{1:t},\tilde\bv_{1:t},\bg_{1:\ell})
=\val_{T'}(\bW;\tilde\by_{1:(t-1)},\tilde\bu_{1:t},\bff_{1:k};
\tilde\bz_{1:t},\tilde\bv_{1:(t-1)},\bg_{1:\ell}).\]
Since $\tilde\zb_t = \bW^\top \tilde\ub_t - \sum_{s=1}^{t-1} \tilde b_{ts}
\tilde \vb_s$, by the same argument as in the above for $\tilde\yb_t$,
for any tensor network $T$ in variables
$(\tilde y_{1:(t-1)},\tilde u_{1:t},f_{1:k};\tilde z_{1:t},\tilde v_{1:(t-1)},
g_{1:\ell})$, there exists a set of tensor networks $\cF$ such that
\[\val_T(\bW;\tilde\by_{1:(t-1)},\tilde\bu_{1:t},\tilde\bff_{1:k};
\tilde\bz_{1:t},\tilde\bv_{1:(t-1)},\tilde\bg_{1:\ell})
= \sum_{T'\in\cF}
\val_{T'}(\bW;\tilde\by_{1:(t-1)},\tilde\bu_{1:t},\tilde\bff_{1:k};
\tilde\bz_{1:(t-1)},\tilde\bv_{1:(t-1)},\tilde\bg_{1:\ell}),\]
and applying the fact that $\tilde u_t(\cdot)$ is a polynomial function, for any 
tensor network $T$ in the variables $(\tilde y_{1:(t-1)},\tilde u_{1:t},f_{1:k}; 
\tilde z_{1:(t-1)},\tilde v_{1:(t-1)},g_{1:\ell})$, there exists a tensor 
network $T'$ such that
\[\val_T(\bW;\tilde\by_{1:(t-1)},\tilde\bu_{1:t},\bff_{1:k};
\tilde\bz_{1:(t-1)},\tilde\bv_{1:(t-1)},\bg_{1:\ell})
=\val_{T'}(\bW;\tilde\by_{1:(t-1)},\tilde\bu_{1:(t-1)},\bff_{1:k};
\tilde\bz_{1:(t-1)},\tilde\bv_{1:(t-1)},\bg_{1:\ell}).\]
Iteratively applying these four reductions shows (\ref{eq:pureduction}), and the
proof of (\ref{eq:pvreduction}) is analogous.
\end{proof}

The remainder of the proof of Lemma \ref{lemma:rectcompare} parallels that of
Lemma \ref{lemma:Wignercompare}: As in Lemma \ref{lem:polyLawLimit},
the above result together with (\ref{eq:Grectconvergence})
and the given condition for universality of tensor
network values between $\W$ and $\G$ implies the almost-sure Wasserstein 
convergence
\[(\u_1,\f_{1:k},\tilde \y_{1:t}(\W)) \toW (U_1,F_{1:k},\widetilde Y_{1:t}),
\qquad (\g_{1:\ell},\tilde \z_{1:t}(\W)) \toW (G_{1:\ell},\widetilde Z_{1:t})\]
also for the polynomial AMP algorithm applied to $\W$. A similar inductive
polynomial approximation argument as in Lemma \ref{lemma:AMPpolyAPprox},
using the conditions that $\|\W\|_\op<C$ almost surely for all large $m,n$ and
$u_{t+1}(\cdot)$ and $v_t(\cdot)$ are Lipschitz, establishes
\[\max_{s=1}^t \frac{1}{\sqrt{n}} \|\zb_t(\W) - \tilde\zb_t(\W)\|_2<\iota(\eps),
\qquad
\max_{s=1}^t \frac{1}{\sqrt{n}} \|\yb_t(\W) - \tilde\yb_t(\W)\|_2<\iota(\eps),\]
\[\|\bOmega_t - \widetilde{\bOmega}_t\|_\op<\iota(\eps),
\qquad \|\bSigma_t - \widetilde{\bSigma}_t\|_\op<\iota(\eps)\]
for a constant $\iota(\eps) \to 0$ as $\eps \to 0$. This then implies the
desired $W_2$-convergence in Lemma \ref{lemma:rectcompare}.
We omit the details of the argument for brevity.

\subsection{Universality for generalized white noise
matrices}\label{subsec:whitenoise}

In this section, we prove the following result showing that the value of an
alternating diagonal tensor network is universal for the class of generalized
white noise matrices in Definition \ref{def:rect}.

\begin{lemma}\label{lemma:rectmoments}
Let $\x_1,\ldots,\x_k \in \R^m$ and $\y_1,\ldots,\y_\ell \in \R^n$ be (random or
deterministic) vectors and let $(X_1,\ldots,X_k)$ and $(Y_1,\ldots,Y_\ell)$ have
finite moments of all orders, such that almost surely as $m,n \to \infty$
with $m/n \to \gamma \in (0,\infty)$,
\[(\x_1,\ldots,\x_k) \toW (X_1,\ldots,X_k) \qquad \text{ and } 
\qquad (\y_1,\ldots,\y_\ell) \toW (Y_1,\ldots,Y_\ell).\]
Let $\W \in \R^{m \times n}$ be a generalized white noise matrix,
independent of $\x_1,\ldots,\x_k$ and $\y_1,\ldots,\y_\ell$, with variance
profile $\S$. Let $\s_\a$ be the $\a^\text{th}$ row of $\S$, let
$\s^i$ be the $i^\text{th}$ column of $\S$, and suppose for any fixed polynomial
functions $p:\R^k \to \R$ and $q:\R^\ell \to \R$ that
\begin{align}
\max_{i=1}^n \Big| \langle p(\x_1,\ldots,\x_k) \odot \s^i \rangle
-\langle p(\x_1,\ldots,\x_k) \rangle \cdot \langle \s^i \rangle \Big| &\to 0,
\label{eq:rectpsavg}\\
\max_{\a=1}^m \Big| \langle q(\y_1,\ldots,\y_\ell) \odot \s_\a \rangle
-\langle q(\y_1,\ldots,\y_\ell) \rangle \cdot \langle \s_\a \rangle \Big| &\to
0.\label{eq:rectqsavg}
\end{align}
Then for any alternating diagonal tensor network $T$ in $(k,\ell)$ variables,
there is a deterministic limit value
\begin{align*}
    \limval_T(X_1,\ldots,X_k,Y_1,\ldots,Y_\ell)
\end{align*}
depending only on $T$, $\gamma$, and the joint laws of $(X_1,\ldots,X_k)$ and
$(Y_1,\ldots,Y_\ell)$ such that almost surely,
\[\lim_{m,n \to \infty} \val_T(\W;\x_1,\ldots,\x_k;\y_1,\ldots,\y_\ell)
=\limval_T(\gamma,X_1,\ldots,X_k,Y_1,\ldots,Y_\ell).\]
In particular, this limit value is the same for $\W$ as for a Gaussian white
noise matrix $\G$.
\end{lemma}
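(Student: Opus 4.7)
The plan is to mirror the strategy used for Lemma \ref{lemma:Wignermoments}, adapted to the bipartite structure of the alternating tensor network. First, I would write
\[
\val_T(\W;\x_{1:k};\y_{1:\ell}) = \frac{1}{n}\sum_{\boldsymbol{\alpha}\in[m]^{\cU}}\sum_{\bi\in[n]^{\cV}} p_{\boldsymbol{\alpha}|T}\cdot q_{\bi|T}\cdot W_{\boldsymbol{\alpha},\bi|T}
\]
and decompose the double sum according to the induced bipartite partition: vertices $u,u'\in\cU$ lie in the same block iff $\alpha_u=\alpha_{u'}$, and similarly for $\cV$. Each equivalence class $\cI$ gives rise to a bipartite multigraph $G(\cI)=(\cU_G\sqcup\cV_G,\cE_G,\{P_U\}_U,\{Q_V\}_V)$ whose $\cU$-vertices (resp.\ $\cV$-vertices) are blocks of the $\cU$-partition (resp.\ $\cV$-partition), each labeled by the product of the relevant $p_u$'s (resp.\ $q_v$'s). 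The sum within a class becomes a restricted sum over distinct indices in $[m]^{\cU_G}\times[n]^{\cV_G}$ of these products weighted by powers of entries of $\W$.

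Next, I would take expectation over $\W$. Because entries of $\W$ are independent and mean zero, only classes in which every distinct edge of $G(\cI)$ appears at least twice contribute. Since $G(\cI)$ must be connected (inheriting connectivity from the tree $T$), this forces $|\cU_G|+|\cV_G|\le|\cE_{G(\cI)}|_*+1\le|\cE|/2+1$, with equality precisely when each distinct edge appears exactly twice and the distinct edges form a bipartite tree. Higher multiplicities contribute a factor $o(1)/n$ per such edge by the moment bound in Definition~\ref{def:rect}(b), and classes with $|\cU_G|+|\cV_G|\le|\cE|/2$ are similarly negligible. On the surviving classes, $\E[W_{\boldsymbol{\alpha},\bi|G}^e]=\prod_{(U,V)}n^{-1}S[\alpha_U,i_V]$, so the contribution reduces to the value of the bipartite tree $G(\cI)$ evaluated on $\S/n$.

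To handle these values on $\S/n$, I would prove a rectangular analog of Lemma~\ref{lem:VarianceProfile_universality}: by peeling leaves of the bipartite tree one at a time, each leaf sum is either of the form $\langle p(\x_{1:k})\odot\s^i\rangle$ or $\langle q(\y_{1:\ell})\odot\s_\alpha\rangle$, which by hypotheses \eqref{eq:rectpsavg} and \eqref{eq:rectqsavg} together with Definition~\ref{def:rect}(c) factor asymptotically as $\E[p(X_{1:k})]$ or $\E[q(Y_{1:\ell})]$ times the appropriate normalization (and using $m/n\to\gamma$ to track factors of $m/n$ that appear when a $[m]$-indexed leaf is removed). Iterating gives a deterministic limit depending only on the joint laws and $\gamma$. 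A version of Lemma~\ref{lemma:distinctness} for bipartite index tuples lets us drop the distinctness restriction with negligible error. Summing the surviving classes then yields $\limval_T(\gamma,X_{1:k},Y_{1:\ell})$, which structurally agrees with the Gaussian white noise case since the identification depends only on these limits.

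Finally, for almost-sure convergence I would bound the fourth moment $\E[(\val(\W)-\E\val(\W))^4]$ exactly as in Lemma~\ref{lem:Wignermoments_concentration}: introduce four independent copies of $\W$, decompose according to whether the four bipartite index tuples $(\boldsymbol{\alpha}_j,\bi_j)$ form a graph with $1,2,3,$ or $4$ connected components, show that the three- and four-component contributions cancel by symmetry and independence, and bound the one/two-component contribution by $C/n^2$ using Definition~\ref{def:rect}(b). Borel--Cantelli then yields the almost-sure limit. The main technical obstacle, I expect, is the bookkeeping in the leaf-peeling argument: each peeled vertex may be of $\cU$- or $\cV$-type, forcing us to track normalizations by $m$ versus $n$ (and hence factors of $\gamma$), and to invoke the appropriate one of \eqref{eq:rectpsavg}/\eqref{eq:rectqsavg} together with the correct row- or column-sum condition from Definition~\ref{def:rect}(c). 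Everything else transfers essentially verbatim from the symmetric Wigner analysis.
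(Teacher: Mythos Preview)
Your proposal is correct and follows essentially the same approach as the paper: the paper likewise decomposes into bipartite equivalence classes $(\cA,\cI)$ on $[m]^{\cU}\times[n]^{\cV}$, takes expectation to isolate classes where $|\cU_G|+|\cV_G|=|\cE_G|_*+1=|\cE|/2+1$, reduces to evaluating the bipartite tree $G(\cA,\cI)$ on $\S/n$ via the leaf-peeling argument (picking up the factor $\gamma^{|\cU_G|}$), and then proves concentration by the same fourth-moment/four-copies/connected-components computation with Borel--Cantelli. The only cosmetic difference is that the paper indexes equivalence classes by pairs $(\cA,\cI)$ of partitions of $\cU$ and $\cV$ separately rather than a single bipartite partition, which is equivalent to what you describe.
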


The proof of Lemma \ref{lemma:rectmoments} is quite similar to that of Lemma
\ref{lemma:Wignermoments}, and we highlight only the differences: Fix the tensor
network $T=(\cU,\cV,\cE,\{p_u\}_{u \in \cU},\{q_v\}_{v \in \cV})$.
Let $\cP=\cP_\cU \times \cP_\cV$ be the set of pairs of partitions
$(\pi_U,\pi_V)$ of $\cU$ and $\cV$. For index
tuples $\balpha \in [m]^\cU$ and $\bi \in [n]^{\cV}$, consider their
induced partitions $\pi(\balpha) \in \cP_\cU$ and $\pi(\bi) \in \cP_\cV$. Then
\[\val_T(\W;\x_{1:k};\y_{1:\ell})
=\frac{1}{n}\sum_{(\pi_U,\pi_V) \in \cP}
\sum_{\balpha \in [m]^\cU:\pi(\balpha)=\pi_U}
\sum_{\bi \in [n]^{\cV}:\pi(\bi)=\pi_V} p_{\balpha|T} \cdot q_{\bi|T}
\cdot W_{\balpha,\bi|T}.\]
Corresponding to $(\pi_U,\pi_V) \in \cP$,
let $G_{\pi_U,\pi_V}=(\cK_{\pi_U} \sqcup \cK_{\pi_V},\cF_{\pi_U,\pi_V})$
be the image of $(\cU,\cV,\cE)$ under $(\pi_U,\pi_V)$: This is the bipartite
multi-graph with vertices $\cK_{\pi_U} \sqcup
\cK_{\pi_V}$ being the blocks of $\pi_U$ and $\pi_V$, and having one
edge $(U,V) \in \cF_{\pi_U,\pi_V}$ for each edge $(u,v) \in \cE$ where $U,V$
are the blocks containing $u,v$. Denote
$P_U=\prod_{u \in U} p_u$ and $Q_V=\prod_{v \in V} q_v$. Then this gives
\begin{equation}\label{eq:rectval1}
\val_T(\W;\x_{1:k};\y_{1:\ell})
=\frac{1}{n}\sum_{(\pi_U,\pi_V) \in \cP}
\sum_{\balpha \in [m]^{\cK_{\pi_U}}}^* \sum_{\bi \in [n]^{\cK_{\pi_V}}}^*
P_{\balpha|G_{\pi_U,\pi_V}} \cdot Q_{\bi|G_{\pi_U,\pi_V}} \cdot
W_{\balpha,\bi|G_{\pi_U,\pi_V}}^e
\end{equation}
where
\[P_{\balpha|G_{\pi_U,\pi_V}}=\prod_{U \in \cK_{\pi_U}} P_U(x_{1:k}[\a_U]),
\quad Q_{\bi|G_{\pi_U,\pi_V}}=\prod_{V \in \cK_{\pi_V}} Q_V(y_{1:\ell}[i_V]),\]
\[W_{\balpha,\bi|G_{\pi_U,\pi_V}}^e=\prod_{\text{unique edges } (U,V) \text{ of
} \cG_{\pi_U,\pi_V}} W[\a_U,i_V]^{e(U,V)}\]
and $e(U,V)$ is the number of times the unique
edge $(U,V)$ appears in $\cF_{\pi_U,\pi_V}$.

\begin{lemma}\label{lemma:Erectmoments}
Let $\E$ denote the expectation over $\W$ conditional on
$\x_1,\ldots,\x_k,\y_1,\ldots,\y_\ell$. Then Lemma~\ref{lemma:rectmoments} holds for
$\E[\val_T(\W;\x_1,\ldots,\x_k;\y_1,\ldots,\y_\ell)]$ in place of
$\val_T(\W;\x_1,\ldots,\x_k;\y_1,\ldots,\y_\ell)$.
\end{lemma}
\begin{proof}
Taking expectation on both sides of (\ref{eq:rectval1}), observe that any
summand has 0 expectation unless each unique edge of $G_{\pi_U,\pi_V}$ appears
at least twice. For the partitions $(\pi_U,\pi_V)$ corresponding to
summands with non-zero expectation, the numbers
of vertices and unique edges of $G_{\pi_U,\pi_V}$ must satisfy
\[|\cK_{\pi_U}|+|\cK_{\pi_V}| \leq |\cF_{\pi_U,\pi_V}|_*+1 \leq |\cE|/2+1,\]
the first inequality holding because the graph $G_{\pi_U,\pi_V}$ is connected.
By the same argument as in Lemma \ref{lem:Wignermoments_expected_value},
the summands where
$|\cK_{\pi_U}|+|\cK_{\pi_V}| \leq |\cE|/2$ have vanishing
contributions in the limit as $m,n \to \infty$. Thus the only non-vanishing
contributions come from partitions $(\pi_U,\pi_V)$ where
\begin{equation}\label{eq:rectnonvanishing}
|\cK_{\pi_U}|+|\cK_{\pi_V}|=|\cF_{\pi_U,\pi_V}|_*+1=|\cE|/2+1.
\end{equation}

For such classes, each unique edge of $G_{\pi_U,\pi_V}$ appears exactly twice by
the second equality of (\ref{eq:rectnonvanishing}),
these edges form a tree by the first equality of (\ref{eq:rectnonvanishing}),
and we have
\begin{align*}
&\frac{1}{n} \sum_{\balpha \in [m]^{\cK_{\pi_U}}}^* \sum_{\bi \in
[n]^{\cK_{\pi_V}}}^* P_{\balpha|G_{\pi_U,\pi_V}} \cdot Q_{\bi|G_{\pi_U,\pi_V}}
\cdot \E[W_{\balpha,\bi|G_{\pi_U,\pi_V}}^e]\\
&\qquad=\frac{1}{n} \sum_{\balpha \in [m]^{\cK_{\pi_U}}}^* \sum_{\bi \in
[n]^{\cK_{\pi_V}}}^* P_{\balpha|G_{\pi_U,\pi_V}} \cdot Q_{\bi|G_{\pi_U,\pi_V}}
\cdot \prod_{\text{unique edges } (U,V) \text{ of } G_{\pi_U,\pi_V}}
\frac{S[\alpha_U,i_V]}{n}.
\end{align*}
Applying Lemma \ref{lemma:distinctness} and the first equality of
(\ref{eq:rectnonvanishing}), this has the same asymptotic limit as 
\[\frac{1}{n} \sum_{\balpha \in [m]^{\cK_{\pi_U}}} \sum_{\bi \in
[n]^{\cK_{\pi_V}}} P_{\balpha|G_{\pi_U,\pi_V}} \cdot Q_{\bi|G_{\pi_U,\pi_V}}
\cdot \prod_{\text{unique edges } (U,V) \text{ of } G_{\pi_U,\pi_V}}
\frac{S[\alpha_U,i_V]}{n}\]
which removes the distinctness requirement for the indices of $\balpha$ and
$\bi$. Applying the conditions
(\ref{eq:rectpsavg}--\ref{eq:rectqsavg}) and the same argument as in
Lemma \ref{lem:VarianceProfile_universality}, this quantity has
the limit, as $m,n \to \infty$ with $m/n \to \gamma$,
\[\gamma^{|\cK_{\pi_U}|} \prod_{U \in \cK_{\pi_U}} \E[P_U(X_{1:k})]
\prod_{V \in \cK_{\pi_V}} \E[Q_V(Y_{1:\ell})]\]
(A factor of $\gamma$ arises when summing over each vertex of
$\cK_{\pi_U}$ because $S[\alpha_U,i_V]$ is normalized by $n$ but the
summation is over $[m]$.) Thus
\begin{align*}
&\lim_{m,n \to \infty} \E[\val_T(\W;\x_{1:k};\y_{1,\ell})]\\
&=\mathop{\sum_{(\pi_U,\pi_V) \in \cP}}_{|\cK_{\pi_U}|+|\cK_{\pi_V}|
=|\cF_{\pi_U,\pi_V}|_*+1=|\cE|/2+1}
\gamma^{|\cK_{\pi_U}|} \prod_{U \in \cK_{\pi_U}} \E[P_U(X_{1:k})]
\prod_{V \in \cK_{\pi_V}} \E[Q_V(Y_{1:\ell})]\\
&=:\limval_T(\gamma,X_1,\ldots,X_k,Y_1,\ldots,Y_\ell)
\end{align*}
This limit depends only on $T$, $\gamma$, and the joint laws of $X_{1:k}$ and
$Y_{1:\ell}$, concluding the proof.
\end{proof}

\begin{lemma}\label{lemma:rectfourthmoment}
Let $\E$ denote the expectation over $\W$, conditional on $\x_1,\ldots,\x_k,\y_1,\ldots,\y_\ell$. Under the setting of Lemma \ref{lemma:rectmoments}, almost
surely as $m,n \to \infty$,
\[\val_T(\W;\x_1,\ldots,\x_k;\y_1,\ldots,\y_\ell)
-\E[\val_T(\W;x_1,\ldots,\x_k;\y_1,\ldots,\y_\ell)] \to 0.\]
\end{lemma}
\begin{proof}
The proof is similar to Lemma \ref{lem:Wignermoments_concentration}.
Let $\W^{(1)},\W^{(2)},\W^{(3)},\W^{(4)}$ be four independent copies of $\W$,
define index tuples $\balpha_1,\balpha_2,\balpha_3,\balpha_4 \in [m]^\cU$
and $\bi_1,\bi_2,\bi_3,\bi_4 \in [n]^\cV$, and set
\[p_{\balpha_{1:4}}=p_{\balpha_1|T}p_{\balpha_2|T}p_{\balpha_3|T}p_{\balpha_4|T},
\quad q_{\bi_{1:4}}=q_{\bi_1|T}q_{\bi_2|T}q_{\bi_3|T}q_{\bi_4|T},\]
\[W_{\balpha_{1:4},\bi_{1:4}}^{(a_1,a_2,a_3,a_4)}
=W_{\balpha_1,\bi_1|T}^{(a_1)}W_{\balpha_2,\bi_2|T}^{(a_2)}
W_{\balpha_3,\bi_3|T}^{(a_3)}W_{\balpha_4,\bi_4|T}^{(a_4)}.\]
For index tuples $\balpha_{1:4}$ and $\bi_{1:4}$, consider a bi-partitite
multigraph $G(\balpha_{1:4},\bi_{1:4})=(\cU_G,\cV_G,\cE_G)$ whose vertices
$\cU_G$ are the unique index values in $\balpha_{1:4}$ and vertices $\cV_G$
are the unique index values in $\bi_{1:4}$, and having one edge
$(\a_{a,u},i_{a,v})$ for every combination of $a=1,2,3,4$ and edge $(u,v) \in
\cE$. Define
\begin{align*}
\cI_2&=\{(\balpha_{1:4},\bi_{1:4}):G(\balpha_{1:4},\bi_{1:4}) \text{ has } 1
\text{ or } 2 \text{ connected components\}},\\
\cI_3&=\{(\balpha_{1:4},\bi_{1:4}):G(\balpha_{1:4},\bi_{1:4}) \text{ has } 3
\text{ connected components\}},\\
\cI_4&=\{(\balpha_{1:4},\bi_{1:4}):G(\balpha_{1:4},\bi_{1:4}) \text{ has } 4
\text{ connected components\}},
\end{align*}
and define correspondingly for each $j=2, 3, 4$
\[A_j=\frac{1}{n^4}\sum_{(\balpha_{1:4},\bi_{1:4}) \in \cI_j}
p_{\balpha_{1:4}}q_{\bi_{1:4}}\Big(\E[W_{\balpha_{1:4},\bi_{1:4}}^{(1,1,1,1)}]
-4 \cdot \E[W_{\balpha_{1:4},\bi_{1:4}}^{(1,1,1,2)}]
+6 \cdot \E[W_{\balpha_{1:4},\bi_{1:4}}^{(1,1,2,3)}]
-3 \cdot \E[W_{\balpha_{1:4},\bi_{1:4}}^{(1,2,3,4)}]\Big).\]
Then $\E[(\val(\W)-\val(\W))^4]=A_2+A_3+A_4$.
By the same argument as in Lemma \ref{lem:Wignermoments_concentration},
we have $A_3=A_4=0$, while $|A_2| \leq C/n^2$ for a constant $C>0$ and all large
$n$. Then the result follows from Markov's inequality and the Borel-Cantelli
lemma.
\end{proof}

Lemmas \ref{lemma:Erectmoments} and \ref{lemma:rectfourthmoment} show Lemma
\ref{lemma:rectmoments}. Finally, combining Lemmas \ref{lemma:rectcompare} 
and \ref{lemma:rectmoments} concludes the proof of Theorem \ref{thm:rect}.

\subsection{Universality for rectangular generalized invariant
matrices}\label{subsec:rectinv}

In this section, we prove the following result showing that the value of an
alternating diagonal tensor network is universal across the class of rectangular
generalized invariant matrices.

\begin{lemma}\label{lemma:rectinvmoments}
Let $\x_1,\ldots,\x_k \in \R^m$ and $\y_1,\ldots,\y_\ell \in \R^n$ be (random or
deterministic) vectors and let $(X_1,\ldots,X_k)$ and $(Y_1,\ldots,Y_\ell)$ have
finite moments of all orders, such that almost surely as
$m,n \to \infty$ with $m/n \to \gamma \in (0,\infty)$,
\[(\x_1,\ldots,\x_k) \toW (X_1,\ldots,X_k) \qquad
\text{ and } \qquad (\y_1,\ldots,\y_\ell) \toW (Y_1,\ldots,Y_\ell).\]
Let $\W \in \R^{m \times n}$ be a rectangular generalized invariant
matrix independent of $\x_1,\ldots,\x_k$ and $\y_1,\ldots,\y_\ell$,
with limit diagonal distribution $\cD_\tdiag$.
Then for any alternating
diagonal tensor network $T$ in $(k,\ell)$ variables, there is a deterministic
limit value
\[\limval_T(\gamma,X_1,\ldots,X_k,Y_1,\ldots,Y_\ell,\cD_\tdiag)\]
depending only on $T$, $\gamma$, the joint laws of $(X_1,\ldots,X_k)$ and
$(Y_1,\ldots,Y_\ell)$, and $\cD_\tdiag$, such that almost surely
\[\lim_{m,n \to \infty} \val_T(\W;\x_1,\ldots,\x_k;\y_1,\ldots,\y_\ell)
=\limval_T(\gamma, X_1,\ldots,X_k,Y_1,\ldots,Y_\ell,\cD_\tdiag).\]
In particular, if there exists a bi-orthogonally invariant matrix $\bG$
having the same limit diagonal distribution $\cD_\tdiag$, then this 
limit value is the same for $\bW$ as for $\bG$.
\end{lemma}

The proof of Lemma~\ref{lemma:rectinvmoments} is similar to 
Lemma \ref{lemma:Esyminvmoments}. Fix the tensor network
$T=(\cU,\cV,\cE,\{p_u\}_{u \in \cU},\{q_v\}_{v \in \cV})$.
Expanding $\bW=\bPi_U\bM\bPi_V^\top$, the tensor network value is given by
\begin{align*}
&\val_T(\W;\x_{1:k};\y_{1:\ell})\\
&\qquad=\frac{1}{n}\sum_{\balpha \in [m]^\cU}
\sum_{\bi \in [n]^\cV}
\sum_{\bbeta \in [m]^\cE}
\sum_{\bj \in [n]^\cE}
\prod_{u \in \cU} p_u(x_{1:k}[\a_u])
\prod_{v \in \cV} q_v(y_{1:\ell}[i_v])
\prod_{e=(u,v) \in \cE} \Pi_U[\a_u,\b_e]M[\b_e,j_e] \Pi_V[i_v,j_e].
\end{align*}
Write $\bPi_U = \bXi_U\bP_U$ and $\bPi_V=\bXi_V\bP_V$ for the random
sign and permutation matrices defining $\bPi_U,\bPi_V$, and
let $\sigma_U,\sigma_V$ be the permutations of $[m],[n]$ for which
$P_U[\alpha,\sigma_U(\alpha)]=1$ and $P_V[i,\sigma_V(i)]=1$.
Let $\cP=\cP_U \times \cP_V$ be the set of partitions $(\pi_U,\pi_V)$
of $\cU$ and $\cV$, and let
$G_{\pi_U,\pi_V} = (\cK_{\pi_U} \sqcup \cK_{\pi_V}, \cF_{\pi_U,\pi_V})$
be the image of $(\cU,\cV,\cE)$ under $(\pi_U,\pi_V)$ as defined in
Appendix \ref{subsec:whitenoise}. Then
we may obtain analogously to (\ref{eq:orthovalasO})
\begin{align}\label{eq:rectinvval1}
	\val_T(\bW;\bx_{1:k};\by_{1:\ell})
	&=\sum_{(\pi_U,\pi_V)\in\cP} \frac{1}{n}
	\sum_{\balpha\in[m]^{\cK_{\pi_U}}}^*
	\sum_{\bi\in[n]^{\cK_{\pi_V}}}^*
	\prod_{R\in\cK_{\pi_U}} P_R(x_{1:k}[\alpha_R])
\Xi_U[\alpha_R]^{\degext(R)}\notag\\
&\hspace{0.5in}\times
\prod_{S\in\cK_{\pi_V}} Q_S(y_{1:\ell}[i_S]) \Xi_V[i_S]^{\degext(S)}
\prod_{(R,S)\in\cF_{\pi_U,\pi_V}} M[\sigma_U(\alpha_R),\sigma_V(i_S)].
\end{align}
Here $P_R=\prod_{u \in R} p_u$, $Q_S=\prod_{v \in S} q_v$, and
$\degext(R)$ is the number of non-self-loop edges in $\cF_{\pi_U,\pi_V}$
(counting multiplicity) that contain $R$.

\begin{lemma}\label{lemma:Erectinvmoments}
Let $\E$ be the expectation over $\bPi_U,\bPi_V$ conditional on
$\bM,\x_1,\ldots,\x_k,\y_1,\ldots,\y_\ell$. Then
Lemma \ref{lemma:rectinvmoments} holds with
$\E[\val_T(\W;\x_1,\ldots,\x_k;\y_1,\ldots,\y_\ell)]$ in place of
$\val_T(\W;\x_1,\ldots,\x_k;\y_1,\ldots,\y_\ell)$.
\end{lemma}
\begin{proof}
Define analogously to (\ref{eq:An}--\ref{eq:Mn})
\begin{align}
B_n(\pi_U,\pi_V)&=m^{|\cK_{\pi_U}|} \cdot \frac{(m-|\cK_{\pi_U}|)!}{m!} \cdot
n^{|\cK_{\pi_V}|} \cdot \frac{(n-|\cK_{\pi_V}|)!}{n!}\label{eq:Anrect}\\
Q_n(\pi_U,\pi_V)&=\frac{1}{m^{|\cK_{\pi_U}|}} \sum_{\balpha\in[m]^{\cK_{\pi_U}}}^*
\prod_{R \in \cK_{\pi_U}} P_R(x_{1:k}[\alpha_R]) \cdot
\frac{1}{n^{|\cK_{\pi_V}|}} \sum_{\bi \in [n]^{\cK_{\pi_V}}}^*
\prod_{S \in \cK_{\pi_V}} Q_S(y_{1:\ell}[i_S])\label{eq:Qnrect}\\
M_n(\pi_U,\pi_V)&=\frac{1}{n} \sum_{\balpha\in[m]^{\cK_{\pi_U}}}^*
	\sum_{\bi\in[n]^{\cK_{\pi_V}}}^*
	\prod_{(R,S)\in\cF_{\pi_U,\pi_V}} M[\alpha_R,i_S]\label{eq:Mnrect}
\end{align}
Then, taking the expectation conditional on $\bM$ using
the identities \eqref{eq:sign} and \eqref{eq:permutation} yields
\begin{align*}
	\E[\val_T(\W;\x_{1:k};\y_{1:\ell})]
	&=\sum_{(\text{even } \pi_U,\text{ even } \pi_V) \in \cP}
	B_n(\pi_U,\pi_V) \cdot Q_n(\pi_U,\pi_V) \cdot M_n(\pi_U,\pi_V)
\end{align*}

We note that in terms of the symmetric embedding $\widetilde\bM$ from
(\ref{eq:Membedding}), the above quantity $M_n(\pi_U,\pi_V)$ has the equivalent
form
\[M_n(\pi_U,\pi_V) = \frac{1}{n} \sum_{\balpha\in[m]^{\cK_{\pi_U}}}^*
	\sum_{\bi\in[n]^{\cK_{\pi_V}}}^*
	\prod_{(R,S)\in\cF_{\pi_U,\pi_V}} 
	\widetilde M[\alpha_R,m+i_S].\]
We further add edges to $G_{\pi_U,\pi_V}$ by attaching to every vertex in 
$\cK_{\pi_U}$ a self-loop labeled with $p_e(\x)=\bI_m$,
attaching to every vertex in $\cK_{\pi_V}$ a self-loop labeled with
$p_e(\x)=\bI_n$, and labeling each original edge $e=(R,S) \in
\cF_{\pi_U,\pi_V}$ with $p_e(\x)=\x$.
Denote the resulting graph as $\widetilde G_{\pi_U,\pi_V} = 
(\cK_{\pi_U}\sqcup \cK_{\pi_V},\widetilde\cF_{\pi_U,\pi_V})$. Then 
\begin{align*}
	M_n(\pi_U,\pi_V) = \frac{1}{n} \sum_{\bj\in[m+n]^{\cK_{\pi_U}\sqcup\cK_{\pi_V}}}^*
	\prod_{(R,S)\in\widetilde\cF_{\pi_U,\pi_V}} 
	p_e(\widetilde\bM)[j_R, j_{S}].
\end{align*}
By the conditions on $\bM$ in Definition~\ref{def:rectinvariant}, 
$\widetilde\bM$ satisfies the assumptions for Lemma~\ref{lemma:Hlemma_rec}
below. Note that $\widetilde G_{\pi_U,\pi_V}$ is connected, and all external 
degrees are even when
$\pi_U,\pi_V$ are both even. Then, applying Lemma~\ref{lemma:Hlemma_rec},
there exists a value $\widetilde{M}(G_{\pi_U,\pi_V},\cD_\tdiag)$ depending only on 
$G_{\pi_U,\pi_V}$ and $\cD_\tdiag$ such that 
\begin{align*}
\lim_{m,n \to \infty} \frac{n}{n+m} M_n(\pi_U,\pi_V)=
	\lim_{m,n\to\infty} \frac{1}{n+m}
\sum_{\bj\in[m+n]^{\cK_{\pi_U}\sqcup\cK_{\pi_V}}}^*
	\prod_{(R,S)\in\widetilde\cF_{\pi_U,\pi_V}} p_e(\widetilde \bM)[j_R,j_S]
	=\widetilde{M}(G_{\pi_U,\pi_V},\cD_\tdiag)
\end{align*}
Combining with the limit value for $Q_n$, which exists by Lemma
\ref{lemma:distinctness} and the assumptions $\x_{1:k} \toW X_{1:k}$ and
$\y_{1:\ell} \toW Y_{1:\ell}$, we conclude that
$\lim_{m,n\to\infty} \E[\val_T(\W;\x_{1:k};\y_{1:\ell})]$ exists and is given by
\begin{align*}
&\limval_T(\gamma,X_{1:k},Y_{1:\ell},\cD_{\tdiag})\\
&:=\sum_{(\text{even } \pi_U,\text{ even } \pi_V) \in \cP}\;
\prod_{R \in \cK_{\pi_U}} \E[P_R(X_{1:k})]
\prod_{S \in \cK_{\pi_V}} \E[Q_S(Y_{1:\ell})]
\cdot (1+\gamma)\,\widetilde{M}(G_{\pi_U,\pi_V},\cD_\tdiag).
\end{align*}
This depends only on $\gamma$, $T$, the joint laws of $X_{1:k}$ and
$Y_{1:\ell}$, and $\cD_\tdiag$, concluding the proof.
\end{proof}

\begin{lemma}\label{lemma:Hlemma_rec}
Let $\widetilde\Mb \in \R^{(m+n) \times (m+n)}$ be a deterministic symmetric
matrix, such that for any diagonal monomial
$p(\x) \in \Delta\langle \x,\bI_m,\bI_n \rangle$, 
$\lim_{m,n \to \infty} \frac{1}{m+n} \Tr p(\widetilde{\bM})$ exists (and is
finite) almost surely, and for any fixed $\eps>0$ and all large $m,n$,
\[\max_{i\neq j} |p(\widetilde\bM)[i,j]|<n^{-1/2+\eps}.\]
Let $G=(\cK,\cF)$ be a connected multi-graph such that 
the external degree $\degext(R)$ is even for every vertex $R\in\cK$.
For every edge $e\in\cF$, let $p_e(\x)\in\Delta\langle\bx,\bI_m,\bI_n\rangle$ 
be a diagonal monomial labeling this edge.
Then there exists a value $M(G,\cD_\tdiag)$ depending only on
$G$ and $\cD_\tdiag$ such that
\[\lim_{m,n \to \infty} \frac{1}{m+n}
\sum_{\bj\in[m+n]^{\cK}}^*
\prod_{e=(R,R')\in\cF} p_e(\widetilde\bM)[j_R, j_{R'}]
= M(G,\cD_\tdiag).\]
\end{lemma}
\begin{proof}
The proof is identical to that of Lemma~\ref{lemma:Hlemma}.
\end{proof}

\begin{lemma}\label{lemma:rectinvfourthmoment}
Let $\E$ be the expectation over $\bPi_U,\bPi_V$ conditional on
$\bM,\x_1,\ldots,\x_k,\y_1,\ldots,\y_\ell$.
Under the setting of
Lemma \ref{lemma:rectinvmoments}, almost surely as $m,n \to \infty$,
\[\val_T(\W;\x_1,\ldots,\x_k;\y_1,\ldots,\y_\ell)
-\E[\val_T(\W;\x_1,\ldots,\x_k;\y_1,\ldots,\y_\ell)] \to 0.\]
\end{lemma}
\begin{proof}
The proof is similar to that of Lemma \ref{lemma:syminvfourthmoment}.
We write $\val(\W)$ for $\val_T(\W;\x_{1:k};\y_{1:\ell})$,
and $\val(\bar\W)$ for the value corresponding to $\bar\W$ defined by
independent copies $\bar\bPi_U,\bar\bPi_V$ of $\bPi_U,\bPi_V$.

Let $(\cU^{(1)},\cV^{(1)},\cE^{(1)}),\ldots,(\cU^{(4)},\cV^{(4)},\cE^{(4)})$
denote four copies of $(\cU,\cV,\cE)$. For subsets $A \subseteq \{1,\ldots,4\}$,
let $(\cU_A,\cV_A,\cE_A)=\bigsqcup_{a \in A} (\cU^{(a)},\cV^{(a)},\cE^{(a)})$
be the graph obtained as the disjoint union of those copies in $A$.
Let $\cP_A=\cP_{\cU_A} \times \cP_{\cV_A}$ be the set of pairs of
partitions of $\cU_A$ and $\cV_A$, and let $\bar{A}=\{1,2,3,4\} \setminus A$.
Then, we have analogously to (\ref{eq:orthoEvalfourth}),
\begin{align}
\E[(\val(\W)-\val(\bar\W))^4]&=\sum_{A \subseteq \{1,2,3,4\}} (-1)^{|A|}
\mathop{\sum_{(\text{even } \pi_U,\text{ even } \pi_V) \in \cP_A}}_{(\text{even }
\bar\pi_U, \text{ even } \bar\pi_V) \in \cP_{\bar A}}
\frac{n^{|\cC(\pi_U,\pi_V)|+|\cC(\bar\pi_U,\bar\pi_V)|}}{n^4}\notag\\
&\times B_n(\pi_U,\pi_V)B_n(\bar\pi_U,\bar\pi_V)
\cdot Q_n(\pi_U,\pi_V)Q_n(\bar\pi_U,\bar\pi_V)
\cdot M_n(\pi_U,\pi_V)M_n(\bar\pi_U,\bar\pi_V)\label{eq:orthoEvalfourthrect}
\end{align}
Here, $B_n,Q_n$ are as defined in (\ref{eq:Anrect}--\ref{eq:Qnrect}),
$|\cC(\pi_U,\pi_V)|$ denotes the number of connected components of the bipartite
graph $G_{\pi_U,\pi_V}=(\cK_{\pi_U} \sqcup \cK_{\pi_V},\cF_{\pi_U,\pi_V})$,
and we extend the definition (\ref{eq:Mnrect}) to
\[M_n(\pi_U,\pi_V)=\frac{1}{n^{|\cC(\pi_U,\pi_V)|}}
\sum_{\balpha\in[m]^{\cK_{\pi_U}}}^*
\sum_{\bi\in[n]^{\cK_{\pi_V}}}^*\prod_{(R,S)\in\cF_{\pi_U,\pi_V}} M[\alpha_R,i_S]\]
when $G_{\pi_U,\pi_V}$ has more than 1 connected component.

We define $(\tau_U,\tau_V)=(\pi_U \oplus \bar\pi_U,\pi_V \oplus \bar\pi_V)
\in \cP_{\{1,2,3,4\}}$ as the partitions of $\cU_{\{1,2,3,4\}}$ and
$\cV_{\{1,2,3,4\}}$ obtained by combining $\pi_U$ with $\bar\pi_U$
and $\pi_V$ with $\bar\pi_V$. Then the same
arguments as in the proof of Lemma \ref{lemma:syminvfourthmoment} show
\begin{align*}
B_n(\pi_U,\pi_V)B_n(\bar\pi_U,\bar\pi_V)&=B_n(\tau_U,\tau_V)+O(n^{-1})\\
Q_n(\pi_U,\pi_V)Q_n(\bar\pi_U,\bar\pi_V)&=Q_n(\tau_U,\tau_V)+O(n^{-1})\\
M_n(\pi_U,\pi_V)M_n(\bar\pi_U,\bar\pi_V)&=M_n(\tau_U,\tau_V)+O(n^{-1})
\end{align*}
When $G_{\tau_U,\tau_V}$ has 4 connected components, which we denote by
$G_{\tau_U,\tau_V}(a)=(\cK_{\tau_U}(a) \sqcup
\cK_{\tau_V}(a),\cF_{\tau_U,\tau_V}(a))$ for $a=1,2,3,4$,
the arguments of Lemma \ref{lemma:syminvfourthmoment} show also
\begin{align*}
B_n(\pi_U,\pi_V)B_n(\bar\pi_U,\bar\pi_V)&=B_n(\tau_U,\tau_V)
+\sum_{a \in A,b \notin A} B_n(\tau_U,\tau_V,a,b)+O(n^{-2})\\
Q_n(\pi_U,\pi_V)Q_n(\bar\pi_U,\bar\pi_V)&=Q_n(\tau_U,\tau_V)
+\sum_{a \in A,b \notin A} Q_n(\tau_U,\tau_V,a,b)+O(n^{-2})\\
M_n(\pi_U,\pi_V)M_n(\bar\pi_U,\bar\pi_V)&=M_n(\tau_U,\tau_V)
+\sum_{a \in A,b \notin A} M_n(\tau_U,\tau_V,a,b)+O(n^{-2})
\end{align*}
for the quantities
\begin{align*}
B_n(\tau_U,\tau_V,a,b)&=-m^{-1}|\cK_{\tau_U}(a)| \cdot |\cK_{\tau_U}(b)|
-n^{-1}|\cK_{\tau_V}(a)| \cdot |\cK_{\tau_V}(b)|\\
Q_n(\tau_U,\tau_V,a,b)&=\frac{1}{m^{|\cK_{\tau_U}|}}
\frac{1}{n^{|\cK_{\tau_V}|}} \sum_{\balpha \in [m]^{\cK_{\tau_U}}}
\sum_{\bi \in [n]^{\cK_{\tau_V}}} \1\{\text{indices of } \bi \text{ are
distinct and exactly 2 indices of } \balpha\\
&\qquad \text{ from } \cK_{\tau_U}(a),\cK_{\tau_U}(b) \text{ coincide, or
indices of } \balpha \text{ are distinct and exactly 2 indices of } \bi\\
&\qquad \text{ from } \cK_{\tau_V}(a),\cK_{\tau_V}(b) \text{ coincide}\}
\times \prod_{R \in \cK_{\tau_U}} P_R(x_{1:k}[\alpha_R])
\prod_{S \in \cK_{\tau_V}} Q_S(y_{1:\ell}[i_S])\\
M_n(\tau_U,\tau_V,a,b)&=\frac{1}{n}\sum_{(\tau_U',\tau_V') \in
\cP(\tau_U,\tau_V,a,b)} M_n^{**}(\tau_U',\tau_V')
\end{align*}
where $\cP(\tau_U,\tau_V,a,b)$ is the set of partitions $(\tau_U',\tau_V')$
obtained by merging one or more pairs of blocks
$R \in \cK_{\tau_U}(a)$ with $R' \in \cK_{\tau_U}(b)$
or $S \in \cK_{\tau_V}(a)$ with $S' \in \cK_{\tau_V}(b)$,
and $M_n^{**}(\tau_U,\tau_V)$ is the product across all
connected components $G_{\sigma_U,\sigma_V} \subseteq G_{\tau_U,\tau_V}$ of
$M_n(\sigma_U,\sigma_V)$.

Then the same argument as in Lemma \ref{lemma:syminvfourthmoment} 
shows $\E[(\val(\W)-\E \val(\W))^4] \leq
\E[(\val(\W)-\val(\bar\W))^4] \leq C/n^2$ for a constant $C>0$ and all
large $n$, so that the result follows by
Markov's inequality and the Borel-Cantelli lemma.
\end{proof}

Lemmas \ref{lemma:Erectinvmoments} and \ref{lemma:rectinvfourthmoment} show Lemma
\ref{lemma:rectinvmoments}, and combining Lemmas \ref{lemma:rectcompare} 
and \ref{lemma:rectinvmoments} concludes the proof of Theorem~\ref{thm:rectinvariant}.

\subsection*{Acknowledgments}

We'd like to thank Mark Sellke for asking us an interesting question about
asymptotic freeness for random tensors, and Zhigang Bao and Yuxin Chen for
asking about AMP algorithms for heteroskedastic variances, which motivated parts
of this work. XZ was supported in part by funding from Two Sigma Investments,
LP. ZF was supported in part by NSF DMS-1916198 and DMS-2142476.

\bibliography{reference}
\bibliographystyle{alpha}

\end{document}